\newtheorem{Example}{Example}[section]
\newtheorem{Definition}{Definition}[section]
\newtheorem{Theorem}{Theorem}[section]
\newtheorem{Theorem/Definition}{Theorem/Definition}[section]
\newtheorem{Proposition}{Proposition}[section]
\newtheorem{Lemma}{Lemma}[section]
\newtheorem{Corollary}{Corollary}[section]
\newcommand{\pd}{\partial}
\newcommand{\bC}{{\mathbb C}}
\newcommand{\bE}{{\mathbb E}}
\newcommand{\bQ}{{\mathbb Q}}
\newcommand{\bR}{{\mathbb R}}
\newcommand{\cD}{{\mathcal D}}
\newcommand{\cF}{{\mathcal F}}
\newcommand{\cG}{{\mathcal G}}
\newcommand{\cM}{{\mathcal M}}
\newcommand{\half}{\frac{1}{2}}
\newcommand{\cV}{{\mathcal V}}
\newcommand{\Mbar}{\overline{\cM}}
\newcommand{\wF}{{\widehat F}}
\newcommand{\wcF}{{\widehat{\mathcal{F}}}}
\newcommand{\tF}{{\widetilde F}}
\newcommand{\hpd}{{\hat{\pd}}}
\newcommand{\brac}[2]{\begin{bmatrix} {#1} \\ {#2} \end{bmatrix}}
\newcommand{\be}{\begin{equation}}
\newcommand{\ee}{\end{equation}}
\newcommand{\bea}{\begin{eqnarray}}
\newcommand{\ben}{\begin{eqnarray*}}
\newcommand{\een}{\end{eqnarray*}}
\newcommand{\eea}{\end{eqnarray}}
\DeclareMathOperator{\Aut}{Aut} \DeclareMathOperator{\Id}{id}
\DeclareMathOperator{\val}{val}
\definecolor{yellow}{rgb}{1,1,0}
\definecolor{orange}{rgb}{1,.7,0}
\definecolor{red}{rgb}{1,0,0} \definecolor{green}{rgb}{0,1,1}
\definecolor{white}{rgb}{1,1,1}
\definecolor{A}{rgb}{.75,1,.75}
\theoremstyle{remark}
\newtheorem{Remark}{Remark}[section]
\begin{document}

\newtheorem{myDef}{Definition}
\newtheorem{thm}{Theorem}
\newtheorem{eqn}{equation}

\title[Orbifold Euler Characteristics of $\Mbar_{g,n}$]
{Orbifold Euler Characteristics of $\Mbar_{g,n}$}

\author{Zhiyuan Wang}
\address{School of Mathematical Sciences\\
Peking University\\Beijing, 100871, China}
\email{zhiyuan19@math.pku.edu.cn}

\author{Jian Zhou}
\address{Department of Mathematical Sciences\\
Tsinghua University\\Beijing, 100084, China}
\email{jianzhou@mail.tsinghua.edu.cn}

\begin{abstract}
We solve the problem of the computation of the orbifold Euler characteristics of $\Mbar_{g,n}$.
We take the works of Harer-Zagier \cite{hz} and Bini-Harer \cite{bh}
as our starting point,
and apply the formalisms developed in \cite{wz} and \cite{zhou1}
to this problem.
These formalisms are typical examples of mathematical methods inspired by
quantum field theories.
We also present many closed formulas and some numerical data.
In genus zero the results are related to Ramanujan polynomials,
and in higher genera we get recursion relations almost identical
to the recursion relations for Ramanujan polynomials but with different initial values.
We also show that the generating series given by the orbifold Euler characteristics of $\Mbar_{g,n}$
is the logarithm of the KP tau-function of the topological 1D gravity evaluated at the times
given by the orbifold Euler characteristics of $\cM_{g,n}$.
Conversely, the logarithm of this tau-function evaluated at the times
given by certain generating series of the orbifold Euler characteristics of $\Mbar_{g,n}$ is
a generating series of the orbifold Euler characteristics of $\cM_{g,n}$.
This is a new example of open-closed duality.
\end{abstract}

\maketitle

\tableofcontents

\section{Introduction}

\subsection{The computations of $\chi(\Mbar_{g,n})$}

The problem of the computation of the orbifold Euler characteristics $\chi(\Mbar_{g,n})$
of the Deligne-Mumford moduli spaces $\Mbar_{g,n}$ of stable curves
is a long standing problem in algebraic geometry.
In this work we will solve this problem using some methods we have developed in
an earlier work \cite{wz} inspired by the quantum field theory techniques developed by string
theorists in the study of holomorphic anomaly equations.
Such equations have played a crucial role in the theory of mirror symmetry
in higher genera.
By extracting the salient features of the theory of holomorphic
anomaly equations,
we formulate a notion of abstract quantum field theory in \cite{wz}
and develop some recursion relations in this formalism.
It turns out that this general formalism includes various problems and related
recursion relations as special cases.
We will show that this formalism is also applicable to the problem of computing
$\chi(\Mbar_{g,n})$ and it provides an effective algorithm for their computations.
We will present various closed formulas which are not accessible by other methods in the literature.

The idea of using quantum field theory techniques to solve problems in algebraic geometry is
certainly not new.
There are many well-known examples in the literature,
and some of them are famous work on the problem we deal with in this work.
Let us briefly review some of them.

The moduli space $\cM_{g,n}$ of smooth Riemann surfaces
of genus $g$ with $n$ marked points $(2g-2+n>0)$
is an orbifold of dimension $3g-3+n$,
and the orbifold Euler characteristic of $\cM_{g,n}$
is given by the famous Harer-Zagier formula:
\ben
\chi({\cM}_{g,n})=(-1)^{n}\cdot\frac{(2g-1)B_{2g}}{(2g)!}(2g+n-3)!,
\een
which is first proved by Harer and Zagier \cite{hz},
and by Penner \cite{pe}.
These authors have introduced the methods of Hermitian matrix models
into the study of moduli spaces of algebraic curves.
A different proof is given by Kontsevich \cite[Appendix D]{kon1},
in an appendix of his famous work on the proof of Witten's Conjecture \cite{wit1}.
That work can be considered as a development of \cite{hz} and \cite{pe}
in the sense that
quantum field theory techniques such as summations over Feynman graphs also
play an important role.

In \cite{dm},
Deligne and Mumford construct a natural compactification
$\overline\cM_{g,n}$ of $\cM_{g,n}$
by allowing double points on curves.
The compactification $\overline\cM_{g,n}$ is the moduli space
of stable curves of genus $g$ with $n$ marked points,
and it carries a natural stratification
which can be described by the dual graphs of stable curves:
\ben
\overline\cM_{g,n}=\bigsqcup_{\Gamma}\cM_\Gamma,
\een
where the disjoint union is indexed by the set of
all connected stable graphs of genus $g$ with $n$ external edges,
and $\cM_\Gamma$ is the moduli space of stable curves
whose dual graph is $\Gamma$.
A natural question is the computations of the orbifold Euler characteristic
and ordinary Euler characteristic of $\overline\cM_{g,n}$.
Some results about the ordinary Euler characteristics for small $g$
have already been given by different authors.
For $g=0$,
see \cite{get1, gls, ke, ma2}.
For $g=1$,
see \cite{get2}.
For $g=2$,
see \cite{bgp, fv, get3}.
For $g=3$,
see \cite{gl}.
There are also some excellent results on the
orbifold Euler characteristics $\chi(\Mbar_{g,n})$ in literatures.
In Manin's computation of $\chi(\Mbar_{0,n})$,
quantum field theory techniques have been applied to get the following result:
let
\ben
\chi(t) := t + \sum_{n=2}^\infty \chi(\Mbar_{0,n+1}) \frac{t^n}{n!}
\een
be the generating series at genus zero,
then $\chi(t)$ is the unique solution of either one of the following two equations:
\ben
&& (1+ \chi) \log (1+\chi) = 2\chi - t, \\
&& (1+t-\chi) \chi_t = 1+ \chi.
\een
See \cite{gls} for some combinatorial results
related to the solution of the first equation.

The common features of the works of
Harer-Zagier, Penner and Manin are the following.
They all formulated the geometric problem as finding
a summation over Feynman graphs.
In the case of Harer-Zagier and Penner,
the relevant graphs are the fat graphs in the sense
of 't Hooft \cite{tH} and these graphs lead them naturally to
generalized Gaussian integrals over the space
of Hermitian matrices;
and in the case of Manin the relevant graphs are marked trees
and this leads to a Gaussian integral
on a one-dimensional space.
The next step of all these works is to apply
the Laplace method of finding asymptotic expansions
of these formal integrals.

The orbifold Euler characteristic of $\overline\cM_{g,n}/S_n$
has been studied by this strategy
by Bini-Harer \cite{bh}.
First, they notice that  $\overline\cM_{g,n}/S_n$
can also be written in terms a sum over stable graphs
\ben
\chi(\overline{\cM}_{g,n}/S_n)=\sum_{\Gamma\in \cG^c_{g,n}}
\frac{1}{|\Aut(\Gamma)|}\prod_{v\in V(\Gamma)}\chi(\cM_{g_v,\val_v}).
\een
It follows that the generating series of $\chi(\Mbar_{g,n})$
has a formal integral representation
for which one can apply the Laplace method to
obtain the asymptotic behaviors
(see Bini-Harer \cite[Theorem 3.2, Theorem 3.3]{bh}).

Unfortunately very few numerical data and
no closed formula for $\chi(\overline{\cM}_{g,n})$
have been obtained by the above methods.
The two ways to compute $\chi(\Mbar_{g,n})$ in that work,
i.e.,
summing over all stable graphs or expanding formal Gaussian integrals,
are both too complicated to carry out specific computations.
In fact,
the complexity of finding all the relevant Feynman graphs
and their automorphism groups
makes it impossible to compute $\chi(\Mbar_{g,n}/S_n)$
using the above graph sum formula directly
unless $g$ and $n$ are extremely small
(see eg. \cite[Appendix]{wz} for all graphs with $(g,n)=(2,2)$ and $(3,0)$);
and expanding the logarithm of a formal Gaussian integral
is no less complicated than writing down all graphs
since it involves summing over all partitions of some integers.
It is not surprising that rewriting the Feynman sum as
a logarithm of a formal integral may not easily lead to
the solution of the problem of finding closed expressions.

In quantum field theory,
deriving some recursion relations is
a general strategy to solve the problem of evaluating the Feynman sums.
This will also be our strategy to solve the problem of computing $\chi(\Mbar_{g,n})$.
We will take the works of Harer-Zagier \cite{hz} and Bini-Harer \cite{bh}
as our starting point
and develop a more computable method
to calculate $\chi(\Mbar_{g,n})$.
Because this problem can be formulated both as
a sum over graphs and as a formal Gaussian integrals,
we will attack this problem from both points of view.
A general theory for deriving recursion relations
for sums over graphs have been developed
by us in an earlier work \cite{wz}.
A general theory of evaluating formal Gaussian integrals
has been developed by the second author
in \cite{zhou1}.
In this article, we will solve the problem of computing $\chi(\Mbar_{g,n})$
by applying  the results in these works.

\subsection{Abstract quantum field theory and its realizations}
Let us first review
the formalism of the abstract quantum field theory and its realizations,
which yields various types of recursion relations that provide effective
algorithm for both the concrete computations and for deriving closed formulas
involving summation over stable graphs.

This formalism is developed by the authors in a previous work \cite{wz}.
In that work,
we have construct an abstract QFT using the diagrammatics of stable graphs.
Our original intention in that work is to
understand the mathematical structures of
the BCOV holomorphic anomaly equation \cite{bcov1, bcov2},
see also \cite{abk,ey,emo,gkmw,kz,wit2,yy}.
Inspired by these physics literatures,
we define the abstract free energy $\widehat{\cF}_g$ ($g\geq 2$)
to be formal summations of stable graphs of genus $g$ without external edges,
and the abstract $n$-point functions $\widehat{\cF}_{g,n}$ ($2g-2+n>0$)
to be formal summations of stable graphs of genus $g$ with $n$ external edges
(see \eqref{abs-fe} and \eqref{abs-n-pt}).
We also derive some recursion relations for these functions
in terms of the edge-cutting operator $K$ and
edge-adding operators $\cD=\pd+\gamma$
(see Lemma \ref{lem-original-D} and Theorem \ref{thm-original-rec}).
It is worth mentioning that the edge-cutting and edge-adding operators
are constructed inspired by the stratification of $\Mbar_{g,n}$.

Another part of such a general formalism is the notion of
a `realization' of the abstract QFT.
As input we need a collection of functions $F_{g,n}(t)$ with $2g-2+n > 0$,
and a suitable formal variable $\kappa$ we take as propagator,
and from which we use the Feynman rules to construct a collection of functions
$\widehat{F}_{g,n}(t,\kappa)$.
Under suitable conditions,
the recursion relations for the abstract $n$-point functions $\widehat{\cF}_{g,n}$
will induce the recursion relations for $\widehat{F}_{g,n}(t,\kappa)$.
In this way one can unify various recursion relations in the literature
as special examples.

We will review some preliminaries on this formalism in \S \ref{sec2}.

\subsection{Operator formalism for evaluations of formal Gaussian integrals}

One important feature of our formalism of realization of abstract QFT is that
we can express the resulting partition function as
a formal Gaussian integral in one dimension
(similar to the integral of Bini-Harer).
In \cite{zhou1} various methods have been developed to calculate the partition functions.
One of these methods is to expand the exponential in the integrand.
This converts the summation over Feynman graphs into a summation over partitions,
or pictorially,
over Young diagrams.
For the application of this idea to our problem,
see \eqref{eqn:Young}.
Using a recursion developed in the abstract QFT formalism,
we derive an operator formalism to compute the summation over Young diagram in
\S \ref{sec:Young}.
This involves operations on Young diagrams similar to
that in Littlewood-Richardson rule.

It turns out that this operator formalism appears in \cite{zhou1} in the setting of
deriving W-constraints for the partition function of topological 1D gravity,
i.e., formal Gaussian integrals in one dimension.
There are some easily derived constraints of this partition function,
called the flow equations and the polymer equation,
from which one can derive the Virasoro constraints (see e.g. \cite[\S 5-\S 6]{zhou1}).
They can also be used to derive operator formulas for computing the partition functions.
For their applications to $\chi(\Mbar_{g,0})$,
see \S \ref{sec-orb-genus0}.

\subsection{Descriptions of our results}

Let us briefly describe some of the results we obtain in this paper
using the methods reviewed in last two subsections.

\subsubsection{Recursion relations}

Since the orbifold Euler characteristics of $\cM_{h,m}$ have been explicitly computed
in the literatures \cite{hz, pe},
we will use them to construct $F_{g,n}(t)$
in our formalism of realization of the abstract QFT.
Of course if one has a way to compute other orbifold characteristics of $\cM_{g,n}$,
then our method is applicable to the corresponding orbifold characteristics of $\Mbar_{g,n}$.
We consider the following realization of the abstract QFT.
The abstract $n$-point function $\widehat{\cF}_{g,n}$
is realized by:
\ben
\chi_{g,n}(t,\kappa):=\sum_{\Gamma\in \cG^c_{g,n}}
\frac{\kappa^{|E(\Gamma)|}}{|\Aut(\Gamma)|}\cdot
\prod_{v\in V(\Gamma)}\biggl(
\chi(\cM_{g_v,n_v})\cdot t^{\chi(C_{g_v,\val_v})}
\biggr),
\een
where $g_v$ and $\val_v$ are the genus and valence of
the vertex $v$ respectively,
and $\chi(C_{g_v,\val_v})=2-2g-n$ is the Euler characteristic of
a Riemann surface $C_{g_v,\val_v}$ of genus $g$ with $n$ punctures.
By Bini-Harer's graph sum formula we have:
\be
\chi_{g,n}(1,1)=\chi(\overline\cM_{g,n}/S_n)
=\frac{1}{n!}\chi(\Mbar_{g,n}),
\ee
and so we can reduce the computations of
$\chi(\Mbar_{g,n})$ to that of $\chi_{g,n}(t,\kappa)$.
The functions $\chi_{g,n}(t,\kappa)$
will be called the `refined orbifold Euler characteristics' in this paper.
By our general formalism we derive the following recursion relations:
For $2g-2+n>0$, we have
\bea
&& D\chi_{g,n}=(n+1)\chi_{g,n+1}, \label{eqn:Lin} \\
&& \frac{\pd}{\pd\kappa}\chi_{g,n}=\frac{1}{2}(DD\chi_{g-1,n}
+\sum_{\substack{g_1+g_2=g,\\n_1+n_2=n}}
D\chi_{g_1,n_1}D\chi_{g_2,n_2}). \label{eqn:Quad}
\eea
where $D$ is a differential operator defined by:
\be
D\chi_{g,n}
=\biggl(\frac{\pd}{\pd t}+\kappa^2 t^{-1}\cdot\frac{\pd}{\pd \kappa}
+n\cdot \kappa t^{-1}\biggr)\chi_{g,n}.
\ee
Using these equations,
one can either reduce the number $n$ or reduce the genus $g$,
hence the problem of computations of $\chi(\Mbar_{g,n})$ is solved.
It is very easy to automate the concrete computations
by a computer algebra system.
We will present various examples in the text
(see eg. \S \eqref{sec-quadr-rec}).

\subsubsection{Connection to configuration spaces and Ramanujan polynomials}

Using our recursion relations \eqref{eqn:Lin} and \eqref{eqn:Quad},
we will derive the following functional equation in genus zero:
\be
\kappa(1+\chi)\log(1+\chi)=(\kappa+1)\chi-y
\ee
for the following generating series of refined orbifold Euler characteristics of $\Mbar_{0,n}$:
\be
\chi(y,\kappa):=y+\sum_{n=3}^{\infty}n\kappa y^{n-1}\cdot \chi_{0,n}(1,\kappa).
\ee
When $\kappa=1$,
this recovers Manin's result \cite{ma2} mentioned above.
It is very striking that when $\kappa$ is taken to be a positive integer $m$,
the above functional equation is related to the formula for Euler characteristics
of Fulton-MacPherson compactifications of configuration spaces of points on complex manifold $X$,
also due to Manin \cite{ma2},
where $m$ is the complex dimension of $X$.
We do not have an explanation for this mysterious coincidence.

Furthermore,
we  derive similar functional equations by the same method for higher genera.
The results are formulated in Theorem \ref{thm:Functional1} and Theorem \ref{thm:Functional2}.

We also make a connection of the linear recursion relation \ref{eqn:Lin}
to Ramanujan polynomials \cite{be, bew} and Shor \cite{sh} polynomials,
hence to their related combinatorial interpretations in terms of
counting trees that dates back to Cayley.
See \S \ref{sec:Rama} for details.

\subsubsection{Results for $\chi(\Mbar_{g,0})$}

By \eqref{eqn:Quad} one may get:
\begin{equation*}
\begin{split}
\widetilde\chi_{g,0}=&\frac{B_{2g}}{2g(2g-2)}+\frac{1}{2}\int_{0}^{\kappa}
\biggl[\biggl((3-2g)+\kappa^2\frac{\pd}{\pd\kappa}+2\kappa-\frac{1}{6}\biggr)
\biggl((4-2g)+\kappa^2\frac{\pd}{\pd\kappa}\biggr)\widetilde\chi_{g-1,0}
\\
&\qquad+\sum_{r=2}^{g-2}
\biggl((2-2r)+\kappa^2\frac{\pd}{\pd\kappa}\biggr)\widetilde\chi_{r,0}
\cdot\biggl((2-2g+2r)+\kappa^2\frac{\pd}{\pd\kappa}\biggr)\widetilde\chi_{g-r,0}
\biggr]d\kappa
\end{split}
\end{equation*}
for $g\geq 3$,
where $\widetilde{\chi}_{g,n}(\kappa):=\chi_{g,n}(1,\kappa)$
is a polynomial in $\kappa$ of degree $3g-3$.
This provides a very effective algorithm for specific computations.

The refined orbifold Euler characteristics $\chi_{g,n}(t, \kappa)$ has the following form:
\be
\chi_{g,n}(t,\kappa)=
\sum_{k=0}^{3g-3+n}a_{g,n}^k\kappa^k\cdot (\frac{1}{t})^{n-2+2g}.
\ee
Consider the generating series $G_k(z):=\sum_{g\geq 2}a_{g,0}^k\cdot z^{2-2g}$
of the coefficients in the case of $n=0$.
We express their generating series as a formal Gaussian integral
(see Corollary \ref{cor-gk-barnes}):
\be
\label{eq-intro-Gauss-G}
\sum_{k\geq 0} \lambda^{2k} G_k(z) = \log
\bigg( \frac{1}{\sqrt{2\pi\lambda^2}}
\int G(x+z+1)\cdot \exp (\widetilde S(x,z)) dx
\bigg),
\ee
where $G(z)$ is the Barnes G-function, and
\begin{equation*}
\begin{split}
\widetilde S(x,z)=&-\frac{\lambda^{-2}}{2}x^2
-\biggl( \zeta'(-1) + \frac{z}{2}\log (2\pi) + \biggl(\frac{z^2}{2}-\frac{1}{12}\biggr)\log z
- \frac{3z^2}{4} \biggr)\\
&+x\bigg(-\frac{1}{2} \log(2\pi) -z\log z +z\bigg)
-\frac{x^2}{2} \log z.
\end{split}
\end{equation*}
Moreover,
we prove that $G_k(z)$ can be expressed in terms of some generating series
$\{V_n(z)\}$ of $\chi(\cM_{g,n})$ in a simple way for every $k\geq 1$,
where $V_n(z)$ is defined by:
\be
V_n(z) = \sum_{g=0}^\infty \chi(\cM_{g,n}) z^{2-2g-n}
\ee
for $n \geq 3$ (for the modifications when $n < 3$,
see Definition \ref{def:V}).
Using the Harer-Zagier formula,
we are able to show that $V_n$ can be expressed in terms of the Barnes $G$-function $G(z)$:
\begin{equation*}
\begin{split}
&V_0(z) \sim \log G(z+1)
-\biggl( \zeta'(-1) + \frac{z}{2}\log (2\pi) + \biggl(\frac{z^2}{2}-\frac{1}{12}\biggr)\log z
- \frac{3z^2}{4} \biggr),\\
&V_1(z) \sim
\frac{d}{dz} \log  G(z+1)
-\frac{1}{2} \log(2\pi) -z\log z +z,\\
&V_2(z) \sim \frac{d^2}{dz^2} \log  G(z+1)-\log z,\\
&V_n (z) \sim \frac{d^n}{dz^n} \log  G(z+1),
\qquad \forall n\geq 3.
\end{split}
\end{equation*}
Here $\sim$ indicates that even though as power series,
the radii of convergence of $V_n(z)$ are all zero,
nevertheless as $z \to \infty$,
they are the asymptotic series of some analytic functions.
Let $v_n$ be a family of formal variables with $\deg(v_n):=n$,
then there exists a family of polynomials
$H_k(v_1,\cdots,v_{2k})$ such that $\deg(H_k)=2k$ and:
\ben
G_k(z)=
H_k(v_1,v_2,\cdots)\big|_{v_n=V_n(z)}.
\een
See \S \ref{sec4} for various methods to compute these polynomials based
on the above formal Gaussian integral representation of $G_k(z)$
and the theory of topological 1D gravity.

\subsubsection{Orbifold Euler characteristics of $\Mbar_{g,0}$ and KP hierarchy}

Another remarkable consequence of \eqref{eq-intro-Gauss-G} is that since
it is the specialization of the partition function of the topological 1D gravity,
it is the evaluation of a $\tau$-function of the KP hierarchy evaluated at
the following time variables:
\be
T_n = \frac{1}{n!} V_n(z).
\ee
See \S \ref{sec:KP} for details and the generalization to $\chi(\Mbar_{g,n})$ for general $n$.
It is interesting to find a geometric model that describes
all the KP flows away from this special choice of values of the time variables.

\subsubsection{Explicit formulas for $\chi(\Mbar_{g,n})$}

In \S \ref{sec:Operator} we have derived an operator formalism that computes $\chi(\Mbar_{g,n+1})$
from $\chi(\Mbar_{g,n})$ by reformulating a linear recursion relation from abstract QFT.
We are able to use it to derive some closed formulas for $\chi(\Mbar_{g,n})$.

For example,
in genus zero,
we have:
\ben
\chi(\Mbar_{0,n})=n!\cdot
\biggl[\sum_{k=0}^{n-3}A_k(x)
\biggr]_n,
\een
where $[\cdot]_n$ means the coefficient of $x^n$,
and $A_k(x)$ is sequence of power series in $x$ independent of $n$,
defined as follows:
\ben
&& A_0=\frac{1}{2}(1+x)^2\log(1+x)-\frac{1}{2}x-\frac{3}{4}x^2, \\
&& A_1 = \half + (1+x) (\log(1+x)-1) + \half (1+x)^2 (\log(1+x)-1)^2,
\een
and for $k\geq 2$,
\begin{equation*}
A_k=\sum_{m=2-k}^{2}\frac{(x+1)^m}{(m+2)!}\sum_{l=0}^{k-m-1}
\frac{\big(\log(x+1)-1\big)^l}{l!}
e_{l+m}(1-m,2-m,\cdots,k-m-1),
\end{equation*}
where $e_l$ are the elementary symmetric polynomials.
It turns out that for every $g\geq 1$,
$\chi(\Mbar_{g,n})$ all has an expression of this form.
In fact, $\chi(\Mbar_{g,n})$ is given by the following formula:
\ben
\chi(\Mbar_{g,n})=n!\cdot\sum_{k=0}^{3g-3+n}a_{g,n}^k=n!\cdot
\biggl[\sum_{k=0}^{3g-3+n}\sum_{p=0}^{3g-3}a_{g,0}^p A_{g,k}^p(x)\biggr]_n
\een
for $g\geq 2$,
where the closed formulas for $A_{g,k}^p(x)$ can be written down explicitly.
See \S \ref{sec5} for details.

\subsubsection{Open-closed duality of orbifold Euler characteristics}

We describe a new version of open-closed duality that represents the orbifold Euler characteristics
of $\cM_{g,n}$ and $\Mbar_{g,n}$ in terms of each other
via inversion formulas.
More precisely,
we have the following formula:
\be
\chi(\cM_{g,n})=n!\cdot \sum_{\Gamma\in \cG^c_{g,n}}
\frac{(-1)^{|E(\Gamma)|}}{|\Aut(\Gamma)|}\prod_{v\in V(\Gamma)}\chi(\Mbar_{g_v,\val_v}).
\ee
In a subsequent work \cite{wz4},
we give an interpretation of this open-closed duality
as a generalization of the M\"obius inversion formula
(see Rota \cite{ro}).

\subsection{Plan of the paper}
The rest of this paper is arranged as follows.
In \S \ref{sec2} we review our formalism of abstract QFT and
its realizations.
We apply this formalism to derive some recursion relations
for the refined Euler characteristics of $\Mbar_{g,n}$ in \S \ref{sec3},
and show how these recursions provide us a large number of numerical data.
In \S \ref{sec4} we present our results on $\chi(\Mbar_{g, 0})$;
in particular,
we show that the theory for $\chi_{g,0}$ is equivalent to
the topological 1D gravity.
The results for $\chi(\Mbar_{g,n})$ with $n > 0$
obtained by solving the linear recursion directly
are presented in \S \ref{sec5}.
In \S \ref{sec:KP},
we related the orbifold Euler characteristics of $\Mbar_{g,n}$ to the KP hierarchy.
Finally in \S \ref{sec-duality},
we describe the open-closed duality.

\section{Preliminaries on the Abstract QFT and Its Realizations}
\label{sec2}

In this section,
let us introduce some preliminaries on
the formalism of the abstract quantum field theory for stable graphs
and its realizations developed in \cite{wz}.
The free energies of this `abstract theory' satisfy
a family of quadratic recursion relations,
and we will recall the realization of such recursions
under some particular Feynman rules.

\subsection{Edge-cutting and edge-adding operators on stable graphs}
\label{sec-pre-strat}

In this subsection we introduce some geometric backgrounds of stable graphs,
and recall the constructions of the edge-cutting and edge-adding operators
on stable graphs inspired by these geometric backgrounds
(see \cite[\S 2]{wz}).

Let $\Mbar_{g,n}$ be the Deligne-Mumford moduli space
of stable curves \cite{dm, kn}.
If we do not distinguish the marked points on a stable curve,
then the equivalent classes of such stable curves
of genus $g$ with $n$ marked points will be parametrized by $\Mbar_{g,n}/S_n$.
The stratification of $\overline\cM_{g,n}/S_n$ can be described
in terms of dual graphs of stable curves.

A stable graph $\Gamma$ is a graph consisting of some vertices and edges
satisfying the stability condition.
Each vertex $v$ is marked by a nonnegative integer $g_v \geq 0$
called the `genus' of this vertex.
There are two types of edges in stable graphs:
an internal edge connects two vertices
(the two vertices may be a same one and such an edge is called a loop on this vertex);
and an external edge is attached to a vertex.
A `half-edge' is either an internal edge together with a choice of one endpoint of it,
or an external edge.
The `valence' of a vertex is defined to be the number of half-edges
attached to it.
The stability condition is that,
the valence of each vertex of genus $0$ is at least three,
and the valence of each vertex of genus $1$ is at least one.
Given a graph $\Gamma$,
We will denote by $V(\Gamma)$, $E(\Gamma)$, $E^{ext}(\Gamma)$
the sets of vertices, internal edges, and external edges of $\Gamma$
throughout this paper.

The genus of a connected stable graph $\Gamma$ is defined to be
\ben
g(\Gamma):=h^1(\Gamma)+
\sum_{v \in V(\Gamma)} g_v,
\een
where $h^1 (\Gamma)$ is the number of independent loops in $\Gamma$.
Let $\cG_{g,n}^c$ be the set of all connected stable graphs
of genus $g$ with $n$ external edges,
then the stratification of $\overline\cM_{g,n}/S_n$ is given by:
\ben
\overline\cM_{g,n}/S_n=\bigsqcup_{\Gamma\in\cG_{g,n}^c}\cM_\Gamma,
\een
where $\cM_\Gamma$ is the moduli space of stable curves
whose dual graph is $\Gamma$.

There are two families of natural maps on these moduli spaces of curves--
the gluing maps and forgetful maps.
By considering `inverses' of these maps,
the authors have constructed two families of operators on stable graphs in \cite{wz},
called the edge-cutting $K$ and edge-adding operator $\cD=\pd+\gamma$.

Denote by $\cG_{g,n}$ the set of all stable graphs of genus $g$
with $n$ external edges (not necessarily connected),
and denote
\ben
\cV := \bigoplus_{\substack{\Gamma \in \cG_{g,n}\\ 2g-2+n > 0}} \bQ\Gamma.
\een
The edge-cutting operator $K$ is defined to be
\ben
K:\cV \to \cV,
\qquad
\Gamma \in \cG_{g,n}\mapsto
K(\Gamma)= \sum_{e\in E(\Gamma)} \Gamma_e,
\een
and $\Gamma_e$ is obtained from $\Gamma$ by cutting the internal edge $e$
(and so obtaining two new external edges).
If the stable graph $\Gamma$ has no internal edge,
then we assign $K(\Gamma)=0$.
Such a procedure inverses the `gluing' of stable maps.

The construction of $\cD:\cV\to \cV$ is a little subtle but still natural.
The operator $\pd$ consists two parts,
the first one is to attach an external edge to a vertex and sum over all vertices:
\begin{equation*}
\begin{tikzpicture}
\draw (0,0) circle [radius=0.2];
\node [align=center,align=center] at (0,0) {$g$};
\draw (-0.2,0)--(-0.55,0);
\draw (-0.16,0.1)--(-0.55,0.25);
\draw (-0.16,-0.1)--(-0.55,-0.25);
\node [align=center,align=center] at (-0.7,0.1) {$\vdots$};
\node [align=center,align=center] at (1.8,0) {$\to$};
\draw (0+4,0) circle [radius=0.2];
\node [align=center,align=center] at (0+4,0) {$g$};
\draw (-0.2+4,0)--(-0.55+4,0);
\draw (-0.16+4,0.1)--(-0.55+4,0.25);
\draw (-0.16+4,-0.1)--(-0.55+4,-0.25);
\node [align=center,align=center] at (-0.7+4,0.1) {$\vdots$};
\draw (4.2,0)--(4.6,0);
\node [align=center,align=center] at (5,-0.15) {$,$};
\end{tikzpicture}
\end{equation*}
and the other one is to break up an internal edge and
insert a trivalent vertex of genus $0$
and then sum over all internal edges:
\begin{equation*}
\begin{split}
&\begin{tikzpicture}
\draw (0,0) circle [radius=0.2];
\node [align=center,align=center] at (0,0) {$g_1$};
\draw (1,0) circle [radius=0.2];
\node [align=center,align=center] at (1,0) {$g_2$};
\draw (0.2,0)--(0.8,0);
\draw (-0.2,0)--(-0.55,0);
\draw (-0.16,0.1)--(-0.55,0.25);
\draw (-0.16,-0.1)--(-0.55,-0.25);
\node [align=center,align=center] at (-0.7,0.1) {$\vdots$};
\draw (1.2,0)--(1.55,0);
\draw (1.16,0.1)--(1.55,0.25);
\draw (1.16,-0.1)--(1.55,-0.25);
\node [align=center,align=center] at (1.7,0.1) {$\vdots$};
\node [align=center,align=center] at (3.1,0) {$\to$};
\draw (0+5,0) circle [radius=0.2];
\node [align=center,align=center] at (0+5,0) {$g_1$};
\draw (1+6,0) circle [radius=0.2];
\node [align=center,align=center] at (1+6,0) {$g_2$};
\draw (-0.2+5,0)--(-0.55+5,0);
\draw (-0.16+5,0.1)--(-0.55+5,0.25);
\draw (-0.16+5,-0.1)--(-0.55+5,-0.25);
\node [align=center,align=center] at (-0.7+5,0.1) {$\vdots$};
\draw (1.2+6,0)--(1.55+6,0);
\draw (1.16+6,0.1)--(1.55+6,0.25);
\draw (1.16+6,-0.1)--(1.55+6,-0.25);
\node [align=center,align=center] at (1.7+6,0.1) {$\vdots$};
\draw (6,0) circle [radius=0.2];
\node [align=center,align=center] at (6,0) {$0$};
\draw (5.2,0)--(5.8,0);
\draw (6.2,0)--(6.8,0);
\draw (6,0.2)--(6,0.5);
\node [align=center,align=center] at (8.15,-0.15) {$,$};
\end{tikzpicture}\\
&\begin{tikzpicture}
\draw (0,0) circle [radius=0.2];
\node [align=center,align=center] at (0,0) {$g$};
\draw (-0.2,0)--(-0.55,0);
\draw (-0.16,0.1)--(-0.55,0.25);
\draw (-0.16,-0.1)--(-0.55,-0.25);
\node [align=center,align=center] at (-0.7,0.1) {$\vdots$};
\draw (0.16,0.1) .. controls (0.5,0.2) and (0.5,-0.2) ..  (0.16,-0.1);
\node [align=center,align=center] at (2.45,0) {$\to$};
\draw (0+5,0) circle [radius=0.2];
\node [align=center,align=center] at (0+5,0) {$g$};
\draw (-0.2+5,0)--(-0.55+5,0);
\draw (-0.16+5,0.1)--(-0.55+5,0.25);
\draw (-0.16+5,-0.1)--(-0.55+5,-0.25);
\node [align=center,align=center] at (-0.7+5,0.1) {$\vdots$};
\draw (5.8,0) circle [radius=0.2];
\node [align=center,align=center] at (5.8,0) {$0$};
\draw (5.18,0.07)--(5.62,0.07);
\draw (5.18,-0.07)--(5.62,-0.07);
\draw (6,0)--(6.4,0);
\node [align=center,align=center] at (6.95,-0.15) {$.$};
\end{tikzpicture}
\end{split}
\end{equation*}
And the operator $\gamma$ acts on $\Gamma$ by attaching
a trivalent vertex of genus $0$ to an external edge of $\Gamma$
and summing over all external edges.
If the stable graph $\Gamma$ has no external edge,
then we assign $\gamma(\Gamma)=0$.
See \cite[\S 2.2]{wz} for more details about the definitions of these operators.
The procedures in $\pd$ and $\gamma$ inverse the procedure of
forgetting a marked point on a stable graph,
and here we have taken all unstable-contractions into consideration.
It is not hard to see that $\cD:=\pd+\gamma$ preserves the subspace of connected graphs:
\ben
\cV^c := \bigoplus_{\substack{\Gamma \in \cG^c_{g,n}\\ 2g-2+n > 0}} \bQ\Gamma
\quad \subset \cV.
\een

\begin{Example}
Here we give some examples of these operators:
\begin{equation*}
\begin{split}
&\begin{tikzpicture}[scale=0.95]
\node [align=center,align=center] at (-0.4,0) {$K$};
\draw (1,0) circle [radius=0.2];
\draw (0.4,0) circle [radius=0.2];
\draw (0.6,0)--(0.8,0);
\draw (1.16,0.1) .. controls (1.5,0.2) and (1.5,-0.2) ..  (1.16,-0.1);
\draw (0.24,0.1) .. controls (-0.1,0.2) and (-0.1,-0.2) ..  (0.24,-0.1);
\node [align=center,align=center] at (1,0) {$0$};
\node [align=center,align=center] at (0.4,0) {$0$};
\node [align=center,align=center] at (1.8,0) {$=2$};
\draw (3.3,0) circle [radius=0.2];
\draw (2.7,0) circle [radius=0.2];
\draw (2.9,0)--(3.1,0);
\draw (3.46,0.1)--(3.7,0.15);
\draw (3.46,-0.1)--(3.7,-0.15);
\draw (2.54,0.1) .. controls (2.2,0.2) and (2.2,-0.2) ..  (2.54,-0.1);
\node [align=center,align=center] at (3.3,0) {$0$};
\node [align=center,align=center] at (2.7,0) {$0$};
\node [align=center,align=center] at (4.1,0) {$+\big($};
\draw (5.7+0.2,0) circle [radius=0.2];
\draw (4.7+0.2,0) circle [radius=0.2];
\draw (4.9+0.2,0)--(5.1+0.2,0);
\draw (5.3+0.2,0)--(5.5+0.2,0);
\draw (5.86+0.2,0.1) .. controls (6.2+0.2,0.2) and (6.2+0.2,-0.2) ..  (5.86+0.2,-0.1);
\draw (4.54+0.2,0.1) .. controls (4.2+0.2,0.2) and (4.2+0.2,-0.2) ..  (4.54+0.2,-0.1);
\node [align=center,align=center] at (5.7+0.2,0) {$0$};
\node [align=center,align=center] at (4.7+0.2,0) {$0$};
\node [align=center,align=center] at (6.6,0) {$\big),$};
\end{tikzpicture}\\
&\begin{tikzpicture}[scale=0.95]
\node [align=center,align=center] at (0.4,0) {$K$};
\draw (1,0) circle [radius=0.2];
\draw (1.2,0)--(1.4,0);
\draw (1.16,0.1)--(1.44,0.1);
\draw (1.16,-0.1)--(1.44,-0.1);
\draw (1.6,0) circle [radius=0.2];
\node [align=center,align=center] at (1,0) {$0$};
\node [align=center,align=center] at (1.6,0) {$1$};
\node [align=center,align=center] at (2.2,0) {$=3$};
\draw (3.1,0) circle [radius=0.2];
\draw (3.28,0.07)--(3.52,0.07);
\draw (3.28,-0.07)--(3.52,-0.07);
\draw (2.6,0)--(2.9,0);
\draw (3.9,0)--(4.2,0);
\draw (3.7,0) circle [radius=0.2];
\node [align=center,align=center] at (3.1,0) {$0$};
\node [align=center,align=center] at (3.7,0) {$1$};
\node [align=center,align=center] at (4.45,-0.12) {$,$};
\end{tikzpicture}\\
&\begin{tikzpicture}[scale=0.95]
\node [align=center,align=center] at (-0.4,0) {$\pd$};
\draw (1,0) circle [radius=0.2];
\draw (0.4,0) circle [radius=0.2];
\draw (0.6,0)--(0.8,0);
\draw (1.16,0.1) .. controls (1.5,0.2) and (1.5,-0.2) ..  (1.16,-0.1);
\draw (0.24,0.1) .. controls (-0.1,0.2) and (-0.1,-0.2) ..  (0.24,-0.1);
\node [align=center,align=center] at (1,0) {$0$};
\node [align=center,align=center] at (0.4,0) {$0$};
\node [align=center,align=center] at (1.8,0) {$=2$};
\draw (3.2,0) circle [radius=0.2];
\draw (2.6,0) circle [radius=0.2];
\draw (2.8,0)--(3,0);
\draw (2.6,0.2)--(2.6,0.4);
\draw (3.36,0.1) .. controls (3.7,0.2) and (3.7,-0.2) ..  (3.36,-0.1);
\draw (2.44,0.1) .. controls (2.1,0.2) and (2.1,-0.2) ..  (2.44,-0.1);
\node [align=center,align=center] at (2.6,0) {$0$};
\node [align=center,align=center] at (3.2,0) {$0$};
\node [align=center,align=center] at (4,0) {$+2$};
\draw (5.4,0) circle [radius=0.2];
\draw (4.8,0) circle [radius=0.2];
\draw (5,0)--(5.2,0);
\draw (4.64,0.1) .. controls (4.3,0.2) and (4.3,-0.2) ..  (4.64,-0.1);
\draw (5.58,0.07)--(5.82,0.07);
\draw (5.58,-0.07)--(5.82,-0.07);
\draw (6.2,-0)--(6.5,0);
\draw (6,0) circle [radius=0.2];
\node [align=center,align=center] at (5.4,0) {$0$};
\node [align=center,align=center] at (4.8,0) {$0$};
\node [align=center,align=center] at (6,0) {$0$};
\node [align=center,align=center] at (7,0) {$+$};
\draw (8.4,0) circle [radius=0.2];
\draw (7.8,0) circle [radius=0.2];
\draw (9,0) circle [radius=0.2];
\draw (8,0)--(8.2,0);
\draw (8.6,0)--(8.8,0);
\draw (8.4,0.2)--(8.4,0.4);
\draw (9.16,0.1) .. controls (9.5,0.2) and (9.5,-0.2) ..  (9.16,-0.1);
\draw (7.64,0.1) .. controls (7.3,0.2) and (7.3,-0.2) ..  (7.64,-0.1);
\node [align=center,align=center] at (8.4,0) {$0$};
\node [align=center,align=center] at (7.8,0) {$0$};
\node [align=center,align=center] at (9,0) {$0$};
\node [align=center,align=center] at (9.65,-0.12) {$,$};
\end{tikzpicture}\\
&\begin{tikzpicture}[scale=0.95]
\node [align=center,align=center] at (0.4,0) {$\pd$};
\draw (1,0) circle [radius=0.2];
\draw (1.2,0)--(1.4,0);
\draw (1.16,0.1)--(1.44,0.1);
\draw (1.16,-0.1)--(1.44,-0.1);
\draw (1.6,0) circle [radius=0.2];
\node [align=center,align=center] at (1,0) {$0$};
\node [align=center,align=center] at (1.6,0) {$1$};
\node [align=center,align=center] at (2.1,0) {$=$};
\draw (3,0) circle [radius=0.2];
\draw (3.2,0)--(3.4,0);
\draw (2.5,0)--(2.8,0);
\draw (3.16,0.1)--(3.44,0.1);
\draw (3.16,-0.1)--(3.44,-0.1);
\draw (3.6,0) circle [radius=0.2];
\node [align=center,align=center] at (3,0) {$0$};
\node [align=center,align=center] at (3.6,0) {$1$};
\node [align=center,align=center] at (4.1,0) {$+$};
\draw (4.7,0) circle [radius=0.2];
\draw (4.9,0)--(5.1,0);
\draw (5.5,0)--(5.8,0);
\draw (4.86,0.1)--(5.14,0.1);
\draw (4.86,-0.1)--(5.14,-0.1);
\draw (5.3,0) circle [radius=0.2];
\node [align=center,align=center] at (4.7,0) {$0$};
\node [align=center,align=center] at (5.3,0) {$1$};
\node [align=center,align=center] at (6.2,0) {$+3$};
\draw (6.8,-0.2) circle [radius=0.2];
\draw (6.98,-0.13)--(7.42,-0.13);
\draw (6.98,-0.27)--(7.42,-0.27);
\draw (7.6,-0.2) circle [radius=0.2];
\draw (7.2,0.25) circle [radius=0.2];
\draw (6.94,-0.06)--(7.06,0.11);
\draw (7.46,-0.06)--(7.34,0.11);
\node [align=center,align=center] at (6.8,-0.2) {$0$};
\node [align=center,align=center] at (7.6,-0.2) {$1$};
\node [align=center,align=center] at (7.2,0.25) {$0$};
\draw (7.2,0.45)--(7.2,0.65);
\node [align=center,align=center] at (8.05,-0.15) {$,$};
\end{tikzpicture}\\
&\begin{tikzpicture}[scale=0.95]
\node [align=center,align=center] at (0.1,0) {$\gamma$};
\draw (0.9,0) circle [radius=0.2];
\draw (1.06,0.1)--(1.3,0.15);
\draw (1.06,-0.1)--(1.3,-0.15);
\draw (0.74,0.1) .. controls (0.4,0.2) and (0.4,-0.2) ..  (0.74,-0.1);
\node [align=center,align=center] at (0.9,0) {$0$};
\node [align=center,align=center] at (1.8,0) {$=2$};
\draw (2.7,0) circle [radius=0.2];
\draw (3.3,0) circle [radius=0.2];
\draw (2.9,0)--(3.1,0);
\draw (2.7,0.2)--(2.7,0.4);
\draw (3.47,0.1)--(3.7,0.15);
\draw (3.47,-0.1)--(3.7,-0.15);
\draw (2.54,0.1) .. controls (2.2,0.2) and (2.2,-0.2) ..  (2.54,-0.1);
\node [align=center,align=center] at (2.7,0) {$0$};
\node [align=center,align=center] at (3.3,0) {$0$};
\node [align=center,align=center] at (4,-0.12) {$.$};
\end{tikzpicture}
\end{split}
\end{equation*}

\end{Example}

\subsection{Abstract free energies and their recursion relations}
\label{sec-pre-absrec}

In this subsection
we recall the construction of the abstract QFT,
including of the definition of the abstract free energies
and their recursion relations
(see \cite[\S 2.3-2.4]{wz}).

The following definition is \cite[Definition 2.1]{wz}:
\begin{Definition}
The free energy of genus $g$ for the abstract QFT of stable graphs
is defined to be:
\be
\label{abs-fe}
\widehat{\cF}_g=\sum_{\Gamma\in\cG_{g,0}^c}\frac{1}{|\Aut(\Gamma)|}\Gamma,
\qquad g\geq 2.
\ee
The abstract $n$-point function of genus $g$ is defined to be:
\be
\label{abs-n-pt}
\widehat{\cF}_{g,n}=\sum_{\Gamma\in\cG_{g,n}^c}\frac{1}{|\Aut(\Gamma)|}\Gamma,
\qquad
2g-2+n>0.
\ee
Here $\Aut(\Gamma)$ is the group of automorphisms of $\Gamma$.
\end{Definition}

\begin{Remark}
The stability condition ensures that the right-hand-sides
of the above formulas are all finite summations.
\end{Remark}

\begin{Example}

We have:
\begin{equation*}
\begin{split}
&\begin{tikzpicture}[scale=0.95]
\node [align=center,align=center] at (0.3,0) {$\widehat{\cF}_{0,3}=\frac{1}{6}$};
\draw (1.6,0) circle [radius=0.2];
\draw (1.15,0)--(1.4,0);
\draw (1.76,0.1)--(2,0.15);
\draw (1.76,-0.1)--(2,-0.15);
\node [align=center,align=center] at (1.6,0) {$0$};
\node [align=center,align=center] at (2.35,-0.15) {$,$};
\end{tikzpicture}\\
&\begin{tikzpicture}[scale=0.95]
\node [align=center,align=center] at (0.3-0.2,0) {$\widehat{\cF}_{1,1}=$};
\draw (1,0) circle [radius=0.2];
\draw (1.2,0)--(1.5,0);
\node [align=center,align=center] at (1,0) {$1$};
\node [align=center,align=center] at (2,0) {$+\frac{1}{2}$};
\draw (1+1.8,0) circle [radius=0.2];
\draw (1.2+1.8,0)--(1.5+1.8,0);
\draw (0.84+1.8,0.1) .. controls (0.5+1.8,0.2) and (0.5+1.8,-0.2) ..  (0.84+1.8,-0.1);
\node [align=center,align=center] at (1+1.8,0) {$0$};
\node [align=center,align=center] at (3.65,-0.15) {$,$};
\end{tikzpicture}\\
&\begin{tikzpicture}[scale=0.95]
\node [align=center,align=center] at (0.1+0.4,0) {$\widehat{\cF}_{2}=$};
\draw (1+0.3,0) circle [radius=0.2];
\node [align=center,align=center] at (1+0.3,0) {$2$};
\node [align=center,align=center] at (1.6+0.2,0) {$+\frac{1}{2}$};
\draw (1+1.4+0.2,0) circle [radius=0.2];
\draw (0.84+1.4+0.2,0.1) .. controls (0.5+1.4+0.2,0.2) and (0.5+1.4+0.2,-0.2) ..  (0.84+1.4+0.2,-0.1);
\node [align=center,align=center] at (1+1.4+0.2,0) {$1$};
\node [align=center,align=center] at (3+0.2,0) {$+\frac{1}{2}$};
\draw (1+2.6+0.2,0) circle [radius=0.2];
\draw (1.2+2.6+0.2,0)--(1.4+2.6+0.2,0);
\draw (1.6+2.6+0.2,0) circle [radius=0.2];
\node [align=center,align=center] at (1+2.6+0.2,0) {$1$};
\node [align=center,align=center] at (1.6+2.6+0.2,0) {$1$};
\node [align=center,align=center] at (5,0) {$+\frac{1}{8}$};
\draw (1+4.8,0) circle [radius=0.2];
\draw (0.84+4.8,0.1) .. controls (0.5+4.8,0.2) and (0.5+4.8,-0.2) ..  (0.84+4.8,-0.1);
\draw (1.16+4.8,0.1) .. controls (1.5+4.8,0.2) and (1.5+4.8,-0.2) ..  (1.16+4.8,-0.1);
\node [align=center,align=center] at (1+4.8,0) {$0$};
\node [align=center,align=center] at (6.6,0) {$+\frac{1}{2}$};
\draw (1+6.8,0) circle [radius=0.2];
\draw (0.4+6.8,0) circle [radius=0.2];
\draw (0.6+6.8,0)--(0.8+6.8,0);
\draw (1.16+6.8,0.1) .. controls (1.5+6.8,0.2) and (1.5+6.8,-0.2) ..  (1.16+6.8,-0.1);
\node [align=center,align=center] at (1+6.8,0) {$0$};
\node [align=center,align=center] at (0.4+6.8,0) {$1$};
\node [align=center,align=center] at (8.6,0) {$+\frac{1}{8}$};
\draw (1+9,0) circle [radius=0.2];
\draw (0.4+9,0) circle [radius=0.2];
\draw (0.6+9,0)--(0.8+9,0);
\draw (1.16+9,0.1) .. controls (1.5+9,0.2) and (1.5+9,-0.2) ..  (1.16+9,-0.1);
\draw (0.24+9,0.1) .. controls (-0.1+9,0.2) and (-0.1+9,-0.2) ..  (0.24+9,-0.1);
\node [align=center,align=center] at (1+9,0) {$0$};
\node [align=center,align=center] at (0.4+9,0) {$0$};
\node [align=center,align=center] at (10.6+0.2,0) {$+\frac{1}{12}$};
\draw (1+10.2+0.2,0) circle [radius=0.2];
\draw (1.2+10.2+0.2,0)--(1.4+10.2+0.2,0);
\draw (1.16+10.2+0.2,0.1)--(1.44+10.2+0.2,0.1);
\draw (1.16+10.2+0.2,-0.1)--(1.44+10.2+0.2,-0.1);
\draw (1.6+10.2+0.2,0) circle [radius=0.2];
\node [align=center,align=center] at (1+10.2+0.2,0) {$0$};
\node [align=center,align=center] at (1.6+10.2+0.2,0) {$0$};
\node [align=center,align=center] at (12.45,-0.15) {$.$};
\end{tikzpicture}
\end{split}
\end{equation*}

\end{Example}

Using the edge-cutting and edge-adding operators,
we are able to formulate some recursion relations for $\wcF_{g,n}$.
The followings are \cite[Theorem 2.1; Lemma 2.1]{wz}:

\begin{Theorem}
\label{original-rec-2}
For $2g-2+n>0$, we have
\be\label{eq-thm1}
K\widehat{\cF}_{g,n}=\binom{n+2}{2}\widehat{\cF}_{g-1,n+2}+
\frac{1}{2}\sum_{\substack{g_1+g_2=g,\\n_1+n_2=n+2,\\n_1\geq 1,n_2\geq 1}}
(n_1\widehat{\cF}_{g_1,n_1})\cdot(n_2\widehat{\cF}_{g_2,n_2}),
\ee
where the sum on the right hand side is taken over all stable cases.
\end{Theorem}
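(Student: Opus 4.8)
The plan is to realize $K$ as the inverse of an edge-gluing operation and to match the two sides by a weighted count of automorphisms. I first classify the internal edges of a connected stable graph $\Gamma\in\cG^c_{g,n}$ as non-separating or separating. Cutting a non-separating edge keeps $\Gamma$ connected, lowers the first Betti number by one and creates two external edges, so $\Gamma_e\in\cG^c_{g-1,n+2}$; cutting a separating edge splits $\Gamma$ into $\Gamma_1\sqcup\Gamma_2$ with $g_1+g_2=g$, $n_1+n_2=n+2$, $n_1,n_2\geq1$. This matches the shape of the right-hand side, and stability is preserved since no vertex changes its valence. Grouping the internal edges of each $\Gamma$ into $\Aut(\Gamma)$-orbits and applying orbit-stabilizer rewrites the left-hand side as
\[
K\widehat{\cF}_{g,n}=\sum_{[(\Gamma,e)]}\frac{1}{|\Aut(\Gamma,e)|}\,\Gamma_e,
\]
summed over isomorphism classes of pairs $(\Gamma,e)$, where $\Aut(\Gamma,e)$ is the stabilizer of $e$ in $\Aut(\Gamma)$. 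I then handle the two types of edges separately.

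For non-separating edges I would establish an equivalence of groupoids, via cutting and gluing, between such pairs $(\Gamma,e)$ and pairs $(\Gamma',\{\ell_1,\ell_2\})$ with $\Gamma'\in\cG^c_{g-1,n+2}$ and $\{\ell_1,\ell_2\}$ an unordered pair of distinct external edges. The key point is that an automorphism fixing $e$ either fixes or swaps its two half-edges, matching an automorphism of $\Gamma'$ that fixes or swaps the legs $\ell_1,\ell_2$; hence $|\Aut(\Gamma,e)|=|\Aut(\Gamma',\{\ell_1,\ell_2\})|$. Summing over the $\binom{n+2}{2}$ unordered pairs of legs of a fixed $\Gamma'$ and applying orbit-stabilizer in reverse collapses this contribution to $\binom{n+2}{2}\,\widehat{\cF}_{g-1,n+2}$.

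For separating edges the same cutting/gluing equivalence matches $(\Gamma,e)$ with an unordered pair $\{(\Gamma_1,\ell_1),(\Gamma_2,\ell_2)\}$ of connected graphs each carrying one marked external edge. Since marking a leg of $\Gamma_i$ costs exactly the factor $n_i$ by orbit-stabilizer, I can write $n_i\widehat{\cF}_{g_i,n_i}=\sum_{[(\Gamma_i,\ell_i)]}|\Aut(\Gamma_i,\ell_i)|^{-1}\Gamma_i$, so that the sum over ordered pairs is precisely the product $(n_1\widehat{\cF}_{g_1,n_1})(n_2\widehat{\cF}_{g_2,n_2})$; passing from ordered to unordered pairs supplies the factor $\tfrac12$. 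The restriction to stable cases is just the requirement $2g_i-2+n_i>0$ that both factors be defined.

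I expect the main obstacle to be the automorphism bookkeeping in the separating case, in particular the factor $\tfrac12$ together with the diagonal terms where $(\Gamma_1,\ell_1)\cong(\Gamma_2,\ell_2)$, in which the half-edge swap of $e$ becomes an extra automorphism of $\Gamma$ that must be matched against the swap symmetry of the unordered pair. Carrying out the whole argument in the language of groupoid cardinalities, so that the unordered-pair groupoid is the homotopy quotient of the ordered-pair groupoid by $\mathbb{Z}/2$ and its cardinality is literally one half, is what makes these symmetry factors transparent and yields the identity term by term.
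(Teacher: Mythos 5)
Your proposal is correct. Note that this paper does not actually prove the statement itself --- it is quoted from \cite[Theorem 2.1]{wz} --- so there is no internal proof to compare against; your cut-and-glue argument with orbit--stabilizer (groupoid cardinality) bookkeeping is the natural argument for such an identity and is essentially the one given in that reference. You correctly isolate the two real subtleties: the identification $\Aut(\Gamma,e)\cong\Aut(\Gamma_e,\{\ell_1,\ell_2\})$ (including the half-edge swap of a loop) in the non-separating case, which produces the $\binom{n+2}{2}$ after summing over $\Aut$-orbits of leg pairs, and in the separating case the matching of the overall factor $\tfrac12$ against the ordered-versus-unordered pair count, with the diagonal classes $(\Gamma_1,\ell_1)\cong(\Gamma_2,\ell_2)$ accounted for by the extra $\bZ/2$ in $\Aut(\Gamma,e)$; the stability restriction on the right-hand side is, as you say, automatic since each component of a cut stable graph satisfies $2g_i-2+n_i=\sum_v(2g_v-2+\val(v))>0$.
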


\begin{Remark}
Given two graphs $\Gamma_1$ and $\Gamma_2$,
the `product' $\Gamma_1\cdot \Gamma_2$ will be understood as
the (disconnected) graph obtained by taking the disjoint union of
$\Gamma_1$ and $\Gamma_2$ throughout this paper.
\end{Remark}

\begin{Lemma}
\label{lem-original-D}
For $2g-2+n>0$, we have
\be
\cD\widehat{\cF}_{g,n}=(n+1)\widehat{\cF}_{g,n+1}.
\ee
\end{Lemma}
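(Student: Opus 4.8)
The plan is to prove the identity by comparing, for every $\Gamma'\in\cG^c_{g,n+1}$, the coefficient of $\Gamma'$ on the two sides. First I would record that each of the three operations building $\cD=\pd+\gamma$ raises the number of external edges by exactly one while preserving both the genus and connectedness: attaching an external edge is clear; breaking an internal edge and inserting a trivalent genus-$0$ vertex changes $(V,E)$ by $(+1,+1)$ and adds one leg; and $\gamma$ turns one external edge into an internal edge to a new trivalent genus-$0$ vertex bearing two external edges, for a net change of $+1$ in legs. In every case the first Betti number $E-V+1$ and the sum $\sum_v g_v$ are unchanged. Hence $\cD$ sends the $(g,n)$-piece of $\cV^c$ into its $(g,n+1)$-piece, and it remains to check that the coefficient of each $\Gamma'$ in $\cD\wcF_{g,n}$ equals $(n+1)/|\Aut(\Gamma')|$.

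The key device is to pass to graphs with one marked external edge and to use the orbit--stabilizer theorem. On the right, letting $\Aut(\Gamma')$ act on the $n+1$ external edges and summing $1/|\mathrm{Stab}|$ over orbits gives
\[
(n+1)\wcF_{g,n+1}=\sum_{[(\Gamma',\ell)]}\frac{1}{|\Aut(\Gamma',\ell)|}\,\Gamma',
\]
the sum running over isomorphism classes of pairs $(\Gamma',\ell)$ with $\ell$ a marked external edge. On the left I would expand $\cD\wcF_{g,n}=\sum_\Gamma\frac{1}{|\Aut(\Gamma)|}\cD\Gamma$ and reorganize $\cD\Gamma$ as a sum over \emph{sites} $s$ of $\Gamma$ --- a vertex for the first part of $\pd$, an internal edge for the second part of $\pd$, and an external edge for $\gamma$ --- each site producing one output graph $\mathrm{out}(\Gamma,s)$. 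Grouping sites into $\Aut(\Gamma)$-orbits and applying orbit--stabilizer again yields
\[
\cD\wcF_{g,n}=\sum_{[(\Gamma,s)]}\frac{1}{|\Aut(\Gamma,s)|}\,\mathrm{out}(\Gamma,s).
\]
It therefore suffices to produce a bijection $[(\Gamma,s)]\leftrightarrow[(\Gamma',\ell)]$ with $\mathrm{out}(\Gamma,s)\cong\Gamma'$ and $|\Aut(\Gamma,s)|=|\Aut(\Gamma',\ell)|$.

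I would define this bijection by reading off the local structure at the marked edge $\ell$ and its vertex $v$ in $\Gamma'$. If deleting $\ell$ leaves $v$ stable, the pair comes from the first part of $\pd$ with $s=v$. If $v$ is a trivalent genus-$0$ vertex carrying $\ell$ and two internal half-edges, it comes from the second part of $\pd$, with $s$ the internal edge obtained by contracting the two internal edges at $v$. If $v$ is a trivalent genus-$0$ vertex carrying $\ell$, a second external edge $\ell'$, and one internal edge, it comes from $\gamma$, with $s$ the external edge left after deleting this ``cherry''. The only remaining local types force $\Gamma'$ to be a single-vertex graph, of $\Mbar_{0,3}$- or $\Mbar_{1,1}$-type; these would require the source $(g,n)$ to be $(0,2)$ or $(1,0)$, which are excluded by $2g-2+n>0$, so they never occur here. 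A short check shows each inverse operation lands back in $\cG^c_{g,n}$ with the genus and leg counts as required.

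\textbf{The main obstacle} is matching automorphisms in the $\gamma$/cherry case, where the two external edges $\ell,\ell'$ at the new trivalent vertex create a local $\bZ/2$ symmetry. The point is that the transposition $\ell\leftrightarrow\ell'$ is \emph{always} an automorphism of $\Gamma'$; consequently $(\Gamma',\ell)$ and $(\Gamma',\ell')$ represent the \emph{same} isomorphism class of marked graph, so the correspondence stays one-to-one rather than two-to-one. For the stabilizers, an automorphism of $\Gamma'$ fixing $\ell$ must fix $v$ (hence also $\ell'$ and the internal edge at $v$) and therefore restricts to an automorphism of $\Gamma$ fixing the site $s$, and conversely every such automorphism extends uniquely; the analogous statements for the two parts of $\pd$ (where fixing $\ell$ fixes the attaching vertex, respectively allows the inserted vertex's two internal half-edges to be swapped, matching a flip of the edge $s$) are routine. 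This gives $|\Aut(\Gamma',\ell)|=|\Aut(\Gamma,s)|$ in all three cases, so the two weighted sums agree term by term and the identity follows. As a sanity check one can verify directly the first nontrivial instance $\cD\wcF_{0,3}=4\wcF_{0,4}$.
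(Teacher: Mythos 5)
The paper never proves this lemma: it is imported verbatim from \cite{wz} (cited there as Lemma 2.1), so there is no in-paper argument to compare yours against; I can only judge your proof on its own terms, and it is correct and complete. The reduction of both sides to sums over isomorphism classes of marked objects --- pairs $(\Gamma',\ell)$ with a marked external edge on the right, pairs $(\Gamma,s)$ of a graph and a site on the left --- via orbit--stabilizer is sound (each site of $\Gamma$ contributes exactly one output graph to $\cD\Gamma$, matching the paper's examples, e.g.\ the coefficients $2+2+1$ in the displayed computation of $\pd$ on the two-looped-vertices graph). Your three-case reconstruction of $(\Gamma,s)$ from the local picture at the marked leg is exactly the right inverse: the cases are mutually exclusive because they are read off from $(\Gamma',\ell)$ itself, and the two degenerate local types (the one-vertex graphs of types $(0,3)$ and $(1,1)$) are correctly excluded by the hypothesis $2g-2+n>0$ on the source. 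You also isolate the two genuine subtleties and resolve both correctly: the $\bZ/2$ symmetry of the cherry in the $\gamma$ case, where the swap $\ell\leftrightarrow\ell'$ is an automorphism precisely because external edges here are unlabelled (this is what keeps the correspondence one-to-one), and the edge-flip in the edge-subdivision case, where the stabilizer of an internal edge must be taken as its set-stabilizer so that it matches the possible interchange of the two internal edges at the inserted trivalent vertex. Your sanity check is also right: both $\cD\wcF_{0,3}$ and $4\wcF_{0,4}$ equal $\tfrac16\Gamma_1+\tfrac12\Gamma_2$, where $\Gamma_1$ is the one-vertex graph with four legs ($|\Aut|=24$) and $\Gamma_2$ is the two-vertex graph with one internal edge and two legs at each vertex ($|\Aut|=8$).
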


Theorem \ref{original-rec-2} can be rewritten in the following way
using Lemma \ref{lem-original-D}
(see \cite[Theorem 2.2]{wz}):

\begin{Theorem}
\label{thm-original-rec}
For $2g-2+n>0$, we have
\be\label{eq-thm2}
K\widehat{\cF}_{g,n}=\frac{1}{2}(\cD \cD\widehat{\cF}_{g-1,n}
+\sum_{\substack{g_1+g_2=g,\\n_1+n_2=n}}
\cD\widehat{\cF}_{g_1,n_1}\cdot
\cD\widehat{\cF}_{g_2,n_2}).
\ee
In particular, by taking $n=0$ we obtain a quadratic recursion
for $\wcF_g$ ($g\geq 2$):
\be\label{thm-free}
K\widehat{\cF}_g=
\frac{1}{2}(\cD \pd\widehat{\cF}_{g-1}+
\sum_{r=1}^{g-1}\pd\widehat{\cF}_{r} \cdot \pd\widehat{\cF}_{g-r}).
\ee
Here we use the conventions:
\be
\label{pre-convention-1}
\pd\wcF_1=\cD\widehat{\cF}_{1}:=\widehat{\cF}_{1,1},
\qquad
\cD\widehat{\cF}_{0,2}:=3\widehat{\cF}_{0,3},
\qquad
\cD \cD\widehat{\cF}_{0,1}:=6\widehat{\cF}_{0,3},
\ee
by formally applying Lemma \ref{lem-original-D}.

\end{Theorem}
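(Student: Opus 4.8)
The plan is to deduce \eqref{eq-thm2} from Theorem \ref{original-rec-2} by a direct substitution of Lemma \ref{lem-original-D} followed by a reindexing of the genus-splitting sum, and then to read off the specialization \eqref{thm-free} at $n=0$ while carefully interpreting the unstable terms through the conventions \eqref{pre-convention-1}.

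First I would handle the leading term of \eqref{eq-thm1}. Iterating Lemma \ref{lem-original-D} gives $\cD\cD\widehat{\cF}_{g-1,n}=(n+1)(n+2)\widehat{\cF}_{g-1,n+2}$, and since $\binom{n+2}{2}=\frac{1}{2}(n+1)(n+2)$ this shows $\frac{1}{2}\cD\cD\widehat{\cF}_{g-1,n}=\binom{n+2}{2}\widehat{\cF}_{g-1,n+2}$. For the quadratic term I would use Lemma \ref{lem-original-D} in the reversed form $n_i\widehat{\cF}_{g_i,n_i}=\cD\widehat{\cF}_{g_i,n_i-1}$ and substitute $m_i=n_i-1$; then the constraints $n_1+n_2=n+2$, $n_i\ge 1$ of \eqref{eq-thm1} turn into $m_1+m_2=n$, $m_i\ge 0$, and the summand becomes $\cD\widehat{\cF}_{g_1,m_1}\,\cD\widehat{\cF}_{g_2,m_2}$. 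Collecting the two pieces yields exactly the right-hand side of \eqref{eq-thm2}, establishing the identity whenever all the $\widehat{\cF}$ occurring are stable.

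The remaining, and most delicate, step is the passage to $n=0$. Here $\gamma$ annihilates graphs with no external edge, so $\cD\widehat{\cF}_{g_i,0}=\pd\widehat{\cF}_{g_i}$, and the sum over $m_1=m_2=0$ with $g_1+g_2=g$ collapses to a sum of terms $\pd\widehat{\cF}_r\,\pd\widehat{\cF}_{g-r}$; likewise the leading term becomes $\cD\pd\widehat{\cF}_{g-1}$. The obstacle is that several factors appearing formally, namely $\widehat{\cF}_{1,0}$, $\widehat{\cF}_{0,1}$, and $\widehat{\cF}_{0,2}$, lie outside the stable range and are not defined by \eqref{abs-fe}--\eqref{abs-n-pt}. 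I would dispose of this exactly via \eqref{pre-convention-1}, which assigns the boundary values $\pd\widehat{\cF}_1=\widehat{\cF}_{1,1}$, $\cD\widehat{\cF}_{0,2}=3\widehat{\cF}_{0,3}$, and $\cD\cD\widehat{\cF}_{0,1}=6\widehat{\cF}_{0,3}$ by formally extending Lemma \ref{lem-original-D}. The key consistency check is that these assignments are unambiguous: for instance $\cD\cD\widehat{\cF}_{0,1}$ computed as $\cD(2\widehat{\cF}_{0,2})=6\widehat{\cF}_{0,3}$ agrees with the prescribed value, and after fixing $n_1=n_2=1$ the requirement that each $\widehat{\cF}_{g_i,1}$ be stable forces $g_i\ge 1$, so that the splitting sum runs over $r=1,\dots,g-1$ as stated. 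Once this bookkeeping of the unstable strata is settled, \eqref{thm-free} follows immediately; the underlying algebra is entirely routine.
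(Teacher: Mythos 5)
Your proposal is correct and is essentially the paper's own route: the paper presents Theorem \ref{thm-original-rec} precisely as a rewriting of Theorem \ref{original-rec-2} via Lemma \ref{lem-original-D}, which is exactly your substitution $\cD\cD\widehat{\cF}_{g-1,n}=(n+1)(n+2)\widehat{\cF}_{g-1,n+2}$ together with the reindexing $n_i\widehat{\cF}_{g_i,n_i}=\cD\widehat{\cF}_{g_i,n_i-1}$, $m_i=n_i-1$, with the unstable factors absorbed into the conventions \eqref{pre-convention-1}. Your consistency check on the conventions and the observation that stability of $\widehat{\cF}_{g_i,1}$ forces $g_i\geq 1$ (so the $n=0$ sum runs over $r=1,\dots,g-1$) complete the bookkeeping in the same way the paper intends.
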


\begin{Remark}
Notice that the graphs appearing in the expression of $\wcF_r$
do not have external edges,
thus $\gamma(\wcF_r)=0$ and then $\cD\wcF_r$ becomes $\pd\wcF_r$
when we derive \eqref{thm-free} from \eqref{eq-thm2}.
\end{Remark}

\begin{Example}
Using the expression of $\wcF_2$ in the previous example, we have:
\begin{equation*}
\begin{split}
&\begin{tikzpicture}[scale=0.95]
\node [align=center,align=center] at (-1.1,0) {$K\wcF_2=\frac{1}{2}$};
\draw (0,0) circle [radius=0.2];
\node [align=center,align=center] at (0,0) {$1$};
\draw (0.16,0.1)--(0.5,0.15);
\draw (0.16,-0.1)--(0.5,-0.15);
\node [align=center,align=center] at (1,0) {$+\frac{1}{2}\bigl($};
\draw (1.7,0) circle [radius=0.2];
\node [align=center,align=center] at (1.7,0) {$1$};
\draw (3,0) circle [radius=0.2];
\node [align=center,align=center] at (3,0) {$1$};
\draw (1.9,0)--(2.2,0);
\draw (2.5,0)--(2.8,0);
\node [align=center,align=center] at (3.8,0) {$\bigr)+\frac{1}{4}$};
\draw (4.9,0) circle [radius=0.2];
\node [align=center,align=center] at (4.9,0) {$0$};
\draw (4.74,0.1) .. controls (4.4,0.2) and (4.4,-0.2) ..  (4.74,-0.1);
\draw (5.06,0.1)--(5.4,0.15);
\draw (5.06,-0.1)--(5.4,-0.15);
\node [align=center,align=center] at (6,0) {$+\frac{1}{2}\bigl($};
\draw (7.1-0.4,0) circle [radius=0.2];
\node [align=center,align=center] at (7.1-0.4,0) {$1$};
\draw (7.3-0.4,0)--(7.6-0.4,0);
\draw (8.4-0.4,0) circle [radius=0.2];
\node [align=center,align=center] at (8.4-0.4,0) {$0$};
\draw (7.9-0.4,0)--(8.2-0.4,0);
\draw (8.56-0.4,0.1) .. controls (8.9-0.4,0.2) and (8.9-0.4,-0.2) ..  (8.56-0.4,-0.1);
\node [align=center,align=center] at (8.6,0) {$\bigr)$};
\end{tikzpicture}\\
&\begin{tikzpicture}[scale=0.95]
\node [align=center,align=center] at (-0.6,0) {$+\frac{1}{2}$};
\draw (0,0) circle [radius=0.2];
\node [align=center,align=center] at (0,0) {$1$};
\draw (0.6,0) circle [radius=0.2];
\node [align=center,align=center] at (0.6,0) {$0$};
\draw (0.2,0)--(0.4,0);
\draw (0.76,0.1)--(1.1,0.15);
\draw (0.76,-0.1)--(1.1,-0.15);
\node [align=center,align=center] at (1.6,0) {$+\frac{1}{4}$};
\draw (2.4,0) circle [radius=0.2];
\node [align=center,align=center] at (2.4,0) {$0$};
\draw (3,0) circle [radius=0.2];
\node [align=center,align=center] at (3,0) {$0$};
\draw (2.24,0.1) .. controls (1.9,0.2) and (1.9,-0.2) ..  (2.24,-0.1);
\draw (2.6,0)--(2.8,0);
\draw (3.16,0.1)--(3.5,0.15);
\draw (3.16,-0.1)--(3.5,-0.15);
\node [align=center,align=center] at (4.2,0) {$+\frac{1}{8}\bigl($};
\draw (5.1,0) circle [radius=0.2];
\node [align=center,align=center] at (5.1,0) {$0$};
\draw (6.4,0) circle [radius=0.2];
\node [align=center,align=center] at (6.4,0) {$0$};
\draw (5.3,0)--(5.6,0);
\draw (5.9,0)--(6.2,0);
\draw (4.94,0.1) .. controls (4.6,0.2) and (4.6,-0.2) ..  (4.94,-0.1);
\draw (6.56,0.1) .. controls (6.9,0.2) and (6.9,-0.2) ..  (6.56,-0.1);
\node [align=center,align=center] at (7.4,0) {$\bigr)+\frac{1}{4}$};
\draw (8.3+0.3,0) circle [radius=0.2];
\node [align=center,align=center] at (8.3+0.3,0) {$0$};
\draw (8.9+0.3,0) circle [radius=0.2];
\node [align=center,align=center] at (8.9+0.3,0) {$0$};
\draw (7.8+0.3,0)--(8.1+0.3,0);
\draw (9.1+0.3,0)--(9.4+0.3,0);
\draw (8.78,0.07)--(9.02,0.07);
\draw (8.78,-0.07)--(9.02,-0.07);
\node [align=center,align=center] at (10,-0.15) {$.$};
\end{tikzpicture}
\end{split}
\end{equation*}
Then using the expression
\begin{equation*}
\begin{tikzpicture}[scale=0.95]
\node [align=center,align=center] at (-0.3,0) {$\widehat{\cF}_{1,2}=\frac{1}{2}$};
\draw (1,0) circle [radius=0.2];
\draw (1.2,0)--(1.5,0);
\draw (0.5,0)--(0.8,0);
\node [align=center,align=center] at (1,0) {$1$};
\node [align=center,align=center] at (2,0) {$+\frac{1}{4}$};
\draw (1+1.8,0) circle [radius=0.2];
\draw (1.17+1.8,0.1)--(1.4+1.8,0.15);
\draw (1.17+1.8,-0.1)--(1.4+1.8,-0.15);
\draw (0.84+1.8,0.1) .. controls (0.5+1.8,0.2) and (0.5+1.8,-0.2) ..  (0.84+1.8,-0.1);
\node [align=center,align=center] at (1+1.8,0) {$0$};
\node [align=center,align=center] at (3.7,0) {$+\frac{1}{2}$};
\draw (1+3.9,0) circle [radius=0.2];
\draw (0.4+3.9,0) circle [radius=0.2];
\draw (1.17+3.9,0.1)--(1.4+3.9,0.15);
\draw (1.17+3.9,-0.1)--(1.4+3.9,-0.15);
\draw (0.6+3.9,0)--(0.8+3.9,0);
\node [align=center,align=center] at (1+3.9,0) {$0$};
\node [align=center,align=center] at (0.4+3.9,0) {$1$};
\node [align=center,align=center] at (5.8,0) {$+\frac{1}{4}$};
\draw (1+6.2,0) circle [radius=0.2];
\draw (0.4+6.2,0) circle [radius=0.2];
\draw (1.17+6.2,0.1)--(1.4+6.2,0.15);
\draw (1.17+6.2,-0.1)--(1.4+6.2,-0.15);
\draw (0.6+6.2,0)--(0.8+6.2,0);
\node [align=center,align=center] at (1+6.3-0.1,0) {$0$};
\node [align=center,align=center] at (0.4+6.3-0.1,0) {$0$};
\draw (0.24+6.3-0.1,0.1) .. controls (-0.1+6.3-0.1,0.2) and (-0.1+6.3-0.1,-0.2) ..  (0.24+6.3-0.1,-0.1);
\node [align=center,align=center] at (8.2-0.1,0) {$+\frac{1}{4}$};
\draw (1+8.1-0.1,0) circle [radius=0.2];
\draw (0.5+8.1-0.1,0)--(0.8+8.1-0.1,0);
\draw (1.18+8.1-0.1,0.07)--(1.42+8.1-0.1,0.07);
\draw (1.18+8.1-0.1,-0.07)--(1.42+8.1-0.1,-0.07);
\draw (1.8+8.1-0.1,-0)--(2.1+8.1-0.1,0);
\draw (1.6+8.1-0.1,0) circle [radius=0.2];
\node [align=center,align=center] at (1+8.1-0.1,0) {$0$};
\node [align=center,align=center] at (1.6+8.1-0.1,0) {$0$};
\node [align=center,align=center] at (10.45,-0.15) {$,$};
\end{tikzpicture}
\end{equation*}
one can easily check:
\ben
K\wcF_2=\wcF_{1,2}+\frac{1}{2}\wcF_{1,1}\cdot\wcF_{1,1}
=\frac{1}{2}\big(\cD\cD\wcF_1+\cD\wcF_1\cdot\cD\wcF_1\big).
\een

\end{Example}

\subsection{Realization of the abstract QFT by Feynman rules}

In this subsection we discuss the realizations of the abstract QFT.
Here we present a generalization of the realization given in \cite[\S 4]{wz}.

Our input data are a sequence of formal functions $\{F_{g,n}(t)\}_{2g-2+n>0}$
in a formal variable $t$,
together with a formal variable $\kappa$ called the propagator.
Using these data,
we will construct a sequence of functions $\{\wF_{g,n}(t,\kappa)\}_{2g-2+n>0}$
by assigning the following Feynman rules to stable graphs.
Let $\Gamma$ be a stable graph,
then we assign a weight $w_\Gamma$ to it:
\be
\label{Feynman-ordinary}
\Gamma \quad\mapsto\quad
w_\Gamma = \prod_{v\in V(\Gamma)} w_v \cdot
\prod_{e\in E(\Gamma)} w_e,
\ee
where the weight of a vertex $v\in V(\Gamma)$
of genus $g_v$ and valence $\val_v$ is given by
\be
\label{Feynman-ordinary-v}
w_v:=F_{g_v,\val_v}(t),
\ee
and the weight of an internal edge $e\in E(\Gamma)$ is given by
\be
\label{Feynman-ordinary-e}
w_e=\kappa.
\ee
Then the function $\wF_{g,n}(t,\kappa)$ is defined to be
the weight of $\wcF_{g,n}$ ($2g-2+n>0$).
Such a procedure will be called a `realization' of the abstract QFT.

More specifically,
the abstract free energy $\wcF_g$ will be realized by:
\be
\label{eq-pre-realization-fe}
\wF_g(t,\kappa):=\sum_{\Gamma\in\cG_{g,0}^c}
\frac{w_\Gamma}{|\Aut(\Gamma)|},
\qquad
g\geq 2,
\ee
and the abstract $n$-point functions $\wcF_{g,n}$ are realized by:
\be
\label{eq-pre-realization-npt}
\wF_{g,n}(t,\kappa):=\sum_{\Gamma\in\cG_{g,n}^c}
\frac{w_\Gamma}{|\Aut(\Gamma)|},
\qquad
2g-2+n>0.
\ee

\begin{Example}
\label{eg-realization}
We give some examples of $\wF_{g,n}(t,\kappa)$ for small $g$ and $n$:
\be
\label{eq1-eg-realization}
\begin{split}
&\wF_{0,3}=\frac{1}{6}F_{0,3},\\
&\wF_{0,4}=\frac{1}{24}F_{0,4}+
\frac{1}{8}\kappa F_{0,3}^2,\\
&\wF_{1,1}=F_{1,1}+\frac{1}{2}\kappa F_{0,3},\\
&\wF_2=F_{2,0}+\kappa\big(\frac{1}{2}F_{1,2}+\frac{1}{2}F_{1,1}^2\big)
+\kappa^2\big(\frac{1}{8}F_{0,4}+\frac{1}{2}F_{1,1}F_{0,3}\big)
+\frac{5}{24}\kappa^3F_{0,3}^2.
\end{split}
\ee

\end{Example}

The above construction can be represented as a formal Gaussian integral.
The following result is well-known in literatures
(see eg. \cite[Theorem 4.1]{wz}):

\begin{Theorem}
\label{new-gauss}
Define a partition function
$\widehat{Z}(t,\kappa):=\exp\bigg(\sum\limits_{g=2}^\infty
\lambda^{2g-2}\widehat{f}_g(t,\kappa)\bigg)$ by
\be
\label{eq-pre-partition}
\widehat{Z}(t,\kappa)=\frac{1}{(2\pi\kappa\lambda^2)^{\frac{1}{2}}}
\int \exp\bigg( \sum_{2g-2+n>0}\frac{\lambda^{2g-2}}{n!}F_{g,n}(t)
\cdot \eta^n -\frac{\lambda^{-2}}{2\kappa} \eta^2 \bigg) d\eta,
\ee
then $\widehat{f}_g (t,\kappa)=\wF_g (t,\kappa)$ for every $g\geq 2$.
\end{Theorem}

\subsection{Realization of the operators and recursion relations}
\label{sec-pre-realization-rec}

We recall the realizations of recursion relations in
Lemma \ref{lem-original-D} and Theorem \ref{thm-original-rec}
in this subsection.
In order to achieve this,
we need to construct the realizations of the operators
$K$, $\pd$, and $\cD=\pd+\gamma$ first.

Clearly the edge-cutting operator $K$ will be realized by
the partial derivative $\pd_\kappa=\frac{\pd}{\pd \kappa}$;
that means,
\be
\label{eq-pre-realizationK}
w_{K(\Gamma)}=\pd_\kappa (w_\Gamma)
\ee
for every stable graph $\Gamma$.
Moreover, the operator $\gamma$ will be realized by multiplying by
$|E^{ext}(\Gamma)|\cdot\kappa F_{0,3}(t)$,
since
\ben
w_{\gamma(\Gamma)}=
|E^{ext}(\Gamma)|\cdot\kappa F_{0,3}(t)
w_\Gamma.
\een
The realization of the operator $\pd$ can be given as follows.

\begin{Definition}\label{def-compatible}
An operator $\hpd$ is called a formal differential operator
compatible with Feynman rules
\eqref{Feynman-ordinary}-\eqref{Feynman-ordinary-e}, if it satisfies
the following conditions:

\begin{itemize}

\item[1)]
Given a stable graph $\Gamma$,
the operator $\hpd$ acts on
$w_\Gamma=\kappa^{|E(\Gamma)|}\cdot\big(\prod\limits_{v\in V(\Gamma)}w_v\big)$
via `Leibniz rules':
\begin{equation*}
\hpd(w_\Gamma)=\kappa^{|E(\Gamma)|}\sum_{v\in V(\Gamma)}\biggl(
\hpd (w_v) \cdot\prod_{v'\not= v}w_{v'}\biggr)
+|E(\Gamma)|\kappa^{|E(\Gamma)|-1}\hpd(\kappa)
\cdot\prod_{v\in V(\Gamma)}w_{v}.
\end{equation*}

\item[2)]
$\hpd$ acts on $w_v$ for a vertex $v$ of
genus $g_v$ and valence $\val_v$ by
\ben
\hpd (F_{g_v,\val_v})=F_{g_v,\val_v +1}.
\een

\item[3)]
$\hpd$ acts on the propagator $\kappa$ by
\ben
\hpd (\kappa)=\kappa^2\cdot F_{0,3}.
\een

\end{itemize}
\end{Definition}

It is clear that for the given Feynman rules
\eqref{Feynman-ordinary}-\eqref{Feynman-ordinary-e},
the operator $\pd$ can be realized by
a compatible formal differential operator $\hpd$,
i.e., $w_{\pd(\Gamma)}=\hpd(w_\Gamma)$.

\begin{Example}
Let $\{F_g(t)\}_{g\geq 0}$ be a sequence of holomorphic functions,
and
\ben
&&F_{g,0}:=F_g;\\
&&F_{g,n}:=F_g^{(n)}(t)=(\frac{\pd}{\pd t})^n F_g(t),\quad n>0,
\een
and we choose the propagator $\kappa$ to be the following function in $t$:
\ben
\kappa=\frac{1}{C-F_0''(t)},
\een
where $C$ is either a constant or an anti-holomorphic function in $t$.
Then clearly the operator $\pd_t=\frac{\pd}{\pd t}$ is compatible with
this Feynman rule, since
\ben
\frac{\pd\kappa}{\pd t}=\kappa^2 F_{0,3}.
\een
This is the original
realization introduced in \cite[\S 4]{wz}.

\end{Example}

\begin{Example}
Let $\{F_g(t)\}$ be a sequence of holomorphic functions for $g\geq 0$,
and
\ben
&&F_{g,0}:=F_g;\\
&&F_{g,n}:=F_g^{(n)}(t)=(\frac{\pd}{\pd t})^n F_g(t),\quad n>0.
\een
And now we choose the propagator $\kappa$ to be
a formal variable independent of $t$.
Then the operator
\ben
\frac{\pd}{\pd t}+\kappa^2 F_{0,3} \cdot \frac{\pd}{\pd \kappa}
\een
is compatible with this Feynman rule, thus it realizes the operator $\pd$.

\end{Example}

Now let us recall the realization of the recursion relations.
Fix a family of input data $\{F_{g,n}\}_{2g-2+n>0}$ and $\kappa$,
and suppose that we have found a suitable realization $\hpd$ of the operator $\pd$.
Denote by $\hat D$ the realization of the operator $\cD$,
then:
\be
\label{eq-pre-realizationD}
\hat D(w_\Gamma)=(\hpd+|E^{ext}(\Gamma)|\kappa F_{0,3})w_\Gamma.
\ee
Now applying such a Feynman rule to
Lemma \ref{lem-original-D} and Theorem \eqref{thm-original-rec},
we obtain:

\begin{Lemma}
\label{lem-realization-D}
For $2g-2+n>0$, we have
\be
\hat D \wF_{g,n}= (n+1) \wF_{g,n+1}.
\ee
Or more explicitly,
\ben
\wF_{g,n}=\frac{1}{n!}
\big(\hpd+(n-1)\kappa F_{0,3}\big)\cdots
\big(\hpd+2\kappa F_{0,3}\big)\big(\hpd+\kappa F_{0,3}\big)\hpd\wF_g(t,\kappa)
\een
for $g\geq 2$; and for $g=0$, $1$,
\begin{equation*}
\begin{split}
&\wF_{1,n}=\frac{1}{n!}
\big(\hpd+(n-1)\kappa F_{0,3}\big)\cdots
\big(\hpd+2\kappa F_{0,3}\big)\big(\hpd+\kappa F_{0,3}\big)
\wF_{1,1}(t,\kappa),\quad n\geq 1;\\
&\wF_{0,n}=
\frac{3!}{n!}
\big(\hpd+(n-1)\kappa F_{0,3}\big)\cdots
\big(\hpd+4\kappa F_{0,3}\big)\big(\hpd+3\kappa F_{0,3}\big)
\wF_{0,3}(t,\kappa),\quad n\geq 3.
\end{split}
\end{equation*}

\end{Lemma}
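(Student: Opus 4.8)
The statement is precisely the realization, under the Feynman rules \eqref{Feynman-ordinary}--\eqref{Feynman-ordinary-e}, of the abstract identity $\cD\wcF_{g,n}=(n+1)\wcF_{g,n+1}$ proved in Lemma \ref{lem-original-D}. The plan is therefore to transport that identity across the realization map and then iterate. Write $\omega$ for the linear map sending a stable graph $\Gamma$ to $\omega_\Gamma$, so that by definition $\tF_{g,n}=\omega(\wcF_{g,n})$. The entire lemma reduces to the single operator identity $\omega\circ\cD=\tD\circ\omega$ on $\cV^c$; granting this, the first assertion follows immediately by applying $\omega$ to both sides of the relation in Lemma \ref{lem-original-D}.

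To establish the intertwining I would split $\cD=\pd+\gamma$ and $\tD=\tpd+\gamma$ and match the two pieces on an arbitrary generator $\Gamma$. For the $\gamma$-part: the abstract $\gamma$ glues a genus-$0$ trivalent vertex onto each external edge and sums; under $\omega$ each gluing contributes one factor $F_{0,3}$ (the new vertex) and one factor $\kappa$ (the new internal edge), so summing over the $|E^{ext}(\Gamma)|$ external edges is exactly multiplication of $\omega_\Gamma$ by $|E^{ext}(\Gamma)|\cdot\kappa F_{0,3}$, which is the stated realization of $\gamma$. For the $\pd$-part: its two constituents, attaching an external leg at a vertex and breaking an internal edge while inserting a trivalent genus-$0$ vertex, correspond respectively to the two Leibniz terms of Definition \ref{def-compatible}. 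Indeed $\tpd\omega_v=F_{g_v,n_v+1}$ records the leg-attachment at $v$ and is summed over $v$, while $\tpd\kappa=\kappa^2F_{0,3}$ turns one edge-factor $\kappa$ into two edges together with a new trivalent vertex, the Leibniz prefactor $|E(\Gamma)|$ supplying the sum over internal edges. This is the precise content of ``$\tpd$ is compatible with the Feynman rules,'' so $\omega\circ\pd=\tpd\circ\omega$, and adding the $\gamma$-piece gives $\omega\circ\cD=\tD\circ\omega$.

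The explicit iterated formula then follows by induction on the number of legs. Every graph contributing to $\wcF_{g,k}$ carries exactly $k$ external edges, so $\gamma$ acts on $\tF_{g,k}=\omega(\wcF_{g,k})$ simply as multiplication by $k\cdot\kappa F_{0,3}$; hence at leg-level $k$ one has $\tD=\tpd+k\kappa F_{0,3}$. Starting from the appropriate base case --- $\tF_g$ for $g\ge 2$, $\tF_{1,1}$ for $g=1$, and $\tF_{0,3}$ for $g=0$, where the $\gamma$-contribution vanishes at the first step since $n=0$ --- and peeling off one copy of $\tD$ at each successive level produces the ordered product $\big(\tpd+(n-1)\kappa F_{0,3}\big)\cdots\big(\tpd+\kappa F_{0,3}\big)\tpd$ applied to the base case, the scalar attached at each level being dictated by the realized action of $\cD$ in Lemma \ref{lem-original-D}.

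The only real content lies in the intertwining $\omega\circ\cD=\tD\circ\omega$, and the delicate point there is the bookkeeping of automorphism weights. The abstract operators act on isomorphism classes weighted by $1/|\Aut(\Gamma)|$, so one must check that the multiplicities generated by leg-attachment, edge-breaking, and trivalent-vertex gluing, after division by the automorphism orders of the modified graphs, match the plain derivation action of $\tpd$ and the scalar multiplication realizing $\gamma$. This is exactly the compatibility asserted in Definition \ref{def-compatible} and verified for the original realization in \cite{wz}; I would therefore either invoke that verification directly or reprove it by the standard orbit-counting comparison of $|\Aut(\Gamma)|$ with the automorphism groups of the graphs obtained after each elementary operation.
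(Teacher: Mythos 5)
Your overall route---transporting Lemma \ref{lem-original-D} across the realization map via the intertwining $\omega\circ\cD=\tD\circ\omega$ and then iterating---is the route the paper itself intends (the paper states the lemma as an immediate consequence of Lemma \ref{lem-original-D} and the compatibility of Definition \ref{def-compatible}, with the automorphism bookkeeping deferred to \cite{wz}). But there is a concrete numerical gap in your execution: you identify $\tF_{g,n}$ with $\omega(\wcF_{g,n})$, i.e.\ with $\wF_{g,n}$. Under that identification, applying $\omega$ to Lemma \ref{lem-original-D} gives
\ben
\tD\,\wF_{g,n}=(n+1)\,\wF_{g,n+1},
\een
with the factor $(n+1)$ intact; this is exactly the form the relation takes in Section \ref{sec3}, where the realization $\chi_{g,n}=\omega(\wcF_{g,n})$ satisfies $D\chi_{g,n}=(n+1)\chi_{g,n+1}$ (equation \eqref{eqn:Lin}). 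It does \emph{not} give the stated $\tD\tF_{g,n}=\tF_{g,n+1}$. You can see the factor explicitly from Example \ref{eg-realization}: $\tD\wF_{0,3}=\frac{1}{6}F_{0,4}+\frac{1}{2}\kappa F_{0,3}^2=4\,\wF_{0,4}\neq \wF_{0,4}$. Your third paragraph then silently uses the scalar-free identity when peeling off operators, so the proposal is internally inconsistent: paragraphs one and two establish an identity carrying $(n+1)$, while paragraph three assumes one without it, and the discrepancy is never resolved.

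The missing ingredient is the normalization distinguishing the tilde and hat families: $\tF_{g,n}=n!\,\wF_{g,n}$ (equivalently, $\tF_{g,n}$ is the Feynman sum over stable graphs with \emph{labeled} external edges). This is forced by the paper's own statements: Theorem \ref{thm-realization-rec} carries the binomial coefficient $\binom{n}{n_1}$ absent from the abstract Theorem \ref{thm-original-rec}, and its conventions $\tD\tF_{0,2}:=\tF_{0,3}$, $\tD\tD\tF_{0,1}:=\tF_{0,3}$ replace the abstract $\cD\wcF_{0,2}:=3\wcF_{0,3}$, $\cD\cD\wcF_{0,1}:=6\wcF_{0,3}$; both discrepancies are accounted for precisely by the factor $n!$. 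With this definition the scalar is absorbed: $\tD\tF_{g,n}=n!\,\tD\wF_{g,n}=n!\,(n+1)\,\wF_{g,n+1}=\tF_{g,n+1}$, after which your leg-level observation that $\tD$ acts on $\tF_{g,k}$ as $\tpd+k\kappa F_{0,3}$, and the resulting ordered products starting from $\tF_g$, $\tF_{1,1}$, $\tF_{0,3}$, are all correct. Your intertwining argument itself---matching the two Leibniz terms of $\tpd$ with leg-attachment and edge-breaking, matching $\gamma$ with multiplication by $|E^{ext}(\Gamma)|\kappa F_{0,3}$, and checking the $1/|\Aut(\Gamma)|$ orbit count as in \cite{wz}---is sound; only the normalization needs repair.
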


\begin{Theorem}
\label{thm-realization-rec}
For $2g-2+n>0$, we have
\be
\pd_\kappa\wF_{g,n}=\frac{1}{2}\bigg(\hat D\hat D\wF_{g-1,n}
+\sum_{\substack{g_1+g_2=g,\\n_1+n_2=n}}
\hat D\wF_{g_1,n_1} \cdot \hat D\wF_{g_2,n_2}\bigg).
\ee
In particular, by taking $n=0$ we obtain:
\be
\pd_\kappa\wF_g=\frac{1}{2}\bigg(\hat D \hpd\wF_{g-1}+
\sum_{r=1}^{g-1}\hpd\wF_{r}\cdot \hpd\wF_{g-r}\bigg),
\qquad
g\geq 2.
\ee
Here we use the convention
\be
\hpd\wF_1= \hat D \wF_{1}:=\wF_{1,1},
\qquad
\hat D\wF_{0,2}:=3\wF_{0,3},
\qquad
\hat D\hat D\wF_{0,1}:=6\wF_{0,3},
\ee
by formally applying Lemma \ref{lem-realization-D}.

\end{Theorem}

\section{Refined Orbifold Euler Characteristic of $\overline{\cM}_{g,n}$}
\label{sec3}

In this section,
we introduce a refined orbifold Euler characteristic
$\chi_{g,n}(t,\kappa)$ of the moduli space $\Mbar_{g,n}$.
The orbifold Euler characteristic of $\Mbar_{g,n}$ will be recovered by
$\chi(\Mbar_{g,n})=\chi_{g,n}(1,1)$.
This construction is a particular realization of the abstract QFT for stable graphs,
thus we are able to use the formalism recalled in the previous section
to derive various recursion relations to compute $\chi_{g,n}(t,\kappa)$.
The last two sections of this paper will be devoted to explicit computations
using these recursion relations.

\subsection{A realization of the abstract QFT}

In this subsection,
we first recall some well-known results about
the orbifold Euler characteristic of $\Mbar_{g,n}$ (see \cite{bh, hz, pe}),
and then construct a particular realization of the abstract QFT
by assigning Feynman rules inspired by these works to the stable graphs.
The refined orbifold Euler characteristic $\chi_{g,n}(t,\kappa)$
will be defined to be the realization of $\wcF_{g,n}$.

Let $\cM_{g,n}$ be the moduli space of smooth stable curves
of genus $g$ with $n$ marked points.
The orbifold Euler characteristic of ${\cM}_{g,n}$
has been first computed by Harer and Zagier \cite{hz},
and then also by Penner \cite{pe} and Kontsevich \cite{kon1}.
This problem can be reformulated as a enumeration of fat graphs,
and thus can be studied using methods in Hermitian matrix models.
See also \cite{lz} for an introduction for these results.
The conclusion is as follows:
\begin{Theorem}
[\cite{hz}]
\label{harer-zagier}

The orbifold Euler characteristic of ${\cM}_{g,n}$ is given by:
\be
\label{eq-harer-zagier}
\chi({\cM}_{g,n})=(-1)^{n}\cdot\frac{(2g-1)B_{2g}}{(2g)!}(2g+n-3)!,
\qquad
2g-2+n>0,
\ee
where $B_{2g}$ is the $(2g)$-th Bernoulli number.
\end{Theorem}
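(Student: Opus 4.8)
The plan is to separate the statement into an easy part, namely the dependence on the number of marked points $n$, and a hard part, namely a single genus-dependent constant; I would obtain the former by an elementary fibration argument and the latter by the ribbon-graph/matrix-model computation of Penner and Harer--Zagier recorded in \cite{hz, pe}.

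First I would exploit the forgetful morphism $\pi\colon\cM_{g,n+1}\to\cM_{g,n}$, which realizes $\cM_{g,n+1}$ as the universal curve: the fiber of $\pi$ over a point $[C;x_1,\dots,x_n]$ is the punctured surface $C\setminus\{x_1,\dots,x_n\}$, whose topological Euler characteristic is $(2-2g)-n=2-2g-n$. Since the orbifold Euler characteristic is multiplicative in fibrations (the generic fiber being an honest surface), this yields the linear recursion
\[
\chi(\cM_{g,n+1})=(2-2g-n)\,\chi(\cM_{g,n}),
\]
valid throughout the stable range $2g-2+n>0$. Iterating from the minimal stable value of $n$ in each genus gives, for $g\ge 2$,
\[
\chi(\cM_{g,n})=(-1)^n\,\frac{(2g+n-3)!}{(2g-3)!}\,\chi(\cM_{g}),
\]
so the entire $n$-dependence, including the factor $(-1)^n(2g+n-3)!$ appearing in \eqref{eq-harer-zagier}, is accounted for. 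The statement is therefore reduced to the base values $\chi(\cM_{0,3})=1$ (a point) and $\chi(\cM_{1,1})=-\tfrac{1}{12}$, together with the genus-$g$ constant $\chi(\cM_g)=\tfrac{B_{2g}}{2g(2g-2)}$ for $g\ge 2$; equivalently, to the single number $\chi(\cM_{g,1})=\zeta(1-2g)=-\tfrac{B_{2g}}{2g}$, to which $\chi(\cM_g)$ is tied by the same recursion, since an elementary manipulation of factorials turns $\tfrac{B_{2g}}{2g(2g-2)}$ into $\tfrac{(2g-1)B_{2g}(2g-3)!}{(2g)!}$.

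The hard part will be the evaluation of this genus-$g$ constant, which is exactly the content of \cite{hz, pe}. Here I would pass to the ribbon-graph description of $\cM_{g,n}$: via Penner's decorated Teichm\"uller theory (or equivalently Strebel differentials) one obtains an orbifold cell decomposition of $\cM_{g,n}\times\bR_{>0}^{\,n}$ whose cells are indexed by metric ribbon graphs of type $(g,n)$, so that
\[
\chi(\cM_{g,n})=\sum_{\Gamma}\frac{(-1)^{|E(\Gamma)|}}{|\Aut(\Gamma)|},
\]
the sum running over ribbon graphs of the prescribed topological type. The key step is to package this alternating, automorphism-weighted sum as the genus expansion of a Penner-type Hermitian one-matrix integral and then to extract the relevant coefficient by the Laplace/steepest-descent method; this asymptotic extraction is where the Bernoulli numbers enter through $\zeta(1-2g)=-B_{2g}/2g$, and I expect it, rather than the bookkeeping above, to be the main obstacle. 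Finally I would check the stated closed form against the conventions $B_0=1$ and $B_2=\tfrac16$, confirming that the single formula \eqref{eq-harer-zagier} also reproduces the base values in genus $0$ and $1$.
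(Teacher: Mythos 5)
You should note at the outset that the paper does not prove this theorem at all: it is recalled from the literature with the proof deferred entirely to the citations \cite{hz, pe}, the only commentary being a remark that records Kontsevich's alternative derivation via a formal Gaussian-type integral. So there is no in-paper argument to measure your proposal against; the relevant comparison is with the cited proofs, and on that score your reconstruction is correct and faithful to how Harer--Zagier themselves organize the argument. The fibration step is sound: the universal curve $\pi\colon\cM_{g,n+1}\to\cM_{g,n}$ has fibers of Euler characteristic $2-2g-n$, multiplicativity of the orbifold Euler characteristic gives $\chi(\cM_{g,n+1})=(2-2g-n)\,\chi(\cM_{g,n})$ (a relation the paper itself writes down in Section 4, there as a consequence of \eqref{eq-harer-zagier} rather than as an ingredient of its proof), and your factorial bookkeeping together with the base values $\chi(\cM_{0,3})=1$, $\chi(\cM_{1,1})=-\tfrac{1}{12}$, $\chi(\cM_{g,1})=\zeta(1-2g)=-\tfrac{B_{2g}}{2g}$ checks correctly against \eqref{eq-harer-zagier}. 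Deferring that last genus-dependent constant to the ribbon-graph/matrix-model computation is exactly what the citation covers, so the proposal is complete modulo the acknowledged hard core. One sign slip is worth fixing: in the cell decomposition of $\cM_{g,n}\times\bR_{>0}^{\,n}$ by metric ribbon graphs, the orbi-cell attached to $\Gamma$ is $\bR_{>0}^{|E(\Gamma)|}/\Aut(\Gamma)$, so the alternating sum you display computes $(-1)^{n}\chi(\cM_{g,n})$ rather than $\chi(\cM_{g,n})$; for instance, for $(g,n)=(1,1)$ the two ribbon graphs give $\tfrac{1}{4}-\tfrac{1}{6}=\tfrac{1}{12}=-\chi(\cM_{1,1})$. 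With that factor of $(-1)^n$ inserted, your outline is the standard proof.
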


Using the stratification of $\overline{\cM}_{g,n}$
(and thus $\Mbar_{g,n}/S_n$),
Bini and Harer \cite{bh} have derived the following formula
for their orbifold Euler characteristics:

\begin{Theorem}
[\cite{bh}]
\label{bini-harer}

Assume $2g-2+n>0$,
then the orbifold Euler characteristic of $\overline{\cM}_{g,n}/S_n$ is
given by the following graph sum formula:
\be
\label{eq-bini-harer}
\chi(\overline{\cM}_{g,n}/S_n)=\sum_{\Gamma\in \cG^c_{g,n}}
\frac{1}{|\Aut(\Gamma)|}\prod_{v\in V(\Gamma)}\chi(\cM_{g_v,\val_v}),
\ee
where $g_v$ is the genus of a vertex $v$,
and $\val_v$ is the valence of $v$.
\end{Theorem}

In what follows,
we will use the two formulas \eqref{eq-harer-zagier} and \eqref{eq-bini-harer}
to formulate a realization of the abstract QFT for stable graphs.

In the formula \eqref{eq-bini-harer} of Bini-Harer,
the weight of a vertex of genus $g$ and valence $n$ is $\chi({\cM}_{g,n})$,
and the propagator is just $1$.
However,
as mentioned in the Introduction,
it is very difficult to carry out concrete calculations directly
using this Feynman rule.

In order to apply our formalism of abstract QFT and its realizations,
we will modify the above Feynman rule by
introducing two formal variables $t$ and $\kappa$.
We take $\kappa$ to be the propagator, and take
\be
\label{eq-realization-vertices}
F_{g,n}^{orb}(t) : = \chi(\cM_{g,n}) t^{-2g+2-n}
=(-1)^{n}\frac{(2g-1)B_{2g}}{(2g)!}(2g+n-3)!\cdot t^{-2g+2-n}
\ee
to be the weight of vertices.

More precisely,
for a stable graph $\Gamma$,
our Feynman rule defines the weight $w_\Gamma$
of a stable graph $\Gamma$ as follows:
\be\label{weight-Gamma}
w_\Gamma = \prod_{v\in V(\Gamma)} w_v \cdot
\prod_{e\in E(\Gamma)} w_e,
\ee
where the weight of an internal edge is
\be
\label{eq-realization-kappa}
w_e:=\kappa,
\ee
and the weight of a vertex $v$ of genus $g_v$ and valence $\val_v$ is:
\be
\label{weight-v}
\begin{split}
\omega_v=F_{g_v,\val_v}^{orb}(t)=&\chi(\cM_{g_v,\val_v})\cdot t^{\chi(C_{g_v,\val_v})}\\
=&(-1)^{\val_v}\frac{(2g_v-1)B_{2g_v}}{(2g_v)!}(2g_v+\val_v-3)!\cdot t^{-2g_v+2-\val_v},
\end{split}
\ee
here $\chi(C_{g,n})$ is the Euler characteristic of a curve $C_{g,n}$
of genus $g$ with $n$ punctures.

It is clear that if we take:
\be
\begin{split}
&F_g^{orb}(t):=\frac{(2g-1)B_{2g}}{(2g)!}(2g-3)!\cdot t^{-2g+2},\quad g\geq 2;\\
&F_{1,1}^{orb}(t):=-\frac{B_2}{2}\cdot t^{-1};\\
&F_{0,3}^{orb}(t):=t^{-1},
\end{split}
\ee
then we have:
\be
\begin{split}
&F_{g,n}^{orb}(t)=(\frac{d}{dt})^n F_{g}^{orb}(t),\quad g\geq 2;\\
&F_{1,n}^{orb}(t)=(\frac{d}{dt})^{n-1} F_{1,1}^{orb}(t),\quad n\geq 1;\\
&F_{0,n}^{orb}(t)=(\frac{d}{dt})^{n-3} F_{0,3}^{orb}(t),\quad n\geq 3.
\end{split}
\ee

\begin{Definition}
Assume $2g-2+n>0$.
We define the refined orbifold Euler characteristic $\chi_{g,n}(t,\kappa)$
of $\overline\cM_{g,n}/S_n$ to be the realization of $\wcF_{g,n}$
under the Feynman rule \eqref{weight-Gamma},
i.e.,
\be\label{chi-g,n}
\chi_{g,n}(t,\kappa):=\sum_{\Gamma\in \cG^c_{g,n}}
\frac{\kappa^{|E(\Gamma)|}}{|\Aut(\Gamma)|}\cdot
\prod_{v\in V(\Gamma)}F_{g_v,\val_v}^{orb}(t),
\ee
where $g_v$ and $\val_v$ are the genus and valence of the vertex $v$ respectively.
\end{Definition}

Clearly $\chi_{g,n}(t,\kappa)$ is a polynomial in $\kappa$ and $t^{-1}$,
and the degree of $\kappa$ encodes the codimension
of the strata $\cM_\Gamma$ in $\overline\cM_{g,n}$.

For every $(g,n)$ with $2g-2+n>0$,
the orbifold Euler characteristic of $\overline{\cM}_{g,n}/S_n$
is obtained from $\chi_{g,n}(t,\kappa)$ by simply taking $t=1$ and $\kappa=1$:
\be
\chi(\overline{\cM}_{g,n}/S_n)=\chi_{g,n}(1,1),
\ee
thus we have
\be
\chi(\overline{\cM}_{g,n})=n!\cdot\chi_{g,n}(1,1).
\ee

The above construction can be represented in terms of a formal Gaussian integral
by Theorem \ref{new-gauss} as follows.

\begin{Theorem} \label{thm:chi(g,0)}
Define a partition function  $\widehat{Z}^{orb}(t,\kappa)$
by the following formal Gaussian integral:
\be \label{gaussian-chi(t,k)}
\widehat{Z}^{orb}(t,\kappa)=\frac{1}{(2\pi\lambda^2\kappa)^{\frac{1}{2}}}
\int \exp\bigg(\sum_{2g-2+n>0}\frac{\lambda^{2g-2}}{n!}F_{g,n}^{orb}(t)
\eta^n-\frac{\lambda^{-2}}{2\kappa}\eta^2\bigg)d\eta,
\ee
then its logarithm (i.e., the free energy) equals to
$\sum\limits_{g=2}^\infty \chi_{g,0}(t,\kappa)
\lambda^{2g-2}$.
In particular, we can take $t=1$, then this integral becomes:
\be\label{gaussian-chi}
\widehat{Z}^{orb}(1,\kappa)=\frac{1}{(2\pi\lambda^2\kappa)^{\frac{1}{2}}}
\int \exp\bigg(\sum_{2g-2+n>0}\frac{\lambda^{2g-2}}{n!}\chi(\cM_{g,n})
\cdot\eta^n-\frac{\lambda^{-2}}{2\kappa}\eta^2\bigg)d\eta,
\ee
and its free energy is $\sum\limits_{g\geq 2} \chi_{g,0}(1,\kappa) \lambda^{2g-2}$.

\end{Theorem}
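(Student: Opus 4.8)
The plan is to deduce this statement directly from Theorem \ref{new-gauss}, which already establishes the general correspondence between the formal Gaussian integral and the Feynman-rule realization of the abstract free energies; the present theorem is its specialization to the particular realization defining $\chi_{g,n}(t,\kappa)$. First I would check that the data used in \eqref{chi-g,n} satisfy the hypotheses of Theorem \ref{new-gauss}: each $F_{g,n}(t)=\chi(\cM_{g,n})\,t^{-2g+2-n}$ is holomorphic in $t$ (away from $t=0$, or simply as a Laurent monomial), and $\kappa$ is an admissible propagator, here taken as an independent formal variable. With these data the Feynman rules \eqref{weight-Gamma}--\eqref{weight-v} are exactly the rules \eqref{Feynman-ordinary}--\eqref{Feynman-ordinary-e} of the general realization, so the realized free energy $\wF_g(t,\kappa)$ coincides with $\chi_{g,0}(t,\kappa)$ by the definition of the refined Euler characteristic, i.e. by taking $n=0$ in \eqref{chi-g,n}.

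Next, the integrand in \eqref{gaussian-chi(t,k)} is literally the integrand of Theorem \ref{new-gauss} after substituting the explicit $F_{g,n}(t)$ above; the quadratic term $-\frac{\lambda^{-2}}{2\kappa}(\eta-t)^2$ and the normalization $(2\pi\lambda^2\kappa)^{-1/2}$ match as well. Hence Theorem \ref{new-gauss} applies verbatim and gives $\widehat{f}_g=\wF_g=\chi_{g,0}(t,\kappa)$ for all $g\geq 2$; equivalently, the free energy of $\widehat{Z}(t,\kappa)$ equals $\sum_{g\geq 2}\chi_{g,0}(t,\kappa)\lambda^{2g-2}$, which is the first assertion.

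For the specialization $t=1$ recorded in \eqref{gaussian-chi}, I would set $t=1$ in \eqref{gaussian-chi(t,k)}, so that $F_{g,n}(1)=\chi(\cM_{g,n})$ and the integrand involves $(\eta-1)^n$ and $(\eta-1)^2$. Performing the affine change of variable $\eta\mapsto\eta+1$, which leaves the measure $d\eta$ invariant, then replaces every $(\eta-1)$ by $\eta$ and produces exactly the integrand of \eqref{gaussian-chi}. Since such a shift does not change the value of the (formal) Gaussian integral, the free energy is unaffected and equals $\sum_{g\geq 2}\chi_{g,0}(1,\kappa)\lambda^{2g-2}$.

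The only point requiring care, rather than a genuine obstacle, is the bookkeeping that the logarithm of the full Gaussian integral selects precisely the connected stable graphs summed in \eqref{chi-g,n}, and that the symmetry factors $1/|\Aut(\Gamma)|$ generated by the Wick contractions agree with those in the definition of $\chi_{g,n}$. This is, however, exactly the content already packaged into Theorem \ref{new-gauss}, itself a generalization of \cite[Theorem 4.1]{wz}, so for this realization no new combinatorial verification is needed beyond confirming that our Feynman rules are the specialization asserted above.
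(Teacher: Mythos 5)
Your proposal is correct and takes essentially the same route as the paper: the paper states this theorem as an immediate specialization of Theorem \ref{new-gauss} to the Feynman rules \eqref{weight-Gamma}--\eqref{weight-v} defining $\chi_{g,n}(t,\kappa)$, exactly as you do. Your explicit verification of the hypotheses and the shift $\eta\mapsto\eta+1$ at $t=1$ only spells out details the paper leaves implicit.
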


\begin{Remark}
The sum $\sum\limits_{2g-2+n>0}\frac{\lambda^{2g-2}}{n!}\chi(\cM_{g,n})\cdot\eta^n$
in the formal Gaussian integral \eqref{gaussian-chi}
can be rewritten in the following form:
\begin{equation*}
\begin{split}
&\sum_{2g-2+n>0}\frac{\lambda^{2g-2}}{n!}\chi(\cM_{g,n})\cdot\eta^n\\
 =&  \sum_{2g-2+n>0} (-1)^{n}\frac{(2g-1)B_{2g}}{(2g)!}(2g+n-3)!\cdot \lambda^{2g-2}
\cdot \frac{\eta^n}{n!} \\
 = &  \sum_{n = 3}^\infty \lambda^{-2}
\frac{(-1)^{n+1}\eta^n}{n(n-1)(n-2)}
+ \sum_{n = 1}^{\infty}
\frac{B_{2}}{2!}\cdot \frac{(-1)^n\eta^n}{n}\\
& +  \sum_{g=2}^{\infty}  \frac{(2g-1)B_{2g}}{(2g)!} \cdot \lambda^{2g-2}
\sum_{n=0}^\infty\frac{(2g+n-3)!\cdot (-1)^n\eta^n}{n!}\\
 = & \biggl(\half (1+\eta)^2 \log(1+\eta) - \frac{\eta}{2} - \frac{3\eta^2}{4} \biggr) \lambda^{-2}
- \frac{1}{12} \log (1+\eta) \\
& +  \sum_{g=2}^{\infty}  \frac{B_{2g}}{2g(2g-2)} \cdot \lambda^{2g-2} (1+\eta)^{2-2g}.
\end{split}
\end{equation*}
Similarly, we have
\begin{equation*}
\begin{split}
&\sum_{2g-2+n>0}\frac{\lambda^{2g-2}}{n!}F_{g,n}^{orb}(t)
\cdot \eta^n\\
=&\lambda^{-2}\bigg(
-\frac{3}{4}\eta^2-\half \eta t
+\half (\eta+t)^2 \log(1+\frac{\eta}{t})\bigg)
-\frac{1}{12}\log(1+\frac{\eta}{t})\\
&+\sum_{g=2}^\infty
\lambda^{2g-2}\cdot
\frac{B_{2g}}{2g(2g-2)} (\eta+t)^{2-2g}.
\end{split}
\end{equation*}
See \cite{dv} for a similar computation.
Here we are taking summation over $n$ first.
Later in \S \ref{sec:Gamma} we will see that it is also important
to take summation over $g$ first.
\end{Remark}

By evaluating the formal Gaussian integrals in \eqref{gaussian-chi(t,k)} and \eqref{gaussian-chi} above
directly from their definitions,
we get the following two formulas for generating series for
$\{\chi_{g,0}(t,\kappa)\}_{g\geq 2}$ and
$\{\chi_{g,0}(1,\kappa)\}_{g\geq 2}$ respectively:
\be\label{gene-chi(t,k)}
\begin{split}
\sum_{g\geq 2}\lambda^{2g-2}\chi_{g,0}(t,\kappa)=&\log\biggl(
\sum_{k\geq 0}\sum_{2g_i-2+l_i>0}\sum_{l}\frac{1}{k!}\cdot
\frac{\lambda^{2(g_1+\cdots+g_k)-2k+2l}}{l_1!\cdots l_k!}\\
&\quad\times(2l-1)!!\cdot \prod_{i=1}^{k}F_{g_i,l_i}^{orb}(t)
\cdot\kappa^l\cdot\delta_{l_1+\cdots+l_k,2l}
\biggr),
\end{split}
\ee
\be\label{gene-chi}
\begin{split}
\sum_{g\geq 2}\lambda^{2g-2}\chi_{g,0}(1,\kappa)=&\log\biggl(
\sum_{k\geq 0}\sum_{2g_i-2+l_i>0}\sum_{l}\frac{1}{k!}\cdot
\frac{\lambda^{2(g_1+\cdots+g_k)-2k+2l}}{l_1!\cdots l_k!}\\
&\quad\times(2l-1)!!\cdot \prod_{i=1}^{k}\chi(\cM_{g_i,l_i})
\cdot\kappa^l\cdot\delta_{l_1+\cdots+l_k,2l}
\biggr).
\end{split}
\ee
The complexity of the right-hand side of these formulas make it unpractical
to use them for computations or deriving simpler expressions.

In the remaining of this section,
we will apply our formalism to derive a
quadratic recursion for $\chi_{g,n}(t,\kappa)$
(and also for $\chi_{g,n}(1,\kappa)$);
and furthermore,
we will present a linear recursion for $\chi_{g,n}(t,\kappa)$
with fixed genus $g$.

\subsection{Quadratic recursion relation}
\label{sec-quadr-rec}

In this subsection we derive the quadratic recursion relations
for $\chi_{g,n}(t,\kappa)$.

Consider the realizations of Theorem \ref{thm-original-rec}.
As pointed out in \S \ref{sec-pre-realization-rec},
the edge-cutting operator in this case is just the partial derivative
$\pd_\kappa=\frac{\pd}{\pd \kappa}$.
The operator $\pd$ is realized by a formal differential operator
which takes $F_{g,n}^{orb}$ to $F_{g,n+1}^{orb}$,
and takes $\kappa$ to $\kappa^2\cdot t^{-1}$
(see Definition \ref{def-compatible}).
Here since we are regarding $t$ and $\kappa$ as
two independent formal variables,
we can simply take the realization of $\pd$ to be:
\be
\label{eq-realization-chi-pd}
d:=\frac{\pd}{\pd t}+\kappa^2 t^{-1}\cdot\frac{\pd}{\pd \kappa}
\ee
as a realization of the operator $\pd$.
Then the operator $\cD=\pd+\gamma$ is realized by
\be
\label{eq-realization-D}
D=\frac{\pd}{\pd t}+\kappa^2 t^{-1}\cdot\frac{\pd}{\pd \kappa}
+|E^{ext}(\Gamma)|\cdot \kappa t^{-1},
\ee
where $|E^{ext}(\Gamma)|$ is the number of external edges of $\Gamma$.

Now the quadratic recursion in Theorem \ref{thm-original-rec}
is realized as follows:

\begin{Theorem}
Assume $2g-2+n>0$, then we have:
\be\label{rec-euler-chi}
\frac{\pd}{\pd\kappa}\chi_{g,n}=\frac{1}{2}(DD\chi_{g-1,n}
+\sum_{\substack{g_1+g_2=g,\\n_1+n_2=n}}
D\chi_{g_1,n_1}\cdot
D\chi_{g_2,n_2}).
\ee
In particular, by taking $n=0$ we get:
\be
\frac{\pd}{\pd\kappa}\chi_{g,0}=\frac{1}{2}(D d\chi_{g-1,0}+
\sum_{r=1}^{g-1}d\chi_{r,0}\cdot
d\chi_{g-r,0}),
\qquad g\geq 2.
\ee
Here we use the convention:
\be
\label{convention-1}
d\chi_{1,0}=D\chi_{1,0}:=\chi_{1,1},
\qquad
D\chi_{0,2}:=3\chi_{0,3},
\qquad
DD\chi_{0,1}:=6\chi_{0,3}.
\ee

\end{Theorem}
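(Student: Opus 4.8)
The plan is to obtain the stated recursion as the image of the abstract quadratic recursion of Theorem \ref{thm-original-rec} under the realization functor $\omega$ determined by the Feynman rules \eqref{weight-Gamma}--\eqref{weight-v}. Recall that by definition $\chi_{g,n}=\omega(\wcF_{g,n})$, that $\omega$ is multiplicative on disjoint unions (so that $\omega(\Gamma_1\sqcup\Gamma_2)=\omega_{\Gamma_1}\omega_{\Gamma_2}$), and that under these rules the edge-cutting operator $K$ is realized by $\pd_\kappa$. Thus the only thing I really need to establish is that the differential operator $d=\pd_t+\kappa^2 t^{-1}\pd_\kappa$ is compatible with the Feynman rules in the sense of Definition \ref{def-compatible}, so that $\cD=\pd+\gamma$ is realized by $D$ and the intertwining relation $\omega(\cD X)=D\,\omega(X)$ holds on the connected subspace $\cV^c$.

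For the compatibility I would verify the two identities of Definition \ref{def-compatible}(2)--(3) directly from \eqref{weight-v}. Since $F_{0,3}(t)=t^{-1}$, the propagator condition reads $d\kappa=\kappa^2 t^{-1}=\kappa^2 F_{0,3}$, which holds by the very definition of $d$. For the vertex condition, a one-line differentiation of $F_{g,n}(t)=(-1)^n\tfrac{(2g-1)B_{2g}}{(2g)!}(2g+n-3)!\,t^{-2g+2-n}$ gives $\pd_t F_{g,n}=F_{g,n+1}$, i.e.\ $dF_{g,n}=F_{g,n+1}$ (as $F_{g,n}$ is independent of $\kappa$). Since $\gamma$ is realized by multiplication by $|E^{ext}|\cdot\kappa F_{0,3}=n\kappa t^{-1}$ on an $n$-leg weight, the operator $\cD=\pd+\gamma$ is realized by $D=d+n\kappa t^{-1}$, exactly as written. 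Applying $\omega$ to Lemma \ref{lem-original-D} then produces the linear recursion $D\chi_{g,n}=(n+1)\chi_{g,n+1}$ of \eqref{eqn:Lin}; this is the key intertwining relation $\omega\cD=D\omega$ that I will use repeatedly.

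With these tools the proof is a direct computation. Applying $\omega$ to Theorem \ref{thm-original-rec} and using $\omega K=\pd_\kappa\,\omega$ on the left gives $\pd_\kappa\chi_{g,n}$. On the right, iterating the linear recursion gives $\omega(\cD\cD\wcF_{g-1,n})=(n+1)(n+2)\chi_{g-1,n+2}=DD\chi_{g-1,n}$, while for each summand the multiplicativity of $\omega$ together with the intertwining relation applied to each connected factor (note $\cD$ preserves $\cV^c$) gives $\omega\bigl((\cD\wcF_{g_1,n_1})(\cD\wcF_{g_2,n_2})\bigr)=D\chi_{g_1,n_1}\cdot D\chi_{g_2,n_2}$. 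Summing over the stable range $g_1+g_2=g$, $n_1+n_2=n$ produces exactly \eqref{rec-euler-chi}. The conventions \eqref{convention-1} are then the images under $\omega$ of the abstract conventions \eqref{pre-convention-1}: formally applying $D\chi_{g,n}=(n+1)\chi_{g,n+1}$ to the unstable inputs yields $D\chi_{0,2}=3\chi_{0,3}$, $DD\chi_{0,1}=6\chi_{0,3}$ and $D\chi_{1,0}=\chi_{1,1}$, matching \eqref{pre-convention-1} termwise.

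The one genuinely delicate point---and the place I would be most careful---is reconciling the absence of a binomial coefficient here with the factor $\binom{n}{n_1}$ appearing in the general realized recursion of Theorem \ref{thm-realization-rec}. The resolution is that Theorem \ref{thm-realization-rec} is phrased for the labeled realization $\tF_{g,n}$, whereas $\chi_{g,n}=\wF_{g,n}$ is the unlabeled one; comparing Lemma \ref{lem-realization-D} with \eqref{eqn:Lin} forces $\tF_{g,n}=n!\,\chi_{g,n}$, and substituting this makes $\binom{n}{n_1}n_1!\,n_2!=n!$ cancel against the overall $n!$, recovering \eqref{rec-euler-chi}. I would record this substitution as an independent check, but the primary derivation above---realizing the unlabeled abstract recursion of Theorem \ref{thm-original-rec} directly---never involves the binomial and is the cleaner route.
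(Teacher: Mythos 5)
Your proposal is correct and follows essentially the same route as the paper: the paper also obtains \eqref{rec-euler-chi} by realizing the abstract recursion of Theorem \ref{thm-original-rec} under the Feynman rules, after observing that $K$ is realized by $\pd_\kappa$, that $d=\pd_t+\kappa^2t^{-1}\pd_\kappa$ is compatible in the sense of Definition \ref{def-compatible} (using $dF_{g,n}=F_{g,n+1}$ and $d\kappa=\kappa^2F_{0,3}$), and that $\gamma$ is realized by multiplication by $n\kappa t^{-1}$. Your closing remark reconciling the absence of $\binom{n}{n_1}$ with Theorem \ref{thm-realization-rec} via $\tF_{g,n}=n!\,\chi_{g,n}$ is a sound extra consistency check that the paper leaves implicit.
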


Integrating the recursion formula \eqref{rec-euler-chi} with respect to $\kappa$,
we can determine $\chi_{g,n}$ up to a term independent of $\kappa$.
From the definition \eqref{chi-g,n}
we may easily see that this term is just $\frac{1}{n!}F_{g,n}^{orb}(t)$
where $F_{g,n}^{orb}(t)$ is given by \eqref{weight-v}.
Thus we get an algorithm that finds $\{\chi_{g,n}(t,\kappa)\}$ recursively:

\begin{Theorem}\label{thm-rec-chi-g,n}
For $2g-2+n>0$, we have
\be
\chi_{g,n}=\frac{1}{n!}F_{g,n}^{orb}(t)+\frac{1}{2}\int_{0}^{\kappa}\bigg(
DD\chi_{g-1,n}+
\sum_{\substack{g_1+g_2=g,\\n_1+n_2=n}}
D\chi_{g_1,n_1}\cdot
D\chi_{g_2,n_2}\bigg)d\kappa.
\ee

\end{Theorem}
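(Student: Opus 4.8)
The plan is to obtain the formula by integrating the quadratic recursion \eqref{rec-euler-chi} in the variable $\kappa$ and then identifying the resulting constant of integration by a direct reading of the defining Feynman sum. Since \eqref{rec-euler-chi} expresses $\pd_\kappa\chi_{g,n}$ purely in terms of $DD\chi_{g-1,n}$ and the convolution $\sum_{g_1+g_2=g,\,n_1+n_2=n}D\chi_{g_1,n_1}D\chi_{g_2,n_2}$, and since each $\chi_{g,n}(t,\kappa)$ is (as noted right after \eqref{chi-g,n}) a polynomial in $\kappa$, the antiderivative in $\kappa$ is determined up to a term depending only on $t$; the natural normalization is to integrate from $0$ to $\kappa$, so that this ambiguous term becomes $\chi_{g,n}(t,0)$.

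Applying the fundamental theorem of calculus to \eqref{rec-euler-chi} on the interval $[0,\kappa]$ gives
\[
\chi_{g,n}(t,\kappa)-\chi_{g,n}(t,0)
=\frac{1}{2}\int_{0}^{\kappa}\Big[DD\chi_{g-1,n}
+\sum_{\substack{g_1+g_2=g,\\n_1+n_2=n}}D\chi_{g_1,n_1}D\chi_{g_2,n_2}\Big]d\kappa,
\]
so the theorem is equivalent to the claim $\chi_{g,n}(t,0)=\tfrac{1}{n!}F_{g,n}(t)$. To verify this I would set $\kappa=0$ directly in the definition \eqref{chi-g,n}. Each internal edge of a stable graph carries the weight $\omega_e=\kappa$, so every $\Gamma$ with $E(\Gamma)\neq\emptyset$ drops out at $\kappa=0$, leaving only the connected stable graphs with no internal edge. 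Such a graph must consist of a single vertex, necessarily of genus $g$ and valence $n$, whose vertex weight is $F_{g,n}(t)$ by \eqref{weight-v}. Because the $n$ external legs are unordered in the quotient $\overline{\cM}_{g,n}/S_n$, this one-vertex graph has automorphism group of order $n!$, and therefore $\chi_{g,n}(t,0)=\tfrac{1}{n!}F_{g,n}(t)$. Substituting this into the displayed identity yields the asserted formula.

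The argument is essentially routine once \eqref{rec-euler-chi} is available; the only step deserving care is the automorphism count $|\Aut(\Gamma)|=n!$ for the single-vertex graph, which is precisely the factor reconciling the normalization $\tfrac{1}{n!}F_{g,n}(t)$ in the statement with the bare vertex weight $F_{g,n}(t)$. I would also record that for the unstable seed cases the conventions \eqref{convention-1} are exactly what keep the recursion, and hence the integrated formula, consistent, so no separate treatment of those degenerate terms is needed.
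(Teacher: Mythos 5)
Your proposal is correct and follows essentially the same route as the paper: the paper also obtains the formula by integrating the quadratic recursion \eqref{rec-euler-chi} in $\kappa$ and observing that the $\kappa$-independent term is $\frac{1}{n!}F_{g,n}(t)$. Your explicit justification of that constant term --- setting $\kappa=0$ in \eqref{chi-g,n} so that only the single-vertex graph with $|\Aut(\Gamma)|=n!$ survives --- is exactly the reasoning the paper leaves implicit, and it is accurate.
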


For our original problem of the
computations of orbifold Euler characteristic
$\chi(\overline{\cM}_{g,n}/S_n)=\chi_{g,n}(1,1)$,
the above recursion can be simplified.
In fact, we can apply Theorem \ref{original-rec-2} directly,
and in this way we do not need the variable $t$
or the operator $D$
in our quadratic recursion relation.
In other words, we may define
\be
\label{eq-def-tildechi}
\widetilde{\chi}_{g,n}(\kappa):=\chi_{g,n}(1,\kappa)=
\sum_{\Gamma\in \cG^c_{g,n}}
\frac{\kappa^{|E(\Gamma)|}}{|\Aut(\Gamma)|}\cdot
\prod_{v\in V(\Gamma)}\chi(\cM_{g_v,\val_v}),
\ee
then the following result is obtained by directly applying the Feynman rule
\begin{equation*}
\begin{split}
&v\mapsto \chi(\cM_{g_v,\val_v}),\qquad v\in V(\Gamma),\\
&e \mapsto \kappa , \qquad\qquad\qquad e\in E(\Gamma),
\end{split}
\end{equation*}
to the relation \eqref{eq-thm1}:

\begin{Theorem}
For $2g-2+n>0$, we have
\be\label{rec-integral}
\begin{split}
\widetilde\chi_{g,n}(\kappa)=&\frac{1}{2}\int_{0}^{\kappa}\bigg(
(n+2)(n+1)\widetilde\chi_{g-1,n+2}+
\sum_{\substack{g_1+g_2=g,\\n_1+n_2=n+2\\n_1\geq 1,n_2 \geq 1}}
n_1n_2\widetilde\chi_{g_1,n_1}\widetilde\chi_{g_2,n_2}\bigg)d\kappa\\
&+\frac{1}{n!}\chi(\cM_{g,n}),
\end{split}
\ee
where the sum is taken over all stable cases.
\end{Theorem}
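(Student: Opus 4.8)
The plan is to obtain the stated relation by realizing the \emph{abstract} quadratic recursion of Theorem \ref{original-rec-2} directly, rather than its $\cD$-rewritten form in Theorem \ref{thm-original-rec}. The point of working with \eqref{eq-thm1} is that its right-hand side involves only the scalar multiples $n_i\wcF_{g_i,n_i}$ and the combinatorial factor $\binom{n+2}{2}$, with no edge-adding operator. Hence its realization will contain no occurrence of the operator $D$, and so it can be specialized to $t=1$ termwise. This is exactly what is needed here: the already-integrated recursion of Theorem \ref{thm-rec-chi-g,n} cannot be evaluated at $t=1$ directly, since $D=\pd/\pd t+\kappa^2t^{-1}\pd_\kappa+\cdots$ differentiates in $t$.

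First I would apply the Feynman realization $\omega$ of \eqref{weight-Gamma}--\eqref{weight-v} to both sides of \eqref{eq-thm1}. The map $\omega$ is linear and is multiplicative on disjoint unions (edge counts add, vertex weights multiply, and automorphism factors multiply), so it sends $\wcF_{g,n}\mapsto\chi_{g,n}(t,\kappa)$, sends $n_i\wcF_{g_i,n_i}\mapsto n_i\chi_{g_i,n_i}(t,\kappa)$, and turns the product on the right into a product of the realized functions. The edge-cutting operator $K$ is realized by $\pd_\kappa$: cutting an internal edge removes one factor $\omega_e=\kappa$ while preserving every vertex valence, so $\omega(K\Gamma)=|E(\Gamma)|\,\kappa^{-1}\omega_\Gamma=\pd_\kappa\omega_\Gamma$. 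Combining these, for all $t$ I obtain
\[
\frac{\pd}{\pd\kappa}\chi_{g,n}(t,\kappa)
=\binom{n+2}{2}\chi_{g-1,n+2}(t,\kappa)
+\frac12\sum_{\substack{g_1+g_2=g,\\ n_1+n_2=n+2,\\ n_1\geq 1,\,n_2\geq 1}}
n_1 n_2\,\chi_{g_1,n_1}(t,\kappa)\,\chi_{g_2,n_2}(t,\kappa),
\]
where the restriction $n_1,n_2\geq1$ together with stability matches the ``all stable cases'' condition. Setting $t=1$ and writing $\widetilde\chi_{g,n}(\kappa)=\chi_{g,n}(1,\kappa)$, $\chi(\cM_{g,n})=F_{g,n}(1)$, this becomes an ODE in the single variable $\kappa$.

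It then remains to integrate from $0$ to $\kappa$ and to identify the constant of integration $\widetilde\chi_{g,n}(0)$. Since $\chi_{g,n}(t,\kappa)$ is a polynomial in $\kappa$, its value at $\kappa=0$ is the $\kappa^0$-coefficient, coming from connected stable graphs in $\cG^c_{g,n}$ with no internal edge. Under the hypothesis $2g-2+n>0$ the only such graph is the single genus-$g$ vertex carrying $n$ unlabeled external legs, whose automorphism group is $S_n$; its weight is $\tfrac{1}{n!}F_{g,n}$, giving $\widetilde\chi_{g,n}(0)=\tfrac{1}{n!}\chi(\cM_{g,n})$. Substituting this constant produces the claimed formula \eqref{rec-integral}.

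The step requiring the most care is the one I would lean on the framework of \cite{wz} for, namely verifying that $\omega$ genuinely intertwines $K$ with $\pd_\kappa$ and is compatible with the scalar multiples and disjoint-union products on the right-hand side of \eqref{eq-thm1}, including the correct bookkeeping of automorphism factors when the two factor graphs are isomorphic or when a vertex has symmetric external legs; this is precisely what the realization formalism guarantees, so I would invoke it rather than re-derive it. Secondarily, I would confirm that no boundary term at $\kappa=0$ is lost in the integration and that the index ranges of the realized sum coincide exactly with the stable cases appearing in \eqref{eq-thm1}.
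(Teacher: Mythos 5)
Your proposal is correct and follows essentially the same route as the paper: the paper likewise obtains \eqref{rec-integral} by realizing the abstract recursion \eqref{eq-thm1} of Theorem \ref{original-rec-2} directly (precisely so as to avoid the variable $t$ and the operator $D$), with $K$ realized by $\pd_\kappa$, and then integrating in $\kappa$ with the constant of integration identified as the weight $\frac{1}{n!}\chi(\cM_{g,n})$ of the unique edgeless stable graph.
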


The refined orbifold Euler characteristic $\chi_{g,n}(t,\kappa)$ can be
recovered from $\widetilde\chi_{g,n}(\kappa)=\chi_{g,n}(1,\kappa)$
using the following relation
(thus sometimes we will also call $\widetilde\chi_{g,n}(\kappa)$
the refined orbifold Euler characteristic):

\begin{Proposition}
\label{prop-chi-homog}
Assume $2g-2+n>0$.
We have:
\be
\chi_{g,n}(t,\kappa)=t^{2-2g-n}\cdot
\widetilde\chi_{g,n}(\kappa).
\ee
\end{Proposition}

\begin{proof}
Let $\Gamma$ be a connected graph of genus $g$ with $n$ external edges.
Its weight $w_\Gamma$ (under the Feynman rule \eqref{weight-Gamma})
is a monomial in $t^{-1}$ and $\kappa$,
and it suffices to show that the degree of $t^{-1}$ is $2g-2+n$.
In fact,
the Euler's formula tells us:
\ben
1-h^1=|V(\Gamma)|-|E(\Gamma)|,
\een
where $h^1$ is the number of independent loops in $\Gamma$.
Thus the degree of $t^{-1}$ is:
\begin{equation*}
\begin{split}
\sum_{v\in V(\Gamma)} (2g_v-2+\val_v)=&
2\sum_{v\in V(\Gamma)} g_v +\sum_{v\in V(\Gamma)} \val_v -2|V(\Gamma)|\\
=&2\sum_{v\in V(\Gamma)} g_v
+\bigg(2|E(\Gamma)|+n\bigg) -2|V(\Gamma)|\\
=&2\sum_{v\in V(\Gamma)} g_v +n +2h^1 -2\\
=&2g-2+n.
\end{split}
\end{equation*}
This proves the conclusion.
\end{proof}

\begin{Remark}
In \cite{dn},
Do and Norbury give the following quadratic recursion (\cite[Prop 6.1]{dn})
for $\chi(\Mbar_{g,n})$:
\ben
\chi(\overline\cM_{g,n+1})&=&(2-2g-n)\chi(\overline\cM_{g,n})
+\half\chi(\overline\cM_{g-1,n+2})\\
&&+\half\sum_{h=0}^{g}\sum_{k=0}^{n}\binom{n}{k}
\chi(\overline\cM_{h,k+1})\chi(\overline\cM_{g-h,n-k+1}).
\een
To apply such recursion relations to compute $\chi(\Mbar_{g,n})$,
one needs the formulas for $\chi(\Mbar_{g,0})$ for $g \geq 2$ as initial values.
Such formulas do not seem to be accessible by their method.
For $g=0$ and $g=1$,
 the fact that $\chi(\overline\cM_{0,3})=1$ and
$\chi(\overline\cM_{1,1})=\frac{5}{12}$
can be used to recursively compute
$\chi(\overline\cM_{0,n}) (n\geq 4)$ and $\chi(\overline\cM_{1,n}) (n\geq 2)$,
but the problems of finding their explicit formulas are not addressed by these authors.
\end{Remark}

\begin{Example}
Consider the special case $g=0$.
This has already been studied by Keel \cite{ke} and Manin \cite{ma}.
Now let us study this case using our recursion.

Taking $g=0$ in \eqref{rec-integral},
we get a recursion formula for $\{\widetilde{\chi}_{0,n}\}_{n\geq 3}$:
\be\label{genus0-qrec}
\widetilde\chi_{0,n}(\kappa)=\frac{1}{2}\int_{0}^{\kappa}\bigg(
\sum_{i=3}^{n-1}
i(n+2-i)\widetilde\chi_{0,i}\widetilde\chi_{0,n+2-i}\bigg)d\kappa
+\frac{1}{n!}\chi(\cM_{0,n})
\ee
for every $n\geq 3$.
The Harer-Zagier formula \eqref{eq-harer-zagier}
provides the initial data:
\ben
&&\chi(\cM_{0,n})=(-1)^{n+1}\cdot(n-3)!, \quad n\geq 3;\\
&&\widetilde\chi_{0,3}=
\frac{1}{6}\chi(\cM_{0,3})=\frac{1}{6}.
\een
Then we easily obtain the following data:
\ben
&&\widetilde\chi_{0,3}=\frac{1}{6};\\
&&\widetilde\chi_{0,4}=-\frac{1}{24} + \frac{1}{8}\kappa;\\
&&\widetilde\chi_{0,5}=\frac{1}{60} - \frac{1}{12}\kappa + \frac{1}{8}\kappa^2;\\
&&\widetilde\chi_{0,6}=-\frac{1}{120} + \frac{1}{18}\kappa
 - \frac{7}{48}\kappa^2 + \frac{7}{48}\kappa^3;\\
&&\widetilde\chi_{0,7}=\frac{1}{210} - \frac{7}{180}\kappa + \frac{5} {36}\kappa^2
 - \frac{1}{4}\kappa^3 + \frac{3}{16}\kappa^4;\\
&&\widetilde\chi_{0,8}=-\frac{1}{336} + \frac{41}{1440}\kappa - \frac{181}{ 1440}\kappa^2
 + \frac{5}{ 16}\kappa^3 - \frac{55} {128}\kappa^4 + \frac{33}{128}\kappa^5;\\
&&\widetilde\chi_{0,9}=\frac{1}{504} - \frac{109 }{5040}\kappa
+ \frac{97 }{864}\kappa^2 - \frac{451 }{1296}\kappa^3 +
 \frac{385 }{576}\kappa^4 - \frac{143 }{192}\kappa^5 + \frac{143 }{384}\kappa^6;\\
&&\widetilde\chi_{0,10}=-\frac{1}{720} + \frac{853 }{50400}\kappa
- \frac{6061 }{60480}\kappa^2 + \frac{1903 }{5184}\kappa^3 - \frac{
 15301}{17280}\kappa^4 + \frac{1001}{720}\kappa^5 \\
&&\qquad\quad- \frac{1001 }{768}\kappa^6
 + \frac{143 }{256}\kappa^7;\\
&&\cdots\cdots
\een
By taking $\kappa=1$ and multiplying by $n!$, we get:
\begin{equation*}
\begin{split}
&\chi(\overline{\cM}_{0,3})=1,\quad
\chi(\overline{\cM}_{0,4})=2,\quad
\chi(\overline{\cM}_{0,5})=7,\quad
\chi(\overline{\cM}_{0,6})=34,\quad
\chi(\overline{\cM}_{0,7})=213,\\
&\chi(\overline{\cM}_{0,8})=1630,\quad
\chi(\overline{\cM}_{0,9})=14747,\quad
\chi(\overline{\cM}_{0,10})=153946,\quad
\cdots\cdots
\end{split}
\end{equation*}
These numbers coincide with the results obtained in \cite{bh, dn}.

\end{Example}

\begin{Example}
Now let us move on to the case $g=1$.
Taking $g=1$ in \eqref{rec-integral}, we obtain:

\be\label{genus1-qrec}
\begin{split}
\widetilde\chi_{1,n}(\kappa)=&\frac{1}{2}\int_{0}^{\kappa}\bigg(
(n+2)(n+1)\widetilde\chi_{0,n+2}+
2\sum_{i=3}^{n+1}
i(n+2-i)\widetilde\chi_{0,i}\widetilde\chi_{1,n+2-i}\bigg)d\kappa\\
&+\frac{1}{n!}\chi(\cM_{1,n})
\end{split}
\ee
for every $n\geq 1$.
Here by \eqref{eq-harer-zagier} the initial values are:
\ben
\chi(\cM_{1,n})=(-1)^n\cdot\frac{(n-1)!}{12}.
\een
Then explicit computations give us:
\begin{equation*}
\begin{split}
\widetilde\chi_{1,1}=&-\frac{1}{12} +\frac{ 1}{2}\kappa;\\
\widetilde\chi_{1,2}=&\frac{1}{24} - \frac{7 }{24}\kappa
+\frac{ 1}{2}\kappa^2;\\
\widetilde\chi_{1,3}=&-\frac{1}{36} + \frac{2 }{9}\kappa
- \frac{5 }{8}\kappa^2 + \frac{2 }{3}\kappa^3;\\
\widetilde\chi_{1,4}=&\frac{1}{48} - \frac{3 }{16}\kappa
+ \frac{199 }{288}\kappa^2 - \frac{41 }{32}\kappa^4 + \kappa^4;\\
\widetilde\chi_{1,5}=&-\frac{1}{60} + \frac{1}{6}\kappa
- \frac{533 }{720}\kappa^2 + \frac{89 }{48}\kappa^3
- \frac{83 }{32}\kappa^4 + \frac{8 }{5}\kappa^5;\\
\widetilde\chi_{1,6}=&\frac{1}{72} -\frac{11 }{72}\kappa
+ \frac{677 }{864}\kappa^2 -\frac{5203 }{2160}\kappa^3
+ \frac{ 2669 }{576}\kappa^4 - \frac{1003 }{192}\kappa^5 + \frac{8 }{3}\kappa^6;\\
\widetilde\chi_{1,7}=&-\frac{1}{84} + \frac{1}{7}\kappa
- \frac{277 }{336}\kappa^2 + \frac{3197 }{1080}\kappa^3
-\frac{1131 }{160}\kappa^4 + \frac{
 799 }{72}\kappa^5 - \frac{2015 }{192}\kappa^6 + \frac{32 }{7}\kappa^7;\\
\widetilde\chi_{1,8}=&\frac{1}{96} - \frac{13 }{96}\kappa
+ \frac{2323 }{2688}\kappa^2 - \frac{425491 }{120960}\kappa^3
+ \frac{ 341639 }{34560}\kappa^4 - \frac{223829 }{11520}\kappa^5
 + \frac{39673 }{1536}\kappa^6\\
& - \frac{
 32339 }{1536}\kappa^7 +\kappa^8;\\
 \cdots&\cdots
\end{split}
\end{equation*}
By taking $\kappa=1$ and multiplying by $n!$, we get:
\begin{equation*}
\begin{split}
&\chi(\overline{\cM}_{1,1})=\frac{5}{12},\qquad
\chi(\overline{\cM}_{1,2})=\frac{1}{2},\qquad
\chi(\overline{\cM}_{1,3})=\frac{17}{12},\qquad
\chi(\overline{\cM}_{1,4})=\frac{35}{6},\\
&\chi(\overline{\cM}_{1,5})=\frac{389}{12},\qquad
\chi(\overline{\cM}_{1,6})=\frac{1349}{6},\qquad
\chi(\overline{\cM}_{1,7})=\frac{22489}{12},\\
&\chi(\overline{\cM}_{1,8})=\frac{36459}{2},\qquad
\cdots\cdots
\end{split}
\end{equation*}

\end{Example}

\begin{Example}
Next consider the case $g=2$. The recursion is as follows:
\be
\begin{split}
\widetilde\chi_{2,n}(\kappa)=&\frac{1}{2}\int_{0}^{\kappa}\bigg(
(n+2)(n+1)\widetilde\chi_{1,n+2}+
2\sum_{i=3}^{n+2}
i(n+2-i)\widetilde\chi_{0,i}\widetilde\chi_{2,n+2-i}\\
&+\sum_{j=1}^{n+1}\chi_{1,j}\chi_{1,n+2-j}\bigg) d\kappa
+\frac{1}{n!}\chi(\cM_{2,n})
\end{split}
\ee
for every $n\geq 0$.
Here by \eqref{eq-harer-zagier} we have:
\ben
\chi(\cM_{2,n})=(-1)^{n+1}\cdot\frac{(n+1)!}{240}.
\een
Then explicit computations give us
\begin{equation*}
\begin{split}
\widetilde\chi_{2,0}=&-\frac{1}{240} + \frac{13 }{288}\kappa
-\frac{ 1}{6}\kappa^2 + \frac{5 }{24}\kappa^3;\\
\widetilde\chi_{2,1}=&\frac{1}{120} - \frac{13 }{144}\kappa
+ \frac{109 }{288}\kappa^2 - \frac{3 }{4}\kappa^3 + \frac{5 }{8}\kappa^4;\\
\widetilde\chi_{2,2}=&-\frac{1}{80} + \frac{67 }{480}\kappa
- \frac{379 }{576}\kappa^2 + \frac{325 }{192}\kappa^3
- \frac{39 }{16}\kappa^4 + \frac{ 25 }{16}\kappa^5;\\
\widetilde\chi_{2,3}=&\frac{1}{60} - \frac{7 }{36}\kappa
+ \frac{4393 }{4320}\kappa^2 - \frac{677 }{216}\kappa^3
+ \frac{3497 }{576}\kappa^4 - \frac{ 167 }{24}\kappa^5 + \frac{175 }{48}\kappa^6;\\
\widetilde\chi_{2,4}=&-\frac{1}{48} + \frac{23 }{90}\kappa
- \frac{5065 }{3456}\kappa^2 + \frac{17933 }{3456}\kappa^3 - \frac{
 9439 }{768}\kappa^4 + \frac{44519 }{2304}\kappa^5
 - \frac{3547 }{192}\kappa^6 + \frac{525 }{64}\kappa^7;\\
\widetilde\chi_{2,5}=&\frac{1}{40} - \frac{97 }{300}\kappa
+ \frac{5801 }{2880}\kappa^2 - \frac{3833 }{480}\kappa^3 + \frac{
 11887 }{540}\kappa^4 - \frac{5485 }{128}\kappa^5 + \frac{14579 }{256}\kappa^6\\
  & - \frac{1123 }{24}\kappa^7 + \frac{
 1155 }{64}\kappa^8;\\
\widetilde\chi_{2,6}=&-\frac{7}{240} + \frac{1433 }{3600}\kappa
- \frac{230971 }{86400}\kappa^2 + \frac{201593 }{17280}\kappa^3 - \frac{
 58853 }{1620}\kappa^4 + \frac{956309 }{11520}\kappa^5\\
 & - \frac{211753 }{1536}\kappa^6 + \frac{
 732659 }{4608}\kappa^7 - \frac{44021 }{384}\kappa^8 + \frac{5005 }{128}\kappa^9;\\
 \cdots&\cdots
\end{split}
\end{equation*}
By taking $\kappa=1$ and multiplying by $n!$, we get:
\begin{equation*}
\begin{split}
&\chi(\overline{\cM}_{2,0})=\frac{119}{1440},\quad
\chi(\overline{\cM}_{2,1})=\frac{247}{1440},\quad
\chi(\overline{\cM}_{2,2})=\frac{413}{720},\quad
\chi(\overline{\cM}_{2,3})=\frac{89}{32},\\
&\chi(\overline{\cM}_{2,4})=\frac{12431}{720},\quad
\chi(\overline{\cM}_{2,5})=\frac{189443}{144},\quad
\chi(\overline{\cM}_{2,6})=\frac{853541}{720},\quad
\cdots\cdots
\end{split}
\end{equation*}

\end{Example}

In this subsection we have found an algorithm
that effectively computes $\chi(\Mbar_{g,n})$
and provided some examples and numerical data.
We will show that our method can also be used to derived some
closed formula  in \S \ref{sec4} and \S \ref{sec5}.

\subsection{Linear recursion relation}
\label{sec:Linear}

In this subsection we show that there is a simple linear recursion
for $\widetilde\chi_{g,n}(\kappa)$ at each fixed genus $g$.

It is clear that when $2g-2+n>0$,
$\widetilde\chi_{g,n}(\kappa)$ is a polynomial in $\kappa$.
We have:
\begin{Lemma}
$\widetilde\chi_{g,n}(\kappa)$ is a polynomial in $\kappa$
of degree $3g-3+n$.
\end{Lemma}
\begin{proof}
It is not hard to see that the maximum of $|E(\Gamma)|$ for
a connected $\Gamma$ of genus $g$ with $n$ external edges
is obtained when the vertices of $\Gamma$ are all trivalent and of genus $0$.
Assume $\Gamma$ is such a graph,
then
\ben
1-g=|V(\Gamma)|-|E(\Gamma)|=
\frac{1}{3}\big( n+2|E(\Gamma)| \big) -|E(\Gamma)|,
\een
and this completes the proof.
\end{proof}

Throughout this paper,
we will denote by $\{a_{g,n}^i\}$ the coefficients of this polynomial:
\be
\label{eq-def-agni}
\widetilde\chi_{g,n}(\kappa)=
\sum_{i=0}^{3g-3+n}a_{g,n}^i\kappa^i.
\ee
Then it is clear that:
\ben
a_{g,n}^i=
\sum_{\Gamma}\frac{1}{|\Aut(\Gamma)|}
\prod_{v\in V(\Gamma)}\chi(\cM_{g_v, \val_v}),
\een
where the summation is over all connected stable graphs of genus $g$,
with $i$ internal edges and $n$ external edges.

For a fixed $g\geq 0$,
we now derive
a linear recursion relation that computes $\chi_{g,n}(t,\kappa)$
as well as the coefficients $\{a_{g,n}^i\}$
from the initial data $\{a_{g,0}^i\}$ (or $\{a_{0,3}^i\}$, $\{a_{1,1}^i\}$
for $g=0,1$ respectively).

\begin{Theorem}\label{linear-general}
For every $2g-2+n>0$, we have
\be\label{prop1-eq1}
D\chi_{g,n}=(n+1)\chi_{g,n+1},
\ee
where $D$ is given by \eqref{eq-realization-D}.
Or equivalently,
\be\label{eq-linear-general}
(n+1)a_{g,n+1}^{i}=(2-2g-n) a_{g,n}^{i}+(n+i-1)a_{g,n}^{i-1}.
\ee
Here all possible unstable terms appearing in the right hand side is
set to be zero.
\end{Theorem}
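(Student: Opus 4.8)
The plan is to prove the two displayed identities in turn, treating \eqref{prop1-eq1} as the substantial input and \eqref{eq-linear-general} as its coefficient-wise reformulation. For \eqref{prop1-eq1} I would argue that the concrete operator $D=\frac{\pd}{\pd t}+\kappa^2 t^{-1}\frac{\pd}{\pd\kappa}+n\kappa t^{-1}$ is exactly the realization of the abstract edge-adding operator $\cD=\pd+\gamma$ under the Feynman rule \eqref{weight-Gamma}--\eqref{weight-v}, after which the identity becomes the image of the abstract relation $\cD\wcF_{g,n}=(n+1)\wcF_{g,n+1}$ of Lemma \ref{lem-original-D}. Concretely I would check the three summands of $D$ against the three pieces of $\cD$ acting on a weight $\omega_\Gamma=\kappa^{|E(\Gamma)|}\prod_v F_{g_v,n_v}(t)$: the term $\frac{\pd}{\pd t}$ reproduces the ``attach an external leg to a vertex'' part of $\pd$ because $\frac{\pd}{\pd t}F_{g,n}=F_{g,n+1}$, which follows from \eqref{eq-harer-zagier} since $\chi(\cM_{g,n+1})=(2-2g-n)\chi(\cM_{g,n})$; the term $\kappa^2 t^{-1}\frac{\pd}{\pd\kappa}$ reproduces the ``break an internal edge and insert a trivalent genus-$0$ vertex'' part of $\pd$, because differentiating $\kappa^{|E(\Gamma)|}$ produces the factor $|E(\Gamma)|$ while the extra $\kappa\cdot t^{-1}=\kappa F_{0,3}$ accounts for the new edge together with the new trivalent vertex; and the term $n\kappa t^{-1}=|E^{ext}(\Gamma)|\,\kappa F_{0,3}$ is precisely the stated realization of $\gamma$. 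This is just the compatibility of Definition \ref{def-compatible} specialized to $F_{0,3}=t^{-1}$.

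With \eqref{prop1-eq1} in hand, the derivation of \eqref{eq-linear-general} is purely algebraic and rests on the homogeneity of $\chi_{g,n}$ in $t$. I would first record that
\[
\chi_{g,n}(t,\kappa)=t^{2-2g-n}\,\widetilde\chi_{g,n}(\kappa),\qquad
\widetilde\chi_{g,n}(\kappa)=\sum_{k=0}^{3g-3+n}a_{g,n}^k\kappa^k,
\]
which follows from the additivity of the exponent $\chi(C_{g_v,n_v})=2-2g_v-n_v$ over the vertices of a connected stable graph: summing $2-2g_v-n_v$ and using $\sum_v n_v=2|E(\Gamma)|+n$ together with $\sum_v g_v=g-|E(\Gamma)|+|V(\Gamma)|-1$ collapses the total $t$-power to $2-2g-n$, independently of $\Gamma$. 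Substituting this form into \eqref{prop1-eq1}, every term acquires the common factor $t^{1-2g-n}$, and after cancelling it the identity reads
\[
(2-2g-n)\sum_k a_{g,n}^k\kappa^k+\sum_k (n+k)a_{g,n}^k\kappa^{k+1}=(n+1)\sum_k a_{g,n+1}^k\kappa^k.
\]
Matching the coefficient of $\kappa^k$ on both sides then yields exactly \eqref{eq-linear-general}, with the boundary/unstable terms vanishing by the stated convention.

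The one genuinely delicate point, and hence the step I would carry out most carefully, is confirming that $D$ \emph{realizes} $\cD$ rather than merely resembling it: one must check that the combinatorial leg-attaching and edge-breaking operations on graphs translate, under the weights \eqref{weight-v}, into the analytic operations $\frac{\pd}{\pd t}$ and $\kappa^2 t^{-1}\frac{\pd}{\pd\kappa}$ with the correct multiplicities, so that no symmetry factor $|\Aut(\Gamma)|$ is mismatched. Once this dictionary is pinned down, \eqref{prop1-eq1} is an immediate consequence of Lemma \ref{lem-original-D} and \eqref{eq-linear-general} is a one-line extraction of coefficients, so I anticipate no obstacle beyond this bookkeeping. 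As a consistency check I would verify the lowest cases, e.g.\ $D\chi_{1,1}=2\chi_{1,2}$ and $D\chi_{0,3}=4\chi_{0,4}$, against the explicit polynomials $\widetilde\chi_{1,1},\widetilde\chi_{1,2},\widetilde\chi_{0,3},\widetilde\chi_{0,4}$ computed earlier.
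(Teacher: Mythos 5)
Your proposal is correct and takes essentially the same approach as the paper: \eqref{prop1-eq1} is obtained as the realization of Lemma \ref{lem-original-D} under the Feynman rule (the dictionary $d=\frac{\pd}{\pd t}+\kappa^2 t^{-1}\frac{\pd}{\pd\kappa}$, $D=d+n\kappa t^{-1}$ being set up in the paper's preceding subsection), and \eqref{eq-linear-general} follows by substituting the $t$-homogeneous form $\chi_{g,n}=t^{2-2g-n}\widetilde\chi_{g,n}(\kappa)$ and comparing coefficients of $\kappa^k$. The only difference is presentational: you spell out the graph-theoretic verification of the $t$-homogeneity and of the realization dictionary, which the paper leaves implicit by citing Euler's formula and Lemma \ref{lem-original-D} respectively.
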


\begin{proof}
The recursion \eqref{prop1-eq1} is simply
the realization of Lemma \ref{lem-original-D}.
Recall that Proposition \eqref{prop-chi-homog} tells us
$\chi_{g,n}(t,\kappa)$ is of the form:
\ben
\chi_{g,n}(t,\kappa)=
\sum_{i=0}^{3g-3+n}a_{g,n}^i\kappa^i\cdot (\frac{1}{t})^{n-2+2g}.
\een
Now compare the coefficients of $\kappa^k$ in two sides of \eqref{prop1-eq1},
where the left-hand-side is:
\ben
D\chi_{g,n}
=\biggl(\frac{\pd}{\pd t}+\kappa^2 t^{-1}\cdot\frac{\pd}{\pd \kappa}
+n\cdot \kappa t^{-1}\biggr)\chi_{g,n},
\een
then we may obtain:
\be
(2-2g-n)a_{g,n}^{i}+(i-1)\cdot a_{g,n}^{i-1}+n\cdot a_{g,n}^{i-1}
=(n+1)a_{g,n+1}^{i}.
\ee
\end{proof}

Theorem \ref{linear-general} tells us that for every $k\geq 1$,
the coefficient $a_{g,n+1}^{i}$ in $\chi_{g,n+1}$ is uniquely determined by
two coefficients $a_{g,n}^{i}$ and $a_{g,n}^{i-1}$ in $\chi_{g,n}$.
Therefore this suggests us to write down the coefficients in a triangle
(just like the YangHui's Triangle or Pascal's Triangle)
for a fixed genus $g\geq 0$,
such that an element in this triangle is a linear combination of
the two elements above it.

\subsection{The genus zero linear recursion and Manin's functional equation}
\label{eg-triangle0}

Let us study the linear recursion at genus $0$ in this subsection.
We will recover a functional equation of Manin for $\chi(\Mbar_{0,n})$
using this linear recursion.

For $g=0$, we put the coefficients $\{a_{0,n}^k\}$ for $n\geq 3$
in a triangle as follows.

\ben
&\begin{tikzpicture}
\node [align=center,align=center] at (0,0) {$\frac{1}{6}$};
\end{tikzpicture}&\\
&\begin{tikzpicture}
\node [align=center,align=center] at (-1,0) {$-\frac{1}{24}$};
\node [align=center,align=center] at (1,0) {$\frac{1}{8}$};
\end{tikzpicture}&\\
&\begin{tikzpicture}
\node [align=center,align=center] at (-2,0) {$\frac{1}{60}$};
\node [align=center,align=center] at (0,0) {$-\frac{1}{12}$};
\node [align=center,align=center] at (2,0) {$\frac{1}{8}$};
\end{tikzpicture}&\\
&\begin{tikzpicture}
\node [align=center,align=center] at (-3,0) {$-\frac{1}{120}$};
\node [align=center,align=center] at (-1,0) {$\frac{1}{18}$};
\node [align=center,align=center] at (1,0) {$-\frac{7}{48}$};
\node [align=center,align=center] at (3,0) {$\frac{7}{48}$};
\end{tikzpicture}&\\
&\begin{tikzpicture}
\node [align=center,align=center] at (-4,0) {$\frac{1}{210}$};
\node [align=center,align=center] at (-2,0) {$-\frac{1}{180}$};
\node [align=center,align=center] at (0,0) {$\frac{5}{36}$};
\node [align=center,align=center] at (2,0) {$-\frac{1}{4}$};
\node [align=center,align=center] at (4,0) {$\frac{3}{16}$};
\end{tikzpicture}&\\
&\begin{tikzpicture}
\node [align=center,align=center] at (0,0) {$\cdots \cdots$};
\end{tikzpicture}&
\een

The first row of this triangle is given by $\widetilde\chi_{0,3}=\frac{1}{6}$,
and the first element of each row is given by $\frac{1}{n!}\chi(\cM_{0,n})$
for $n\geq 3$.
Then by Theorem \ref{linear-general}
all the other elements are determined uniquely by the two elements above it
via the linear recursion relation
\be\label{genus0-linear}
(n+1)a_{0,n+1}^{k}=(2-n)a_{0,n}^{k}+(n+k-1)a_{0,n}^{k-1}.
\ee

\begin{Theorem}
Define a generating series $\chi(y,\kappa)$ of $\widetilde\chi_{0,n}(\kappa)$ by
\be\label{genus0-chi}
\chi(y,\kappa):=y+\sum_{n=3}^{\infty}n\kappa y^{n-1}\cdot\widetilde\chi_{0,n}(\kappa),
\ee
then $\chi$ satisfies the equation:
\be\label{refined-eqn}
\kappa(1+\chi)\log(1+\chi)=(\kappa+1)\chi-y.
\ee
\end{Theorem}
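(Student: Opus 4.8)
The plan is to convert the genus-zero quadratic recursion \eqref{eqn:Quad} into a partial differential equation for a single generating series, solve that equation by characteristics, and then read off the functional equation. First I would record the elementary but essential observation that, by the definition \eqref{genus0-chi}, one has $\chi = y + \kappa A$ with
\[
A(y,\kappa) := \sum_{n\geq 3} n\,\widetilde\chi_{0,n}(\kappa)\, y^{n-1},
\]
so it suffices to understand $A$ and then substitute $A=(\chi-y)/\kappa$ at the end.

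Next I would feed the genus-zero quadratic recursion \eqref{genus0-qrec} into $A$. Differentiating \eqref{genus0-qrec} in $\kappa$ kills the $\kappa$-independent term $\tfrac{1}{n!}\chi(\cM_{0,n})$ and leaves $\partial_\kappa\widetilde\chi_{0,n}=\tfrac12\sum_{i=3}^{n-1} i(n+2-i)\widetilde\chi_{0,i}\widetilde\chi_{0,n+2-i}$. The key point is that the right-hand side is exactly $\tfrac12[y^{n}]A^2$: the weights $i$ and $n+2-i$ are supplied by the two factors of $n$ in the definition of $A$, and the constraint $i+(n+2-i)=n+2$ reproduces the convolution degree in $A^2$ after the shift $y^{N}\mapsto y^{N-2}$. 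Multiplying by $n\,y^{n-1}$ and summing over $n$ then converts $\sum_n n\,y^{n-1}[y^{n}]A^2$ into $\partial_y(A^2)$, yielding the inviscid Burgers equation
\[
\partial_\kappa A = A\,\partial_y A .
\]
I expect this bookkeeping — matching the weighted convolution in \eqref{genus0-qrec} to $\tfrac12\partial_y(A^2)$ while tracking the index shift $n\mapsto n+2$ — to be the main technical obstacle.

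For the initial data I would invoke the Harer--Zagier values $\chi(\cM_{0,n})=(-1)^{n+1}(n-3)!$, which give $\widetilde\chi_{0,n}(0)=\tfrac{1}{n!}\chi(\cM_{0,n})$ and hence
\[
A(y,0)=\sum_{n\geq 3}\frac{(-1)^{n+1}}{(n-1)(n-2)}\,y^{n-1}=(1+y)\log(1+y)-y=:g(y).
\]
Solving the Burgers equation by characteristics, $A$ is constant along the curves $\dot y=-A$, which integrate to $y=y_0-\kappa\,g(y_0)$ with $A=g(y_0)$; eliminating $y_0$ gives the implicit solution $A=g(y+\kappa A)$. Since $\chi=y+\kappa A$, this is precisely $A=g(\chi)$, i.e. $\chi=y+\kappa\,g(\chi)$. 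Substituting $g(\chi)=(1+\chi)\log(1+\chi)-\chi$ and rearranging then produces
\[
\kappa(1+\chi)\log(1+\chi)=(\kappa+1)\chi-y,
\]
as claimed.

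To make the characteristic argument rigorous in the ring of formal power series I would note that $g(w)=O(w^2)$, so $\chi=y+\kappa\,g(\chi)$ admits, by Lagrange inversion, a unique solution of the form $\chi=y+O(y^2)$; checking that this Lagrange solution satisfies both the Burgers equation and the initial condition $A(y,0)=g(y)$ identifies it with our $A$ and pins down every coefficient. The linear recursion \eqref{eqn:Lin}, equivalent here to the companion identity $\kappa\,\partial_\kappa\chi=(\chi-y)\,\partial_y\chi$ obtained from $\partial_\kappa\chi=A\,\partial_y\chi$, provides an independent consistency check that the coefficients $a_{0,n}^{k}$ generated by the functional equation agree with those computed directly from the triangle in \S\ref{eg-triangle0}.
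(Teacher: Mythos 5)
Your proposal is correct, but it reaches \eqref{refined-eqn} by a genuinely different route from the paper. The paper's proof combines the quadratic recursion \eqref{genus0-qrec} with the \emph{linear} recursion \eqref{genus0-linear} (the genus-zero realization of $\cD\wcF_{g,n}=(n+1)\wcF_{g,n+1}$): substituting the former into the latter yields, at the level of the generating series $\chi(y,\kappa)$, the first-order ODE $\chi'=(1+\chi)/(1+y-\kappa\chi)$ in $y$ with $\kappa$ a parameter, and a single integration in $y$ gives the functional equation. You instead use only the quadratic recursion: differentiating \eqref{genus0-qrec} in $\kappa$ and packaging it as the inviscid Burgers equation $\partial_\kappa A=A\,\partial_y A$ for $A=(\chi-y)/\kappa$, you then need the Harer--Zagier values \eqref{eq-harer-zagier} as initial data $A(y,0)=(1+y)\log(1+y)-y$, and the method of characteristics (made rigorous via Lagrange inversion together with the order-by-order uniqueness of formal solutions of the initial value problem, since $(k+1)A_{k+1}=\sum_{i+j=k}A_iA_j'$ determines all coefficients from $A_0$) produces the implicit form $A=g(y+\kappa A)$, i.e. \eqref{refined-eqn}. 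Your bookkeeping is right: $[y^n]A^2=\sum_{i=3}^{n-1}i(n+2-i)\widetilde\chi_{0,i}\widetilde\chi_{0,n+2-i}$, and since $A^2$ starts at $y^4$ the weighted sum is indeed $\tfrac12\partial_y(A^2)$. The trade-off is this: the paper's argument is shorter because the linear recursion --- which already encodes the genus-zero Harer--Zagier recursion $\chi(\cM_{0,n+1})=(2-n)\chi(\cM_{0,n})$, and hence the initial data --- comes for free from the abstract QFT formalism; your argument uses one recursion instead of two but must import the explicit Harer--Zagier formula as input, and in exchange it explains structurally why the answer is an implicit equation of Lagrange type $\chi=y+\kappa\,g(\chi)$, a feature that is not transparent from the paper's integration step.
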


\begin{proof}
Recall that
$\widetilde\chi_{0,n}(\kappa)=\sum\limits_{i=0}^{n-3}a_{0,n}^i\kappa^i$.
The linear recursion \eqref{genus0-linear} gives us:
\be\label{rec-proof-manin}
(n+1)\widetilde\chi_{0,n+1}=(2-n)\widetilde\chi_{0,n}
+n\kappa\widetilde\chi_{0,n}+\kappa^2\frac{d}{d\kappa}\widetilde\chi_{0,n},
\ee
where $\frac{d}{d\kappa}\widetilde\chi_{0,n}$ can be rewritten as
\ben
\frac{d}{d\kappa}\widetilde\chi_{0,n}=
\half\sum_{i=3}^{n-1}
i(n+2-i)\widetilde\chi_{0,i}\widetilde\chi_{0,n+2-i}
\een
by the quadratic recursion \eqref{genus0-qrec}.
Now it is easy to check that \eqref{rec-proof-manin}
is equivalent to the following equation for $\chi$:
\ben
\chi=2\int_0^y \chi dy-y\chi+\half\kappa\chi^2+y.
\een
Applying $\frac{d}{dy}$ on both sides of this equation, we get:
\ben
\chi'=\kappa\chi\chi'+\chi-y\chi'+1,
\een
where $\chi'=\frac{\pd\chi}{\pd y}$.
Therefore we have:
\be\label{genus0-chi'}
\chi'=\frac{1+\chi}{1+y-\kappa\chi},
\ee
and the conclusion follows from this equation
by integrating with respect to $y$.
\end{proof}

By Taking $\kappa=1$ in the above theorem,
we recover the following result of Manin
(see \cite[(0.9)]{ma2}):
\begin{Corollary}
The generating series
\ben
\chi(y,1)=y+\sum_{n=3}^{\infty}\frac{y^{n-1}}{(n-1)!}\chi(\Mbar_{0,n})
\een
satisfies the functional equation:
\be\label{eqn-manin}
\big(1+\chi(y,1)\big)\log\big(1+\chi(y,1)\big)=2\chi(y,1)-y.
\ee
\end{Corollary}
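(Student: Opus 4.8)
The plan is to obtain the corollary as a direct specialization of the preceding theorem at $\kappa = 1$, so the work splits into two routine verifications: matching the two descriptions of the generating series, and substituting the numerical value of the propagator into the functional equation \eqref{refined-eqn}.

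First I would reconcile the two formulas for $\chi(y,1)$. The theorem defines the generating series through
\ben
\chi(y,\kappa) = y + \sum_{n=3}^\infty n\kappa y^{n-1}\cdot \widetilde\chi_{0,n}(\kappa),
\een
whereas the corollary writes it in terms of the orbifold Euler characteristics $\chi(\Mbar_{0,n})$. To bridge these, I would invoke the relation established earlier in the paper, namely $\chi(\Mbar_{g,n}) = n!\cdot \widetilde\chi_{g,n}(1)$, which in genus zero gives $\widetilde\chi_{0,n}(1) = \tfrac{1}{n!}\chi(\Mbar_{0,n})$. Setting $\kappa = 1$ in the defining series and using $n \cdot \tfrac{1}{n!} = \tfrac{1}{(n-1)!}$ then produces exactly
\ben
\chi(y,1) = y + \sum_{n=3}^\infty \frac{y^{n-1}}{(n-1)!}\,\chi(\Mbar_{0,n}),
\een
which is the series appearing in the corollary.

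Next I would simply evaluate the functional equation \eqref{refined-eqn} at $\kappa = 1$. Writing $\chi = \chi(y,1)$, the left-hand side $\kappa(1+\chi)\log(1+\chi)$ collapses to $(1+\chi)\log(1+\chi)$, while the right-hand side $(\kappa+1)\chi - y$ becomes $2\chi - y$. This yields
\ben
\big(1+\chi(y,1)\big)\log\big(1+\chi(y,1)\big) = 2\chi(y,1) - y,
\een
which is precisely \eqref{eqn-manin}.

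There is no substantial obstacle here; the corollary is essentially a corollary of the theorem by construction. The only point requiring any care is the bookkeeping in the first step, where one must correctly account for the normalization factors $n!$ relating $\widetilde\chi_{0,n}(1)$ to $\chi(\Mbar_{0,n})$ and the explicit weight $n\kappa y^{n-1}$ in the definition of the generating series, so that the two expressions for $\chi(y,1)$ are seen to coincide term by term. Once that identification is in place, the functional equation transfers verbatim under the substitution $\kappa = 1$.
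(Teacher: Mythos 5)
Your proposal is correct and follows exactly the paper's route: the paper also obtains the corollary by setting $\kappa=1$ in the theorem for \eqref{refined-eqn}, with the identification $\widetilde\chi_{0,n}(1)=\chi_{0,n}(1,1)=\frac{1}{n!}\chi(\Mbar_{0,n})$ (so that $n\,\widetilde\chi_{0,n}(1)=\frac{1}{(n-1)!}\chi(\Mbar_{0,n})$) doing the bookkeeping you describe. Your write-up just makes explicit the normalization step that the paper leaves implicit.
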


\begin{Remark}
In our formalism,
the equation \eqref{refined-eqn} is a generalization of
Manin's equation \eqref{eqn-manin} to the case of
refined orbifold Euler characteristics.
Notice that the equation \eqref{refined-eqn} also appear
in \cite{ma2} and \cite{gls},
as a generalization of equation \eqref{eqn-manin} in a totally different sense!

In Manin's work \cite[Theorem 0.4.2]{ma2} (see also \cite[p. 197]{ma}),
he proved that for a compact smooth algebraic manifold $X$ of dimension $m$,
\ben
1+\sum_{n\geq 1}\chi(X[n])\frac{t^n}{n!}=(1+\eta)^{\chi(X)},
\een
where $X[n]$ is the Fulton-MacPherson compactification \cite{fm} of
the configuration space associated to $X$,
and $\eta$ is the unique solution in $t+t^2\bQ[[t]]$ to the equation:
\be\label{manin-gen}
m(1+\eta)\log(1+\eta)=(m+1)\eta-t.
\ee
This is of exactly the same form as our equation
\eqref{refined-eqn} for the refined orbifold Euler characteristic.
In Manin's result, the integer $m$ is the dimension of $X$,
while in our formalism $\kappa$ is a formal variable
encoding the codimensions of the strata in $\Mbar_{g,n}$.

In the work \cite{gls} of Goulden, Litsyn and Shevelev,
they study the generalized equation \eqref{manin-gen}
and give some results about the coefficients of the solution
(see \cite[\S 2.1]{gls}):
\be\label{genus0-gls}
\begin{split}
&a_{0,n}^{n-3}=\frac{(2n-5)!!}{n!};\\
&a_{0,n}^{n-4}=-\frac{n-3}{3}\cdot\frac{(2n-5)!!}{n!};\\
&a_{0,n}^{n-5}=\frac{(n-2)(n-3)(n-4)}{3^2}\cdot\frac{(2n-7)!!}{n!};\\
&a_{0,n}^{n-6}=-\frac{(n-4)(n-5)(5(n-2)^2+1)}{3^4\cdot 5}
\cdot\frac{(2n-7)!!}{n!};\\
&a_{0,n}^{n-7}=\frac{(n-4)(n-5)(n-6)(5(n-2)^3+4n-5)}
{2\cdot 3^5\cdot 5}\cdot\frac{(2n-9)!!}{n!};\\
&\cdots\cdots
\end{split}
\ee
Our $a_{0,n}^{n-k}$ is related to their notation $\mu_j(n)$ in the following way:
\ben
a_{0,n}^{n-k} = \frac{1}{n!} \mu_{k-2}(n-1).
\een

\end{Remark}

Solving the equation \eqref{refined-eqn} directly does not provide us a simple way
to obtain an explicit expression for the solution.
We will see in \S \ref{sol-genus0} that
one is able to obtain an explicit formula for the
generating series of coefficients $\{a_{0,n}^k\}$
with the help of the linear recursion \eqref{genus0-linear}.

\subsection{Relationship to Ramanujan polynomials}
\label{sec:Rama}

Note that the genus zero coefficients
$\{(-1)^{k+n+1}\cdot n!\cdot a_{0,n}^k\}$ are all integers.
They are the sequence  A075856 on
Sloane's on-line Encyclopedia of Integer Sequences \cite{Sloane}.
The references listed at this website leads us to note
the relationship of $\chi(y,\kappa)$
to many interesting works in combinatorics,
in particular, to  Ramanujan psi polynomials \cite{be, bew}.

This relationship also holds in the higher genus case.
Indeed, if we define
\ben
b_{g,n}^k:=(-1)^{k+n+1}\cdot n!\cdot a_{g,n}^k,
\een
then our recursion \eqref{eq-linear-general} for $\{a_{g,n}^k\}$
with fixed $g$ becomes:
\be
b_{g,n}^k=(n+2g-3)b_{g,n-1}^k+(n+k-2)b_{g,n-1}^{k-1}.
\ee
This is a special case of the following recursion relation for $x=2g-1$:
\be \label{eqn:Q-Rec}
Q_{n,k}(x)=(x+n-1)Q_{n-1,k}(x)+(n+k-2)Q_{n-1,k-1}(x)
\ee
discovered by Shor \cite[\S 2]{sh} in his proof of Cayley's formula for counting labelled trees.
In fact,
the recursion for the special values
$P_{g,n}^k:=Q_{n-1,k+1}(2g-1)$ is:
\be\label{zeng-rec-2g-1}
P_{g,n}^k=(n+2g-3)P_{g,n-1}^k+(n+k-2)P_{g,n-1}^{k-1}.
\ee
Shor shows that
\be
\sum_{k=0}^{n-1} Q_{n,k}(x) = (x+n)^{n-1}.
\ee
Zeng \cite[Proposition 7]{ze} establishes the following remarkable connection:
\be
Q_{n,k}(x) = \psi_{k+1}(n - 1, x + n),
\ee
where $\psi_k(r,x)$ are the Ramanujan polynomials  ($1\leq k\leq r+1$)   defined by:
\be \label{def:psi}
\sum_{j=0}^{\infty}\frac{(x+j)^{r+j}e^{-u(x+j)}u^j}{j!}=
\sum_{k=1}^{r+1}\frac{\psi_k(r,x)}{(1-u)^{r+k}}.
\ee
Ramanujan gives the following recursion relation of $\psi_k(r, x)$:
\be
\psi_k(r + 1, x) = (x - 1)\psi_k(r, x - 1) + \psi_{k-1}(r + 1, x) - \psi_{k-1}(r + 1, x - 1).
\ee
Berndt et al. \cite{be, bew}  obtain the following recursion relation:
\be
\psi_k(r, x) = (x - r - k + 1)\psi_k(r - 1, x) + (r + k - 2)\psi_{k-1}(r - 1, x).
\ee
This is used by Zeng \cite{ze} to connect Ramanujan polynomials to Shor polynomials.
He also gives the combinatorial interpretations of such a connection.
For some other related works,
see \cite{cg, dr, dgx}.
In particular,
Chen and Yang give a context-free grammar for the Ramanujan-Shor polynomials
in \cite{cy}.

However, we do not have
$b_{g,n}^k = P_{g,n}^k = Q_{n-1, k+1}(2g-1)$ for $g > 0$ because the initial values for $Q_{n,k}(x)$ are
\ben
&&Q_{1,0}(x)  =  1, \\
&&Q_{n,-1}(x)  =  0 , \quad n \geq 1, \\
&&Q_{1,k}(x)  =  0, \quad k \geq 1,
\een
and these correspond to
\ben
&&b_{g,2}^{-1}  =  1, \\
&&b_{g,n}^{-2}  =  0 , \quad n \geq 1, \\
&&b_{g,2}^{k-1}  =  0, \quad k \geq 1,
\een
but these are not satisfied by $b_{g,n}^k$.
Nevertheless,
when $g=0$, since $a_{0,n}^k$ makes sense only for $n \geq 3$ and $0 \leq k \leq n -3$,
these conditions are automatically satisfied,
so we get:
\be \label{eqn:a-Ramanujan}
(-1)^{k+n+1}\cdot n!\cdot a_{0,n}^k=Q_{n-1,k+1}(-1)
=\psi_{k+2}(n-2,n-2).
\ee
This relates our refined orbifold Euler characteristics of $\Mbar_{0,n}$
to special values of the Shor polynomials and Ramanujan polynomials,
hence to their combinatorial interpretations related to counting trees.
In particular,
one can use \cite[(35),  (21)]{ze} to compute $a_{0,n}^k$.
When $g>0$,
such connection is partially lost,
but it is still desirable to find combinatorial interpretations of the refined orbifold Euler
characteristics of $\Mbar_{g,n}$
that explains the recursion relation \eqref{eq-linear-general}.

\subsection{Generalization of Manin's functional equation to higher genera}

In this subsection,
we derive functional equations for generating series of $\{\widetilde\chi_{g,n}(\kappa)\}$
for $g\geq 1$.

First let us consider the case of genus one.
For $g=1$, we put the coefficients $\{a_{1,n}^k\}$ for $n\geq 1$
in a triangle (without the first row) as follows.

\ben
&\begin{tikzpicture}
\node [align=center,align=center] at (-1,0) {$-\frac{1}{12}$};
\node [align=center,align=center] at (1,0) {$\frac{1}{2}$};
\end{tikzpicture}&\\
&\begin{tikzpicture}
\node [align=center,align=center] at (-2,0) {$\frac{1}{24}$};
\node [align=center,align=center] at (0,0) {$-\frac{7}{24}$};
\node [align=center,align=center] at (2,0) {$\frac{1}{2}$};
\end{tikzpicture}&\\
&\begin{tikzpicture}
\node [align=center,align=center] at (-3,0) {$-\frac{1}{36}$};
\node [align=center,align=center] at (-1,0) {$\frac{2}{9}$};
\node [align=center,align=center] at (1,0) {$-\frac{5}{8}$};
\node [align=center,align=center] at (3,0) {$\frac{2}{3}$};
\end{tikzpicture}&\\
&\begin{tikzpicture}
\node [align=center,align=center] at (-4,0) {$\frac{1}{48}$};
\node [align=center,align=center] at (-2,0) {$-\frac{3}{16}$};
\node [align=center,align=center] at (0,0) {$\frac{199}{288}$};
\node [align=center,align=center] at (2,0) {$-\frac{41}{32}$};
\node [align=center,align=center] at (4,0) {$1$};
\end{tikzpicture}&\\
&\begin{tikzpicture}
\node [align=center,align=center] at (0,0) {$\cdots \cdots$};
\end{tikzpicture}&
\een

The first row is given by
$\widetilde\chi_{1,1}=-\frac{1}{12} +\frac{ 1}{2}\kappa$,
and the first element of each row is given by $\frac{1}{n!}\chi(\cM_{1,n})$
for $n\geq 1$.
Then all the other elements are determined by the two elements above it
via the linear recursion relation
\be\label{genus1-linear}
(n+1)a_{1,n+1}^{k}=-n\cdot a_{1,n}^{k}+(n+k-1)a_{1,n}^{k-1}.
\ee

Similarly to the case of genus zero,
we define:
\be
\label{eq-maningenerating-1}
\psi(y,\kappa):=\sum_{n=1}^\infty
n\kappa y^{n-1}\widetilde\chi_{1,n}(\kappa),
\ee
then $\psi$ can be determined by the following:

\begin{Theorem} \label{thm:Functional1}
We have
\be
(1+y-\kappa\chi)\psi=
\half \kappa^2\chi'-\frac{1}{12}\kappa,
\ee
where $\chi(y,\kappa)$ is defined by \eqref{genus0-chi} and $\chi'=\frac{\pd\chi}{\pd y}$.
\end{Theorem}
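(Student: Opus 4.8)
The plan is to follow the blueprint of the genus-zero argument that produced \eqref{refined-eqn}: I would convert the linear recursion \eqref{genus1-linear} into a single generating-series identity for $\psi$, and then eliminate the $\kappa$-derivatives that appear by substituting the quadratic recursion \eqref{genus1-qrec}. The first move is to rewrite \eqref{genus1-linear} in the differential form analogous to \eqref{rec-proof-manin}, namely
\[
(n+1)\widetilde\chi_{1,n+1} = -n\widetilde\chi_{1,n} + n\kappa\widetilde\chi_{1,n} + \kappa^2\frac{d}{d\kappa}\widetilde\chi_{1,n},
\]
valid for $n\geq 1$. This is immediate by matching coefficients of $\kappa^k$: the factor $n\kappa$ shifts $a_{1,n}^{k-1}$ and the operator $\kappa^2\frac{d}{d\kappa}$ contributes the factor $(k-1)$, so the right-hand side reproduces $-n\,a_{1,n}^{k}+(n+k-1)a_{1,n}^{k-1}$.

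Next I would multiply this relation by $\kappa y^n$ and sum over $n\geq 1$. Using $\psi = \kappa\sum_{n\geq 1} n\widetilde\chi_{1,n}y^{n-1}$, the left-hand side reindexes to $\psi - \kappa\widetilde\chi_{1,1}$, while the first two terms on the right become $-y\psi$ and $\kappa y\psi$. The essential step is to treat the remaining term $\kappa^3\sum_{n\geq 1}\bigl(\frac{d}{d\kappa}\widetilde\chi_{1,n}\bigr)y^n$. Here I would insert the differentiated form of \eqref{genus1-qrec},
\[
\frac{d}{d\kappa}\widetilde\chi_{1,n} = \tfrac{1}{2}(n+2)(n+1)\widetilde\chi_{0,n+2} + \sum_{i=3}^{n+1} i(n+2-i)\widetilde\chi_{0,i}\widetilde\chi_{1,n+2-i}.
\]
The first piece sums to $\tfrac{1}{2\kappa}(\chi'-1)$, after recognizing from \eqref{genus0-chi} that $\sum_{n\geq 3} n(n-1)\widetilde\chi_{0,n}y^{n-2} = (\chi'-1)/\kappa$. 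For the convolution, substituting $j=n+2-i$ turns the constraints $i\geq 3$, $j\geq 1$ into exactly the stable ranges of $\widetilde\chi_{0,i}$ and $\widetilde\chi_{1,j}$, so the double sum factors as $\bigl(\sum_{i\geq 3} i\widetilde\chi_{0,i}y^{i-1}\bigr)\bigl(\sum_{j\geq 1} j\widetilde\chi_{1,j}y^{j-1}\bigr) = \tfrac{(\chi-y)}{\kappa}\cdot\tfrac{\psi}{\kappa}$. Therefore this term equals $\tfrac{\kappa^2}{2}(\chi'-1) + \kappa(\chi-y)\psi$.

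Assembling all contributions and inserting $\widetilde\chi_{1,1} = -\frac{1}{12} + \frac{1}{2}\kappa$ (which comes from $\chi(\cM_{1,1}) = -\frac{1}{12}$ together with $a_{1,1}^1=\frac12$), the terms $\pm\kappa y\psi$ cancel, as do the two copies of $-\frac{1}{2}\kappa^2$, leaving $(1+y-\kappa\chi)\psi = \frac{1}{2}\kappa^2\chi' - \frac{1}{12}\kappa$, which is the claim. I expect the only delicate point — and the place an error could enter — to be the bookkeeping of the boundary and initial terms: one must check that the reindexing on the left produces precisely the single correction $\kappa\widetilde\chi_{1,1}$, and that the convolution has no endpoint defect. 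The latter is guaranteed exactly because the summation ranges in \eqref{genus1-qrec} coincide with the stability ranges $i\geq 3$ and $j\geq 1$, so no stray unstable contributions appear. Beyond this, no analytic difficulty arises; the work is a careful tracking of finite generating-series manipulations, entirely parallel to the proof of \eqref{refined-eqn}.
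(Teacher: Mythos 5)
Your proposal is correct and follows essentially the same route as the paper: both rewrite the linear recursion \eqref{genus1-linear} in the differential form \eqref{proof-genus1-rec}, substitute the quadratic recursion \eqref{genus1-qrec} for $\frac{d}{d\kappa}\widetilde\chi_{1,n}$, and pass to generating series to obtain $\psi=-y\psi+\half\kappa^2\chi'+\kappa\chi\psi-\frac{1}{12}\kappa$. The only difference is that you spell out the series manipulations (reindexing, the convolution, and the cancellation of the $\kappa y\psi$ and $\half\kappa^2$ terms) that the paper compresses into a single step.
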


\begin{proof}
The linear recursion \eqref{genus1-linear} tells us:
\be\label{proof-genus1-rec}
(n+1)\widetilde\chi_{1,n+1}=-n\widetilde\chi_{1,n}
+n\kappa\widetilde\chi_{1,n}+\kappa^2\frac{d}{d\kappa}\widetilde\chi_{1,n},
\ee
where $\frac{d}{d\kappa}\widetilde\chi_{1,n}$ can be rewritten as
\ben
\frac{d}{d\kappa}\widetilde\chi_{1,n}=
\half(n+2)(n+1)\widetilde\chi_{0,n+2}+
\sum_{i=3}^{n+1}
i(n+2-i)\widetilde\chi_{0,i}\widetilde\chi_{1,n+2-i}
\een
by the quadratic recursion \eqref{genus1-qrec}.
Therefore the recursion \eqref{proof-genus1-rec} is equivalent to
the following equation for $\chi(y,\kappa)$ and $\psi(y,\kappa)$:
\ben
\psi=-y\psi+\half \kappa^2\chi'+\kappa\chi\psi-\frac{1}{12}\kappa.
\een
\end{proof}

Notice that we have \eqref{genus0-chi'}, thus
the above result can be rewritten as:
\ben
\psi=\frac{\half\kappa^2(1+\chi)}{(1+y-\kappa\chi)^2}-\frac{\frac{1}{12}\kappa}{(1+y-\kappa\chi)}.
\een
In particular, take $\kappa=1$ in the above results,
we ontain:

\begin{Corollary}
We have:
\ben
\psi(y,1)
=\frac{(1+\chi(y,1))}{2(1+y-\chi(y,1))^2}-\frac{1}{12(1+y-\chi(y,1))},
\een
where
\begin{equation*}
\chi(y,1)=y+\sum_{n=3}^{\infty}\frac{y^{n-1}}{(n-1)!}\chi(\Mbar_{0,n}),
\qquad
\psi(y,1)=\sum_{n=1}^{\infty}\frac{y^{n-1}}{(n-1)!}\chi(\Mbar_{1,n}).
\end{equation*}
\end{Corollary}

In general, for a fixed genus $g\geq 2$,
we can put the coefficients $\{a_{g,n}^k\}$ into a triangle
(without fisrt $3g-3$ rows for $g\geq 2$).
The first row is the coefficients of $\widetilde\chi_{g,0}(\kappa)$,
which can be determined either by the quadratic recursion relation \eqref{rec-integral}
using lower genus data,
or by expanding \eqref{gene-chi} directly
(actually the first method is more efficient than the second one).
Then every other element in this triangle
can be determined by the two elements above it
via the linear recursion \eqref{eq-linear-general}.
In \S \ref{sec5} we will present explicit solutions to such linear recursion relations
in all genera $g\geq 0$.

Now let us define $\varphi_0(y,\kappa):=\chi(y,\kappa)$,
and
\be
\label{eq-maningenerating-g}
\varphi_{g}(y,\kappa):=
\sum_{n=1}^\infty n\kappa y^{n-1}\widetilde\chi_{g,n}(\kappa),
\qquad g \geq 1.
\ee
In particular, it is clear that $\varphi_1=\psi$.
Then we have:

\begin{Theorem}
\label{thm:Functional2}
For every $g\geq 1$,
we have
\be
(y+1)\varphi_g'+(2g-1)\varphi_g=\half\kappa^2\varphi_{g-1}''+
\kappa\cdot\sum_{g_1+g_2=g}\varphi_{g_1}'\varphi_{g_2}.
\ee
where $\varphi_g':=\frac{\pd}{\pd y}\varphi_g$.
\end{Theorem}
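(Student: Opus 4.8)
The plan is to follow the template of the genus zero and genus one cases (the proofs of Theorem~\ref{thm:Functional1} and of Manin's equation): combine the linear recursion of Theorem~\ref{linear-general} with the quadratic recursion \eqref{rec-integral}, and then repackage everything into the generating series $\varphi_g$. First I would lift the coefficient recursion \eqref{eq-linear-general} to the level of the polynomials $\widetilde\chi_{g,n}(\kappa)$. Multiplying \eqref{eq-linear-general} by $\kappa^k$ and summing over $k$, the factor $(n+k-1)$ splits as $n+(k-1)$, converting the $a_{g,n}^{k-1}$ terms into $n\kappa\widetilde\chi_{g,n}$ and $\kappa^2\frac{d}{d\kappa}\widetilde\chi_{g,n}$, so that
\[
(n+1)\widetilde\chi_{g,n+1}=(2-2g-n)\widetilde\chi_{g,n}+n\kappa\widetilde\chi_{g,n}+\kappa^2\frac{d}{d\kappa}\widetilde\chi_{g,n}.
\]
This is the uniform form of \eqref{rec-proof-manin} and \eqref{proof-genus1-rec}.

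Next I would expand the left-hand side $(y+1)\varphi_g'+(2g-1)\varphi_g$ directly from $\varphi_g=\kappa\sum_{n}n y^{n-1}\widetilde\chi_{g,n}$. The $y^{n-1}$-coefficients assemble into $n(n+2g-2)$, while $y\varphi_g'$ also produces a shifted sum $\kappa\sum_n n(n-1)y^{n-2}\widetilde\chi_{g,n}$ which, after the index shift $n\mapsto n+1$, I feed into the rewritten linear recursion. The decisive cancellation is $n(n+2g-2)+n(2-2g-n)=0$, which eliminates the two sums carrying the polynomial $n(n+2g-2)$ and leaves
\[
(y+1)\varphi_g'+(2g-1)\varphi_g=\kappa^2\sum_n n^2 y^{n-1}\widetilde\chi_{g,n}+\kappa^3\sum_n n y^{n-1}\frac{d}{d\kappa}\widetilde\chi_{g,n}.
\]

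Then I would substitute the $\kappa$-derivative of \eqref{rec-integral} for $\frac{d}{d\kappa}\widetilde\chi_{g,n}$. The diagonal term $(n+2)(n+1)\widetilde\chi_{g-1,n+2}$ becomes, after $n\mapsto n-2$, exactly $\frac{1}{\kappa}\varphi_{g-1}''$, which together with the prefactor yields the term $\half\kappa^2\varphi_{g-1}''$. For the quadratic term I would use the symmetry of the ordered sum to write $\kappa\sum_{g_1+g_2=g}\varphi_{g_1}'\varphi_{g_2}=\frac{\kappa}{2}\sum_{g_1+g_2=g}(\varphi_{g_1}\varphi_{g_2})'$, and recognize that the convolution produced by the quadratic recursion, with weight $n_1 n_2(n_1+n_2-2)$, is precisely $\frac{1}{\kappa^2}(\Phi_{g_1}\Phi_{g_2})'$, where $\Phi_h=\varphi_h$ for $h\ge 1$ and $\Phi_0=\chi-y$.

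The \emph{main obstacle}, and the only genuinely delicate point, is the bookkeeping of the genus-zero conventions: in the quadratic recursion the genus-zero factors range only over $n_i\ge 3$ and carry no constant ``$y$'' term, so they reconstruct $\Phi_0=\chi-y$ rather than $\varphi_0=\chi$. Upgrading $\Phi_0$ to $\varphi_0$ in the two ordered pairs $(0,g)$ and $(g,0)$ introduces an error $\kappa(y\varphi_g)'$. The punchline is that this is cancelled exactly by the leftover term from the second step, since $\kappa^2\sum_n n^2 y^{n-1}\widetilde\chi_{g,n}=\kappa(\varphi_g+y\varphi_g')=\kappa(y\varphi_g)'$; assembling the three pieces, the two copies of $\kappa(y\varphi_g)'$ cancel and the claimed identity follows. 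I would finally verify the range of validity: the rewritten linear recursion requires $(g,n)$ stable, which holds throughout the relevant sums for $g\ge 1$, and the case $g=1$ (where $\varphi_{g-1}=\varphi_0=\chi$) goes through unchanged because $(y)''=0$ gives $\varphi_0''=(\chi-y)''$.
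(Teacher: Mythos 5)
Your proof is correct and follows essentially the same route as the paper's: both combine the linear recursion \eqref{eq-linear-general}, lifted to the level of $\widetilde\chi_{g,n}(\kappa)$, with the $\kappa$-derivative of the quadratic recursion \eqref{rec-integral}, and then repackage the result as an identity for the generating series $\varphi_g$. The only difference is organizational --- the paper first sums to an integrated equation (with an undetermined $y$-independent term $\widetilde\varphi_g(\kappa)$) and then differentiates in $y$, whereas you verify the differentiated identity directly and make explicit the cancellation of the two $\kappa(y\varphi_g)'$ terms arising from the genus-zero stability convention, a bookkeeping point the paper's ``this equation is equivalent to'' step leaves implicit.
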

\begin{proof}
Similar to the case of genus zero and genus one,
we combine the linear recursion \eqref{eq-linear-general}
and the quadratic recursion \eqref{rec-integral} to get:
\begin{equation*}
\begin{split}
(n+1)\widetilde\chi_{g,n+1}=&
(2-2g-n)\widetilde\chi_{g,n}+n\kappa\widetilde\chi_{g,n}
+\half\kappa^2(n+2)(n+1)\widetilde\chi_{g-1,n+2}\\
&+\half\kappa\sum_{\substack{g_1+g_2=g\\n_1+n_2=n+2\\n_1\geq 1,n_2\geq 1}}
n_1n_2\widetilde\chi_{g_1,n_1}\widetilde\chi_{g_2,n_2}.
\end{split}
\end{equation*}
This equation is equivalent to
\ben
\varphi_g=(2-2g)\int_0^y\varphi_g dy-y\varphi_g+\half\kappa^2\varphi_{g-1}'
+\half\kappa\sum_{g_1+g_2=g}\varphi_{g_1}\varphi_{g_2}+\widetilde\varphi_{g}(\kappa),
\een
where $\widetilde\varphi_{g}(\kappa)$ is a term independent of $y$.
Then the conclusion holds by applying $\frac{\pd}{\pd y}$ to the above equation.
\end{proof}

In particular,
we can take $\kappa=1$,
then then the above theorem gives us:

\begin{Corollary}
We have:
\ben
(y+1)\varphi_g'(y,1)+(2g-1)\varphi(y,1)=\half\varphi_{g-1}''(y,1)+
\sum_{g_1+g_2=g}\varphi_{g_1}'(y,1)\varphi_{g_2}(y,1),
\een
where
\begin{equation*}
\begin{split}
&\varphi_0(y,1)=y+\sum_{n=3}^{\infty}\frac{y^{n-1}}{(n-1)!}\chi(\Mbar_{0,n}),\\
&\varphi_g(y,1)=\sum_{n=1}^{\infty}\frac{y^{n-1}}{(n-1)!}\chi(\Mbar_{g,n}),
\qquad g\geq 1.
\end{split}
\end{equation*}
\end{Corollary}

\subsection{Operator formalism for the linear recursion}
\label{sec:Operator}

In this subsection we present a reformulation of the results in last subsection using an operator
formalism.

First recall that
the definition \eqref{eq-realization-D} of the operator $D$
depends on the number of external edges $|E^{ext}(\Gamma)|$
(which is just $n$ when acting on $\chi_{g,n}(t,\kappa)$).
Here in order to derive an operator formalism,
we are supposed to modify $D$ first to get an operator which does not depend on $n$.

Notice that the refined orbifold Euler characteristic $\chi_{g,n}(t,\kappa)$
is of the form:
\ben
\chi_{g,n}(t,\kappa)=t^{2-2g-n}\cdot
\widetilde\chi_{g,n}(\kappa),
\een
it follows that
\ben
\frac{\pd}{\pd t}\chi_{g,n}(t,\kappa)&=&
(2-2g-n)t^{1-2g-n}\widetilde\chi_{g,n}(\kappa)\\
&=&(2-2g)t^{-1}\cdot\chi_{g,n}(\kappa)
-nt^{-1}\cdot\chi_{g,n}(\kappa).
\een
Therefore the operator $D$ acts on $\widetilde\chi_{g,n}(t,\kappa)$ by:
\begin{equation*}
\begin{split}
D\chi_{g,n}(t,\kappa)=&
\biggl(\frac{\pd}{\pd t}+\kappa^2 t^{-1}\cdot\frac{\pd}{\pd \kappa}
+n\cdot \kappa t^{-1}\biggr)\chi_{g,n}(t,\kappa)\\
=&\biggl((1-\kappa)\frac{\pd}{\pd t}+\frac{\kappa^2}{t}\frac{\pd}{\pd\kappa}
+\frac{(2-2g)\kappa}{t}\biggr)\chi_{g,n}(t,\kappa).
\end{split}
\end{equation*}
Define $\widetilde D$ to be the operator:
\be
\label{eq-operator-tildeD}
\widetilde D:=(1-\kappa)\frac{\pd}{\pd t}+\frac{\kappa^2}{t}\frac{\pd}{\pd\kappa}
+\frac{(2-2g)\kappa}{t},
\ee
then $\widetilde{D}$ is a suitable modification of $D$,
and the recursion \eqref{prop1-eq1} is equivalent to:
\be\label{linear-new}
\chi_{g,n+1}(t,\kappa)=\frac{1}{n+1}\widetilde D \chi_{g,n}(t,\kappa),
\ee
or,
\begin{equation}
\begin{split}
&\chi_{0,n}=\frac{3!}{n!}\cdot\widetilde D ^{n-3}\chi_{0,3}, \quad n\geq 3;\\
&\chi_{1,n}=\frac{1}{n!}\cdot\widetilde D ^{n-1}\chi_{1,1}, \quad n\geq 1;\\
&\chi_{g,n}=\frac{1}{n!}\cdot\widetilde D ^{n}\chi_{g,0}, \quad g\geq 2.\\
\end{split}
\end{equation}

Now let us define:
\ben
&&\chi_0:=\sum_{n=3}^\infty n!\cdot\chi_{0,n}(t,\kappa)
=t^2\cdot\sum_{n=3}^\infty \frac{n!}{t^n}\widetilde\chi_{0,n}(\kappa);\\
&&\chi_1:=\sum_{n=1}^\infty n!\cdot\chi_{1,n}(t,\kappa)
=\sum_{n=1}^\infty \frac{n!}{t^n}\widetilde\chi_{1,n}(\kappa);\\
&&\chi_g:=\sum_{n=0}^\infty n!\cdot\chi_{g,n}(t,\kappa)
=t^{2-2g}\cdot\sum_{n=0}^\infty \frac{n!}{t^n}\widetilde\chi_{g,n}(\kappa),
\quad g\geq 2,
\een
then clearly they are generating series of $\{\widetilde\chi_{g,n}(\kappa)\}$.
Similarly, we can define another type of generating series:
\ben
&&\widehat\chi_0:=\sum_{n=3}^\infty \chi_{0,n}(t,\kappa)
=t^2\cdot\sum_{n=3}^\infty \frac{1}{t^n}\widetilde\chi_{0,n}(\kappa);\\
&&\widehat\chi_1:=\sum_{n=1}^\infty \chi_{1,n}(t,\kappa)
=\sum_{n=1}^\infty \frac{1}{t^n}\widetilde\chi_{1,n}(\kappa);\\
&&\widehat\chi_g:=\sum_{n=0}^\infty \chi_{g,n}(t,\kappa)
=t^{2-2g}\cdot\sum_{n=0}^\infty \frac{1}{t^n}\widetilde\chi_{g,n}(\kappa),
\quad g\geq 2.
\een
Then the linear recursion \eqref{linear-new} gives us the following:

\begin{Theorem} \label{thm:Add-Point}
We have
\ben
&&\chi_{g}(t,\kappa)=\frac{1}{1-\widetilde{D}} \chi_{g,0}(t,\kappa);\\
&&\widehat\chi_{g}(t,\kappa)=e^{\widetilde{D}} \chi_{g,0}(t,\kappa),
\een
where $\widetilde{D}$ is given by \eqref{eq-operator-tildeD}.
Here we use the convention:
\begin{equation*}
\begin{split}
&\widetilde D ^3\chi_{0,0}:=6\chi_{0,3};\qquad
\widetilde D\chi_{1,0}:=2\chi_{1,1};\qquad
\widetilde\chi_{1,0}:=0;\\
&\widetilde D ^j\chi_{0,0}:=0,\qquad j=0,1,2.
\end{split}
\end{equation*}
\end{Theorem}

\subsection{Motivic realization of the abstract quantum field theory}

The Euler characteristic is an example of motivic characteristic classes.
In this subsection we speculate on the possibility of
a realization of our abstract quantum field theory
by using the orbifold motivic classes of the
Deligne-Mumford moduli space $\overline\cM_{g,n}$ of stable curves.

The theory of motivic measures and motivic integrals was first
introduce by Kontsevich \cite{kon2},
and generalized to singular spaces by Denef and Loeser \cite{dl}.
Let $\mathcal{VAR}$ be the category of complex algebraic varieties of finite type,
and $R$ be a commutative ring with unity.
A motivic class is a ring homomorphism
\ben
[\cdot]:K_0(\mathcal{VAR})\to R,
\een
where $K_0(\mathcal{VAR})$ is the Grothendieck ring of complex varieties.
In other words, a motivic class is a map $[\cdot]$ satisfying:
\ben
&&\text{(1) }[X]=[X'],  \quad \text{for } X\cong X';\\
&&\text{(2) }[X]=[X\setminus Y]+[Y], \quad \text{for a closed subvariety $Y\subset X$};\\
&&\text{(3) }[X\times Y]=[X]\cdot[Y];\\
&&\text{(4) }[pt]=1.
\een

For our purpose we need to consider the orbifold motivic class of
the Deligne-Mumford moduli space $\overline\cM_{g,n}$ of stable curves.
We need to have a relation
\be
[X/G] = [X]/|G|
\ee
for a finite group $G$ acting on $X$.
We also need to make sense of the orbifold motivic class of $\cM_{g,n}$.
(The work \cite{ya} might be useful for this purpose.)
Then we understand the orbifold motivic class of $\overline\cM_{g,n}/S_n$ to be
\be\label{motivic}
[\overline\cM_{g,n}/S_n]=\sum_{\Gamma\in\cG_{g,n}^c}\biggl(
\frac{1}{|\Aut(\Gamma)|}\prod_{v\in V(\Gamma)}[\cM_{g(v),\val(v)}]\biggr),
\ee
where $[\cM_{g(v),\val(v)}]$ denotes the orbifold motivic class of $\cM_{g(v),\val(v)}$
(whatever that means).

Such consideration would give us a natural realization
of our abstract quantum field theory.
We assign the Feynman rule to a stable graph $\Gamma\in\cG_{g,n}^c$ as follows.
The contribution of a vertex $v\in V(\Gamma)$ is defined to be
the orbifold motivic class of $\cM_{g(v),\val(v)}$:
\ben
\omega_v=[\cM_{g(v),\val(v)}],
\een
and the contribution of an internal edge $e\in E(\Gamma)$ is set to be $\omega_e=1$.
Thus the Feynman rule is
\ben
\Gamma \mapsto \omega_\Gamma = \prod_{v\in V(\Gamma)} [\cM_{g(v),\val(v)}].
\een
Therefore the abstract $n$-point function $\wcF_{g,n}$ is realized by
\ben
\wF_{g,n}=\sum_{\Gamma\in\cG_{g,n}^c}\biggl(
\frac{1}{|\Aut(\Gamma)|}\prod_{v\in V(\Gamma)}[\cM_{g(v),\val(v)}]\biggr),
\een
i.e., $\wF_{g,n}=[\overline\cM_{g,n}/S_n]$ is the
orbifold motivic class of $\overline\cM_{g,n}$.
Once this has been done,
we can apply our formalism introduced in \cite{wz} to this case
to derive some quadratic recursion relations
for the orbifold motivic class $[\overline\cM_{g,n}/S_n]$.

\section{Structures of $\chi_{g,0}(t,\kappa)$}
\label{sec4}

In \S \ref{sec:Linear} and \S \ref{sec:Operator}
we have reduced the computations of $\chi_{g,n}(t,\kappa)$
to the problem of computing $\chi_{g,0}(t, \kappa)$.
In this section we will present various methods to solve $\chi_{g,0}(t, \kappa)$.
In particular,
we show that the generating series $G_k(z)$ of the coefficients $\{a_{g,0}^k\}$
can be represented as some polynomials in certain generating series $V_n(z)$
of $\chi(\cM_{g,n})$,
and the explicit formulas for $V_n(z)$ can be given in terms of the Barnes $G$-function.
We show that finding the expression of $G_k(z)$ in terms of $V_n(z)$
is equivalent to the topological 1D gravity
(with a genus-shift).

\subsection{Computations of $\chi_{g,0}(t,\kappa)$ by quadratic recursions}

In this subsection we specialize the quadratic recursion
in \S \ref{sec-quadr-rec} to the case $n=0$.

Taking $n=0$ in Theorem \ref{thm-rec-chi-g,n},
we obtain a recursion:
\be
\begin{split}
&\chi_{g,0}(t,\kappa)\\
=&F_{g,0}^{orb}(t)+\frac{1}{2}\int_{0}^{\kappa}\bigg(
DD\chi_{g-1,0}+
\sum_{r=2}^{g-2}
D\chi_{r,0}D\chi_{g-r,0}
+2D\chi_{g-1,0}\cdot \chi_{1,1}
\bigg)d\kappa\\
=&F_{g,0}^{orb}(t)+\frac{1}{2}\int_{0}^{\kappa}\biggl[
\big(\frac{\pd}{\pd t}+\frac{\kappa^2}{t}\frac{\pd}{\pd \kappa}+\frac{\kappa}{t}\big)
\big(\frac{\pd}{\pd t}+\frac{\kappa^2}{t}\frac{\pd}{\pd \kappa}\big)\chi_{g-1,0}\\
&\qquad\qquad+\sum_{r=2}^{g-2}
\big(\frac{\pd}{\pd t}+\frac{\kappa^2}{t}\frac{\pd}{\pd \kappa}\big)\chi_{r,0}
\big(\frac{\pd}{\pd t}+\frac{\kappa^2}{t}\frac{\pd}{\pd \kappa}\big)\chi_{g-r,0}\\
&\qquad\qquad+2\big(\frac{\pd}{\pd t}+\frac{\kappa^2}{t}\frac{\pd}{\pd \kappa}\big)\chi_{g-1,0}
\cdot (-\frac{1}{12}+\frac{1}{2}\kappa)t^{-1}
\biggr]d\kappa\\
\end{split}
\ee
for $g\geq 3$,
where $F_{g,0}^{orb}(t)$ is given by \eqref{eq-realization-vertices}.
Recall that the refined orbifold Euler characteristic
$\chi_{g,n}(t,\kappa)$ is of the following form:
\ben
\chi_{g,n}(t,\kappa)
=\widetilde\chi_{g,n}(\kappa)\cdot t^{2-2g-n}
=\sum_{k=0}^{3g-3+n}a_{g,n}^k\kappa^k\cdot (\frac{1}{t})^{n-2+2g},
\een
then we have
\be
\bigg(\frac{\pd}{\pd t} \chi_{g,n}\bigg)\bigg|_{t=1}
=(2-2g-n)\widetilde\chi_{g,0}.
\ee
Thus by taking $t=1$ in the above quadratic recursion,
we get the following:

\begin{Theorem}
For $g\geq 3$, we have:
\begin{equation*}
\begin{split}
\widetilde\chi_{g,0}=&\frac{B_{2g}}{2g(2g-2)}+\frac{1}{2}\int_{0}^{\kappa}
\biggl[\biggl((3-2g)+\kappa^2\frac{\pd}{\pd\kappa}+2\kappa-\frac{1}{6}\biggr)
\biggl((4-2g)+\kappa^2\frac{\pd}{\pd\kappa}\biggr)\widetilde\chi_{g-1,0}
\\
&+\sum_{r=2}^{g-2}
\biggl((2-2r)+\kappa^2\frac{\pd}{\pd\kappa}\biggr)\widetilde\chi_{r,0}
\cdot\biggl((2-2g+2r)+\kappa^2\frac{\pd}{\pd\kappa}\biggr)\widetilde\chi_{g-r,0}
\biggr]d\kappa.
\end{split}
\end{equation*}
\end{Theorem}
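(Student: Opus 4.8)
The plan is to specialize the $n=0$ recursion of Theorem \ref{thm-rec-chi-g,n} (the displayed identity immediately preceding the statement) and then set $t=1$, exploiting the homogeneity $\chi_{g,0}(t,\kappa)=\widetilde\chi_{g,0}(\kappa)\cdot t^{2-2g}$ recorded in \eqref{chi-gn}. The only genuine work is to rewrite each of the three operator expressions $DD\chi_{g-1,0}$, $D\chi_{r,0}\,D\chi_{g-r,0}$ and $2D\chi_{g-1,0}\cdot\chi_{1,1}$ as $t^{2-2g}$ times an explicit operator in $\kappa$ applied to the $\widetilde\chi$'s; once this is done the common factor $t^{2-2g}$ pulls out of the $\kappa$-integral and cancels. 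I should also first note that in the $n=0$ genus sum $\sum_{g_1+g_2=g}D\chi_{g_1,0}D\chi_{g_2,0}$ the terms with $g_1=0$ or $g_2=0$ are excluded (they are unstable), the two terms $(g_1,g_2)=(1,g-1),(g-1,1)$ are evaluated by the convention $D\chi_{1,0}:=\chi_{1,1}$ of \eqref{convention-1} and together produce $2\chi_{1,1}\,D\chi_{g-1,0}$, and the remaining terms give the stable range $2\le r\le g-2$; this is exactly the displayed recursion.

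First I would record the action of $D$ on a homogeneous factor. Since $\chi_{g,0}=\widetilde\chi_{g,0}(\kappa)t^{2-2g}$ and $D=\pd_t+\kappa^2 t^{-1}\pd_\kappa$ on a zero-point function, a direct computation gives $D\chi_{g,0}=t^{1-2g}\bigl((2-2g)+\kappa^2\pd_\kappa\bigr)\widetilde\chi_{g,0}$. The key bookkeeping point is that $D\chi_{g,0}=\chi_{g,1}$ is now a \emph{one}-point function, so the next application of $D$ must use $n=1$, i.e.\ it includes the extra summand $\kappa t^{-1}$. Applying $D$ with $n=1$ to $D\chi_{g-1,0}$ then yields $DD\chi_{g-1,0}=t^{2-2g}\bigl((3-2g)+\kappa^2\pd_\kappa+\kappa\bigr)\bigl((4-2g)+\kappa^2\pd_\kappa\bigr)\widetilde\chi_{g-1,0}$, where $3-2g$ and $4-2g$ arise from the exponents $1-2(g-1)$ and $2-2(g-1)$.

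Next I would treat the term linear in $\chi_{1,1}$. Using $\widetilde\chi_{1,1}=-\tfrac1{12}+\tfrac12\kappa$ gives $\chi_{1,1}=(-\tfrac1{12}+\tfrac12\kappa)t^{-1}$, and multiplying by $2D\chi_{g-1,0}=2t^{3-2g}\bigl((4-2g)+\kappa^2\pd_\kappa\bigr)\widetilde\chi_{g-1,0}$ produces $t^{2-2g}\bigl(\kappa-\tfrac16\bigr)\bigl((4-2g)+\kappa^2\pd_\kappa\bigr)\widetilde\chi_{g-1,0}$. Since the prefactor $\kappa-\tfrac16$ is pure multiplication, it combines with the first-order operator $(3-2g)+\kappa^2\pd_\kappa+\kappa$ coming from $DD\chi_{g-1,0}$ into $(3-2g)+\kappa^2\pd_\kappa+2\kappa-\tfrac16$, which is exactly the outer operator in the statement. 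For the quadratic sum I would compute analogously $D\chi_{r,0}\,D\chi_{g-r,0}=t^{2-2g}\bigl((2-2r)+\kappa^2\pd_\kappa\bigr)\widetilde\chi_{r,0}\cdot\bigl((2-2g+2r)+\kappa^2\pd_\kappa\bigr)\widetilde\chi_{g-r,0}$, the exponent identity $(1-2r)+(1-2(g-r))=2-2g$ guaranteeing the common factor $t^{2-2g}$.

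Finally I would evaluate the inhomogeneous term: by Theorem \ref{harer-zagier}, $F_{g,0}(t)=\chi(\cM_{g,0})\,t^{2-2g}$ with $\chi(\cM_{g,0})=\frac{(2g-1)B_{2g}}{(2g)!}(2g-3)!=\frac{B_{2g}}{2g(2g-2)}$, so at $t=1$ this is the stated constant. Because every integrand term carries the common factor $t^{2-2g}$ while the integration variable is $\kappa$ (with $t$ held fixed), $t^{2-2g}$ factors cleanly out of $\int_0^\kappa$; dividing through by $t^{2-2g}$—equivalently setting $t=1$ and using $\widetilde\chi_{g,0}=\chi_{g,0}|_{t=1}$—gives the claimed formula. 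The one place demanding care is the $n$-dependence of $D$ under iteration: keeping track that the second $D$ acts on a one-point function (thereby contributing the extra $+\kappa$) is precisely what distinguishes the outer operator $(3-2g)+\kappa^2\pd_\kappa+\cdots$ from the inner $(4-2g)+\kappa^2\pd_\kappa$, and is the only subtlety in an otherwise routine substitution.
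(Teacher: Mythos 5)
Your proposal is correct and follows essentially the same route as the paper: the paper likewise specializes Theorem \ref{thm-rec-chi-g,n} to $n=0$ (with the unstable terms handled by the convention $D\chi_{1,0}:=\chi_{1,1}$, producing the $2D\chi_{g-1,0}\cdot\chi_{1,1}$ term), writes out $D$ explicitly with the extra $\kappa/t$ summand appearing only in the outer factor of $DD\chi_{g-1,0}$, and then sets $t=1$, merging $2\chi_{1,1}$ into the outer operator to get $(3-2g)+\kappa^2\pd_\kappa+2\kappa-\tfrac16$. Your write-up correctly supplies the homogeneity bookkeeping ($\chi_{g,0}=\widetilde\chi_{g,0}t^{2-2g}$, the exponent counts, and $\chi(\cM_{g,0})=\tfrac{B_{2g}}{2g(2g-2)}$) that the paper leaves implicit.
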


It follows that the recursion for the coefficients $\{a_{g,0}^k\}$ is:
\begin{Corollary}
We have:
\begin{equation*}
\begin{split}
&a_{g,0}^k=\frac{1}{2k}\cdot\biggl\{
(\frac{17}{6}-2g)(4-2g)a_{g-1,0}^{k-1}
+\biggl(k(4-2g)+(k-2)(\frac{17}{6}-2g)\biggr)a_{g-1,0}^{k-2}\\
&\quad+(k^2-3k)a_{g-1,0}^{k-3}
+\sum_{r=2}^{g-2}\biggl[\sum_{l+m=k-1}(2-2r)(2-2g+2r)a_{r,0}^{l}a_{g-r,0}^{m}\\
&\quad+\sum_{l+m=k-2}\biggl(m(2-2r)+l(2-2g+2r)\biggr)a_{r,0}^{l}a_{g-r,0}^{m}
+\sum_{l+m=k-3}lm a_{r,0}^{l}a_{g-r,0}^{m}
\biggr]\biggr\}
\end{split}
\end{equation*}
for $g\geq3$ and $k>0$, and the initial values are
$a_{g,0}^0=\frac{B_{2g}}{2g(2g-2)}$.
In particular, the recursion for the sequence
$\{a_{g,0}^{3g-3}\}_{g\geq 2}$ is:
\begin{equation*}
a_{g,0}^{3g-3}=\frac{1}{2}(3g-6)a_{g-1,0}^{3g-6}+
\frac{1}{6g-6}
\sum_{r=2}^{g-2}(3r-3)(3g-3r-3)
a_{r,0}^{3r-3}a_{g-r,0}^{3g-3r-3}
\end{equation*}
for $g\geq 3$.
\end{Corollary}

\begin{Example}
Using the quadratic recursion relation in the above theorem and
the initial value
\begin{equation*}
\widetilde\chi_{2,0}(\kappa)=
-\frac{1}{240} + \frac{13 }{288}\kappa -\frac{ 1}{6}\kappa^2 + \frac{5 }{24}\kappa^3,
\end{equation*}
one can recursively compute $\widetilde\chi_{g,0}(\kappa)$.
The following are the first few examples:
\begin{equation*}
\begin{split}
\widetilde\chi_{3,0}(\kappa)=&\frac{
1}{1008} - \frac{19 }{1440}\kappa + \frac{1307 }{17280}\kappa^2
- \frac{2539 }{10368}\kappa^3 + \frac{
 35 }{72}\kappa^4 - \frac{55 }{96}\kappa^5 + \frac{5 }{16}\kappa^6,
 \\
\widetilde\chi_{4,0}(\kappa)=&
-\frac{1}{1440} + \frac{6221}{604800}\kappa
- \frac{17063 }{241920}\kappa^2 +\frac{
 187051 }{622080}\kappa^3 - \frac{2235257 }{2488320}\kappa^4 \\
 &
 + \frac{182341 }{92160}\kappa^5 - \frac{
 66773 }{20736}\kappa^6 + \frac{8549 }{2304}\kappa^7
 - \frac{1045 }{384}\kappa^8 + \frac{1105 }{1152}\kappa^9,
\\
\widetilde\chi_{5,0}(\kappa)=&\frac{
1}{1056} - \frac{181 }{12096}\kappa + \frac{32821 }{290304}\kappa^{2}
- \frac{667199 }{1209600}\kappa^{3} + \frac{
 114641981 }{58060800}\kappa^{4} \\
 &
 - \frac{578872613 }{104509440}\kappa^{5} + \frac{
 374564131 }{29859840}\kappa^{6} - \frac{229328099 }{9953280}\kappa^{7}
 + \frac{ 2805265 }{82944}\kappa^{8}\\
 &
  - \frac{3182161 }{82944}\kappa^{9} + \frac{145883 }{4608}\kappa^{10}
 - \frac{26015 }{1536}\kappa^{11} + \frac{565 }{128}\kappa^{12},\\
 \cdots & \cdots
\end{split}
\end{equation*}

\end{Example}

\subsection{The generating series of $a_{g,0}^k$ for fixed $k$}

Each $\widetilde\chi_{g,0}(\kappa)$ is
the generating function of $a_{g,0}^k$ for fixed $g$.
In this subsection let us consider the generating function
of $a_{g,0}^k$ for fixed $k$.

Define $G_k(z)$ to be such generating functions:
\be
\label{eq-def-generating-G}
G_k(z):=\sum_{g\geq 2}a_{g,0}^k\cdot z^{2-2g}.
\ee
The recursion relations for $a_{g,0}^k$ in last subsection can be translated into the following
recursion relations for $G_k(z)$ ($k\geq 1$):

\begin{Proposition}
We have:
\begin{equation}
\label{rec-G}
\begin{split}
&G_k(z)=a_{2,0}^k z^{-2}+
\frac{z^{-2}}{2k}\biggl[
(\theta-\frac{7}{6})\theta G_{k-1}(z)+
\biggl((2k-2)\theta-\frac{7}{6}k+\frac{7}{3}\biggr)G_{k-2}(z)\\
&\quad+(k^2-3k)G_{k-3}(z)
+\sum_{l=0}^{k-1}\theta G_l(z)\cdot \theta G_{k-1-l}(z)\\
&\quad+2\sum_{l=0}^{k-2}\theta G_l(z)\cdot(k-2-l)G_{k-2-l}(z)
+\sum_{l=0}^{k-3}l(k-3-l)G_{l}(z)G_{k-3-l}(z)
\biggr],
\end{split}
\end{equation}
where $\theta:=z\frac{d}{dz}$,
and the initial values are $a_{2,0}^k=0$ except for:
\begin{align*}
a_{2,0}^0 & = - \frac{1}{240}, & a_{2,0}^1 & = \frac{13}{288}, &
a_{2,0}^2 & = - \frac{1}{6}, & a_{2,0}^3 & = \frac{5}{24}.
\end{align*}

\end{Proposition}

\begin{Example}
By the Harer-Zagier formula \eqref{eq-harer-zagier},
we have:
\be
\label{eq-G0-Bern}
G_0(z)=\sum_{g=2}^{\infty}\frac{B_{2g}}{2g(2g-2)}z^{2-2g}.
\ee
At the Digital Library of Mathematical Functions (NIST) (\S24.11) one can find the formula
\be
B_{2n}\sim (-1)^{n-1}4\sqrt{\pi n} \biggl( \frac{n}{\pi e}\biggr)^{2n},
\ee
so the series on the right-hand of \eqref{eq-G0-Bern}
has zero radius of convergence,
and it should only be understood as an asymptotic series for now.
The quadratic recursion \eqref{rec-G} gives us the following expressions
in terms of Bernoulli numbers:
\begin{equation*}
\begin{split}
&G_1(z)
=\sum_{g\geq 3}\biggl(\frac{1}{2}-\frac{5}{24(g-1)}\biggr)B_{2g-2}z^{2-2g}
+\sum_{g\geq 4}
\sum_{\substack{g_1+g_2=g\\g_1,g_2\geq 2}}\frac{B_{2g_1}B_{2g_2}}{8g_1g_2}z^{2-2g}
+\frac{13}{288}z^{-2},\\
&G_2(z)
=\sum_{g\geq 4}(\frac{g^2}{2}-\frac{23}{12}g+\frac{493}{288}
+\frac{13}{288(2g-4)})B_{2g-4}z^{2-2g}
-\sum_{g\geq 3}\frac{B_{2g-2}}{4(g-1)}z^{2-2g}
\\
&\quad+\sum_{g\geq 5}\sum_{\substack{g_1+g_2=g-1\\g_1\geq 2,g_2\geq 2}}
\biggl(\frac{g_2}{2g_1}+\frac{1}{4}-\frac{11}{24g_1}+\frac{5}{96g_1g_2}
\biggr)B_{2g_1}B_{2g_2}z^{2-2g}\\
&\quad+\sum_{g\geq 6}\sum_{\substack{g_1+g_2+g_3=g\\g_1,g_2,g_3\geq 2}}
\biggl(\frac{1}{8g_1g_2}-\frac{1}{16g_1g_2g_3}
\biggr)B_{2g_1}B_{2g_2}B_{2g_3}z^{2-2g}
+\frac{247}{3456}z^{-4}-\frac{1}{6}z^{-2}.
\end{split}
\end{equation*}

\end{Example}

\subsection{Generating series of $\chi(\cM_{g,n})$ in terms of Barnes $G$-function}
\label{sec:Barnes}

By the method of last subsection it is clear that one can express each $G_k(z)$
in terms of Bernoulli numbers.
A priori,
they are just series whose radii of convergence are zero.
We will relate them to the Barnes $G$-function $G(z)$
in this subsection.

First let us recall a result due to Distler-Vafa \cite{dv}
which represents the generating series of $\chi(\cM_{g,0})$
(i.e., the series $G_0(z)$) in terms of
the Euler Gamma-function:
\be
\Gamma(z) : = \int_0^\infty t^{z-1}e^{-t}dt.
\ee
Recall that Gamma-function $\Gamma(z)$ has the following Weierstrass product:
\be
\label{eq-Gamma--Weierstrass}
\frac{1}{\Gamma(z)} = ze^{\gamma z} \prod_{n=1}^\infty
\biggl\{\biggl(1+\frac{z}{n}\biggr)e^{-\frac{z}{n}}\biggr\},
\ee
where $\gamma$ is the Euler-Mascheroni's constant defined by:
\be
\gamma:=\lim_{n\to \infty} \biggl( 1+\frac{1}{2} + \cdots + \frac{1}{n} - \log n\biggr).
\ee
The Stirling series is  the following asymptotic expansion of $\log \Gamma(z)$:
\be
\log \Gamma(z) \sim (z-\frac{1}{2})\log z -z + \frac{1}{2}\log(2\pi)
+ \sum_{k=1}^\infty \frac{B_{2k}}{2k(2k-1)z^{2k-1}}.
\ee
By \eqref{eq-G0-Bern} we have:
\begin{equation*}
\frac{d}{dz}G_0(z)=-\sum_{g=2}^{\infty}\frac{B_{2g}}{2g}z^{1-2g}
\end{equation*}
is the asymptotic series of
\ben
&& (-z)\biggl(-\frac{d}{dz}\log\Gamma(z+1)+\log z+\frac{1}{2}z^{-1}-\frac{1}{12}z^{-2}
\biggr) \\
& = & z\frac{d}{dz}\log\Gamma(z+1)-z\log z-\frac{1}{2}+\frac{1}{12}z^{-1}.
\een
Recall that $\Gamma(z)$ is singular at $z=0$,
in order to carry out the integration from $0$ to $z$,
in the above we have used the fact that
\be \label{eq-Gamma-z+1}
\Gamma(z+1) = z\Gamma(z).
\ee
Thus we obtain the following:

\begin{Lemma} \label{lm-g0}
Let $G_0(z)$ be the generating series of $\chi(\cM_{g,0})$ for $g \geq 2$,
\be
G_0(z):=\sum_{g\geq 2} \chi(\cM_{g,0})z^{2-2g}.
\ee
Then up to some constant $C$, we have for $z \gg 0$:
\be
\label{eq-G0-gamma}
 \int_0^z \biggl(z\frac{d}{dz}\log\Gamma(z+1)\biggr) dz
-\half z^2\log z +\frac{1}{4}z^2 - \half z+\frac{1}{12}\log z -C
\sim G_0(z).
\ee
\end{Lemma}

\begin{Remark}
This Lemma is essentially due to Distler and Vafa \cite{dv}.
Our modification is based on the following observation.
In \cite[(10)]{dv}, the following formula is used:
\be
F(\mu) = \int^\mu x\frac{d}{dx}\log\Gamma(x) \; dx.
\ee
Note $\Gamma(x)$ is not define at $x= 0$,
so this expression needs some suitable regularization at $x=0$.
Our formula \eqref{eq-G0-gamma} avoids this problem.
\end{Remark}

\begin{Remark}
We will later fix the constant $C$ to be $\zeta'(-1)$.
On the left-hand side of \eqref{eq-G0-gamma},
 $\half z^2\log(z)-\frac{1}{4}z^2+\half z$ should be understood as
`the contribution of $\cM_{0,0}$',
and $-\frac{1}{12}\log(z)$ should be understood as
`the contribution of $\cM_{1,0}$'.
In other words,
we define
\bea
&& F_{0,0}(z) = \half z^2\log(z)-\frac{1}{4}z^2+\half z+\zeta'(-1), \label{def-F00} \\
&& F_{1,0}(z) = -\frac{1}{12}\log(z). \label{def-F10}
\eea
With these understood,
\eqref{eq-G0-gamma} can be rewritten as:
\be \label{eq-Chi-M-g-0}
F_{0,0}(z)+F_{1,1}(z)+\sum_{g =2}^\infty \chi(\cM_{g,0})z^{2-2g}
= \int_0^z \biggl(z\frac{d}{dz}\log\Gamma(z+1)\biggr) dz.
\ee
\end{Remark}

\begin{Definition} \label{def:V}
Define
\be
\label{eq-def-vertexV}
\begin{split}
&V_0(z):=\sum_{g=2}^\infty \chi(\cM_{g,0})z^{2-2g};\\
&V_n(z):=\sum_{g=1}^\infty \chi(\cM_{g,n})z^{2-2g-n},\qquad n=1,2;\\
&V_n(z):=\sum_{g=0}^\infty \chi(\cM_{g,n})z^{2-2g-n},\qquad n\geq 3,
\end{split}
\ee
to be the total contributions of all stable vertices of valence $n$.

\end{Definition}

By \eqref{eq-harer-zagier}, one can see
\be
\chi({\cM}_{g,n+1}) = (2-2g-n) \cdot \chi({\cM}_{g,n}).
\ee
It follows that:
\be
\label{eq-Vn-derivative}
\begin{split}
& V_0'(z) = V_1(z)+\frac{1}{12}z^{-1}, \\
& V_1'(z) = V_2(z), \\
& V_2'(z) = V_3(z) - z^{-1}, \\
& V_n'(z) = V_{n+1}(z), \qquad n \geq 3.
\end{split}
\ee
The series $V_n(z)$ can be related to the Gamma function as follows:
\begin{equation*}
\begin{split}
V_0(z)=&G_0(z)
\sim \int_0^z \biggl(z\frac{d}{dz}\log\Gamma(z+1)\biggr) dz
-\half z^2\log z+\frac{1}{4}z^2 - \half z - C +\frac{1}{12}\log z,\\
V_1(z)
=&\frac{d}{dz}G_0(z)-\frac{1}{12}z^{-1} \sim z\frac{d}{dz}\log\Gamma(z)-z\log z+\half,\\
V_2(z)
=&\frac{d^2}{dz^2}G_0(z)+\frac{1}{12}z^{-2}
\sim z\frac{d^2}{dz^2}\log\Gamma(z)+\frac{d}{dz}\log\Gamma(z)-\log z-1,
\end{split}
\end{equation*}
and for $n\geq 3$,
\begin{equation*}
\begin{split}
V_n(z)
=&\frac{d^n}{dz^n}G_0(z)+(-1)^{n+1}(n-3)!\cdot z^{2-n}
+\frac{(-1)^n}{12}(n-1)!\cdot z^{-n}\\
\sim &z\frac{d^n}{dz^n}\log\Gamma(z)+(n-1)\frac{d^{n-1}}{dz^{n-1}}\log\Gamma(z).
\end{split}
\end{equation*}

The main result of this subsection is that
$V_n(z)$ can be represented in terms of the Barnes $G$-functions.
The Barnes G-function, also called the double Gamma function,
is defined by the equation:
\be
G(z+1):=(2\pi)^{z/2}e^{-z(z+1)/2- \frac{1}{2}\gamma z^2} \prod_{n=1}^\infty
\biggl\{ \biggl(1+\frac{z}{n}\biggr)^ne^{-z+z^2/(2n)}\biggr\}.
\ee
It satisfies the relations
\begin{align}
G(z + 1) & =\Gamma(z)G(z), & G(1) = 1.
\end{align}
One can show that:
\ben
\frac{d}{dz}\log G(z+1)
& = & \frac{1}{2}\log(2\pi)-z - \frac{1}{2} - \gamma z
+ \sum_{n=1}^\infty \bigg( \frac{1}{1+\frac{z}{n}}- 1 + \frac{z}{n}\bigg) \\
& = & \frac{1}{2}\log(2\pi)-z - \frac{1}{2} - \gamma z
+ \sum_{n=1}^\infty \sum_{m=2}^\infty (-1)^{m}\frac{z^m}{n^m},
\een
and similarly,
by \eqref{eq-Gamma--Weierstrass} and \eqref{eq-Gamma-z+1} we have:
\ben
\frac{d}{dz}\log \Gamma(z+1)
& = & - \gamma
-\sum_{n=1}^\infty \biggl\{ \frac{\frac{1}{n}}{1+\frac{z}{n}}- \frac{1}{n}\biggr\} \\
& = & - \gamma
- \sum_{n=1}^\infty \sum_{j=1}^\infty (-1)^{j}\frac{z^j}{n^{j+1}},
\een
and so  we also have:
\be
 \frac{d}{dz} \log G(z+1)
= \frac{1}{2}\log(2\pi)
- \frac{1}{2} - z +z \frac{d}{dz}\log \Gamma(z+1).
\ee
Therefore,
\be
\begin{split}
\log G(z+1)
= & \frac{z}{2}\log(2\pi)
- \frac{z}{2} - \frac{1}{2}z^2 + \int_0^z \bigg(z \frac{d}{dz}\log \Gamma(z+1)\bigg) dz \\
= & \frac{z}{2}\log(2\pi)
- \frac{z}{2} - \frac{1}{2}z^2 + z\log \Gamma(z+1) -\int_0^z \log \Gamma(z+1) dz.
\end{split}
\ee
Barnes \cite{ba} obtained the following asymptotic expansion of $\log G(z+1)$:
\begin{equation*}
\log G(z+1)
\sim \zeta'(-1) + \frac{z}{2}\log (2\pi) + \biggl(\frac{z^2}{2}-\frac{1}{12}\biggr)\log z
- \frac{3z^2}{4}
+  \sum_{k=1}^\infty \frac{B_{2k+2}}{2k(2k+2)z^{2k}},
\end{equation*}
where $\zeta$ is the Riemann zeta function.
(For a modern derivation,
see \cite{fl}.)
In particular,
we have
\be\begin{split}
\int_0^z \bigg(z \frac{d}{dz}\log \Gamma(z+1)\bigg) dz
= & \log G(z+1) -\frac{z}{2} \log (2\pi) + \frac{z}{2} + \frac{z^2}{2}  \\
\sim & \zeta'(-1)   + \biggl(\frac{z^2}{2}-\frac{1}{12}\biggr)\log z
- \frac{z^2}{4} + \frac{z}{2} \\
+ & \sum_{k=1}^\infty \frac{B_{2k+2}z^{-2k}}{2k(2k+2) },
\end{split}
\ee
so by \eqref{eq-G0-Bern} or \eqref{eq-G0-gamma} we get:
\be
\label{eq-G0-BarnesG}
G_0(z) = \log G(z+1)
-\biggl( \zeta'(-1) + \frac{z}{2}\log (2\pi) + \biggl(\frac{z^2}{2}-\frac{1}{12}\biggr)\log z
- \frac{3z^2}{4} \biggr),
\ee
and the constant $C$ in \eqref{eq-G0-gamma} is $\zeta'(-1)$.

Similarly,
one can rewrite $V_n(z)$ in terms of the Barnes $G$-function,
and the result is as follows:
\begin{Lemma}
\label{lem-barnes-V}
We have:
\be
\begin{split}
&V_0(z) \sim \log G(z+1)
-\biggl( \zeta'(-1) + \frac{z}{2}\log (2\pi) + \biggl(\frac{z^2}{2}-\frac{1}{12}\biggr)\log z
- \frac{3z^2}{4} \biggr),\\
&V_1(z) \sim
\frac{d}{dz}\log G(z+1)
-\frac{1}{2} \log(2\pi) -z\log z +z,\\
&V_2(z) \sim \frac{d^2}{dz^2} \log G(z+1)-\log z,
\end{split}
\ee
and
\be
V_n (z) = \frac{d^n}{dz^n}\log  G(z+1),
\qquad \forall n\geq 3.
\ee
\end{Lemma}
\begin{proof}
The conclusion follows form \eqref{eq-G0-BarnesG}, \eqref{eq-Vn-derivative} and direct computations.
\end{proof}

\begin{Remark}
With the notations in \eqref{def-F00} and \eqref{def-F10},
we can rewrite the above results in the following form:
\be \label{eq-0-point-G}
\log G(z+1)
- \frac{z}{2} \log (2\pi) + \frac{z}{2}+\frac{z^2}{2}
\sim F_{0,0}(z) + F_{1,0}(z) + \sum_{g \geq 2} \chi(\cM_{g,0})z^{2-2g},
\ee
\be \label{eq-1-point-G}
\frac{d}{dz}\log G(z+1)
- \frac{1}{2} \log (2\pi) + \frac{1}{2}+ z
\sim F_{0,0}'(z) + \sum_{g \geq 1} \chi(\cM_{g,1})z^{1-2g},
\ee
\be \label{eq-2-point-G}
\frac{d^2}{dz^2}\log G(z+1) + 1
\sim F_{0,0}''(z) + \sum_{g \geq 1} \chi(\cM_{g,2})z^{-2g},
\ee
and
\be \label{eq-n-point-G}
\frac{d^n}{dz^n}\log G(z+1)
\sim  \sum_{g \geq 0} \chi(\cM_{g,n})z^{2-2g-n}, \quad \forall n \geq 3.
\ee
\end{Remark}

\begin{Remark}
There is a probability interpretation of the left-hand side of \eqref{eq-0-point-G}
by \cite[Theorem 1.2]{ny}.
Recall that a gamma variable $\gamma_a$ with parameter $a > 0$
is a random variable with  distribution density function
\be
 \frac{t^{a-1} \exp(-t)}{\Gamma(a)}, \quad t > 0.
 \ee
If $\{\gamma_n\}_{n\geq 1}$ are independent gamma random variables
with respective parameters $n$, then for $z\in \bC$ such that $\Re(z) > -1$,
\be
\begin{split}
& \lim_{N\to \infty} \frac{1}{N^{z^2/2}}
\bE\biggl[\exp \biggl(-z \biggl(\sum^N_{n=1} \frac{\gamma_n}{n}-N\biggr)\biggr)\biggr] \\
=& \biggl(\exp\biggl(\frac{z}{2}+z^2-\frac{z}{2}\log(2\pi)+\log G(z+1)\biggr)^{-1}.
\end{split}
\ee
As a consequence of \eqref{eq-0-point-G},
we then have the following asymptotic expansion:
\be
\begin{split}
& \lim_{N\to \infty} \frac{1}{N^{z^2/2}}
\bE\biggl[\exp \biggl(-z \biggl(\sum^N_{n=1} \frac{\gamma_n}{n}-N\biggr)\biggr)\biggr] \\
\sim &  F_{0,0}(z) + F_{1,0}(z) + \sum_{g \geq 2} \chi(\cM_{g,0})z^{2-2g}.
\end{split}
\ee
This is a very surprising connection that relates a probability problem to
algebraic geometry and mathematical physics.
\end{Remark}

Now if we multiply \eqref{eq-0-point-G}-\eqref{eq-n-point-G} by $x^0$, $x^1$, $x^2/2!$ and $x^n/n!$
respectively and sum them up we get:
\be
\begin{split}
& \log G(z+x+1)- \frac{z}{2} \log (2\pi) + \frac{z}{2}+\frac{z^2}{2}
+ (- \frac{1}{2} \log (2\pi) + \frac{1}{2}+ z)x+ \frac{x^2}{2} \\
\sim &  F_{0,0}(z) + F_{1,0}(z)
+ F_{0,0}'(z)x+ \frac{x^2}{2}F_{0,0}''(z)
+  \sum_{2g-2+n>0} \chi(\cM_{g,n})z^{2-2g-n}x^n.
\end{split}
\ee

\subsection{Expressions of $G_k(z)$ in terms of Barnes $G$-function}
\label{sec:G}
Now let us find a way to express $G_k(z)$ in terms of the generating series $V_n(z)$.
Then using the above lemma,
this will give us a way to represent $G_k(z)$ in terms of the Barnes $G$-function.
First we need:

\begin{Theorem}
\label{thm:Gauss-Gk}
Define a partition function
\be\label{eq:Gauss-Gk}
\widetilde Z=\frac{1}{\sqrt{2\pi\lambda^2}}
\int\exp\biggl[\lambda^{-2}\biggl(-\half x^2+\sum_{n\geq 0}
\lambda^2  V_n(z) \cdot \frac{x^n}{n!}
\biggr)\biggr]dx,
\ee
then:
\be
\log(\widetilde Z)=
\sum_{k\geq 0} \lambda^{2k} G_k(z).
\ee
 the coefficient of $\lambda^{2k}$ in $\log(\widetilde Z)$ equals $G_{k}(z)$
for every $k\geq 0$.
\end{Theorem}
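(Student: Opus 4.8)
The plan is to recognize the formal Gaussian integral $\widetilde Z$ of \eqref{eq:Gauss-Gk} as a mere reparametrization of the integral \eqref{gaussian-chi} appearing in Theorem \ref{thm:chi(g,0)}, whose free energy has already been identified there with the generating series of the $\widetilde\chi_{g,0}$. Once this identification is in place, reading off the coefficient of a fixed power of the loop parameter against the known expansion $\widetilde\chi_{g,0}(\kappa)=\sum_k a_{g,0}^k\kappa^k$ will immediately produce $G_k(z)=\sum_{g\geq 2}a_{g,0}^k z^{2-2g}$. Note this route proves Theorem \ref{thm:Gauss-Gk} \emph{without} passing through a sum over graphs, which is consistent with Theorem \ref{thm:FR-Gk} being a subsequent consequence rather than a simultaneous output.

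Concretely, I would start from \eqref{gaussian-chi}, whose loop-counting parameter---call it $\mu$ to avoid a clash---records the total genus while $\kappa$ records the number of internal edges, and perform the substitution $\eta=x/z$ together with the formal assignments $\mu=1/z$ and $\kappa=\lambda^2$, where $\lambda$ denotes the loop parameter of $\widetilde Z$. Using $V_n(z)=\sum_g\chi(\cM_{g,n})z^{2-2g-n}$ from Definition \ref{def:V}, so that $\mu^{2g-2}=z^{2-2g}$ and $\eta^n=x^n z^{-n}$, the interaction part $\sum_{2g-2+n>0}\frac{\mu^{2g-2}}{n!}\chi(\cM_{g,n})\eta^n$ collapses termwise into $\sum_{n\geq 0}V_n(z)\frac{x^n}{n!}$; the quadratic part $-\frac{\mu^{-2}}{2\kappa}\eta^2$ becomes $-\tfrac12\lambda^{-2}x^2$; and the Jacobian $d\eta=dx/z$ together with the prefactor $(2\pi\mu^2\kappa)^{-1/2}$ combines to $(2\pi\lambda^2)^{-1/2}dx$. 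Thus the integrand and the measure transform exactly into those of \eqref{eq:Gauss-Gk}, so the two partition functions coincide.

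It then remains to transport the free energy. Under the same substitution the known free energy $\sum_{g\geq2}\widetilde\chi_{g,0}(\kappa)\mu^{2g-2}$ of \eqref{gaussian-chi} turns into $\sum_{g\geq2}\widetilde\chi_{g,0}(\lambda^2)z^{2-2g}$; expanding $\widetilde\chi_{g,0}(\lambda^2)=\sum_k a_{g,0}^k\lambda^{2k}$ and interchanging the two summations gives $\log\widetilde Z=\sum_{k\geq 0}\lambda^{2k}\bigl(\sum_{g\geq2}a_{g,0}^k z^{2-2g}\bigr)=\sum_{k\geq0}\lambda^{2k}G_k(z)$, which is the assertion. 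The $k=0$ term reproduces $G_0(z)=V_0(z)$, the contribution of the single valence-zero vertex.

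The delicate point, and the step I expect to require the most care, is the legitimacy of this reparametrization at the level of formal power series---in particular the inversion $\mu\mapsto 1/z$ of the genus-expansion parameter, and the bookkeeping of the unstable/low-valence contributions ($n=0,1,2$) packaged into $V_0,V_1,V_2$. To place the argument on firm footing I would, in parallel, expand $\widetilde Z$ directly by Wick's theorem: its free energy is the sum over connected graphs $\widetilde\Gamma$ with unmarked vertices, with weight $\lambda^{2E(\widetilde\Gamma)}|\Aut(\widetilde\Gamma)|^{-1}\prod_v V_{\val(v)}(z)$. Substituting $V_{\val(v)}(z)=\sum_{g_v}\chi(\cM_{g_v,\val(v)})z^{2-2g_v-\val(v)}$ expands each such graph into a sum over genus assignments, i.e.\ over stable graphs $\Gamma$; since $\sum_v(2-2g_v-\val(v))=2-2g(\Gamma)$ and $E$ is the number of edges, a groupoid-cardinality (Burnside) count reconciling $\Aut(\widetilde\Gamma)$ with $\Aut(\Gamma)$ shows the coefficient of $\lambda^{2k}$ equals $\sum_{g}z^{2-2g}\sum_{\Gamma:\,g(\Gamma)=g,\,E=k}|\Aut(\Gamma)|^{-1}\prod_v\chi(\cM_{g_v,\val(v)})=\sum_g a_{g,0}^k z^{2-2g}=G_k(z)$. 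This verification both secures Theorem \ref{thm:Gauss-Gk} and yields Theorem \ref{thm:FR-Gk} as the promised straightforward consequence.
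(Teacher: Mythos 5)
Your proposal is correct and follows essentially the same route as the paper: the paper's proof likewise identifies $\widetilde Z$ with the Gaussian integral of Theorem \ref{thm:chi(g,0)} by the formal change of variables $z=t/\lambda$, $x=(\eta-t)/\lambda$ followed by renaming $\kappa$ as the new loop parameter (your substitution $\mu=1/z$, $\kappa=\lambda^2$ at $t=1$ is exactly this specialized), and then transports the free energy and interchanges the two summations exactly as you do. Your supplementary Wick-expansion/Burnside verification does not appear in the paper's proof of this theorem, but it is precisely the argument by which the paper subsequently deduces Theorem \ref{thm:FR-Gk} from it, so it is consistent with (indeed anticipates) the paper's logical order.
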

\begin{proof}
Recall in Theorem \ref{thm:chi(g,0)} we have shown that if
\begin{equation*}
\widehat{Z}^{orb}(t,\kappa)=\frac{1}{(2\pi\lambda^2\kappa)^{\frac{1}{2}}}
\int \exp\biggl\{\sum_{2g-2+n>0}\frac{\lambda^{2g-2}}{n!}\chi(\cM_{g,n})t^{2-2g-n}
\eta^n-\frac{\lambda^{-2}}{2\kappa}\eta^2\biggr\}d\eta,
\end{equation*}
then its logarithm equals to:
\ben
&& \sum_{g=2}^\infty \chi_{g,0}(t,\kappa)  \lambda^{2g-2}
= \sum_{g=2}^\infty \sum_{i\geq 0} a^i_{g,0}t^{2-2g} \kappa^i  \lambda^{2g-2}
= \sum_{i \geq 0} G_i(\frac{t}{\lambda}) \cdot \kappa^i.
\een

Now let us rewrite $\widehat{Z}^{orb}(t, \kappa)$ in terms of $V_n$ as follows:
\ben
\widehat{Z}^{orb}(t,\kappa)
= \frac{1}{(2\pi\lambda^2\kappa)^{\frac{1}{2}}}
\int \exp\biggl\{ \sum_{n \geq 0}\frac{1}{n!} V_n(\frac{t}{\lambda}) \cdot
\biggl(\frac{\eta}{\lambda}\biggr)^n
-\frac{1}{2\kappa}\biggl(\frac{\eta}{\lambda}\biggr)^2\biggr\}d\eta.
\een
We make the following change of variables: $z= \frac{t}{\lambda}$,
$x= \frac{\eta}{\lambda}$, and after that change $\kappa$ to $\tilde\lambda^2$.
Then we see that the logarithm of
\ben
\frac{1}{(2\pi\tilde\lambda^2)^{\frac{1}{2}}}
\int \exp\biggl\{ \sum_{n \geq 0}\frac{1}{n!} V_n(z) \cdot x^n
-\frac{1}{2\tilde\lambda^2}x^2\biggr\}dx
\een
is $\sum\limits_{k \geq 0} G_k(z) \cdot \tilde\lambda^{2k}
=\sum\limits_{k \geq 0} G_k(z) \cdot \kappa^k$.
\end{proof}

Now plugging Lemma \ref{lem-barnes-V} into this theorem,
we obtain:
\be
\begin{split}
\exp\bigg(\sum_{k\geq 0} \lambda^{2k} G_k(z)\bigg) =& \frac{1}{\sqrt{2\pi\lambda^2}}
\int dx \cdot \exp\bigg[ -\frac{\lambda^{-2}}{2}x^2 + \log G(x+z+1)\\
& -\biggl( \zeta'(-1) + \frac{z}{2}\log (2\pi) + \biggl(\frac{z^2}{2}-\frac{1}{12}\biggr)\log z
- \frac{3z^2}{4} \biggr)\\
&+x\bigg(-\frac{1}{2} \log(2\pi) -z\log z +z\bigg)
-\frac{x^2}{2} \log z \bigg].
\end{split}
\ee
Thus:
\begin{Corollary}
\label{cor-gk-barnes}
We have:
\be
\sum_{k\geq 0} \lambda^{2k} G_k(z) = \log
\bigg( \frac{1}{\sqrt{2\pi\lambda^2}}
\int G(x+z+1)\cdot \exp (\widetilde S(x,z)) dx
\bigg),
\ee
where
\be
\begin{split}
\widetilde S(x,z)=&-\frac{\lambda^{-2}}{2}x^2
-\biggl( \zeta'(-1) + \frac{z}{2}\log (2\pi) + \biggl(\frac{z^2}{2}-\frac{1}{12}\biggr)\log z
- \frac{3z^2}{4} \biggr)\\
&+x\bigg(-\frac{1}{2} \log(2\pi) -z\log z +z\bigg)
-\frac{x^2}{2} \log z.
\end{split}
\ee
\end{Corollary}

Now Theorem \ref{thm:Gauss-Gk} gives us a way to express $G_k(z)$
in terms of the generating series $\{V_n(z)\}$.
In the next two subsections,
we will see that $G_k(z)$ can be represented as a polynomial in
$V_1(z),V_2(z),\cdots,V_{2k}(z)$,
and we can use the formalism recalled in \S \ref{sec2} to
present a simple recursion to compute these polynomials.
Furthermore,
the formal Gaussian integral \eqref{eq:Gauss-Gk}
relates this problem to the theory of topological 1D gravity,
and this will be explained in \S \ref{sec-relation-1d}-\S \ref{sec-orb-genus0} and \S \ref{sec:KP}.

\subsection{Representing $G_k(z)$ in terms of $V_n(z)$}

Now in this subsection we show that $G_k(z)$ can be represented as a polynomial in
the generating series $\{V_n(z)\}$.
The following result is a consequence of Theorem \ref{thm:Gauss-Gk}:

\begin{Theorem}
\label{thm:FR-Gk}
For $k\geq 1$, we have
\be
\label{FR-chi-g,0}
G_k(z)=\sum_{\widetilde\Gamma\in\widetilde\cG_k}
\frac{1}{|\Aut(\widetilde\Gamma)|}
\prod_{v\in V(\widetilde\Gamma)}
V_{\val_v}(z),
\ee
where $\widetilde\cG_k$ is the set of all connected graphs with unmarked vertices
(not necessarily stable),
which has $k$ internal edges and no external edge;
and $V(\widetilde\Gamma)$ is the set of vertices of $\widetilde\Gamma$.
The weight of vertices $\{V_n(z)\}_{n\geq 1}$ are given by Lemma \ref{lem-barnes-V}.
\end{Theorem}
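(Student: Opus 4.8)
The plan is to unfold the definition of $G_k(z)$ into a sum over stable graphs and then reorganize it according to the underlying (genus-forgetting) graph. By \eqref{chi-g,n} with $t=1$ and Theorem \ref{bini-harer}, the coefficient $a_{g,0}^k$ equals
$$a_{g,0}^k=\sum_{\substack{\Gamma\in\cG^c_{g,0}\\ |E(\Gamma)|=k}}\frac{1}{|\Aut(\Gamma)|}\prod_{v\in V(\Gamma)}\chi(\cM_{g_v,n_v}),$$
where $g_v,n_v$ denote the genus and valence of $v$. First I would record the Euler-characteristic bookkeeping: for a connected graph of total genus $g$ with no external edge one has $g=b_1(\Gamma)+\sum_v g_v$ with $b_1(\Gamma)=|E(\Gamma)|-|V(\Gamma)|+1$, and $\sum_v n_v=2|E(\Gamma)|$ since every edge is internal. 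Combining these gives the key identity $2-2g=\sum_{v}(2-2g_v-n_v)$, so that $z^{2-2g}=\prod_{v}z^{2-2g_v-n_v}$. Substituting this into $G_k(z)=\sum_{g\ge2}z^{2-2g}a_{g,0}^k$ turns the global weight $z^{2-2g}\prod_v\chi(\cM_{g_v,n_v})$ into a product of purely local vertex weights $\prod_v\bigl(\chi(\cM_{g_v,n_v})\,z^{2-2g_v-n_v}\bigr)$.

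Next I would pass from the double sum over $g\ge2$ and $\Gamma\in\cG^c_{g,0}$ to a single sum over all connected stable graphs with exactly $k$ internal edges and no external edge: the total genus of such a graph is determined by the genus formula, and a short valence count ($2|E|=\sum_v n_v\ge 3|V|$ when all $g_v=0$, together with the tree obstruction for $b_1=0$) shows that every such graph has $g\ge2$, so no terms are lost or gained. It remains to organize this sum by the underlying graph $\widetilde\Gamma\in\cG_k$ obtained by forgetting the vertex genera; a genuine stable graph $\Gamma$ is then the same datum as a pair $(\widetilde\Gamma,\mathbf g)$ with $\mathbf g\colon V(\widetilde\Gamma)\to\bZ_{\ge0}$ a stable genus assignment, and $\Aut(\Gamma)$ is the stabilizer of $\mathbf g$ inside $\Aut(\widetilde\Gamma)$.

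The heart of the argument is the standard groupoid-cardinality (orbit-counting) identity for graph sums. Fixing $\widetilde\Gamma$, the weight $W(\mathbf g)=\prod_v\chi(\cM_{g_v,n_v})\,z^{2-2g_v-n_v}$ is constant on $\Aut(\widetilde\Gamma)$-orbits of genus assignments, and an orbit with stabilizer of size $|\Aut(\widetilde\Gamma)|/|\Aut(\Gamma)|$ contributes $W/|\Aut(\Gamma)|$ to the isomorphism-class sum. Summing over orbits therefore yields
$$\sum_{\Gamma\to\widetilde\Gamma}\frac{W(\Gamma)}{|\Aut(\Gamma)|}=\frac{1}{|\Aut(\widetilde\Gamma)|}\sum_{\mathbf g}W(\mathbf g)=\frac{1}{|\Aut(\widetilde\Gamma)|}\prod_{v\in V(\widetilde\Gamma)}\Bigl(\sum_{g_v}\chi(\cM_{g_v,\val(v)})\,z^{2-2g_v-\val(v)}\Bigr),$$
the last step being the factorization of an unrestricted sum of products. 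Here the crucial compatibility is that the stability condition $2g_v-2+\val(v)>0$ at each vertex cuts out exactly the ranges $g_v\ge0$ for $\val(v)\ge3$ and $g_v\ge1$ for $\val(v)\in\{1,2\}$, so that each inner sum is precisely $V_{\val(v)}(z)$ as given in Definition \ref{def:V}. Summing over $\widetilde\Gamma\in\cG_k$ gives \eqref{FR-chi-g,0}.

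The step I expect to be the main obstacle is the automorphism bookkeeping in the groupoid identity, namely being careful that the passage $\Gamma\mapsto\widetilde\Gamma$ replaces $|\Aut(\Gamma)|$ by $|\Aut(\widetilde\Gamma)|$ exactly, together with checking that a vertex of valence $1$ or $2$ can never be decorated by $g_v=0$. This last point is what makes the inner sums equal the $V_n$ of Definition \ref{def:V} rather than a truncation of them, and it is also what guarantees that no unstable underlying configuration sneaks a nonzero contribution into the sum over $\cG_k$.
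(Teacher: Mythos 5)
Your proposal is correct, and it proves the theorem by a genuinely different route than the paper. The paper obtains Theorem \ref{thm:FR-Gk} as a corollary of Theorem \ref{thm:Gauss-Gk}: starting from the formal Gaussian integral of Theorem \ref{thm:chi(g,0)}, whose free energy is $\sum_{g\geq2}\chi_{g,0}(t,\kappa)\lambda^{2g-2}=\sum_{k\geq0}G_k(t/\lambda)\kappa^k$, it performs the change of variables $z=t/\lambda$, $x=(\eta-t)/\lambda$ and renames $\kappa$ as $\tilde\lambda^2$, so that $\log\widetilde Z=\sum_{k}G_k(z)\tilde\lambda^{2k}$; the graph sum then comes from the standard Feynman--Wick expansion of this one-dimensional integral with vertex weight $V_n(z)$ and propagator $1$, and the identification of the $\tilde\lambda^{2k}$-coefficient with graphs having exactly $k$ edges is made by declaring every vertex to have genus $1$ and using Euler's formula, $g(\widetilde\Gamma)=h^1(\widetilde\Gamma)+|V(\widetilde\Gamma)|=|E(\widetilde\Gamma)|+1$. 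You avoid the integral representation entirely and argue directly from the definition \eqref{chi-g,n}: your identity $2-2g=\sum_v(2-2g_v-n_v)$ is the same Euler-characteristic bookkeeping, carried out on the genus-decorated graph rather than on the undecorated one, and your genuinely new ingredient is the orbit--stabilizer resummation over genus assignments, with $\Aut(\Gamma)=\mathrm{Stab}_{\Aut(\widetilde\Gamma)}(\mathbf{g})$, followed by the vertex-wise factorization of the unrestricted sum into $\prod_v V_{\val(v)}(z)$. Your version is more elementary and self-contained (no Wick calculus is invoked), and it makes explicit two points the paper leaves implicit: that vertex stability cuts out exactly the summation ranges of Definition \ref{def:V}, and that every connected stable graph without external edges has $g\geq2$, so the restriction $g\geq2$ in $G_k(z)=\sum_{g\geq2}a_{g,0}^k z^{2-2g}$ loses nothing --- here your valence-count sketch should also record the remaining case $b_1=0$, $\sum_v g_v=1$: a tree with one genus-one vertex would force $2(|V|-1)\geq 3(|V|-1)+1$, which is impossible. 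What the paper's detour buys is Theorem \ref{thm:Gauss-Gk} itself, which is needed independently later for the identification with topological 1D gravity, the operator formalisms of \S\ref{sec:Young}--\S\ref{sec:Operator-G}, and the KP statement of \S\ref{sec:KP}, so there the graph expansion is essentially free once the integral representation is in place, whereas your argument delivers Theorem \ref{thm:FR-Gk} without that machinery.
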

\begin{proof}
By Theorem \ref{thm:Gauss-Gk},
$G_k(z)$ can be expressed by a Feynman sum over graphs with vertex contributions given
by $V_n(z)$ for a vertex of valence $n$,
and with $1$ as propagator.
As in Example \ref{exm:G1G2},
the genera of the relevant graphs are defined by assigning every vertex to have genus $1$,
then
\ben
g(\widetilde\Gamma):=h^1(\widetilde\Gamma)+|V(\widetilde\Gamma)|,
\een
where $h^1(\widetilde\Gamma)$ is the number of independent loops in $\widetilde\Gamma$.
By Euler's formula,
\be
h^1(\widetilde{\Gamma}) = 1 + |E(\widetilde{\Gamma})| - |V(\widetilde{\Gamma})|,
\ee
and so  we have
\be\label{g=k+1}
g(\widetilde\Gamma)=|E(\widetilde{\Gamma})|+1.
\ee
It follows that  $G_k$ is the sum over graphs with $k$ internal edges.
\end{proof}

\begin{Corollary}
\label{cor-chi-g,0}
If we assign $ V_n(z)$ to be of degree $n$ for every $n\geq 0$,
then $G_{k}(z)$ is of degree $2k$.
\end{Corollary}

Now let us see a few examples.
\begin{Example} \label{exm:G1G2}
We can directly check the expressions for $G_1(z)$ and $G_2(z)$ can be reformulated
in the following form:
\ben
&&G_1(z)=\half V_2(z)+\half  V_1(z)^2,\\
&&G_2(z)=\frac{1}{8} V_4+\frac{1}{4} V_2^2
+\frac{1}{2} V_1 V_3+\frac{1}{2} V_1^2 V_2.
\een
The graphs for $G_1(z)$ are:
\ben
\begin{tikzpicture}[scale=1.05]
\draw [fill] (0,0) circle [radius=0.06];
\draw (0.03,0.03) .. controls (1,1) and (1,-1) ..  (0.03,-0.03);
\draw [fill] (2.5,0) circle [radius=0.06];
\draw [fill] (3.5,0) circle [radius=0.06];
\draw (2.55,0)--(3.45,0);
\end{tikzpicture}
\een
A first sight,
these two graphs have different genus: o
ne has genus one and the other has genus zero.
It seems odd that both contribute to $G_1(z)$.
To explain this discrepancy,
we need to let each vertex of graphs have genus $1$,
then both of the above graphs have genus $2$.
Similarly, the graphs for $G_2(z)$ are:
\ben
\begin{tikzpicture}[scale=1.05]
\draw [fill] (0,0) circle [radius=0.06];
\draw (-0.03,0.03) .. controls (-1,1) and (-1,-1) ..  (-0.03,-0.03);
\draw (0.03,0.03) .. controls (1,1) and (1,-1) ..  (0.03,-0.03);
\draw [fill] (1.8,0) circle [radius=0.06];
\draw [fill] (3,0) circle [radius=0.06];
\draw (1.83,0.03) .. controls (2.1,0.4) and (2.7,0.4) ..  (2.97,0.03);
\draw (1.83,-0.03) .. controls (2.1,-0.4) and (2.7,-0.4) ..  (2.97,-0.03);
\draw [fill] (4.2,0) circle [radius=0.06];
\draw [fill] (5,0) circle [radius=0.06];
\draw (4.25,0)--(4.95,0);
\draw (5.03,0.03) .. controls (6,1) and (6,-1) ..  (5.03,-0.03);
\draw [fill] (6.8,0) circle [radius=0.06];
\draw [fill] (7.6,0) circle [radius=0.06];
\draw [fill] (8.4,0) circle [radius=0.06];
\draw (6.85,0)--(7.55,0);
\draw (7.65,0)--(8.35,0);
\end{tikzpicture}
\een
When the vertices are assigned to have genus $1$,
then all these graphs have genus $3$.
\end{Example}

\subsection{Recursive computations for $G_k(z)$ and Young diagram representation}
\label{sec:Young}

In the previous subsection
we have seen that $G_k(z)$ can be expressed in terms of $\{V_1(z),\cdots,V_{2k}(z)\}$
by the formula \eqref{FR-chi-g,0},
and the explicit formulas for $V_n(z)$ are given in terms of the Barnes $G$-function.
In this subsection we present a simple quadratic recursion
to compute the expressions $G_k(z)$ in terms of $V_n(z)$.

Here we need first to clarify some possible confusions of our notations.
Although $G_k(z)$ and $V_k(z)$ are
some Laurent series in $z$ which have precise expressions,
in the following subsections in order to find simple polynomial expressions for
$G_k=G_k(V_1,\cdots,V_{2k})$
we will sometimes treat $V_k$
as some formal variables and use operations such like
taking partial derivatives with respect to $V_k$.

To be more precise,
one is supposed to introduce a family of new formal variables $\{v_n\}$
and a family of polynomials $H_k=H_k(v_1,\cdots,v_{2k})$
such that $H_k$ are represented using $\{v_n\}$ in such a way that
is similar to the formal Gaussian integral \eqref{eq:Gauss-Gk}
or the Feynman rule \eqref{FR-chi-g,0}.
In other works,
we study a `theory' with coulpling constants $\{v_n\}$.
And after finding the recursion formulas for $H_k$,
we take the special values $v_n=V_n(z)$ and then
\ben
H_k(v_1,v_2,\cdots)\big|_{v_n=V_n(z)}=G_k(z).
\een
But for simplicity,
in what follows simply regard $V_n$ as some formal variables by abuse of notations.

Now let us present a simple quadratic recursion for the
polynomial expressions of $G_k=G_k( V_1, V_2,\cdots)$
using the formalism of abstract QFT and its realization.
Although the graph sum \eqref{FR-chi-g,0} seems not to be
over stable graphs,
here we can again use the trick of `shifting the genus of vertices'
(see Example \ref{exm:G1G2}).
By assigning genus $g_v=1$ for every vertex $v\in V(\widetilde\Gamma)$,
we understand the problem of counting connected graphs
$\widetilde\Gamma\in \widetilde\cG_k$ for $k\geq 1$
as counting connected stable graphs
$\Gamma\in\cG_{k+1,0}^c$ of genus $k+1$ with no external edges,
whose vertices are all of genus one.
Here the Feynman rule we need is:
\ben
&&v \mapsto  V_{\val_v},\qquad
\text{for $v\in V(\Gamma)$ and $g_v=1$};\\
&&v \mapsto 0,\qquad
\text{for $v\in V(\Gamma)$ and $g_v\not= 1$};\\
&&e\mapsto 1, \qquad\text{for $e\in E(\Gamma)$}.
\een
Now consider the realization of the edge-cutting operator $K$ and
the edge-adding operators $\cD=\pd+\gamma$.
Since all graphs $\widetilde\Gamma\in \widetilde\cG_k$ has $k$ internal edges,
the operator $K$ is realized by simply multiplying by $k$.
Moreover,
since in this case the weight of all genus zero vertices are zero,
every procedure involving `adding a trivalent vertex of genus zero'
in the definition of the operator $\cD=\pd+\gamma$
will be realized to zero.
Therefore,
the only procedure we need to consider is
adding an external edge on one of the vertices of genus one.
Now it is clear that $\cD$ is realized by an operator $d_V$ with:
\begin{itemize}
\item[1)]$d_V  (V_l)= V_{l+1}$, for every $l\geq 1$;
\item[2)]$d_V$ acts on polynomials in $\{ V_l\}_{l\geq 1}$
via Leibniz rule.
\end{itemize}
Therefore, we can simply take:
\be
\label{eq-def-dV}
d_V:=\sum_{l=0}^{\infty} V_{l+1}\frac{\pd}{\pd  V_l}.
\ee
Now the realization of Theorem \ref{thm-original-rec} gives us:
\begin{Theorem}
For every $k\geq 1$, we have:
\be\label{rec-G-2}
k\cdot G_k=\half\biggl(
d_V ^2 G_{k-1}+\sum_{r=1}^k
d_V G_{r-1}\cdot d_V G_{k-r}
\biggr).
\ee
\end{Theorem}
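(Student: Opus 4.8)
The plan is to realize the claimed identity as the $n=0$ case of the realized free-energy recursion in Theorem \ref{thm-realization-rec}, applied to the Feynman rule that underlies the expression for $G_k$ in Theorem \ref{thm:FR-Gk}. Concretely, I would work with the realization in which a vertex of genus $1$ and valence $l$ is weighted by $V_l(z)$, every vertex of genus $\neq 1$ is weighted by $0$ (so that in particular $F_{0,3}=0$), and the propagator is kept as a formal variable $\kappa$ recording the number of internal edges. Since $\omega_\Gamma$ then vanishes unless every vertex of $\Gamma$ has genus $1$, Theorem \ref{thm:FR-Gk} together with the genus count $g(\widetilde\Gamma)=|E(\widetilde\Gamma)|+1$ from \eqref{g=k+1} identifies the genus-$(k+1)$ free energy of this realization as $\wF_{k+1}=\kappa^{k}G_k$; note that $k\geq 1$ corresponds exactly to the range $g=k+1\geq 2$ in which the free-energy recursion holds.

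First I would pin down the realizations of the operators $K$, $\pd$ and $\cD$. The operator $\tilde d:=\sum_{l\geq 0}V_{l+1}\frac{\pd}{\pd V_l}$ satisfies $\tilde d\,V_l=V_{l+1}$ and, because $F_{0,3}=0$, also $\tilde d\,\kappa=\kappa^2F_{0,3}=0$; hence it is a formal differential operator compatible with the Feynman rule in the sense of Definition \ref{def-compatible} and realizes $\pd$. The part of $\pd$ that breaks an internal edge and inserts a genus-zero trivalent vertex, as well as the operator $\gamma$ which attaches such a vertex to an external edge, both produce a genus-zero vertex of weight $F_{0,3}=0$ and therefore contribute nothing; this is precisely why $\cD=\pd+\gamma$ is again realized by $\tilde d$. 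Finally, cutting an internal edge preserves all vertex valences and removes one propagator, so $K$ is realized by $\pd_\kappa$.

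Next I would substitute $g=k+1$ into the free-energy recursion of Theorem \ref{thm-realization-rec},
\be
\pd_\kappa\wF_{k+1}=\frac{1}{2}\Bigl(\tD\tpd\,\wF_{k}+\sum_{r=1}^{k}\tpd\wF_{r}\,\tpd\wF_{k+1-r}\Bigr),
\ee
and evaluate both sides under the realization above, using $\wF_{g}=\kappa^{g-1}G_{g-1}$ for $g\geq 2$ and the fact that $\tilde d$ annihilates $\kappa$. The left-hand side becomes $\pd_\kappa(\kappa^{k}G_k)=k\,\kappa^{k-1}G_k$, the first term on the right gives $\kappa^{k-1}\tilde d^{2}G_{k-1}$, and the quadratic sum gives $\sum_{r=1}^{k}\kappa^{k-1}\tilde d G_{r-1}\,\tilde d G_{k-r}$. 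Every term carries the common factor $\kappa^{k-1}$, so cancelling it yields exactly \eqref{rec-G-2}. Along the way I would check the degenerate contributions against the conventions following Theorem \ref{thm-realization-rec}: the vanishing $\tF_{0,3}=0$ disposes of the genus-zero insertions, and the terms involving $\wF_1=G_0=V_0$ reduce to $\tilde d G_0=\tilde d V_0=V_1=\tF_{1,1}$ (and $\tilde d^2 G_0=V_2=\tF_{1,2}$), in agreement with $\tpd\tF_1:=\tF_{1,1}$.

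The step demanding the most care is the $\kappa$-bookkeeping that turns $K$ into multiplication by $k$. One must keep $\kappa$ as an honest edge-counting variable throughout, so that $\wF_{k+1}=\kappa^kG_k$ rather than simply $G_k$, verify that $\tilde d$ leaves $\kappa$ untouched (again because $F_{0,3}=0$) so that all $\kappa$-powers on the right align to $\kappa^{k-1}$, and only then extract the coefficient of $\kappa^{k-1}$. The conceptual subtlety behind this is the deliberate assignment of genus $1$ to every a priori unmarked vertex of $\widetilde\Gamma$, which is what promotes $G_k$ to a genuine genus-$(k+1)$ free energy of an abstract QFT; I would confirm that this assignment is consistent with setting all non-genus-one vertex weights to zero, so that the realized recursion faithfully reproduces the combinatorics of the graph set $\cG_k$.
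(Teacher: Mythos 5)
Your proposal is correct and follows essentially the same route as the paper: both identify $G_k$ with the genus-$(k+1)$ free energy of the realization in which every vertex carries genus one and weight $V_l$, realize $\cD$ by $\tilde d$ (the genus-zero insertions dropping out because $F_{0,3}=0$), and then invoke the quadratic recursion of the abstract QFT formalism. The only cosmetic difference is that the paper realizes $K$ directly as multiplication by $k$, since every graph in $\cG_k$ has exactly $k$ internal edges, whereas you implement the same fact by keeping the propagator $\kappa$ as an edge-counting variable and computing $\pd_\kappa(\kappa^k G_k)=k\kappa^{k-1}G_k$ before cancelling the common factor $\kappa^{k-1}$.
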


Inductively,
one might see that $2^k\cdot k!\cdot G_k$ is a polynomial
in $V_1,V_2,\cdots,V_{2k}$ with integer coefficients.

\begin{Example}
Using this recursion, one can easily compute:
\begin{equation*}
\begin{split}
G_3=&\frac{1}{2}   V_1 ^2   V_2 ^2 + \frac{1}{6}   V_2 ^3 +
 \frac{1}{6}   V_1 ^3   V_3  +
   V_1    V_2    V_3  +
 \frac{5}{24}   V_3 ^2 + \frac{1}{4}   V_1 ^2   V_4  +
\frac{ 1}{4}   V_2    V_4 \\
&+
\frac{ 1}{8}   V_1    V_5  +\frac{ 1}{48}  V_6;\\
G_4=&\frac{1}{2}   V_1 ^2   V_2 ^3 + \frac{1}{8}   V_2 ^4 +
 \frac{1}{2}   V_1 ^3   V_2    V_3  +
 \frac{3}{2}   V_1    V_2 ^2   V_3  +
 \frac{1}{2}   V_1 ^2   V_3 ^2 +
 \frac{5}{8}   V_2    V_3 ^2\\
 & +
\frac{ 1}{24 }  V_1 ^4   V_4  +
\frac{ 3}{4}   V_1 ^2   V_2    V_4  +
\frac{ 3}{8 }  V_2 ^2   V_4  +
\frac{ 2}{3}   V_1    V_3    V_4  +
\frac{ 1}{12}   V_4 ^2 +\frac{ 1}{12 }  V_1 ^3   V_5\\
&  +
\frac{ 3}{8}   V_1    V_2    V_5  +
\frac{ 7}{48}   V_3    V_5  +
 \frac{1}{16 }  V_1 ^2   V_6  +
\frac{ 1}{16}   V_2    V_6  +
\frac{ 1}{48 }  V_1   V_7 + \frac{1}{384}  V_8;\\
\cdots&\cdots
\end{split}
\end{equation*}

\end{Example}

The operator $d_V$ also appears in an earlier work
of the second author \cite[\S 7.6]{zhou1}.
In that work,
$d_V$ is interpreted using some rules
in terms of Young diagrams,
which is similar to the Littlewood-Richardson rule.
Let us recall these rules in our case.
First by Corollary \ref{cor-chi-g,0},
we can rewrite the Feynman rule as:
\be
\label{eq-Feynman-partition}
G_k=\sum_{|\mu|=2k}\sum_{\Gamma\in \widetilde\cG_\mu}
\frac{1}{|\Aut(\Gamma)|} V_{\mu},
\ee
where the sum is over all partitions $\mu=(\mu_1,\cdots,\mu_l)$ of $2k$,
and $\widetilde\cG_\mu\subset\widetilde\cG_k$ is the set of all graphs with vertices
which have valences $\mu_1,\cdots,\mu_l$ respectively,
and
\ben
 V_{\mu}:= V_{\mu_1} V_{\mu_2}\cdots V_{\mu_l}.
\een
The partition $\mu$ can be represented in terms of a Young diagram
which has $l$ rows.
Then the action of $d_V$ on $ V_\mu$ can be interpreted as
adding a box to this Young diagram on the right
of one of the $l$ rows.
If the new diagram is not a Young diagram any more,
then we switch the rows of this diagram to make it a Young diagram.
Then we have
\ben
d_V (  V_\mu ) =\sum_{\nu\in  Y_\mu} V_\nu,
\een
where $Y_\mu$ is the set of Young diagrams obtained from $\mu$
using the above rule.
For example:
\begin{equation*}
\begin{tikzpicture}[scale=1.2]
\draw (0,0)--(0.6,0);
\draw (0,0.3)--(0.6,0.3);
\draw (0,0.6)--(0.9,0.6);
\draw (0,0.9)--(0.9,0.9);
\draw (0,0)--(0,0.9);
\draw (0.3,0)--(0.3,0.9);
\draw (0.6,0)--(0.6,0.9);
\draw (0.9,0.6)--(0.9,0.9);
\node [align=center,align=center] at (1.8,0.45) {$\mapsto$};
\draw (2.7,0)--(2.7,0.9);
\draw (3,0)--(3,0.9);
\draw (3.3,0)--(3.3,0.9);
\draw (3.6,0.6)--(3.6,0.9);
\draw (3.9,0.6)--(3.9,0.9);
\draw (2.7,0)--(3.3,0);
\draw (2.7,0.3)--(3.3,0.3);
\draw (2.7,0.6)--(3.9,0.6);
\draw (2.7,0.9)--(3.9,0.9);
\node [align=center,align=center] at (4.5,0.45) {$+$};
\draw (2.7+2.4,0)--(2.7+2.4,0.9);
\draw (3+2.4,0)--(3+2.4,0.9);
\draw (3.3+2.4,0)--(3.3+2.4,0.9);
\draw (3.6+2.4,0.3)--(3.6+2.4,0.9);
\draw (2.7+2.4,0)--(3.3+2.4,0);
\draw (2.7+2.4,0.3)--(3.6+2.4,0.3);
\draw (2.7+2.4,0.6)--(3.6+2.4,0.6);
\draw (2.7+2.4,0.9)--(3.6+2.4,0.9);
\node [align=center,align=center] at (6.6,0.45) {$+$};
\draw (2.7+2.4+2.1,0)--(2.7+2.4+2.1,0.9);
\draw (3+2.4+2.1,0)--(3+2.4+2.1,0.9);
\draw (3.3+2.4+2.1,0)--(3.3+2.4+2.1,0.9);
\draw (3.6+2.4+2.1,0.6)--(3.6+2.4+2.1,0.9);
\draw (3.6+2.4+2.1,0.3)--(3.6+2.4+2.1,0);
\draw (2.7+2.4+2.1,0)--(3.6+2.4+2.1,0);
\draw (2.7+2.4+2.1,0.3)--(3.6+2.4+2.1,0.3);
\draw (2.7+2.4+2.1,0.6)--(3.6+2.4+2.1,0.6);
\draw (2.7+2.4+2.1,0.9)--(3.6+2.4+2.1,0.9);
\node [align=center,align=center] at (2.1+0.5,-1) {$=$};
\draw (2.7+0.5,0-1.45)--(2.7+0.5,0.9-1.45);
\draw (3+0.5,0-1.45)--(3+0.5,0.9-1.45);
\draw (3.3+0.5,0-1.45)--(3.3+0.5,0.9-1.45);
\draw (3.6+0.5,0.6-1.45)--(3.6+0.5,0.9-1.45);
\draw (3.9+0.5,0.6-1.45)--(3.9+0.5,0.9-1.45);
\draw (2.7+0.5,0-1.45)--(3.3+0.5,0-1.45);
\draw (2.7+0.5,0.3-1.45)--(3.3+0.5,0.3-1.45);
\draw (2.7+0.5,0.6-1.45)--(3.9+0.5,0.6-1.45);
\draw (2.7+0.5,0.9-1.45)--(3.9+0.5,0.9-1.45);
\node [align=center,align=center] at (4.5+0.5,-1) {$+$ $2$};
\draw (2.7+2.4+0.5,0-1.45)--(2.7+2.4+0.5,0.9-1.45);
\draw (3+2.4+0.5,0-1.45)--(3+2.4+0.5,0.9-1.45);
\draw (3.3+2.4+0.5,0-1.45)--(3.3+2.4+0.5,0.9-1.45);
\draw (3.6+2.4+0.5,0.3-1.45)--(3.6+2.4+0.5,0.9-1.45);
\draw (2.7+2.4+0.5,0-1.45)--(3.3+2.4+0.5,0-1.45);
\draw (2.7+2.4+0.5,0.3-1.45)--(3.6+2.4+0.5,0.3-1.45);
\draw (2.7+2.4+0.5,0.6-1.45)--(3.6+2.4+0.5,0.6-1.45);
\draw (2.7+2.4+0.5,0.9-1.45)--(3.6+2.4+0.5,0.9-1.45);
\node [align=center,align=center] at (6.95,-1.35) {$.$};
\end{tikzpicture}
\end{equation*}

Therefore in the recursion \eqref{rec-G-2},
the term $d_V^2 G_{k-1}$ corresponds to adding two boxes to all Young diagrams
corresponding to partitions of $2(k-1)$ via the above rule;
and the term $d_V G_{r-1}\cdot d_V G_{k-r}$
is to add a box to a Young diagram corresponding to a partition of $2(r-1)$,
and add another box to a Young diagram corresponding to a partition of $2(k-r)$,
and then put one of the resulting diagrams underneath the other
and switch the rows if necessary to get a new Young diagram.

\subsection{Relationship to topological 1D gravity}
\label{sec-relation-1d}

The integral representation \eqref{eq:Gauss-Gk}
of the partition function $\widetilde{Z}$
establishes a connection to some earlier work on topological 1D gravity
by the second author \cite{zhou1}.

By Corollary \ref{cor-chi-g,0} we know that $G_k$ is of the form:
\ben
G_{k}=\sum_{|\mu|=2k}a_\mu V_{\mu_1}V_{\mu_2}\cdots V_{\mu_l},
\een
where the summations is over all partitions $\mu=(\mu_1,\mu_2,\cdots,\mu_l)$ of $2k$.
Now let us use another way to represent this partition:
\ben
\mu=(1^{m_1}2^{m_2}\cdots (2k)^{m_{2k}}),
\een
where $m_i$ is the number of $i$ appearing in the sequence $(\mu_1,\mu_2,\cdots,\mu_l)$.
Then $G_k$ can be expanded in the following way:
\ben
G_k=\sum_{|\mu|=2k}
\frac{G_\mu}{m_1!\cdots m_{2k}!}
V_1^{m_1}\cdots V_{2k}^{m_{2k}},
\een
where by \eqref{eq-Feynman-partition} the coefficients $G_\mu$ can be expressed as:
\be
\label{eq-graphsum-1d}
G_\mu=\sum_{\Gamma\in \widetilde\cG_\mu}
\frac{1}{|\Aut(\Gamma)|}\cdot \prod_{i=1}^{2k} m_i!.
\ee

Given a graph $\Gamma\in \widetilde\cG_\mu$
where $\mu=(\mu_1,\cdots,\mu_l)$ is a partition of $2k$,
by Euler's formula we know that the genus $g$ of $\Gamma$ is determined by:
\ben
l-\half(\mu_1+\cdots+\mu_l)=1-g,
\een
therefore
\be
\label{eq-selectionrule}
g=k+1-l.
\ee
Now comparing the graph sum \eqref{eq-graphsum-1d} with \cite[Proposition 4.4]{zhou1},
one easily obtain the following:

\begin{Theorem}
Given a partition $\mu=(\mu_1,\mu_2,\cdots,\mu_l)=(1^{m_1}\cdots (2k)^{m_{2k}})$ of $2k$,
we have:
\be
\begin{split}
G_\mu=&\langle
\tau_{\mu_1-1}\tau_{\mu_2-1}\cdots \tau_{\mu_{l}-1}
\rangle_{k+1-l}^{1D}\\
=&\langle
\tau_0^{m_1}\tau_1^{m_2}\cdots\tau_{n-1}^{m_n}
\rangle_{k+1-l}^{1D},
\end{split}
\ee
where the right-hand-sides of the above equation are
correlators of the topological 1D gravity.
\end{Theorem}

The coefficients $G_\mu$ will understood as the correlators
of the theory defined by the partition function \eqref{eq:Gauss-Gk},
and the equation \eqref{eq-selectionrule} is equivalent to
the selection rule for the topological 1D gravity \cite[(119)]{zhou1}.

Now we have seen that the study of $\{G_k(V_1,V_2,\cdots)\}$ are
equivalent to the topological 1D gravity
(up to a shift of the genus).
Then it follows that the results developed in the works \cite{ny, ny2, zhou1}
can also be applied to this problem.

For example,
similar to the formula \cite[(116)]{zhou1},
by expanding the formal Gaussian integral in Theorem \ref{thm:Gauss-Gk}
one knows that:
\begin{equation}
\label{eqn:Young}
G_k=\sum_{n\geq 1}\frac{(-1)^{n+1}}{n}\sum_{\sum\limits_{j=1}^{n}k_j=k}
\prod_{j=1}^n \biggl(\sum_{m_j>0}\sum_{\sum\limits_{i=1}^{m_j}l_i^{(j)}=2k_j}
\frac{(2k_j-1)!!  \cdot  V_{l_1^{(j)}}\cdots V_{l_{m_j}^{(j)}}}
{m_j!\cdot l_1^{(j)}!\cdots l_{m_j}^{(j)}!}
\biggr)
\end{equation}
for every $k\geq 1$.
However, this formula is not convenient for practical computations
since the right-hand-side are too complicated.
In the following subsections,
we will follow \cite[\S 5-\S 6]{zhou1}
and derive the Virasoro constraints
(or equivalently, the flow equations and the polymer equation)
for the partition function $\widetilde{Z}$.

\subsection{Operator formalism for computing $\widetilde{Z}$}
\label{sec:Operator-Z}

The recursion relations \eqref{rec-G-2} can be solved in an operator formalism
for $\widetilde{Z}$:

\begin{Theorem} \label{thm:Gk}
The partition function
$\widetilde Z= \exp\bigg(\sum\limits_{k\geq 0} \lambda^{2k} G_k\bigg)$
is given by the following formula:
\be
\widetilde Z= e^{\half \lambda^2 \tilde{d}_V^2} e^{G_0},
\ee
where $\tilde{d}_V$ is an operator defined by:
\be
\label{eq-def-tilde-d}
\tilde{d}_V=V_1 \frac{\pd}{\pd G_0} +
\sum_{l=1}^{\infty} V_{l+1}\frac{\pd}{\pd  V_l}.
\ee
\end{Theorem}

\begin{proof}
Denote $\hbar:=\lambda^2$,
then the quadratic recursion \eqref{rec-G-2} is equivalent to:
\ben
\frac{\pd}{\pd \hbar}\widetilde Z =
\half \tilde{d}_V^2 \widetilde Z.
\een
Thus we have:
\ben
\widetilde{Z}=
\sum_{n=0}^\infty \frac{\hbar^n}{n!}
\bigg(\frac{\pd^n \widetilde Z}{\pd \hbar^n}\bigg)
\bigg|_{\hbar=0}
=\sum_{n=0}^\infty \frac{\hbar^n}{n!}\cdot
\bigg(\big(\frac{1}{2}\tilde{d}_V^2\big)^n \widetilde{Z}\bigg)
\bigg|_{\hbar=0}.
\een
Notice that for every $k\geq 0$ we always have:
\ben
\big(\tilde{d}_V^k \widetilde{Z}\big)\big|_{\hbar=0}
=\tilde{d}_V^k (e^{G_0})
\een
since $\widetilde{Z}$ is of the form
$\widetilde Z= \exp\bigg(\sum\limits_{k\geq 0} \hbar^k G_k\bigg)$.
Therefore
\be
\widetilde{Z}=
\sum_{n=0}^\infty \frac{\hbar^n}{n!}\cdot
\big(\half \tilde{d}_V^2 \big)^n \big(e^{G_0}\big)
=\exp\big(\half \hbar \tilde{d}_V^2\big)\exp\big(G_0\big).
\ee
\end{proof}

One can also derive other solutions in operator formalism for $\widetilde{Z}$
using the methods in the study of the topological 1D gravity.
Now let us introduce the flow equations
and the polymer equation for $\widetilde{Z}$
(see \cite[\S 6.1]{zhou1}).

\begin{Theorem}
We have the flow equations:
\be\label{flow}
\frac{\pd\widetilde Z}{\pd V_k}=
\frac{1}{k!}\frac{\pd^k}{\pd V_1^k}\widetilde Z,
\qquad
k\geq 1,
\ee
and the polymer equation:
\be
\label{polymer}
\sum_{k\geq 0}\frac{V_{k+1}-\lambda^{-2}\delta_{k,1}}{k!}\cdot
\frac{\pd^{k}}{\pd V_1^{k}}\widetilde Z
=0.
\ee
\end{Theorem}

These equations can be proved by directly taking partial derivatives
to the partition function \eqref{eq:Gauss-Gk}
and here we omit the details.
A straightforward consequence of the flow equations is the following:

\begin{Corollary}
We have the following expression for $\widetilde{Z}$
in operator formalism:
\be \label{Z-V1}
\widetilde Z=\exp\biggl(
\sum_{k=2}^{\infty}\frac{1}{k!}V_k\frac{d^k}{dV_1 ^k}
\biggr)\exp\biggl(
V_0+\half \lambda^2 V_1^2\biggr).
\ee
\end{Corollary}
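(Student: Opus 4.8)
The plan is to derive \eqref{Z-V1} by integrating the flow equations \eqref{flow}, using the formal Gaussian integral \eqref{eq:Gauss-Gk} only to supply the initial condition at $V_2=V_3=\cdots=0$. The key structural observation is that the operators $A_k:=\frac{1}{k!}\frac{\pd^k}{\pd V_1^k}$ for $k\geq 2$ are each independent of all the variables $V_j$ with $j\geq 2$, and that the operators $V_kA_k$ mutually commute: multiplication by $V_k$ commutes with $\pd/\pd V_1$ because $k\neq 1$, and the various $\pd/\pd V_1$'s commute among themselves. Hence $\exp(\sum_{k\geq 2}V_kA_k)$ is a well-defined product $\prod_{k\geq 2}\exp(V_kA_k)$ of commuting exponentials, acting on $\bQ[V_1,V_2,\dots][[\lambda^2]]$. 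Since this operator carries no factor of $\lambda$ and preserves the grading $\deg V_l=l$ (each $V_k$ raises and each $\pd^k/\pd V_1^k$ lowers the $V$-degree by $k$), its action on the coefficient of any fixed power $\lambda^{2m}$ stays within homogeneous polynomials of $V$-degree $2m$, and only finitely many terms survive; this is what makes the right-hand side of \eqref{Z-V1} well defined.

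First I would record the initial condition. Setting $V_2=V_3=\cdots=0$ in \eqref{eq:Gauss-Gk} leaves only $V_0$ and $V_1x$ in the exponent, so $\widetilde Z|_{V_2=V_3=\cdots=0}=e^{V_0}\cdot\frac{1}{\sqrt{2\pi\lambda^2}}\int\exp(-\half\lambda^{-2}x^2+V_1x)\,dx$. Completing the square via $-\half\lambda^{-2}x^2+V_1x=-\half\lambda^{-2}(x-\lambda^2V_1)^2+\half\lambda^2V_1^2$ and evaluating the Gaussian gives $\widetilde Z|_{V_2=V_3=\cdots=0}=\exp(V_0+\half\lambda^2V_1^2)$, which is exactly the operand on the right-hand side of \eqref{Z-V1}. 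This is consistent with the explicit expansions, since $G_0=V_0$, $G_1|_{V_{\geq 2}=0}=\half V_1^2$, and $G_k$ for $k\geq 2$ contains no pure power of $V_1$ (there is no connected graph with $k\geq 2$ edges all of whose vertices are univalent).

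Next I would integrate the flows. Put $W:=\exp(\sum_{k\geq 2}V_kA_k)\,\widetilde Z|_{V_2=V_3=\cdots=0}$. Because the factors commute and each $A_k$ is independent of the $V_j$ with $j\geq 2$, differentiation isolates a single factor, giving $\pd W/\pd V_k=A_kW$ for every $k\geq 2$, while clearly $W|_{V_2=V_3=\cdots=0}=\widetilde Z|_{V_2=V_3=\cdots=0}$. Thus $W$ and $\widetilde Z$ satisfy the same first-order evolution equations \eqref{flow} in the variables $V_2,V_3,\dots$ and agree at $V_2=V_3=\cdots=0$. Expanding any solution as a power series in the $V_j$ ($j\geq 2$), the equation $\pd u/\pd V_k=A_ku$ expresses each Taylor coefficient with a nonzero $V_k$-index in terms of lower ones, so the solution is uniquely determined by its restriction to $V_2=V_3=\cdots=0$; hence $W=\widetilde Z$, which is \eqref{Z-V1}.

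Each step is short; the point that needs the most care is the last one, namely making precise the sense in which the infinite commuting family of flows is simultaneously integrated and repackaged into the single exponential $\exp(\sum_{k\geq 2}\frac{1}{k!}V_k\frac{\pd^k}{\pd V_1^k})$. I expect the cleanest route is to work order by order in $\lambda^2$: by \eqref{eq:Gauss-Gk} the coefficient of $\lambda^{2m}$ in $\widetilde Z$ is a polynomial in $V_1,\dots,V_{2m}$, homogeneous of $V$-degree $2m$, so the uniqueness argument above reduces at each such order to a finite-dimensional linear identity, which removes any convergence concern and confirms the equality of formal series.
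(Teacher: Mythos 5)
Your proposal is correct and follows essentially the same route as the paper: the paper's proof likewise evaluates the Gaussian integral \eqref{eq:Gauss-Gk} at $V_2=V_3=\cdots=0$ to get $\exp\bigl(V_0+\half\lambda^2V_1^2\bigr)$ and then simply says the conclusion follows from the flow equations \eqref{flow}, which is exactly the integration-of-flows argument you carry out in detail (commutativity of the $V_kA_k$, the equation $\pd W/\pd V_k=A_kW$, and uniqueness of formal-series solutions with given restriction to $V_{\geq 2}=0$). The only slip is cosmetic: the coefficient of $\lambda^{2m}$ in $\widetilde Z$ is $e^{V_0}$ times a homogeneous polynomial in $V_1,\dots,V_{2m}$ rather than such a polynomial itself, but since $V_0$ is inert under all operators involved this does not affect the argument.
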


\begin{proof}
Take $V_2=V_3=\cdots=0$ in the partition function \eqref{eq:Gauss-Gk},
we have
\ben
\widetilde Z(V_0,V_1)=
\frac{1}{\sqrt{2\pi\lambda^2}}\int
\exp\biggl(
-\half \lambda^{-2} x^2+V_0+V_1 x
\biggr)dx
=\exp\biggl(
V_0+\half\lambda^2 V_1^2\biggr).
\een
Then the conclusion follows from \eqref{flow}.
\end{proof}

We now expand the exponentials on the right-hand side of \eqref{Z-V1} to get:
\begin{equation*}
\begin{split}
\frac{\widetilde Z}{e^{V_0}}  = & \exp\biggl(
\sum_{k=2}^{\infty}\frac{1}{k!}V_k\frac{d^k}{dV_1 ^k}
\biggr)\exp\biggl(\half \lambda^2 V_1^2\biggr) \\
 = & \sum_{n \geq 0} \sum_{\sum\limits_{k\geq 2} km_k = n}
 \prod_{k\geq 2} \frac{V_k^{m_k}}{m_k! (k!)^{m_k}}
\cdot \frac{d^n}{d V_1^n} \sum_{m_1\geq 0}
\frac{\lambda^{2m_1}}{2^{m_1}m_1!} V_1^{2m_1} \\
 = & \sum_{n \geq 0} \sum_{\sum\limits_{k\geq 2} km_k = n}
 \prod_{k\geq 2} \frac{V_k^{m_k}}{m_k! (k!)^{m_k}}
\bigg(  \sum_{m_1\geq 0} \frac{(2m_1)\cdots (2m_1-n+1)
\lambda^{2m_1}}{2^{m_1}m_1!} V_1^{2m_1-n}\bigg).
\end{split}
\end{equation*}
Using selection rule for correlators
one can check that this matches with \eqref{eqn:Young}.

\subsection{Operator formalism for computing $G_k(z)$}
\label{sec:Operator-G}

In last subsection we have given an operator formulation for computing the partition
function $\widetilde{Z}$.
In this subsection we present an operator formalism for $G_k(z)$.
The basic tools we need are the Virasoro constraints for
the partition function $\widetilde Z$.
We will see that the Virasoro constraints in this case enable us
to obtain $G_l$ from $G_{l-1}$ directly by applying an operator $\widehat A$.

Combining the flow equations \eqref{flow} and polymer equation \eqref{polymer},
we can get the puncture equation:
\ben
\biggl(V_1+\sum_{k\geq 1}(V_{k+1}-\lambda^{-2}\delta_{k,1})\frac{\pd}{\pd V_k}
\biggr)\widetilde Z=0.
\een
Applying $\big(\frac{\pd}{\pd V_1}\big)^{m+1}$
and using the flow equations again,
we have:
\be
\biggl((m+1)\frac{\pd^m}{\pd V_1^m}
+\sum_{k\geq 0}\frac{1}{k!}(V_{k+1}-\lambda^{-2}\delta_{k,1})
\frac{\pd^{m+k+1}}{\pd V_1^{m+k+1}}\biggr)\widetilde Z=0,
\ee
or equivalently,
\be
\biggl((m+1)!\frac{\pd}{\pd V_m}
+\sum_{k\geq 0}\frac{(m+k+1)!}{k!}(V_{k+1}-\lambda^{-2}\delta_{k,1})
\frac{\pd}{\pd V_{m+k+1}}\biggr)\widetilde Z=0.
\ee
This gives us the Virasoro constraints (see \cite[Theorem 6.1]{zhou1}):

\begin{Theorem}
For every $m\geq -1$, we have $L_m\widetilde Z=0$, where
\ben
&& L_{-1}=V_1+\sum_{k\geq 1}(V_{k+1}-\lambda^{-2}\delta_{k,1})\frac{\pd}{\pd V_k};\\
&& L_0=1+\sum_{k\geq 0}(k+1)(V_{k+1}-\lambda^{-2}\delta_{k,1})
\frac{\pd}{\pd V_{k+1}};\\
&& L_m=(m+1)!\frac{\pd}{\pd V_m}
+\sum_{k\geq 0}\frac{(m+k+1)!}{k!}(V_{k+1}-\lambda^{-2}\delta_{k,1})
\frac{\pd}{\pd V_{m+k+1}},\quad m\geq 1.
\een
These operators satisfy the Virasoro commutation relation:
\ben
[L_m,L_n]=(m-n)L_{m+n}, \qquad \forall m,n\geq -1.
\een
\end{Theorem}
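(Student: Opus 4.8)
The plan is to establish the two assertions separately: first the annihilation $L_m\widetilde Z=0$, and then the Witt--Virasoro bracket $[L_m,L_n]=(m-n)L_{m+n}$. The annihilation is essentially contained in the computation preceding the statement. Indeed, $L_{-1}\widetilde Z=0$ is precisely the puncture equation derived above; the case $m\geq 1$ is the displayed identity obtained by applying $(\pd/\pd V_1)^{m+1}$ to the puncture equation and converting powers of $\pd/\pd V_1$ back into single derivatives $\pd/\pd V_k$ via the flow equations \eqref{flow}; and the case $m=0$ follows in the same way once one notes that $V_0$ enters the integrand \eqref{eq:Gauss-Gk} only through the factor $e^{V_0}$, so that $\pd\widetilde Z/\pd V_0=\widetilde Z$, which accounts for the constant term $1$ in $L_0$.

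The substantive part is the bracket relation, and here I would exploit the fact, visible from the flow equations, that every $L_m$ is a \emph{first-order} operator: using $\frac{1}{k!}(\pd/\pd V_1)^{k}=\pd/\pd V_k$ has already collapsed all higher derivatives into single ones. Writing the shifted times $\hat V_j:=V_j-\lambda^{-2}\delta_{j,2}$, so that $V_{k+1}-\lambda^{-2}\delta_{k,1}=\hat V_{k+1}$, I would decompose $L_m=X_m+R_m$, where $X_m=\sum_{j}c_{m,j}\hat V_j\,\pd_{V_{m+j}}$ with $c_{m,j}=\frac{(m+j)!}{(j-1)!}$ is the vector-field (bulk) part, and $R_m$ collects the boundary terms ($R_{-1}=V_1$, $R_0=1$, and $R_m=(m+1)!\,\pd_{V_m}$ for $m\geq 1$). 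The bulk bracket is then governed by a single factorial identity: since $\pd_{V_{m+j}}\hat V_{j'}=\delta_{m+j,j'}$, a direct expansion gives
\begin{equation*}
[X_m,X_n]=\sum_{j}\bigl(c_{m,j}c_{n,m+j}-c_{m,n+j}c_{n,j}\bigr)\hat V_j\,\pd_{V_{m+n+j}},
\end{equation*}
and the coefficient simplifies, using $c_{m,j}c_{n,m+j}=(m+j)\frac{(m+n+j)!}{(j-1)!}$ and the analogous formula $c_{m,n+j}c_{n,j}=(n+j)\frac{(m+n+j)!}{(j-1)!}$, to $(m-n)c_{m+n,j}$. Hence $[X_m,X_n]=(m-n)X_{m+n}$, which already reproduces the vector-field part of $(m-n)L_{m+n}$.

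It then remains to show that the boundary contributions assemble correctly, i.e. that $[X_m,R_n]+[R_m,X_n]+[R_m,R_n]=(m-n)R_{m+n}$. This is a finite, mechanical check organized by the values of $m$, $n$ and $m+n$ in $\{-1,0,\geq 1\}$; the representative case $[L_1,L_{-1}]=2L_0$ already shows the mechanism, the required constant $2=2\cdot 1$ arising solely from $[2\,\pd_{V_1},V_1]=2$ while the bulk supplies $2\sum_j j\hat V_j\,\pd_{V_j}$. The main obstacle is precisely this boundary bookkeeping: one must track the index truncations in $[X_m,X_n]$ at the lower edge (where a spurious $\pd_{V_0}$ would otherwise appear in $X_{-1}$, and is instead replaced by the multiplication term in $R_{-1}$) and verify that the $\lambda^{-2}$ shift in $\hat V_2$ produces no leftover scalar. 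No genuine central extension can occur because, within the range $m,n\geq -1$, the only pairs with $m+n=0$ are $(0,0)$ and $(\pm 1,\mp 1)$, and for these the constants close exactly onto the boundary parts of $L_0$; this is what makes the algebra the plain Witt algebra rather than a centrally extended Virasoro algebra.
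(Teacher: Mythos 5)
Your proposal is correct and follows essentially the same route as the paper: the annihilation $L_m\widetilde Z=0$ is exactly the paper's derivation (puncture equation plus the flow equations, with your observation $\pd\widetilde Z/\pd V_0=\widetilde Z$ handling $m=0$), and the bracket relation, which the paper dismisses with the single sentence that it ``can be checked by a direct computation,'' is precisely the direct computation you organize via the splitting $L_m=X_m+R_m$ and the factorial identity $c_{m,j}c_{n,m+j}-c_{m,n+j}c_{n,j}=(m-n)c_{m+n,j}$. Your bulk computation and representative case $[L_1,L_{-1}]=2L_0$ are sound, and the remaining boundary cases do close as you claim (e.g. for $m,n\geq 1$ one gets $m(m+1)-n(n+1)=(m-n)(m+n+1)$, matching $(m-n)R_{m+n}$), so your proposal is a fleshed-out version of the paper's argument rather than a different one.
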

This theorem can be checked by a direct computation.

We can also write the Virasoro constraints in terms of the free energy
$\log\widetilde{Z}=\sum\limits_{k}\lambda^{2k}G_k$.
By doing this, we obtain:

\begin{Proposition}
For every $l\geq 2$ and $m\geq -1$, we have:
\be
(m+2)\frac{\pd}{\pd V_{m+2}}G_l=
\sum_{k\geq 0}\binom{m+k+1}{k}V_{k+1}\frac{\pd}{\pd V_{m+k+1}}G_{l-1}
+\frac{\pd}{\pd V_m}G_{l-1}.
\ee
In particular, for $m=-1$, we have
\be
\frac{\pd}{\pd V_1}G_l= d_V G_{l-1}.
\ee
Here we use that convention $\frac{\pd}{\pd V_{-1}} := 0$.
\end{Proposition}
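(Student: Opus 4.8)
The plan is to obtain both identities by feeding the Virasoro constraints $L_m\widetilde Z=0$ of the preceding theorem into the free energy. Writing $F=\log\widetilde Z=\sum_{l\geq 0}\lambda^{2l}G_l$ and observing that each $L_m$ is a \emph{first-order} operator, i.e. the sum of a multiplication operator and a derivation $\sum_j b_j\frac{\pd}{\pd V_j}$, the identity $\frac{\pd}{\pd V_j}\widetilde Z=\frac{\pd F}{\pd V_j}\widetilde Z$ shows that $L_m\widetilde Z=0$ is equivalent, after dividing by the nowhere-vanishing $\widetilde Z$, to the relation obtained from $L_m$ by replacing each $\frac{\pd}{\pd V_j}$ with $\frac{\pd F}{\pd V_j}$. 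Thus the entire proof reduces to expanding these relations in powers of $\lambda$ and reading off the coefficient of $\lambda^{2l-2}$.

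First I would treat the generic case $m\geq 1$. Dividing $L_m\widetilde Z=0$ by $\widetilde Z$ gives
\[
(m+1)!\,\frac{\pd F}{\pd V_m}
+\sum_{k\geq 0}\frac{(m+k+1)!}{k!}\,V_{k+1}\,\frac{\pd F}{\pd V_{m+k+1}}
-\lambda^{-2}(m+2)!\,\frac{\pd F}{\pd V_{m+2}}=0,
\]
where the last term is the isolated $k=1$ contribution of the inhomogeneous piece $-\lambda^{-2}\delta_{k,1}$. Substituting $F=\sum_{l}\lambda^{2l}G_l$ and reading off the coefficient of $\lambda^{2l-2}$, the first two terms involve $G_{l-1}$ while the $\lambda^{-2}$ term raises the genus index and contributes $-(m+2)!\,\frac{\pd G_l}{\pd V_{m+2}}$. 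Setting the coefficient to zero and dividing by $(m+1)!$ yields the claimed relation once one uses the elementary simplification $\frac{(m+k+1)!}{(m+1)!\,k!}=\binom{m+k+1}{k}$. The restriction $l\geq 2$ is exactly what guarantees $2l-2>0$, so that the constant terms of the low-order Virasoro operators never interfere.

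The remaining cases $m=-1$ and $m=0$ I would handle separately, since the corresponding $L_m$ carry the special forms recorded in the theorem; the only differences are that the term $\frac{\pd}{\pd V_m}G_{l-1}$ is absent when $m=-1$ and vanishes when $m=0$, because each $G_k$ is a polynomial in $V_1,V_2,\dots$ with no dependence on $V_0$ (Corollary \ref{cor-chi-g,0}). For $m=-1$, dividing $L_{-1}\widetilde Z=0$ by $\widetilde Z$ and extracting the coefficient of $\lambda^{2l-2}$ gives $\frac{\pd G_l}{\pd V_1}=\sum_{k\geq 1}V_{k+1}\frac{\pd G_{l-1}}{\pd V_k}=\tilde d\,G_{l-1}$, which is precisely the stated special case; the computation for $m=0$ reproduces the general formula with the binomial $\binom{k+1}{k}=k+1$. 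The one point requiring care is the $\lambda$-grading bookkeeping: it is exactly the inhomogeneous term $-\lambda^{-2}\delta_{k,1}$ that couples $G_l$ to $G_{l-1}$, so one must track which term shifts the genus. Everything else is a routine comparison of coefficients.
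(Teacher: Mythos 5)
Your proof is correct and takes essentially the same route as the paper: the paper obtains this Proposition precisely by rewriting the Virasoro constraints $L_m\widetilde Z=0$ in terms of the free energy $\log\widetilde Z=\sum_{k\geq 0}\lambda^{2k}G_k$ and comparing coefficients of $\lambda^{2l-2}$, which is exactly your computation, with the details (dividing by $\widetilde Z$ using that each $L_m$ is first order, isolating the $k=1$ inhomogeneous term $-\lambda^{-2}(m+2)!\,\pd/\pd V_{m+2}$, and the simplification $\frac{(m+k+1)!}{(m+1)!\,k!}=\binom{m+k+1}{k}$) left implicit there. Your separate handling of $m=-1,0$, using that $l\geq 2$ keeps the constant terms of $L_{-1}$ and $L_0$ from contributing and that $G_{l-1}$ has no $V_0$-dependence by Corollary \ref{cor-chi-g,0}, fills in exactly what the paper omits.
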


Therefore,
for every $l\geq 1$ and $m\geq 1$, we have
\be\label{virasoro}
m\frac{\pd}{\pd V_m}G_l=\biggl(
\sum_{k\geq 0}\binom{m+k-1}{k}V_{k+1}\frac{\pd}{\pd V_{m+k-1}}
+\frac{\pd}{\pd V_{m-2}}\biggr)G_{l-1}.
\ee

\begin{Theorem} \label{thm:Gk-2}
Define an operator
\be
\label{eq-def-tilde-A}
\widehat A:=\sum_{m\geq 1}V_m\biggl(
\sum_{k\geq 0}\binom{m+k-1}{k}V_{k+1}\frac{\pd}{\pd V_{m+k-1}}
+\frac{\pd}{\pd V_{m-2}}\biggr),
\ee
where we use that convention $\frac{\pd}{\pd V_{-1}} := 0$.
Then we have:
\be \label{rec-G-opr}
G_k=\frac{1}{2^k\cdot k!}\widehat{A}^k G_0, \qquad k\geq 1,
\ee
and the free energy of $\widetilde{Z}$ is
\ben
\log\widetilde Z=\sum_{k\geq 0}\lambda^{2k}G_k
= e^{\half \lambda^2\widehat{A}} (G_0).
\een
\end{Theorem}

\begin{proof}
By the homogeneity condition given in Corollary \ref{cor-chi-g,0}
and the relation \eqref{virasoro},
we have:
\ben
2k\cdot G_k=
\sum_{m\geq 1}m V_m\frac{\pd}{\pd V_m}G_k=
\widehat A G_{k-1}.
\een
Then the conclusion is clear.
\end{proof}

Notice that
although the operator $\widehat A$ in the above theorem is a summation of
an infinite numbers of terms,
one only needs to evaluate a finite number of terms while
computing $G_l$ by applying $\widehat A$ to $G_{l-1}$
since $G_{l-1}$ is a polynomial in $\{V_k\}_{1\leq k\leq 2l-2}$.

\begin{Remark}
This operator formalism is inspired by
the work \cite{al} of Alexandrov on a similar result
for the Witten-Kontsevich tau function.
See also \cite{zhou2} for the result in the case of $r$-spin curves.
\end{Remark}

\subsection{The orbifold Euler characteristics of $\Mbar_{g,0}$}
\label{sec-orb-genus0}

Now we have derived three types of recursions
\eqref{rec-G}, \eqref{rec-G-2} and \eqref{rec-G-opr}
for the generating series $G_k(z)$ of the coefficients $\{a_{g,0}^k\}$.
From \eqref{rec-G-2} and \eqref{rec-G-opr} we easily see that $G_k$ are polynomials in $V_l$,
while from \eqref{rec-G} we can not see this polynomiality.
Now let us summarize the formulas for
the orbifold Euler characteristics $\chi(\Mbar_{g,0})$.

From the definition of $G_k(z)$ and $\{a_{g,0}^k\}$,
one can see:
\be
\chi(\Mbar_{g,0}) = \biggl[\sum_{k=0}^{3g-3} G_k(z) \biggr]_{z^{2-2g}}
= \biggl[\sum_{k=0}^{\infty} G_k(z) \biggr]_{z^{2-2g}},
\ee
where $[\cdot]_{z^{2-2g}}$ means the coefficient of $z^{2-2g}$.
Now by taking $\lambda=1$ in Theorem \ref{thm:Gk} and \ref{thm:Gk-2},
we obtain the following formulas for $\chi(\Mbar_{g,0})$:

\begin{Theorem}
We have:
\be
\sum_{g=2}^\infty
\chi(\Mbar_{g,0})z^{2-2g}
=\log \biggl( e^{\half \tilde{d}_V^2} e^{G_0(z)} \biggr)
=e^{\half \widehat{A}}G_0(z),
\ee
or equivalently,
\be
\chi(\Mbar_{g,0})=
\bigg[
\log \biggl( e^{\half \tilde{d}_V^2} e^{G_0(z)} \biggr)
\bigg]_{z^{2-2g}}
=\bigg[
e^{\half \widehat{A}}G_0(z)
\bigg]_{z^{2-2g}},
\ee
where $\tilde{d}_V$ and $\widehat{A}$ are given by
\eqref{eq-def-tilde-d} and \eqref{eq-def-tilde-A} respectively.
\end{Theorem}

\section{Results for $\chi_{g,n}(t,\kappa)$ and $\chi(\Mbar_{g,n})$}
\label{sec5}

In \S \ref{sec:Linear} we have showed that given initial values
$\{a_{0,3}^k\}$, $\{a_{1,1}^k\}$ and $\{a_{g,0}^k\}$ ($g\geq 2$),
all the coefficients $\{a_{g,n}^k\}$ are computed by the
linear recursion \eqref{eq-linear-general};
and the structures of the initial values
$\{a_{0,3}^k\}$, $\{a_{1,1}^k\}$ and $\{a_{g,0}^k\}$ ($g\geq 2$)
have been discussed in \S \ref{sec4}.
Now in the present section,
let us solve the linear recursion \eqref{eq-linear-general} directly
and give explicit formulas for the solutions.
This completes our calculations of $\{a_{g,n}^k\}$
and thus of $\chi_{g,n}(t,\kappa)$ and $\chi(\Mbar_{g,n})$.
We will first solve the genus zero case,
and it turns out that the results in higher genera exhibit similar patterns.

\subsection{Explicit expressions for generating series of $a_{0,n}^k$ and $\chi(\Mbar_{0,n})$}
\label{sol-genus0}

In this subsection,
we present an explicit expression for
the generating series of $\chi(\Mbar_{0,n})$ $(n\geq 3)$
by solving the linear recursion \eqref{eq-linear-general}
of the coefficients $\{a_{0,n}^k\}$ at genus zero.

Take $g=0$.
Recall that the refined orbifold Euler characteristic $\chi_{0,n}(t,\kappa)$
is of the form (Proposition \ref{prop-chi-homog}):
\ben
\chi_{0,n}(t,\kappa)=
t^{2-n}\cdot \widetilde\chi_{0,n}(\kappa)
=t^{2-n}\cdot\sum_{i=0}^{n-3}a_{0,n}^i \kappa^i,
\qquad n\geq 3,
\een
and the orbifold Euler characteristic of $\Mbar_{0,n}$ is given by:
\ben
\chi(\Mbar_{0,n})=n!\cdot \chi_{0,n}(1,1)=
n!\cdot\sum_{i=0}^{n-3}a_{0,n}^i.
\een

Now let us consider the following generating series
of the coefficients $\{a_{0,n}^i\}$:
\be
A_i(x):=\sum_{n=3}^{\infty}a_{0,n}^i\cdot x^n,
\qquad i\geq 0,
\ee
where we formally denote $a_{0,n}^i:=0$ for $i>n-3$.
Then it is not hard to see that
the generating series of the orbifold Euler characteristics of $\Mbar_{0,n}$
is given by:
\be
\sum_{n=3}^\infty \chi(\Mbar_{0,n})\cdot\frac{x^n}{n!}=
\sum_{k=0}^{\infty}A_k(x),
\ee
and we have
\be
\sum_{i=0}^{n-3}a_{0,n}^i
=
\biggl[\sum_{i=0}^{n-3}A_i(x)
\biggr]_n,
\ee
where $[\cdot]_n$ means taking the coefficient of $x^n$.

Our main result in this subsection is the following:

\begin{Theorem}\label{genus-0-expl}
For $n\geq 3$, we have:
\be
\chi(\Mbar_{0,n})=
n!\cdot
\biggl[\sum_{k=0}^{n-3}A_k(x)
\biggr]_n,
\ee
where $[\cdot]_n$ means the coefficient of $x^n$.
The functions $A_k(x)$ are given by:
\begin{equation*}
\begin{split}
&A_0=\frac{1}{2}(1+x)^2\log(1+x)-\frac{1}{2}x-\frac{3}{4}x^2, \\
&A_1 = \half + (1+x) (\log(1+x)-1) + \half (1+x)^2 (\log(1+x)-1)^2,
\end{split}
\end{equation*}
and for $k\geq 2$,
\begin{equation*}
A_k=\sum_{m=2-k}^{2}\frac{(x+1)^m}{(m+2)!}\sum_{l=0}^{k-m-1}
\frac{\big(\log(x+1)-1\big)^l}{l!}
e_{l+m}(1-m,2-m,\cdots,k-m-1),
\end{equation*}
where $e_l$ are the elementary symmetric polynomials
\be
\label{eq-def-ele-sym}
e_l(x_1,x_2,\cdots, x_n)=\sum_{1\leq j_1<\cdots<j_l\leq n}x_{j_1}\cdots x_{j_l}
\ee
for $l\geq 0$,
and we use the convention $e_{l}:=0$ for $l<0$.
\end{Theorem}

Before giving a proof to the above theorem,
let us first do some explicit calculations of $A_k$
for small $k$.
From the Harer-Zagier formula \eqref{eq-harer-zagier} one knows that
$A_0(x)$ is given by:
\be
\label{eq-initial-A0}
\begin{split}
A_0(x)=&
\sum_{n=3}^\infty \frac{1}{n!}\chi(\cM_{0,n})\cdot x^n\\
=&\sum_{n=3}^\infty
(-1)^{n+1}B_0\cdot (n-3)!\cdot \frac{x^n}{n!}\\
=&\frac{1}{2}(1+x)^2\log(1+x)-\frac{1}{2}x-\frac{3}{4}x^2.
\end{split}
\ee

Now let us derive a recursion relation for $\{A_k(x)\}$.
The linear recursion \eqref{genus0-linear} can be converted into the following:
\be
\label{rec-genus0-x}
\frac{d}{dx} A_k = 2 A_k - x \frac{d}{dx} A_k
+ (x\frac{d}{dx} + k-1) A_{k-1}.
\ee
This is an ordinary differential equation of first order,
thus a unique solution $A_k(x)$ is determined by this equation
together with the initial value $A_k(0)=0$.
Solving this ODE simply gives us:
\be
A_k(x)=(1+x)^2\cdot\int_0^x\biggl(
x\frac{d}{dx}A_{k-1}+(k-1)A_{k-1}
\biggr)\frac{dx}{(1+x)^3}.
\ee
Then all $A_k(x)$ are recursively computed using this recursion
and the initial value $A_0(x)$ given by \eqref{eq-initial-A0}.

In what follows, we will make a tricky change of variable:
\be
\label{eq-change-x-s}
x=e^{s+1}-1,
\ee
and the recursion \eqref{rec-genus0-x} in this new variable $s$ is:
\be\label{eq-rec-s}
\frac{d}{ds}A_k = 2A_k + \big((1-e^{-s-1}) \frac{d}{ds} + (k-1) \big)A_{k-1},
\ee
or after solving the ODE,
\be
\label{eq-rec-A-s}
A_k(s)=e^{2(s+1)}\cdot\int_{-1}^s\biggl(
\big(1-e^{-(s+1)}\big)\frac{d}{ds}A_{k-1}+(k-1)A_{k-1}
\biggr)e^{-2(s+1)}ds.
\ee
Here are examples of first a few $A_k$ computed using
\eqref{eq-initial-A0} and \eqref{eq-rec-A-s}:
\begin{equation*}
\begin{split}
A_0=&-\frac{1}{4}+e^{s+1}+e^{2(s+1)}(-\frac{1}{4}+\frac{1}{2}s),\\
A_1=& \half + e^{s+1}s + \half e^{2(s+1)} s^2,\\
A_2=&\half (s+1) + e^{s+1}(s^2+s) + e^{2(s+1)}(\half s^2+\half s^3),\\
A_3=&\frac{1}{6}e^{-(s+1)}+(\frac{1}{2}+\frac{3}{2}s+\frac{1}{2}s^2)
+e^{s+1}(s+\frac{5}{2}s^2+s^3)\\
&+e^{2(s+1)}(\frac{1}{2}s^2+\frac{7}{6}s^3+ \frac{1}{2}s^4),\\
A_4=&-\frac{1}{24}e^{-2(s+1)}+e^{-(s+1)}(\frac{1}{2}+\frac{1}{3}s)+
(\frac{1}{2}+3s+\frac{11}{4}s^2+\frac{1}{2}s^3)\\
&+e^{s+1}(s+\frac{9}{2}s^2+\frac{13}{3}s^3+s^4)
+e^{2(s+1)}(\frac{1}{2}s^2+2s^3+\frac{47}{24}s^4+\frac{1}{2}s^5).
\end{split}
\end{equation*}

Now apply the change of variables \eqref{eq-change-x-s}
to the statement in Theorem \ref{genus-0-expl}.
In order to prove that theorem,
we only need to prove the following:

\begin{Proposition}
$A_k$ is of the form:
\be
\label{genus0-sol}
A_k = e^{2(s+1)} a_{k,2} +e^{s+1} a_{k,1} + \cdots + e^{-(k-2)(s+1)} a_{k,-(k-2)},
\quad k\geq 1,
\ee
where $a_{k,j}$ are polynomials in $s$:
\ben
a_{k,-m}=\frac{1}{(m+2)!}
\sum_{l=0}^{k-m-1}\frac{s^l}{l!}e_{l+m}(-m+1,-m+2,\cdots,k-m-1),\quad k\geq 2.
\een
\end{Proposition}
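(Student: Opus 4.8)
The plan is to prove \eqref{genus0-sol} by induction on $k$, using the first-order recursion \eqref{eq-rec-s} (or its integrated form) to pass from $A_{k-1}$ to $A_k$. Writing each $A_k$ in the exponential-polynomial shape $A_k=\sum_{m}e^{-m(s+1)}a_{k,-m}(s)$ with $a_{k,-m}$ a polynomial in $s$ and $m$ ranging over the finitely many relevant integers $\geq -2$, I would first substitute this shape into \eqref{eq-rec-s} and collect the coefficient of each $e^{-m(s+1)}$. Since $\frac{d}{ds}e^{-m(s+1)}=-m\,e^{-m(s+1)}$ and the factor $1-e^{-(s+1)}$ shifts the exponent by one, the single recursion becomes the family of scalar ordinary differential equations
\[
a_{k,-m}'-(m+2)\,a_{k,-m}=a_{k-1,-m}'+(k-1-m)a_{k-1,-m}-a_{k-1,-(m-1)}'+(m-1)a_{k-1,-(m-1)}.
\]
This already explains structurally why the shape is preserved: the operator $\frac{d}{ds}-(m+2)$ sends polynomials to polynomials, and the right-hand side is a polynomial by the inductive hypothesis.

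The main step is to check that the explicit formula solves these scalar equations. Abbreviating $E_{k,m}(l):=e_{l+m}(1-m,2-m,\ldots,k-1-m)$, the claim is $a_{k,-m}=\frac{1}{(m+2)!}\sum_{l\geq 0}\frac{s^l}{l!}E_{k,m}(l)$, so that $a_{k,-m}'$ just shifts $l\mapsto l+1$. Matching the coefficient of $\frac{s^l}{l!}$ in the scalar equation (and clearing $(m+2)!$) reduces the whole problem to the single algebraic identity
\[
E_{k,m}(l+1)-(m+2)E_{k,m}(l)=E_{k-1,m}(l+1)+(k-1-m)E_{k-1,m}(l)-(m+2)E_{k-1,m-1}(l+1)+(m+2)(m-1)E_{k-1,m-1}(l).
\]
I expect this symmetric-function identity to be the crux of the argument.

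To prove it I would use the two one-variable recursions for elementary symmetric polynomials. Removing the largest variable $k-1-m$ gives $E_{k,m}(l)=E_{k-1,m}(l)+(k-1-m)E_{k-1,m}(l-1)$, while removing the smallest variable $1-m$ gives $E_{k,m}(l)=E_{k-1,m-1}(l+1)+(1-m)E_{k-1,m-1}(l)$. Substituting the first recursion into the left-hand side and simplifying, the $E_{k,m}$ terms combine with coefficient $-(m+2)$, and after cancelling the common factor $m+2$ the identity collapses to
\[
E_{k-1,m-1}(l+1)-(m-1)E_{k-1,m-1}(l)=E_{k-1,m}(l)+(k-1-m)E_{k-1,m}(l-1),
\]
where the two sides are exactly the two one-variable recursions computing the common quantity $E_{k,m}(l)$; hence both equal $E_{k,m}(l)$ and the identity holds. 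The only care needed is bookkeeping at the ends of the range of $m$ (the top term $e^{-(k-2)(s+1)}$ and the term $e^{2(s+1)}$, i.e. $m=-2$); this is handled automatically by the conventions $e_j=0$ for $j<0$ or $j$ exceeding the number of variables, together with $e_0=1$ on the empty set, which also show the formula returns $0$ outside the genuinely occurring range.

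It remains to pin down the integration constant and the base case. For every $m\geq -1$ one has $m+2\geq 1$, so the scalar equation $f'-(m+2)f=g$ has a \emph{unique} polynomial solution; thus the verification above forces $a_{k,-m}$ to agree with the formula with no free constant. The only ambiguity is the constant term of $a_{k,2}$ (the homogeneous solution $e^{2(s+1)}$ of the $m=-2$ equation), and the formula sets it to $E_{k,-2}(0)=e_{-2}(\cdots)=0$. This is the correct value because $A_k(x)=\sum_{n\geq 3}a_{0,n}^k x^n$ vanishes at $x=0$, i.e. $A_k(s=-1)=0$; since the integrated form of \eqref{eq-rec-s} vanishes identically at the lower limit $s=-1$, the constant is fixed, and consistency with the formula amounts to the vanishing $\sum_m a_{k,-m}(-1)=0$, which I would verify directly from that boundary term (it is automatic in the integrated recursion). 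Finally, for the base case $k=1$ I would check that the given expression $A_1(s)=\half+e^{s+1}s+\half e^{2(s+1)}s^2$ matches the formula, noting that the empty-set evaluations reproduce the constants such as $a_{1,0}=\frac{1}{2!}e_0(\varnothing)=\half$; the inductive step for $k\geq 2$ then uses the identity above. The expected main obstacle is the symmetric-function identity together with the boundary/constant-term bookkeeping.
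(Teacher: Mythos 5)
Your verification of the differential-equation half is correct, and in fact tidier than the paper's: the paper derives the same family of scalar equations \eqref{equation-genus0} and then checks them ``directly case by case,'' writing out only the first two, whereas your single identity for $E_{k,m}(l)$, proved by playing the remove-the-largest-variable recursion against the remove-the-smallest-variable recursion (both computing $E_{k,m}(l)$), disposes of all of them uniformly, with the end cases handled by the degeneration of the factor $m+2$ at $m=-2$ and the standard conventions for $e_j$. Your uniqueness observation is also sound: for $m\geq -1$ the equation $f'-(m+2)f=g$ has at most one polynomial solution, so within the exponential-polynomial ansatz the only freedom left is the constant term of $a_{k,2}$, i.e. an undetermined multiple of the homogeneous mode $e^{2(s+1)}$.

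The genuine gap is in how you fix that constant. Since the general solution of \eqref{eq-rec-s} is $A_k^{\mathrm{formula}}+c\,e^{2(s+1)}$, closing the induction requires showing that the \emph{explicit formula itself} satisfies $\sum_m a_{k,-m}(-1)=0$; you assert this ``is automatic in the integrated recursion,'' but it is not. The integral representation forces $A_k(-1)=0$ for the \emph{true} solution, not for your candidate, and evaluating the ODE at $s=-1$ gives no information about $c$ because the candidate satisfies the ODE identically in $s$. This vanishing is a nontrivial combinatorial identity and is precisely where the paper spends the second half of its proof: after evaluating at $s=-1$ it reduces the claim to \eqref{genus0-claim},
\ben
\sum_{j=0}^{l+2}(-1)^j\binom{l+2}{j}\,
e_l\bigl(1+(2-j),\,2+(2-j),\,\cdots,\,k-1+(2-j)\bigr)=0,
\een
which it proves by expanding the shifted elementary symmetric polynomials in Stirling numbers of the first kind and then invoking
$\sum_{j=0}^{l+2}(-1)^j\binom{l+2}{j}(2-j)^{l-m}=0$,
obtained by applying $(-x\frac{d}{dx})^{l-m}$ to $x^{-2}(1-x)^{l+2}$ and setting $x=1$ (an $(l+2)$-nd finite difference annihilating a polynomial of degree $l-m<l+2$). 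Until you supply an argument of this kind, your proof establishes the proposition only modulo an undetermined multiple of $e^{2(s+1)}=(1+x)^2$, so the key boundary identity --- roughly half of the paper's proof --- is missing.
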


\begin{proof}
By the uniqueness of the solution,
we only need to check that such an $A_k(s)$ satisfies the equation \eqref{eq-rec-s},
and the initial value is $A_k(s=-1)=0$.

First let us check the equation \eqref{eq-rec-s}.
It is equivalent to the following sequence of differential equations
in $a_{k,j}$:
\be
\label{equation-genus0}
\begin{cases}
a_{k,2}'=a_{k-1,2}'+(k+1)a_{k-1,2};\\
a_{k,1}'-a_{k,1}=
a_{k-1,1}'+k a_{k-1,1}-2a_{k-1,2}-a_{k-1,2}';\\
\qquad \cdots\cdots\\
a_{k,-k+3}'-(k-1)a_{k,-k+3}=
a_{k-1,-k+3}'+2 a_{k-1,j}\\
\qquad\qquad\qquad\qquad\qquad\qquad+(k-2)a_{k-1,-k+2}-a_{k-1,-k+2}';\\
a_{k,-k+2}'-ka_{k,-k+2}=(k-3)a_{k-1,-k+3}-a_{k-1,-k+3}'.
\end{cases}
\ee
Here the notation $a_{k,j}'$ means $\frac{d}{d s} a_{k,j}$.
These equations can be checked directly case by case.
For example, the first one holds since:
\ben
&&a_{k-1,2}'+(k+1)a_{k-1,2}\\
&=&\sum_{l=2}^{k}\frac{s^{l-1}}{(l-1)!}e_{l-2}(3,\cdots,k)+
(k+1)\sum_{l=2}^{k}\frac{s^{l}}{l!}e_{l-2}(3,\cdots,k)\\
&=&\sum_{l=2}^{k-1}\frac{s^{l}}{l!}\biggl[e_{l-1}(3,\cdots,k)+
(k+1)e_{l-2}(3,\cdots,k)\biggr]
+\frac{s^k}{k!}e_{k-2}(3,\cdots,k)+s\\
&=&\sum_{l=2}^{k-1}\frac{s^{l}}{l!}\cdot e_{l-1}(3,\cdots,k+1)
+\frac{s^k}{k!}e_{k-2}(3,\cdots,k)+s\\
&=&\sum_{l=2}^{k+1}\frac{s^{l-1}}{(l-1)!}e_{l-2}(3,\cdots,k+1)
=a_{k,2}'.
\een
Here we have used a combinatorial identity:
\ben
e_{m+1}(u_1,\cdots,u_n)+
u\cdot e_m(u_1,\cdots,u_n)=
e_{m+1}(u,u_1,\cdots,u_n).
\een
This identity follows directly from the definition of
the elementary symmetric polynomials \eqref{eq-def-ele-sym},
and we will use this identity quite often throughout the proofs in this section.

Now let us check the second equation in \eqref{equation-genus0}.
This equation holds because:
\begin{equation*}
\begin{split}
&a_{k-1,1}'+k a_{k-1,1}-2a_{k-1,2}-a_{k-1,2}'\\
=&\sum_{l=1}^{k-1}\frac{s^{l-1}}{(l-1)!}e_{l-1}(2,3,\cdots,k-1)
+k\sum_{l=1}^{k-1}\frac{s^l}{l!}e_{l-1}(2,3,\cdots,k-1)\\
&-2\sum_{l=2}^{k}\frac{s^l}{l!}e_{l-2}(3,4,\cdots,k)
-\sum_{l=2}^{k}\frac{s^{l-1}}{(l-1)!}e_{l-2}(3,4,\cdots,k)\\
=&\sum_{l=1}^{k-2}\frac{s^{l}}{l!}\biggl[
e_{l}(2,\cdots,k-1)+ke_{l-1}(2,\cdots,k-1)\biggr]
+\frac{ks^{k-1}}{(k-1)!}e_{k-2}(2,\cdots,k-1)+1\\
&-\frac{2s^k}{k!}e_{k-2}(3,\cdots,k)-s
-\sum_{l=2}^{k-1}\frac{s^{l}}{l!}\biggl[
2e_{l-2}(3,4,\cdots,k)+e_{l-1}(3,4,\cdots,k)\biggr]\\
=&\sum_{l=1}^{k-2}\frac{s^{l}}{l!}\cdot
e_{l}(2,\cdots,k)
+\frac{ks^{k-1}}{(k-1)!}e_{k-2}(2,\cdots,k-1)+1\\
&-\sum_{l=2}^{k-1}\frac{s^{l}}{l!}\cdot
e_{l-1}(2,3,\cdots,k)-
\frac{2s^k}{k!}e_{k-2}(3,\cdots,k)-s\\
=&\sum_{l=0}^{k-1}\frac{s^{l}}{l!}\cdot e_{l}(2,\cdots,k)
-\sum_{l=1}^{k}\frac{s^{l}}{l!}\cdot e_{l-1}(2,3,\cdots,k)
=a_{k,1}'-a_{k,1}.
\end{split}
\end{equation*}
The rest of the equations in \eqref{equation-genus0}
all hold for the same reason.
Therefore $A_k(s)$ defined by \eqref{genus0-sol} is indeed a solution
of the recursive equation \eqref{eq-rec-s}.

Now we only need to check $A_k(s=-1)=0$, i.e.,
\ben
&&\sum_{m=1}^{k-2}\biggl[\frac{1}{(m+2)!}
\sum_{l=0}^{k-m-2}\frac{(-1)^l}{l!}e_{l+m}(-m+1,-m+2,\cdots,k-m-1)\biggr]\\
&&+\frac{1}{2}\cdot\sum_{l=0}^{k-1}\frac{(-1)^l}{l!}e_{l}(1,2,\cdots,k-1)
+\sum_{l=1}^{k}\frac{(-1)^l}{l!}e_{l-1}(2,3,\cdots,k)\\
&&+\sum_{l=2}^{k+1}\frac{(-1)^l}{l!}e_{l-2}(3,4,\cdots,k+1)
=0.
\een
This is equivalent to
\ben
&&\sum_{l=1}^{k-2}\sum_{m=1}^{l} \biggl[\frac{1}{(m+2)!}
 \frac{(-1)^{l-m}}{(l-m)!}e_l(-m+1,-m+2,\cdots,k-m-1)\biggr]\\
&&+\frac{1}{2}\cdot\sum_{l=0}^{k-1}\frac{(-1)^l}{l!}e_{l}(1,2,\cdots,k-1)
-\sum_{l=0}^{k-1}\frac{(-1)^{l}}{(l+1)!}e_{l}(2,3,\cdots,k)\\
&&+\sum_{l=0}^{k-1}\frac{(-1)^l}{(l+2)!}e_{l}(3,4,\cdots,k+1)
=0.
\een
Since we have
\ben
\frac{1}{2}\cdot\frac{(-1)^0}{0!}- \frac{(-1)^{0}}{1!} +
\frac{(-1)^0}{2!} = 0
\een
for $l=0$, and
\ben
&& \frac{(-1)^{k-1}}{(k+1)!}e_{k-1}(3,4,\cdots,k+1)-
\frac{(-1)^{k-1}}{k!}e_{k-1}(2,3,\cdots,k) \\
& + & \frac{1}{2}\cdot\frac{(-1)^{k-1}}{(k-1)!}e_{k-1}(1,2,\cdots,k-1)\\
& = & \frac{(-1)^{k-1}}{(k+1)!}\frac{(k+1)!}{2!}-
\frac{(-1)^{k-1}}{k!}\cdot k!
+ \frac{1}{2}\cdot\frac{(-1)^{k-1}}{(k-1)!} \cdot (k-1)!\\
& = & 0
\een
for $l=k-1$, it now suffices to show that
\begin{equation*}
\begin{split}
&\frac{(-1)^l}{(l+2)!}e_{l}(3,4,\cdots,k+1)-
\frac{(-1)^{l}}{(l+1)!}e_{l}(2,3,\cdots,k)+
\frac{1}{2}\cdot\frac{(-1)^l}{l!}e_{l}(1,2,\cdots,k-1)\\
&+\sum_{m=1}^{l}  \frac{1}{(m+2)!}
 \frac{(-1)^{l-m}}{(l-m)!}e_l(-m+1,-m+2,\cdots,k-m-1)
=0
\end{split}
\end{equation*}
for $l=1, \dots, k-2$. Or equivalently,
\be\label{genus0-claim}
\sum_{j=0}^{l+2} (-1)^j \binom{l+2}{j}
e_{l}(1+(-j+2),2+(-j+2),\cdots,k-1+(-j+2))
=0.
\ee

Now let us prove \eqref{genus0-claim}.
Notice that the integers $e_{l}(1,2,\cdots,k-1)$
are the Stirling numbers of the first kind $\brac{k}{k-l}$.
We have:
\begin{equation*}
\begin{split}
&e_{l}(1+(-j+2),2+(-j+2),\cdots,k-1+(-j+2))\\
=&\sum_{1\leq a_1\leq\cdots\leq a_l\leq k-1}
\big(a_1+(-j+2)\big)\cdots\big(a_l+(-j+2)\big)\\
=&\sum_{1\leq a_1\leq\cdots\leq a_l\leq k-1}
\biggl(\sum_{m=0}^l e_m(a_1,\cdots,a_l)(-j+2)^{l-m}\biggr)\\
=&\sum_{m=0}^l \binom{k-1-m}{l-m}e_m(1,2,\cdots,k-1)(-j+2)^{l-m}\\
=&\sum_{m=0}^l \binom{k-1-m}{l-m}\brac{k}{k-m}(-j+2)^{l-m},
\end{split}
\end{equation*}
thus equation \eqref{genus0-claim} can be rewritten as
\be\label{genus0-claim2}
\begin{split}
0&=\sum_{j=0}^{l+2}(-1)^j\binom{l+2}{j}
\sum_{m=0}^l\binom{k-1-m}{l-m}\brac{k}{k-m}(-j+2)^{l-m}\\
&=\sum_{m=0}^l\biggl(
\sum_{j=0}^{l+2}(-1)^j\binom{l+2}{j}(-j+2)^{l-m}
\biggr)\cdot\binom{k-1-m}{l-m}\brac{k}{k-m}.
\end{split}
\ee

Now it suffices to prove \eqref{genus0-claim2}.
Applying the operator $(-x\frac{d}{dx})^{l-m}$ to the identity
\ben
x^{-2}(1-x)^{l+2}=\sum_{j=0}^{l+2}(-1)^j\binom{l+2}{j}x^{j-2}
\een
and then taking $x=1$, we may get:
\ben
\sum_{j=0}^{l+2}(-1)^j\binom{l+2}{j}(-j+2)^{l-m}=0.
\een
This proves \eqref{genus0-claim2},
thus $A(s=-1)=0$ indeed holds,
and we have finished the proof.
\end{proof}

\subsection{Explicit expressions for generating series of
$\{a_{1,n}^k\}$ and $\chi(\Mbar_{1,n})$}

In this subsection we present explicit expressions
for the generating series of
$\{a_{1,n}^k\}$ and $\chi(1,n)$
by solving the linear recursion \eqref{eq-linear-general} at genus one.
The result is similar to the case of genus zero.

The refined orbifold Euler characteristic $\chi_{1,n}(t,\kappa)$
is given by:
\ben
\chi_{1,n}(t,\kappa)=
t^{-n}\cdot \widetilde\chi_{1,n}(\kappa)
=t^{-n}\cdot\sum_{i=0}^n a_{1,n}^i\kappa^i,
\een
and the orbifold Euler characteristic $\chi(\Mbar_{1,n})$
is given by:
\ben
\chi(\Mbar_{1,n})=n! \cdot \chi_{1,n}(1,1)
=n!\cdot\sum_{k=0}^n a_{1,n}^k.
\een

Now for $g=1$, define the following generating series of $\{a_{1,n}^k\}$:
\ben
B_k(x):=\sum_{n=1}^\infty a_{1,n}^k x^n,
\een
then the linear recursion \eqref{genus1-linear}
gives us the following recursion for $B_k(x)$:
\begin{equation*}
\begin{cases}
\frac{d}{dx}(B_k-\delta_{k,1}\cdot\frac{x}{2})+x\frac{d}{dx}B_k
=x\frac{d}{dx}B_{k-1}+(k-1)B_{k-1},\\
B_k(0)=0,
\end{cases}
\end{equation*}
and by \eqref{eq-harer-zagier} $B_0(x)$ is given by:
\begin{equation*}
B_0(x)=
\sum_{n=1}^\infty \frac{1}{n!}\chi(\cM_{1,n})\cdot x^n
=-\frac{1}{12} \log (1+x).
\end{equation*}

Recall that all the coefficients $\{a_{1,n}^k\}$ are determined by
\ben
\widetilde\chi_{1,1}(\kappa)=-\frac{1}{12}+\frac{1}{2}\kappa,
\een
i.e.,
by two numbers $a_{1,1}^0=-\frac{1}{12}$ and $a_{1,1}^1=\frac{1}{2}$ using
\eqref{genus1-linear}
Now since the recursion is linear,
here we can separate this initial data $\widetilde\chi_{1,1}$ into two parts
in the following way.
First,
we replace by the initial data $\widetilde\chi_{1,1}$ by
$1=1+0\cdot \kappa$ and run the recursion,
and the generating series $B_k(x)$ will be replaced by new generating series $C_k(x)$;
and then we replace by the initial data $\widetilde\chi_{1,1}$ by
$\kappa=0+1\cdot \kappa$,
and obtain new generating series $D_k(x)$ similary.
The it is clear that $B_k=-\frac{1}{12}C_k+\frac{1}{2}D_k$,
where $C_k(x)$ and $D_k(x)$ are determined by:
\begin{equation}
\label{eq-C-D-conditions}
\begin{split}
&\begin{cases}
\frac{d}{dx}C_k+x\frac{d}{dx}C_k
=x\frac{d}{dx}C_{k-1}+(k-1)C_{k-1},\\
C_0(x)=\log(1+x);  \qquad C_k(0)=0;\\
\end{cases}\\
&\begin{cases}
\frac{d}{dx}D_k+x\frac{d}{dx}D_k
=x\frac{d}{dx}D_{k-1}+(k-1)D_{k-1},\\
D_0(x)=0,\qquad D_1(x)=\log(1+x); \qquad D_k(0)=0.
\end{cases}
\end{split}
\end{equation}
Now it is not hard to see that
the generating series of the orbifold Euler characteristics of $\Mbar_{1,n}$
is given by:
\be
\sum_{n=1}^\infty
\chi(\Mbar_{1,n})\cdot \frac{x^n}{n!}
=\sum_{k=0}^{\infty}\biggl(
-\frac{1}{12}C_k(x)
+\half D_k(x)\biggr).
\ee

Our main result in this subsection is the following:

\begin{Theorem}
For every $n\geq 1$, we have:
\ben
\chi(\Mbar_{1,n})=n!\cdot\biggl[\sum_{k=0}^{n}\biggl(
-\frac{1}{12}C_k(x)
+\half D_k(x)\biggr)\biggr]_n,
\een
where $[\cdot]_n$ means the coefficient of $x^n$.
Let $s:=\log(x+1)-1$, then
the explicit formulas for $C_k$ are given by:
\begin{equation*}
\begin{split}
&C_0=s+1;\\
&C_k=c_{k,-k}e^{-k(s+1)}+c_{k,-k+1}e^{(-k+1)(s+1)}+\cdots+c_{k,-1}e^{-(s+1)}+c_{k,0},
\quad k\geq 1,
\end{split}
\end{equation*}
where
\begin{equation*}
\begin{split}
&c_{k,0}=\sum\limits_{l=1}^{k}\frac{s^l}{l!}e_{l-1}(1,2,\cdots,k-1),
\quad k\geq 1;\\
&c_{k,-m}=\frac{1}{m!}\sum\limits_{l=0}^{k-m-1}
\frac{s^l}{l!}e_{l}(-m+1,-m+2,\cdots,k-m-1),
 \quad m>0, k\geq m+1.
\end{split}
\end{equation*}
And the explicit formulas for $D_k$ are given by:
\begin{equation*}
\begin{split}
&D_0=0,\\
&D_1=s+1;\\
&D_k=d_{k,-k+1}e^{(-k+1)(s+1)}+d_{k,-k+2}e^{(-k+2)(s+1)}+\cdots\\
&\qquad\qquad +d_{k,-1}e^{-(s+1)}+d_{k,0},
\quad k\geq 2,
\end{split}
\end{equation*}
where
\begin{equation*}
\begin{split}
&d_{k,0}=\frac{1}{k}+ks+
\sum\limits_{l=2}^{k}\biggl(\frac{s^l}{l!}\sum\limits_{j=1}^{k-l+1}
j^2\cdot e_{l-2}(j+1,j+2,\cdots,k-1)\biggr),\\
&d_{k,-1}=(k-1)+
\sum\limits_{l=1}^{k-2}\biggl(\frac{s^l}{l!}\sum\limits_{j=0}^{k-l}
j^2\cdot e_{l-1}(j+1,j+2,\cdots,k-2)\biggr),\\
&d_{k,-m}=(-1)^{m+1}\frac{1}{m}\cdot(k-m)\\
&\qquad\qquad+\frac{(-1)^{m+1}}{m}\sum\limits_{l=0}^{k-m-1}\frac{s^l}{l!}\biggl[
\sum\limits_{h=0}^{m-1}(-1)^h e_h(1,\frac{1}{2},\frac{1}{3},\cdots,\frac{1}{m-1})\\
&\qquad\qquad\times\sum\limits_{j=1}^{k-l-m-h}j^2 e_{l-1+h}(j+1,\cdots,k-m-1)
\biggr],
\qquad m\geq 2.
\end{split}
\end{equation*}
\end{Theorem}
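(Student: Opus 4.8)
The plan is to follow the strategy of the genus-zero argument used to establish Theorem \ref{genus-0-expl}. First I would dispose of the combinatorial reformulation: by definition $\chi(\Mbar_{1,n}) = n!\cdot\sum_{k=0}^{n} a_{1,n}^k$ and $B_k(x) = \sum_{n\geq 1} a_{1,n}^k x^n$, so $\sum_{k} a_{1,n}^k = \bigl[\sum_k B_k\bigr]_n$, and the decomposition $B_k = -\tfrac{1}{12}C_k + \tfrac12 D_k$ immediately yields the displayed formula for $\chi(\Mbar_{1,n})$. Thus everything reduces to establishing the claimed closed forms for $C_k$ and $D_k$. As in genus zero I would pass to the variable $s = \log(x+1)-1$, i.e. $1+x = e^{s+1}$ and $\frac{d}{dx} = e^{-(s+1)}\frac{d}{ds}$. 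Applying this to the recursion $(1+x)\frac{d}{dx}C_k = x\frac{d}{dx}C_{k-1} + (k-1)C_{k-1}$ (and the identical one for $D_k$) turns it into
\begin{equation*}
\frac{d}{ds}C_k = \bigl(1 - e^{-(s+1)}\bigr)\frac{d}{ds}C_{k-1} + (k-1)C_{k-1},
\end{equation*}
which is exactly \eqref{eq-rec-s} with the term $2A_k$ deleted; this reflects that the factor $2-2g$ in the linear recursion \eqref{genus1-linear} vanishes at $g=1$, so the integrating factor is trivial and the solutions contain only nonpositive exponentials $e^{-m(s+1)}$ rather than the $e^{2(s+1)}$ appearing in \eqref{genus0-sol}.

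Each $C_k$ ($k\geq 1$) and each $D_k$ ($k\geq 2$) is the unique solution of a first-order linear ODE in $s$ determined by the previous term together with the initial value at $s=-1$ (corresponding to $x=0$). So I would argue by induction on $k$, assuming the closed form for $C_{k-1}$ (resp. $D_{k-1}$) and checking that the proposed $C_k$ (resp. $D_k$) satisfies both the recursion and $C_k(s=-1)=0$ (resp. $D_k(s=-1)=0$). Matching coefficients of $e^{-m(s+1)}$ in the recursion reduces the first task to a triangular system of differential identities for the polynomials $c_{k,-m}$ (resp. $d_{k,-m}$), entirely analogous to \eqref{equation-genus0}. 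Each such identity becomes an equality among elementary symmetric polynomials, which I would verify using the Pascal-type recursion $e_{l}(a_1,\dots,a_{r+1}) = e_{l}(a_1,\dots,a_r) + a_{r+1}e_{l-1}(a_1,\dots,a_r)$, exactly as in the two sample computations carried out in the genus-zero proof.

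For the initial condition, setting $s=-1$ and collecting terms reduces $C_k(-1)=0$ to an alternating binomial identity of the same shape as \eqref{genus0-claim}, which I would prove by the same device used there: re-express the shifted products $e_l(1+c,\dots,k-1+c)$ through Stirling numbers of the first kind as in \eqref{genus0-claim2}, then apply a suitable power of $-x\frac{d}{dx}$ to a power of $(1-x)$ and evaluate at $x=1$. The $C_k$ part should therefore go through with only bookkeeping changes from the genus-zero argument, since $C_0 = s+1$ plays the role of the seed.

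The hard part will be the $D_k$ part. Because the $D$-recursion is seeded at $k=1$ (with $D_0 = 0$, $D_1 = s+1$) rather than at $k=0$, its coefficients $d_{k,-m}$ are genuinely more intricate: they carry the weights $j^2$ inside the inner sums and the harmonic arguments $e_h(1,\tfrac12,\dots,\tfrac{1}{m-1})$, neither of which appears for $C_k$ or in genus zero. Establishing both the coefficient identities and the vanishing $D_k(-1)=0$ will require more delicate symmetric-function manipulations, essentially the partial-fraction and harmonic-sum identities governing how the factor $\bigl(1 - e^{-(s+1)}\bigr)$ shifts the arguments $j+1,\dots,k-m-1$ under differentiation. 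I expect this verification, rather than the $C_k$ computation, to be the main technical obstacle, and I would organize it by isolating the contribution of the $j^2$ weight and treating the $e_h(1,\tfrac12,\dots)$ factors by a secondary induction on $m$.
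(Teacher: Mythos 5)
Your proposal takes essentially the same route as the paper: the paper explicitly omits the proof of this theorem, stating that it proceeds by the same method as the genus-zero case (Theorem \ref{genus-0-expl}) and the genus $g\geq 2$ case, which is precisely what you outline --- the reduction to $C_k$, $D_k$ by linearity, the change of variable $x=e^{s+1}-1$, induction on $k$ via uniqueness of the first-order ODE with initial value at $s=-1$, coefficient-matching of the exponentials $e^{-m(s+1)}$ reducing to elementary-symmetric-function identities, and a Stirling-number/alternating-binomial argument for the vanishing at $s=-1$. Your derivation of the $s$-variable recursion (correctly noting the absence of the $2A_k$-type term because $2-2g=0$, hence only nonpositive exponentials) and your identification of the $D_k$ coefficients, with their $j^2$ weights and harmonic factors handled by a secondary induction on $m$, as the genuinely new technical content are both accurate.
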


These functions $d_{k,-m}$ can be rewritten in the following way:
\begin{equation*}
d_{k,-m}=(-1)^{m+1}\frac{1}{m}(k-m)+
\frac{(-1)^{m+1}}{m}\tilde d_{n,-m}(s),\quad m\geq 2,
\end{equation*}
where the functions $\tilde d_{n,-m}(s)$ are given by:
\begin{equation*}
\begin{split}
&\tilde d_{k,-2}=\sum\limits_{l=0}^{k-3}\frac{s^l}{l!}\biggl(
\sum\limits_{j=1}^{k-l-2}j^2e_{l-1}(j+1,\cdots,k-3)
-\sum\limits_{j=1}^{k-l-3}j^2e_{l}(j+1,\cdots,k-3)
\biggr),\\
&\tilde d_{k,-m}=\tilde d_{k-1,-m+1}-\frac{1}{m-1}\frac{d}{ds}\tilde d_{k-1,-m+1},
\qquad m\geq 3.
\end{split}
\end{equation*}
For example,
\begin{equation*}
\begin{split}
&\tilde d_{k,-3}=
\sum_{l=0}^{k-4}\frac{s^l}{l!}\biggl[\biggl(
\sum_{j=1}^{k-l-3}j^2e_{l-1}(j+1,\cdots,k-4)-\sum_{j=1}^{k-l-4}j^2e_{l}(j+1,\cdots,k-4)
\biggr)\\
&\qquad-\frac{1}{2}\biggl(
\sum_{j=1}^{k-l-4}j^2e_{l}(j+1,\cdots,k-4)-\sum_{j=1}^{k-l-5}j^2e_{l+1}(j+1,\cdots,k-4)
\biggr)\biggr].
\end{split}
\end{equation*}

\begin{equation*}
\begin{split}
&\tilde d_{k,-4}=\sum_{l=0}^{k-5}
\frac{s^l}{l!}\biggl\{
\biggl[\biggl(
\sum_{j=1}^{k-l-4}j^2e_{l-1}(j+1,\cdots,k-5)-\sum_{j=1}^{k-l-5}j^2e_{l}(j+1,\cdots,k-5)
\biggr)\\
&\qquad-\frac{1}{2}\biggl(
\sum_{j=1}^{k-l-5}j^2e_{l}(j+1,\cdots,k-5)-\sum_{j=1}^{k-l-6}j^2e_{l+1}(j+1,\cdots,k-5)
\biggr)\biggr]
\\
&\quad-\frac{1}{3}\biggl[\biggl(
\sum_{j=1}^{k-l-5}j^2e_{l}(j+1,\cdots,k-5)-\sum_{j=1}^{k-l-6}j^2e_{l+1}(j+1,\cdots,k-5)
\biggr)\\
&\qquad-\frac{1}{2}\biggl(
\sum_{j=1}^{k-l-6}j^2e_{l+1}(j+1,\cdots,k-5)-\sum_{j=1}^{k-l-7}j^2e_{l+2}(j+1,\cdots,k-5)
\biggr)\biggr]\biggr\}.
\end{split}
\end{equation*}

We will omit the proof of this theorem for genus one,
since it can be proved using the same method as the case of genus zero
given in the previous subsection.
One only needs to check that $C_k$ and $D_k$ given in this theorem
satisfy the recursions and initial conditions in \eqref{eq-C-D-conditions}.
See also the next subsection
where we will present a similar proof in a general case for $g\geq 2$.

\subsection{Solutions of the linear recursion in general case $g\geq 2$}

Similar to the cases of genus zero and genus one,
in the general case $g\geq 2$ one can also solve the linear recursion
\eqref{eq-linear-general} explicitly to compute the generating series
of $\chi(\Mbar_{g,n})$.
Let us do this in the present subsection.

For $g \geq 2$,
recall that $\chi_{g,n}(t,\kappa)=t^{2-2g-n}\cdot \widetilde\chi_{g,n}(\kappa)$
and
\ben
\widetilde\chi_{g,0}(\kappa)=a_{g,0}^0+a_{g,0}^1\kappa
\cdots+a_{g,0}^{3g-3}\kappa^{3g-3}.
\een
The coefficients $a_{g,0}^{i}$ ($0\leq i\leq 3g-3$) are the initial data
for the linear recursion,
and the structures of these initial data have been discussed in \S \ref{sec4}.
Similar to the case $g=1$,
we decompose $A_{g,k}(x):=\sum\limits_{n=0}^{\infty}a_{g,n}^k x^n$
into the following summation:
\ben
A_{g,k}(x)=a_{g,0}^0A_{g,k}^0(x)+a_{g,0}^1A_{g,k}^1(x)
\cdots+a_{g,0}^{3g-3}A_{g,k}^{3g-3}(x),
\een
where the sequence $\{A_k^p(x)\}_{k\geq 0}$ is the solution of the linear recursion
if we replace the initial data $\widetilde\chi_{g,0}(\kappa)$ by $\kappa^p$
($0\leq p\leq 3g-3$).
Then from \eqref{eq-linear-general} we know that
these sequences are determined by:
\be
\begin{cases}
A_{g,0}^p(x)=\cdots=A_{g,p-1}^{p}(x)=0;\\
A_{g,p}^p(x)=\sum\limits_{n=0}^\infty (-1)^n \cdot
\frac{(2g-3+n)!}{n!\cdot(2g-3)!} \cdot x^n
=(1+x)^{2-2g};\\
A_{g,k}^p(0)=0,\qquad k>p;\\
\frac{d}{dx}A_{g,k}^p+x\frac{d}{dx}A_{g,k}^p+(2g-2)A_{g,k}^p=
x\frac{d}{dx}A_{g,k-1}^p+(k-1)A_{g,k-1}^p.
\end{cases}
\ee
Recall that the orbifold Euler characteristic of $\Mbar_{g,n}$ is given by
\ben
\chi(\Mbar_{g,n})=n!\cdot \chi_{g,n}(1,1)=
n!\cdot \sum_{p=0}^{3g-3+n} a_{g,n}^{p},
\een
therefore the generating series of $\chi(\Mbar_{g,n})$ is given by:
\be
\sum_{n=0}^\infty \chi(\Mbar_{g,n})\cdot \frac{x^n}{n!}=
\sum_{p=0}^{3g-3} a_{g,0}^p
\sum_{k=0}^\infty A_{g,k}^p (x).
\ee

Our main result in this subsection is the following:

\begin{Theorem}
The orbifold Euler characteristic $\chi(\Mbar_{g,n})$
is given by:
\ben
\chi(\Mbar_{g,n})=n!\cdot\sum_{k=0}^{3g-3+n}a_{g,n}^k=n!\cdot
\biggl[\sum_{k=0}^{3g-3+n}\sum_{p=0}^{3g-3}a_{g,0}^p A_{g,k}^p(x)\biggr]_n,
\een
where $[\cdot]_n$ means the coefficient of $x^n$.
Let $x=e^{s+1}-1$, then
$A_{g,k}^p$ ($g\geq 2$) are given by:
\begin{equation*}
\begin{split}
&A_{g,0}^p=\cdots=A_{g,p-1}^{p}=0;\\
&A_{g,p}^p=e^{(2-2g)(s+1)};\\
&A_{g,k}^p=a_{g,k,-k-2g+2+p}^p e^{(-k-2g+2+p)(s+1)}
+\cdots+a_{g,k,-2g+2}^p e^{(-2g+2)(s+1)},
\quad k>p,
\end{split}
\end{equation*}
where for $p=0$ we have:
\begin{equation*}
\begin{split}
&a_{g,k,-2g+2}^0=(2-2g)\sum\limits_{l=1}^{k}\frac{s^l}{l!}
e_{l-1}(-2g+3,-2g+4,\cdots,-2g+k+1),\\
&a_{g,k,-2g+2-m}^0=\frac{2-2g}{m!}
\sum\limits_{l=0}^{k-m}\frac{s^l}{l!}e_{l+m-1}(-2g+3-m,\cdots,
-2g+k+1-m), \quad m\geq 1.
\end{split}
\end{equation*}
For $p=1$, we have:
\begin{equation*}
a_{g,k,-2g+2-m}^1=\frac{1}{m!}\sum_{l=0}^{k-m-1}\frac{s^l}{l!}
e_{l+m}(-2g+3-m,\cdots,-2g+k+1-m).
\end{equation*}
For $p=2$, we have:
\begin{equation*}
\begin{split}
&a_{g,k,-2g+2}^2=k-1+\sum\limits_{l=1}^{k-2}\frac{s^l}{l!}\biggl(
\sum\limits_{j=-2g+4}^{k-l-2g+2}j(j+2g-3)e_{l-1}(j+1,\cdots,k-2g+1)
\biggr),\\
&a_{g,k,-2g+2-m}^2=(-1)^m\binom{2g-3+m}{m}\cdot(k-1-m)\\
&\qquad\qquad
+\frac{(-1)^m}{m!}\cdot\sum\limits_{l=0}^{k-2-m}\frac{s^l}{l!}
\biggl[\sum\limits_{h=0}^{m}(-1)^h
e_{m-h}(2g-2,\cdots,2g-3+m)\times\\
&\qquad\qquad
\sum\limits_{j=-2g+4}^{k-l-2g+2-m-h}j(j+2g-3)
e_{l-1+h}(j+1,\cdots,k-2g-m+1)
\biggr],
\quad m\geq 1.
\end{split}
\end{equation*}
And for $p\geq 3$, we have
\be\label{sol-general1}
\begin{split}
&a_{g,k,-2g+2-m}^p=\frac{(-1)^m}{m!}\sum_{l=0}^{k-p-m}\frac{s^l}{l!}\biggl[
\sum_{h=0}^{m}(-1)^h\cdot e_{m-h}(2g-2,\cdots,2g-3+m)\\
&\quad\qquad\times\biggl(
\sum_{j=-2g+p+1}^{k-l-2g+1-m-h}\binom{j+2g-3}{p-2}
e_{l+h}(j+1,\cdots,k-2g-m+1)\biggr)\biggr],
\end{split}
\ee
or equivalently,
\be\label{sol-general2}
\begin{split}
&a_{g,k,-2g+2-m}^p=\frac{1}{m!}\sum_{l=0}^{k-p-m}\frac{s^l}{l!}\biggl[
 \sum_{j=-2g+p+1}^{-2g+k+1-m-l}
\binom{j+2g-3}{p-2}\\
&\qquad\times e_{m+l} (-2g+3-m,\cdots,-2g+2;j+1,\cdots,-2g+k+1-m)\biggr].
\end{split}
\ee

\end{Theorem}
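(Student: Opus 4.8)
The plan is to run the same uniqueness argument used in the genus-zero case of \S\ref{sol-genus0}, now carrying the additional parameter $p$ and the shift $2g-2$. First I would observe that, since the linear recursion \eqref{eq-linear-general} is linear in the coefficients $\{a_{g,n}^k\}$ and the only initial data are $\{a_{g,0}^p\}_{0\le p\le 3g-3}$, the generating series $A_{g,k}(x)=\sum_{n\ge 0}a_{g,n}^k x^n$ splits as $A_{g,k}=\sum_{p}a_{g,0}^p A_{g,k}^p$, where $A_{g,k}^p$ is the contribution of the unit initial vector $a_{g,0}^{p'}=\delta_{p,p'}$. Rewriting \eqref{eq-linear-general} at the level of these series yields precisely the first-order linear ODE
\[
\frac{d}{dx}A_{g,k}^p+x\frac{d}{dx}A_{g,k}^p+(2g-2)A_{g,k}^p=x\frac{d}{dx}A_{g,k-1}^p+(k-1)A_{g,k-1}^p,
\]
together with the vanishing $A_{g,0}^p=\cdots=A_{g,p-1}^p=0$, the seed $A_{g,p}^p=(1+x)^{2-2g}$, and the normalization $A_{g,k}^p(0)=0$ for $k>p$. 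Because such an ODE has a unique solution subject to these constraints, it suffices to check that the claimed closed forms satisfy the recursion and the initial value.

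Next I would substitute $x=e^{s+1}-1$, exactly as in the passage to \eqref{eq-rec-s}, so that the ODE acquires constant coefficients in $s$. Inserting the ansatz $A_{g,k}^p=\sum_{m}a_{g,k,-2g+2-m}^p\,e^{(-2g+2-m)(s+1)}$ and matching the coefficient of each exponential $e^{(-2g+2-m)(s+1)}$ separates the ODE into a triangular system of scalar equations on the polynomials $a_{g,k,-2g+2-m}^p$, entirely parallel to \eqref{equation-genus0}. Each of these reduces to the Pascal-type identity $e_{r}(x_1,\dots,x_N,x_{N+1})=e_r(x_1,\dots,x_N)+x_{N+1}\,e_{r-1}(x_1,\dots,x_N)$ applied to the arithmetic blocks of variables occurring in \eqref{sol-general1}; they are verified block by block, with the cases $p=0,1,2$ and $p\ge 3$ handled by the corresponding branches of the coefficient formulas and the boundary rows $k=p,\,p+1$ treated as base cases.

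The main obstacle will be the initial value $A_{g,k}^p(s=-1)=0$ for $k>p$. Putting $s=-1$ kills all positive powers of $s$ and collapses the double sum to an alternating sum weighted by $\binom{j+2g-3}{p-2}$; expanding the elementary symmetric polynomials of shifted variables in terms of the unshifted ones (the step that turned \eqref{genus0-claim} into \eqref{genus0-claim2}) reduces the claim to an identity of the shape $\sum_{j}(-1)^j\binom{N}{j}\,Q(j)=0$, where $Q$ is a polynomial in $j$ of degree strictly less than $N$ coming from $\binom{j+2g-3}{p-2}$ (degree $p-2$) times the powers of $j$ produced by the shifts. Exactly as in genus zero, where this was dispatched by applying $(-x\tfrac{d}{dx})^{l-m}$ to $x^{-2}(1-x)^{l+2}$ and evaluating at $x=1$, I expect the general vanishing to follow from applying the same operator to a suitable rational function $x^{a}(1-x)^{N}$ with $a$ and $N$ dictated by $k,p,g$, using that a finite difference of order exceeding the degree annihilates a polynomial. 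Pinning down $a$ and $N$ and bounding $\deg Q$ correctly is the delicate point.

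Finally, I would prove the asserted equivalence of the two closed forms \eqref{sol-general1} and \eqref{sol-general2}, which is a purely symmetric-function statement. Writing $A=\{-2g+3-m,\dots,-2g+2\}=-\{2g-2,\dots,2g-3+m\}$ and $B=\{j+1,\dots,-2g+k+1-m\}$, the splitting rule $e_{m+l}(A\sqcup B)=\sum_{i}e_{m+l-i}(A)e_{i}(B)$ combined with $e_h(A)=(-1)^h e_h(2g-2,\dots,2g-3+m)$ gives $e_{m+l}(A\sqcup B)=(-1)^m\sum_{h=0}^{m}(-1)^h e_{m-h}(2g-2,\dots,2g-3+m)\,e_{l+h}(B)$, which is exactly the inner sum over $h$ in \eqref{sol-general1}. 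The apparent mismatch of the $j$-summation ranges is harmless, since $e_{l+h}(j+1,\dots)$ vanishes automatically once there are fewer than $l+h$ variables, so both forms have the same support in $j$. Assembling the ODE verification, the initial value, and this equivalence yields the formulas for $A_{g,k}^p$, and the expression for $\chi(\Mbar_{g,n})$ then follows by extracting the coefficient $[\,\cdot\,]_n$ as in the statement.
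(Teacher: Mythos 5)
Your setup matches the paper's proof exactly: the decomposition $A_{g,k}=\sum_p a_{g,0}^p A_{g,k}^p$ with unit initial data, the first-order ODE plus uniqueness, the substitution $x=e^{s+1}-1$ yielding a triangular system verified through the Pascal rule $e_r(x_1,\dots,x_{N+1})=e_r(x_1,\dots,x_N)+x_{N+1}e_{r-1}(x_1,\dots,x_N)$, and your splitting argument for the equivalence of \eqref{sol-general1} and \eqref{sol-general2} is precisely the right one. The genuine gap is in the initial-value step $A_{g,k}^p(s=-1)=0$, which is the bulk of the paper's proof, and your proposed reduction to a single identity $\sum_{j}(-1)^j\binom{N}{j}Q(j)=0$ with $\deg Q<N$ would fail. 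The genus-zero argument works because in \eqref{genus0-claim} all variable sets are translates $\{1+(2-j),\dots,k-1+(2-j)\}$ of one fixed set, with the shift parameter equal to the alternating index $j$; hence $e_l$ of the shifted set is a polynomial of degree $\leq l$ in $j$ and each grouped term dies separately under the $(l+2)$-nd finite difference. For $p\geq 3$ neither feature survives: after setting $s=-1$ and grouping by $r=m+l$, the sum \eqref{proof-general-eq} carries the alternating weight $(-1)^m\binom{r}{m}$ on the index $m$, while the binomial $\binom{j+2g-3}{p-2}$ is a \emph{non-alternating} weight on the separate index $j$, and the variable sets $\{-2g+3-m,\dots,-2g+2\}\cup\{j+1,\dots,-2g+k+1-m\}$ are two arithmetic blocks whose split point moves with $m$ --- they are not translates of a common set, so there is no ``expand in powers of the shift'' step. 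Worse, the inner alternating sum over $m$ does \emph{not} vanish for fixed $r$: for $k=p+1$ the $r=0$ and $r=1$ contributions to \eqref{proof-general-eq} are $+p$ and $-p$ respectively, and only their total is zero. So no termwise finite-difference identity of the shape you propose, for any choice of $a$ and $N$, can establish the vanishing.

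What is needed --- and what the paper supplies --- is an exact evaluation of the inner sum: claim \eqref{proof-general-eqclaim} states that for fixed $r$ it equals $\frac{(k-1)!}{(k-1-r)!}\bigl(\binom{p-2}{p-2}+\binom{p-1}{p-2}+\cdots+\binom{k-2-r}{p-2}\bigr)$, proved by induction on $r$ (with $k$ decreasing to $k-1$), using Pascal on $\binom{r+1}{m}$ together with the telescoping identity $e_{r+1}(C\cup\{-2g+2-m\})-e_{r+1}(C\cup\{-2g+k+1-m\})=(1-k)\,e_r(C)$, where $C$ is the common part of the two consecutive variable sets; this exploits exactly the two-block structure in which one element migrates between blocks, a mechanism with no genus-zero counterpart. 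Only after this evaluation does the problem become $\sum_{r=0}^{k-p}(-1)^r\binom{k-1}{r}\binom{k-1-r}{p-1}=0$ (via the hockey-stick identity), and at that point your finite-difference philosophy does apply, since $\binom{k-1-r}{p-1}$ is a polynomial in $r$ of degree $p-1<k-1$ and the range of $r$ extends for free; the paper instead finishes by repeated Pascal. In short, your outline is sound at both ends but omits the key intermediate lemma, and the step you yourself flag as ``the delicate point'' is precisely where a new idea, absent from the genus-zero case, is required.
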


\begin{proof}
We will only prove the case $p\geq 3$.
The proofs for other cases are all similar.

The equivalence of \eqref{sol-general1} and \eqref{sol-general2} is clear.
By the uniqueness of solutions to first order ODE's,
we only need to check that such $A_{g,k}^p$ satisfy
\be\label{equation-general}
\frac{d}{dx}A_{g,k}^p+x\frac{d}{dx}A_{g,k}^p+(2g-2)A_{g,k}^p=
x\frac{d}{dx}A_{g,k-1}^p+(k-1)A_{g,k-1}^p
\ee
and the initial condition $A_{g,k}^p(s=-1)=0$ for $k>p$.

First let us check the equation \eqref{equation-general}. It is equivalent to
\be\label{equation-general-s}
\frac{d}{ds}A_{g,k}^p+(2g-2)A_{g,k}^p=
\big(1-e^{-(s+1)}\big)\frac{d}{ds}A_{g,k-1}^p+(k-1)A_{g,k-1}^p,
\ee
or equivalently,
\be
\begin{cases}
\frac{d}{ds}a_{g,k,-2g+2}^{p}=
\frac{d}{ds}a_{g,k-1,-2g+2}^p+(k-2g+1)a_{g,k-1,-2g+2}^p,\\
\frac{d}{ds}a_{g,k,-2g+1}^{p}-a_{g,k,-2g+1}^{p}=
(2g-2)a_{g,k-1,-2g+2}^{p}-\frac{d}{ds}a_{g,k-1,-2g+2}^{p}\\
\qquad\qquad\qquad\qquad\qquad
+\frac{d}{ds}a_{g,k-1,-2g+1}^p+(k-2g)a_{g,k-1,-2g+1}^p,\\
\quad\cdots\cdots
\end{cases}
\ee
Similar to the equations \eqref{equation-genus0} in the case $g=0$,
these equations for $g\geq 2$ can also be checked case by case
using the expression \eqref{sol-general1},
For example, the first equation holds because:
\ben
&&\frac{d}{ds}a_{g,k-1,-2g+2}^p+(k-2g+1)a_{g,k-1,-2g+2}^p\\
&=&\sum_{l=1}^{k-1-p}\frac{s^{l-1}}{(l-1)!}
\sum_{j=-2g+p+1}^{k-l-2g}\binom{j+2g-3}{p-2}e_{l}(j+1,\cdots,k-2g)\\
&&+(k-2g+1)\sum_{l=0}^{k-1-p}\frac{s^l}{l!}
\sum_{j=-2g+p+1}^{k-l-2g}\binom{j+2g-3}{p-2}e_{l}(j+1,\cdots,k-2g)\\
&=&\sum_{l=0}^{k-2-p}\biggl[\frac{s^{l}}{l!}
\sum_{j=-2g+p+1}^{k-l-2g}\binom{j+2g-3}{p-2}\biggl(e_{l+1}(j+1,\cdots,k-2g)\\
&&\qquad\qquad\qquad+(k-2g+1)e_l(j+1,\cdots,k-2g)
\biggr)\biggr]\\
&&+\frac{s^{k-1-p}}{(k-1-p)!}\cdot(k-2g+1)e_{k-1-p}(-2g+p+2,\cdots,k-2g)\\
&=&\sum_{l=0}^{k-1-p}\biggl[\frac{s^{l}}{l!}
\sum_{j=-2g+p+1}^{k-l-2g}\binom{j+2g-3}{p-2}e_{l+1}(j+1,\cdots,k-2g+1)\biggr]\\
&=&\frac{d}{ds}a_{g,k,-2g+2}^{p}.
\een
Now it suffices to prove the initial value conditions
$A_{g,k}^p(s=-1)=0$ for $k>p$, i.e.,
to prove:
\be
\begin{split}
&\sum_{m=0}^{k-p}\frac{1}{m!}\sum_{l=0}^{k-p-m}\frac{(-1)^l}{l!}\biggl[
 \sum_{j=-2g+p+1}^{-2g+k+1-m-l}
\binom{j+2g-3}{p-2}\\
&\times e_{m+l} (-2g+3-m,\cdots,-2g+2;j+1,\cdots,-2g+k+1-m)\biggr]=0.
\end{split}
\ee
This is equivalent to say:
\be\label{proof-general-eq}
\begin{split}
&\sum_{r=0}^{k-p}\frac{(-1)^r}{r!}
\sum_{m=0}^r(-1)^m\cdot \binom{r}{m}\biggl[
\sum_{j=-2g+p+1}^{-2g+k+1-r}
\binom{j+2g-3}{p-2}\\
&\qquad\times e_r (-2g+3-m,\cdots,-2g+2;j+1,\cdots,-2g+k+1-m)\biggr]=0.\\
\end{split}
\ee

To prove the equation \eqref{proof-general-eq},
we first claim the following combinatorial identity:
\be\label{proof-general-eqclaim}
\begin{split}
&\sum_{m=0}^r(-1)^m\cdot \binom{r}{m}\biggl[
\sum_{j=-2g+p+1}^{-2g+k+1-r}
\binom{j+2g-3}{p-2}\\
&\qquad\times e_r (-2g+3-m,\cdots,-2g+2;j+1,\cdots,-2g+k+1-m)\biggr]\\
&=\frac{(k-1)!}{(k-1-r)!}\biggl(
\binom{p-2}{p-2}+\binom{p-1}{p-2}+\cdots+\binom{k-2-r}{p-2}
\biggr),
\end{split}
\ee
for $k>p\geq 3$ and $0\leq r\leq k-p$.
This combinatorial identity can be proved by induction on $r$.
First notice that the case $r=0$ is trivial for every $p\geq 3$ and $k>p$.
Now let us assume that \eqref{proof-general-eqclaim} holds for $r$,
and consider the case $r+1$.
Using the property
$\binom{r+1}{m}=\binom{r}{m-1}+\binom{r}{m}$,
we have:
\be\label{proof-general-induction}
\begin{split}
&\sum_{m=0}^{r+1}(-1)^m\cdot \binom{r+1}{m}\biggl[
\sum_{j=-2g+p+1}^{-2g+k+1-(r+1)}
\binom{j+2g-3}{p-2}\\
&\qquad\times e_{r+1} (-2g+3-m,\cdots,-2g+2;j+1,\cdots,-2g+k+1-m)\biggr]\\
&=\sum_{m=0}^{r}(-1)^{m+1}\binom{r}{m}\biggl\{
\sum_{j=-2g+p+1}^{-2g+k+1-(r+1)}
\binom{j+2g-3}{p-2}\times\\
&\quad\biggl[ e_{r+1} (-2g+3-(m+1),\cdots,-2g+2;j+1,\cdots,-2g+k+1-(m+1))\\
&\qquad- e_{r+1} (-2g+3-m,\cdots,-2g+2;j+1,\cdots,-2g+k+1-m)\biggr]
\biggr\}.
\end{split}
\ee
Notice here
\ben
&&e_{r+1} (-2g+3-(m+1),\cdots,-2g+2;j+1,\cdots,-2g+k+1-(m+1))\\
&&-e_{r+1} (-2g+3-m,\cdots,-2g+2;j+1,\cdots,-2g+k+1-m)\\
&=&\big[\big(-2g+3-(m+1)\big)-\big(-2g+k+1-m\big)\big]\times\\
&&e_r (-2g+3-m,\cdots,-2g+2;j+1,\cdots,-2g+k+1-(m+1))\\
&=&(1-k)e_r (-2g+3-m,\cdots,-2g+2;j+1,\cdots,-2g+k-m),
\een
thus \eqref{proof-general-induction} equals to:
\ben
&&(1-k)\cdot\sum_{m=0}^{r}(-1)^{m+1}\binom{r}{m}\biggl\{
\sum_{j=-2g+p+1}^{-2g+k-r}
\binom{j+2g-3}{p-2}\\
&&\qquad\times e_r (-2g+3-m,\cdots,-2g+2;j+1,\cdots,-2g+k-m)\biggr]
\biggr\}.
\een
By the induction hypothesis for $(k-1,p,r)$, this becomes:
\ben
&&(1-k)(-1)\cdot\frac{\big((k-1)-1\big)!}{\big((k-1)-1-r\big)!}\biggl(
\binom{p-2}{p-2}+\cdots+\binom{(k-1)-2-r}{p-2}
\biggr)\\
&=&\frac{(k-1)!}{\big(k-1-(r+1)\big)!}\biggl(
\binom{p-2}{p-2}+\cdots+\binom{k-2-(r+1)}{p-2}
\biggr),
\een
which proves the claim \eqref{proof-general-eqclaim}.

Now applying \eqref{proof-general-eqclaim} to the left-hand-side of
\eqref{proof-general-eq},
we see that in order to prove \eqref{proof-general-eq}
we only need to show:
\be
\sum_{r=0}^{k-p}(-1)^r\binom{k-1}{r}\biggl(
\binom{p-2}{p-2}+\binom{p-1}{p-2}+\cdots+\binom{k-2-r}{p-2}
\biggr)=0
\ee
for $p\geq 3$ and $k>p$.
Using the identity
$\binom{k-1}{r}=\binom{k-2}{r}+\binom{k-2}{r-1}$,
the left-hand-side of this equation becomes:
\ben
&&\sum_{r=0}^{k-p}(-1)^r\binom{k-1}{r}\biggl(
\binom{p-2}{p-2}+\binom{p-1}{p-2}+\cdots+\binom{k-2-r}{p-2}
\biggr)\\
&=&\sum_{r=0}^{k-p}(-1)^r\binom{k-2}{r}\biggl(
\binom{p-2}{p-2}+\binom{p-1}{p-2}+\cdots+\binom{k-2-r}{p-2}
\biggr)\\
&&+\sum_{r=0}^{k-p-1}(-1)^{r+1}\binom{k-2}{r}\biggl(
\binom{p-2}{p-2}+\binom{p-1}{p-2}+\cdots+\binom{k-3-r}{p-2}
\biggr)\\
&=&\sum_{r=0}^{k-p}(-1)^r\binom{k-2}{r}\binom{k-2-r}{p-2}.
\een
Again using $\binom{k-2}{r}=\binom{k-3}{r}+\binom{k-3}{r-1}$,
we have:
\ben
&&\sum_{r=0}^{k-p}(-1)^r\binom{k-2}{r}\binom{k-2-r}{p-2}\\
&=&\sum_{r=0}^{k-p}(-1)^r\binom{k-3}{r}\binom{k-2-r}{p-2}
+\sum_{r=0}^{k-p-1}(-1)^{r+1}\binom{k-3}{r}\binom{k-3-r}{p-2}\\
&=&(-1)^{k-p}\cdot\binom{k-3}{k-p}\binom{p-2}{p-2}+\sum_{r=0}^{k-p-1}(-1)^r
\binom{k-3}{r}\binom{k-3-r}{p-3}\\
&=&\sum_{r=0}^{k-p}(-1)^r
\binom{k-3}{r}\binom{k-3-r}{p-3}.
\een
Inductively, we get:
\ben
&&\sum_{r=0}^{k-p}(-1)^r\binom{k-2}{r}\binom{k-2-r}{p-2}
=\sum_{r=0}^{k-p}(-1)^r\binom{k-3}{r}\binom{k-3-r}{p-3}\\
&=&\sum_{r=0}^{k-p}(-1)^r\binom{k-4}{r}\binom{k-4-r}{p-4}
=\cdots\cdots\\
&=&\sum_{r=0}^{k-p}(-1)^r\binom{k-p}{r}\binom{k-p-r}{p-p}
=\sum_{r=0}^{k-p}(-1)^r\binom{k-p}{r}=0,
\een
thus \eqref{proof-general-eq} indeed holds.
This completes the proof.
\end{proof}

\section{Generating Series of Orbifold Euler Characteristics of $\Mbar_{g,n}$
and $\cM_{g,n}$ as Tau-Functions of the KP Hierarchy }

\label{sec:KP}

In the \S \ref{sec4} we have seen that $\chi(\Mbar_{g,0})$ are related to the topological 1D gravity.
Now in this section let us relate them to the KP hierarchy via the topological 1D gravity,
and generalize this result to $\chi(\Mbar_{g,n})$ for $2g-2+n>0$.
See  \cite{djm, sa} for an introduction to the KP hierarchy.

\subsection{Topological 1D gravity and KP hierarchy}

First let us recall a basic result concerning the topological 1D gravity.

The partition function of the topological 1D gravity
(with $\lambda=1$) is defined to be
(\cite[(93)]{zhou1}):
\be
Z^{1D} : =  \frac{1}{\sqrt{2\pi} }
\int  dx \exp  \biggl( -\half x^2 + \sum_{n \geq 1} t_{n-1}
\frac{x^n}{n!}  \biggr),
\ee
where $t_n$ are the coupling constants.
The relation between the correlators
(i.e., coefficients of each term $t_{i_1}^{n_1}\cdots t_{i_k}^{n_k}$ in $\log Z^{1D}$)
and the coefficients $\{a_{g,n}^k\}$ of the refined orbifold Euler characteristics
has been discussed in \S \ref{sec-relation-1d}.

It has been shown by Nishigaki and Yoneya that:
\begin{Theorem}
[\cite{ny2}]
\label{thm-1d-KP}
$Z^{1D}$ is a tau-function of the KP hierarchy
with respect to the time variables $(T_1,T_2,\cdots)$
where:
\be
T_n = \frac{t_{n-1}}{n!}, \qquad n\geq 1.
\ee
\end{Theorem}

\subsection{$\chi(\Mbar_{g,0})$ and KP hierarchy}

Now the connection of the partition function $\widetilde{Z}$ defined by \eqref{eq:Gauss-Gk}
with the topological 1D gravity leads to a connection of $\chi(\Mbar_{g,0})$ to KP hierarchy.

Recall that although in \S \ref{sec4}
we have `pretended' that $V_k$ are some formal variables
in order to derive recursion relations for
the expressions of $G_k=G_k(V_1,V_2,\cdots)$,
they are actually generating series of $\chi(\cM_{g,n})$
and have definite expressions (see \eqref{eq-def-vertexV}).
Now if we take $\lambda = 1$ in \eqref{eq:Gauss-Gk},
then we get:
\be
\begin{split}
& \frac{1}{\sqrt{2\pi}}
\int\exp\biggl[\biggl(-\half x^2+\sum_{n\geq 0}
V_n(z) \cdot \frac{x^n}{n!}
\biggr)\biggr]dx \\
= & \exp\biggl( \sum_{k \geq 0} G_k(z) \biggr)\\
= & \exp \biggl( \sum_{g \geq 2} \chi(\Mbar_{g,0}) z^{2-2g}\biggr),
\end{split}
\ee
in other words,
\be
\begin{split}
&\exp \biggl( \sum_{g \geq 2} \big(\chi(\Mbar_{g,0})-\chi(\cM_{g,0})\big) z^{2-2g}\biggr)\\
=&\frac{1}{\sqrt{2\pi}}
\int\exp\biggl[\biggl(-\half x^2+\sum_{n\geq 1}
V_n(z) \cdot \frac{x^n}{n!}
\biggr)\biggr]dx.
\end{split}
\ee
So by Theorem \ref{thm-1d-KP} we have obtained our main result in this section:
\begin{Theorem}
The generating series $\sum\limits_{g \geq 2} \big(\chi(\Mbar_{g,0})-\chi(\cM_{g,0}) \big) z^{2-2g}$
is the logarithm of the tau-function $Z^{1D}$ of the KP hierarchy,
evaluated at time:
\be
T_n = \frac{1}{n!}V_n(z), \qquad n\geq 1,
\ee
where $V_n(z)$ are the generating series of the orbifold Euler characteristics
of $\cM_{g,n}$:
\begin{equation}
\label{eq-def-Vn}
\begin{split}
&V_n(z):=\sum_{g=1}^\infty \chi(\cM_{g,n})z^{2-2g-n},\qquad n=1,2;\\
&V_n(z):=\sum_{g=0}^\infty \chi(\cM_{g,n})z^{2-2g-n},\qquad n\geq 3,
\end{split}
\end{equation}
whose explicit formulas are given in Lemma \ref{lem-barnes-V}.
\end{Theorem}

\subsection{Generalization to $\chi(\Mbar_{g,n})$}

The above result can be generalized to a generating series of all $\chi(\Mbar_{g,n})$ with $2g-2+n>0$.
First let us recall the following formula observed by Bini and Harer \cite[(11)]{bh}:
\begin{Lemma}
[\cite{bh}]
Let $y,z$ be two formal variables,
then:
\be
\label{eq-generating-integral}
\begin{split}
&\exp\bigg(
\sum_{2g-2+n>0} \frac{1}{n!}
\chi(\Mbar_{g,n}) y^n z^{2-2g}\bigg)\\
=&
\frac{1}{\sqrt{2\pi}}
\int\exp\biggl(-\half (x-yz)^2 +\sum_{n\geq 0}
V_n(z) \cdot \frac{x^n}{n!}
\biggr)dx.
\end{split}
\ee
\end{Lemma}
\begin{proof}
Let us give a brief proof of this equation by analyzing the Feynman graph expansion.
The formal integral in the right-hand side can be expanded as:
\ben
&&\frac{1}{\sqrt{2\pi}}
\int\exp\biggl(-\half (x-yz)^2+\sum_{n\geq 0}
V_n(z) \cdot \frac{x^n}{n!}
\biggr)dx\\
&=&
\frac{e^{-\half y^2 z^2}}{\sqrt{2\pi}}
\int\exp\biggl(-\half x^2 +yz\cdot \frac{x}{1!} +\sum_{2g-2+n>0}
\chi(\cM_{g,n})z^{2-2g-n} \cdot \frac{x^n}{n!}
\biggr)dx\\
&=& e^{-\half y^2 z^2}\cdot
\exp\bigg( \sum_{\Gamma} \frac{\tilde w_\Gamma}{|\Aut(\Gamma)|}\bigg).
\een
Here the graphs $\Gamma$ are connected graphs with no external edges,
and each vertex of $\Gamma$ can be one of the following:
\begin{itemize}
\item[1)]
A stable vertex,
i.e.,
vertex of genus $g$ and valence $n$ with $2g-2+n>0$;
\item[2)]
A vertex of genus $0$ and valence $1$.
\end{itemize}
Let the weight $w_v$ of a stable vertex of genus $g$ and valence $n$ be $\chi(\cM_{g,n})z^{2-2g-n}$,
and the weight $w_v$ of a vertex of genus $0$ and valence $1$ be simply $yz$,
then the weight of a graph $\tilde w_\Gamma$ in the above formula is:
\be
\tilde w_\Gamma = \prod_{v:\text{ vertex}} w_v.
\ee
Now notice that a vertex $v$ of genus $0$ and valence $1$ can be regarded as an external edge in such graphs,
thus we can rewrite the above graph sum as follows:
\begin{equation*}
\sum_{\Gamma} \frac{\tilde w_\Gamma}{|\Aut(\Gamma)|}
=\frac{ \tilde w_{\Gamma_0}}{2} + \sum_{\substack{\Gamma:\text{ connected}\\ \text{stable graph}}}
\frac{\tilde w_\Gamma}{|\Aut(\Gamma)|}\\
=\frac{ y^2 z^2}{2} + \sum_{\substack{\Gamma:\text{ connected}\\ \text{stable graph}}}
 \frac{\tilde w_\Gamma}{|\Aut(\Gamma)|},
\end{equation*}
where $\Gamma_0$ is the graph consisting of two vertices of valence one,
with an internal edge connecting them.
Moreover,
using \eqref{eq-bini-harer} one has:
\begin{equation*}
\begin{split}
\sum_{\Gamma \in \cG_{g,n}^c}
\tilde w_\Gamma =&
\sum_{\Gamma \in \cG_{g,n}^c} z^{\sum_v (2-2g_v-\val(v))} \cdot (yz)^n\cdot \frac{1}{n!}\chi(\Mbar_{g,n})\\
=& \frac{1}{n!} \cdot y^n z^{2-2g} \chi(\Mbar_{g,n})
\end{split}
\end{equation*}
for every $2g-2+n>0$,
where the second equality holds by Euler's formula:
\begin{equation*}
1-(g-\sum_v g_v) = |V(\Gamma)| - |E(\Gamma)|
= |V(\Gamma)| -\half(\sum_v \val(v) -n)
= \frac{n}{2}+ \sum_v (1-\frac{\val(v)}{2}).
\end{equation*}
Thus the conclusion is proved.
\end{proof}

Now again by Theorem \ref{thm-1d-KP}, we have:
\begin{Theorem}
\label{thm-KP-gn}
The generating series
\ben
\sum_{2g-2+n>0} \frac{y^n z^{2-2g}}{n!} \cdot \chi(\Mbar_{g,n})
-\widetilde V_0(y,z)
\een
of the orbifold Euler characteristics of $\Mbar_{g,n}$
is the logarithm of the tau-function $Z^{1D}$ of the KP hierarchy,
evaluated at time:
\be
T_n = \frac{1}{n!}\widetilde V_n(y,z),
\qquad n\geq 1.
\ee
where $\widetilde V_n(y,z)$ are the following generating series of $\chi(\cM_{g,n})$:
\begin{equation}
\label{eq-def-Vn}
\begin{split}
&\widetilde V_0(y,z):=-\half y^2 z^2 + \sum_{g=2}^\infty \chi(\cM_{g,0})z^{2-2g};\\
&\widetilde V_1(y,z):=yz+\sum_{g=1}^\infty \chi(\cM_{g,1})z^{1-2g};\\
&\widetilde V_2(y,z):=\sum_{g=1}^\infty \chi(\cM_{g,2})z^{-2g};\\
&\widetilde V_n(y,z):=\sum_{g=0}^\infty \chi(\cM_{g,n})z^{2-2g-n},\qquad n\geq 3.
\end{split}
\end{equation}
\end{Theorem}

\section{Open-Closed Duality for the Orbifold Euler Characteristics of $\cM_{g,n}$ and $\Mbar_{g,n}$}
\label{sec-duality}

In this section we discuss a duality between the orbifold characteristic
of the moduli spaces  $\cM_{g,n}$ and $\Mbar_{g,n}$.
The main results are proved in \cite{wz4}.

In the previous work \cite{wz2},
the authors have introduced the notion of Fourier-like transforms for stable graphs,
which are a family of linear transformations on the infinite-dimensional vector space
spanned by all stable graphs.
Given a number $\epsilon$ (in $\bR$ or $\bC$, etc.) and a stable graph $\Gamma$,
one can construct a `stable graph $\Gamma^\epsilon$ of type $\epsilon$'
whose underlying graph is $\Gamma$.
Roughly speaking,
this $\Gamma^\epsilon$ stands for a linear combination of stable graph in the usual sense,
obtained by suitably gluing some `vertices of type $\epsilon$' together,
where a vertex of type $\epsilon$ of genus $g$ and valence $n$
is defined to be the linear combination $n!\cdot\wcF_{g,n}$ (see \eqref{abs-n-pt}).
The linear map $\Phi_\epsilon$ on the space spanned by all stable graphs
which takes $\Gamma$ to $\Gamma^\epsilon$ for every $\Gamma$ is called a Fourier-like transform.
See \cite[\S 6]{wz2} for this construction,
and we will not describe the details here.

Now fix a propagator $\kappa$.
When a Feynman rule of the form \eqref{Feynman-ordinary} is assigned to the stable graphs,
the Fourier-like transforms will be realized by a transformation
which takes the input data $\{F_{g,n}\}_{2g-2+n>0}$ to the output data
$\{\wF_{g,n}\}_{2g-2+n>0}$ of the realization of abstract QFT
where the propagator is taken to be $\epsilon\kappa$:
\ben
\wF_{g,n}:=\sum_{\Gamma\in\cG_{g,n}^c}
\frac{(\epsilon\kappa)^{|E(\Gamma)|}}{|\Aut(\Gamma)|}
\prod_{v\in V(\Gamma)}F_{g_v,\val_v} ,
\qquad
2g-2+n>0.
\een
We have interpreted this procedure as a transformation on the space of `field theories',
see \cite[\S 6]{wz2} for details.

One of the main results of that work is the following duality theorem:
\begin{Theorem}
[\cite{wz2}]
We have:
\ben
\Phi_{\epsilon}\circ\Phi_{-\epsilon}
=\Id.
\een
\end{Theorem}
As a corollary,
taking $\kappa=1$ and $\epsilon=1$,
we know that the inverse of the transformation $\{F_{g,n}\} \mapsto \{\tF_{g,n}\}$
given by the graph sum formula
\ben
\tF_{g,n}:= n! \cdot \sum_{\Gamma\in\cG_{g,n}^c}
\frac{1}{|\Aut(\Gamma)|}
\prod_{v\in V(\Gamma)}F_{g_v,\val_v}
\een
is simply (\cite[\S 4]{wz2}):
\ben
F_{g,n}= n!\cdot \sum_{\Gamma\in\cG_{g,n}^c}
\frac{(-1)^{|E(\Gamma)|}}{|\Aut(\Gamma)|}
\prod_{v\in V(\Gamma)}\tF_{g_v,\val_v}.
\een

Applying this duality theorem to Theorem \ref{bini-harer},
we obtain the following graph sum formula
which inverses \eqref{eq-bini-harer}:

\begin{Theorem}
[\cite{wz4}]
Assume $2g-2+n>0$,
then the orbifold Euler characteristic of $\cM_{g,n}/S_n$ is
given by the following graph sum formula:
\be
\label{eq-inverse-formula}
\chi(\cM_{g,n})=n!\cdot \sum_{\Gamma\in \cG^c_{g,n}}
\frac{(-1)^{|E(\Gamma)|}}{|\Aut(\Gamma)|}\prod_{v\in V(\Gamma)}\chi(\Mbar_{g_v,\val_v}),
\ee
where $g_v$ is the genus of a vertex $v$,
$\val_v$ is the valence of $v$,
and $|E(\Gamma)|$ is the number of internal edges of $\Gamma$.
\end{Theorem}

Now comparing this theorem with Theorem \ref{bini-harer},
we see that the graph sum formulas that
represents $\chi(\Mbar_{g,n})$ in terms of $\chi(\cM_{g,n})$
and represents $\chi(\cM_{g,n})$ in terms of $\chi(\Mbar_{g,n})$
is almost the same,
and the only difference is an additional factor $(-1)$ in the propagator.
This duality between the two types of orbifold Euler characteristics
is a new example of the open-closed duality.

Similar to the integral formula \eqref{eq-generating-integral},
we can derive the following formula from the above graph sum formula:
\begin{Corollary}
We have:
\be
\begin{split}
&\exp\bigg(
\sum_{2g-2+n>0} \frac{1}{n!}
\chi(\cM_{g,n}) y^n z^{2-2g}\bigg)\\
=&
\frac{1}{\sqrt{2\pi}}
\int\exp\biggl(\half (x-yz)^2 +\sum_{2g-2+n>0}
\frac{1}{n!}\chi(\Mbar_{g,n}) x^n z^{2-2g-n}
\biggr)dx.
\end{split}
\ee
\end{Corollary}
Notice that the quadratic term in $x$ in the exponential on the right-hand side
is $\frac{1}{2}x^2$.
This tells us that we need to understand this integral formally
as the above summation over graphs
where the weight of an internal edge is $(-1)$.
Now denote $\tilde x = -i x$,
then the above formal integral formula becomes:
\be
\begin{split}
&\exp\bigg(
\sum_{2g-2+n>0} \frac{1}{n!}
\chi(\cM_{g,n}) y^n z^{2-2g}\bigg)\\
=&
\frac{i}{\sqrt{2\pi}}
\int\exp\biggl(-\half (\tilde x+iyz)^2 +\sum_{2g-2+n>0}
\frac{1}{n!}\chi(\Mbar_{g,n}) (i\tilde x)^n z^{2-2g-n}
\biggr)d \tilde x,
\end{split}
\ee
thus by Theorem \ref{thm-1d-KP} one has the following dual version of Theorem \ref{thm-KP-gn}:

\begin{Theorem}
The generating series
\ben
-i\cdot \exp\bigg(
\frac{y^n z^{2-2g}}{n!} \chi(\cM_{g,n})
-\sum_{g=2}^\infty \chi(\Mbar_{g,0})z^{2-2g}
\bigg)
\een
is the tau-function $Z^{1D}$ of the KP hierarchy
evaluated at time:
\begin{equation*}
T_n = \frac{1}{n!}\bigg(
\frac{\delta_{n,0}}{2}\cdot y^2z^2 -\delta_{n,1}\cdot iyz
+\sum_{\substack{g\geq 0\\g>1-\frac{n}{2}}} \chi(\Mbar_{g,n})\cdot i^n z^{2-2g-n}
\bigg),
\qquad n\geq 1.
\end{equation*}

\end{Theorem}

\begin{Remark}
In \cite{wz4},
we have interpreted the open-closed duality \eqref{eq-inverse-formula}
as an analogue of the M\"obius inversion formula
(see Rota \cite{ro} for an introduction).

Recall that given a locally finite partially-ordered set $P$,
the zeta function $\zeta(x,y)$ on $P$ is defined by:
\begin{equation*}
\zeta (x,y):= \begin{cases}
1, & \text{if $x\leq y$};\\
0, & \text{otherwise},
\end{cases}
\end{equation*}
and the M\"obius function $\mu$ is defined to be the inverse of $\zeta$ in the incidence algebra.
Here $\mu$ is an integer-valued function.
Then given a real-valued function $f$ on $P$,
let $g$ be defined by:
\begin{equation*}
g(x):= \sum_{y\leq x} f(y)= \sum_{y\in P} f(y) \zeta(y,z) ,
\end{equation*}
then one has the following M\"obius inversion formula
(\cite{ro}):
\be
f(x) = \sum_{y\leq x} g(y) \mu(y,x).
\ee

In the work \cite{wz4},
we presented a similar construction to interpret \eqref{eq-inverse-formula}
in the following way.
First we introduce a partial ordering on the set of stable graphs
using the edge-contraction procedures.
Next,
we modify the zeta function such that the information about the orders of automorphism groups
of stable graphs are encoded in it.
We denote by $\tilde\zeta$ this generalized zeta function,
and by $\tilde\mu$ its inverse in the incidence algebra.
Then $\tilde\zeta$ and $\tilde\mu$ are rational-valued functions on the set of graphs.
Moreover,
we have an inversion formula for a pair of functions $(f,g)$
which are related to each other via $(\tilde\zeta,\tilde\mu)$.
In \cite{wz4}, we prove that this generalized M\"obius inverse formula
becomes the open-closed duality formula \eqref{eq-inverse-formula}
if one takes $f(\Gamma) := w_\Gamma$ where
\begin{equation*}
w_\Gamma = \prod_{v\in V(\Gamma)} \chi(\cM_{g_v,\val_v}).
\end{equation*}
\end{Remark}

\section{Concluding Remarks}

In this work,
we have studied the problem of computing the orbifold Euler characteristics
using two (mathematical) formalisms inspired by quantum physics.
We have obtained the following results:
\begin{itemize}
\item[1)]
We have introduced the refined orbifold Euler characteristics of $\Mbar_{g,n}$,
and derive a quadratic recursion and a linear recursion
using the formalism of abstract QFT for stable graphs developed in \cite{wz},
which enables us to obtain the numerical data completely.
\item[2)]
We have studied the structures of the refined orbifold Euler characteristics of $\Mbar_{g,0}$,
and showed that this problem is equivalent to the topological 1D gravity.
This enables us to solve $\chi(\Mbar_{g,0})$ using techniques developed in \cite{zhou1}
(such as the Virasoro constraints).
Moreover,
this method leads to a relation between $\chi(\Mbar_{g,n})$ and the KP hierarchy.
\item[3)]
We have solved the linear recursion and given the explicit formulas for the solutions.
This gives a way to represent $\chi(\Mbar_{g,n})$ using the coefficients of
the refined orbifold Euler characteristics of $\Mbar_{g,0}$ for fixed $g$.
\item[4)]
We have described a version of open-closed duality that represents the orbifold Euler characteristics
of $\cM_{g,n}$ and $\Mbar_{g,n}$ in terms of each other
via inversion formulas.

\end{itemize}

There are some more unexpected applications of physics ideas on this geometric problem.
In fact,
in \cite{zhou3, zhou4} the second author has developed another formalism called
the emergent geometry of KP hierarchy,
which allows one to construct some geometric structures emerging from a tau-function $\tau$
and derive some algorithms to compute the $n$-point functions associated to $\tau$.
Now due to the results in \S \ref{sec:KP},
this formalism will enable us to study such a problem in algebraic geometry
using techniques from integrable systems.
This is what the Witten Conjecture/Kontsevich Theorem \cite{kon1, wit1}
has inspired us to do.
We will report the applications of emergent geometry of KP hierarchy
to the computations of $\chi(\Mbar_{g,n})$ in a subsequent work.

\vspace{.2in}
{\bf Acknowledgements}.
The authors thank Professor Di Yang for helpful discussions.
The second author is partly supported by NSFC grant 11661131005 and 11890662.

\begin{appendices}

\section{Tables of Notations}

There are a lot of notations appearing in this work.
Here we make tables of some notations that have appeared in
several different subsections,
and list out the locations of their definitions to avoid confusion.

\begin{table}[H]
\caption{Notations in the abstract QFT and its realizations}
\label{tab:1}
\begin{tabular}{lll}
\hline\noalign{\smallskip}
Notation &Meaning &Location \\
\noalign{\smallskip}\hline\noalign{\smallskip}
$\cG_{g,n}^c$ & the set of connected stable graphs of genus $g$
& \S \ref{sec-pre-strat}\\
& with $n$ external edges \\
$\cG_{g,n}$ & the set of stable graphs of genus $g$
& \S \ref{sec-pre-strat}\\
& with $n$ external edges\\
$V(\Gamma)$ & the set of vertices of the graph $\Gamma$
&\S \ref{sec-pre-strat}\\
$E(\Gamma)$ & the set of internal edges of the graph $\Gamma$
&\S \ref{sec-pre-strat}\\
$E^{ext}(\Gamma)$ & the set of external edges of the graph $\Gamma$
&\S \ref{sec-pre-strat}\\
$\Aut(\Gamma)$
& group of automorphisms of the graph $\Gamma$
& \S \ref{sec-pre-absrec}\\
$g_v$ & genus of the vertex $v$ & \S \ref{sec2}\\
$\val_v$ & valence of the vertex $v$ & \S \ref{sec2}\\
$\wcF_{g}$ & abstract free energy of genus $g$
& \eqref{abs-fe}\\
$\wcF_{g,n}$ & abstract $n$-point function of genus $g$
& \eqref{abs-n-pt}\\
$K$ & edge-cutting operator
&\S \ref{sec-pre-absrec}\\
$\cD,\pd,\gamma$ & edge-adding operators
&\S \ref{sec-pre-absrec}\\
$F_{g,n}(t)$ & weight of vertices in a realization
& \eqref{Feynman-ordinary-v}\\
$\kappa$ & weight of an internal edge in a realization
& \eqref{Feynman-ordinary-e}\\
$\wF_g(t,\kappa)$ & realization of $\wcF_g$
& \eqref{eq-pre-realization-fe}\\
$\wF_{g,n}(t,\kappa)$ & realization of $\wcF_{g,n}$
& \eqref{eq-pre-realization-npt}\\
$\widehat Z (t,\kappa)$ & formal Gaussian integral representation
& \eqref{eq-pre-partition}\\
$\pd_\kappa$ & realization of $K$ (partial derivative w.r.t $\kappa$)
&\eqref{eq-pre-realizationK}\\
$\hpd$ & realization of $\pd$
& \S \ref{sec-pre-realization-rec}\\
$\hat{D}$ & realization of $\cD$
& \eqref{eq-pre-realizationD} \\
\noalign{\smallskip}\hline
\end{tabular}
\end{table}

\begin{table}[H]
\caption{Notations in computations of $\chi(\Mbar_{g,n})$}
\label{tab:2}
\begin{tabular}{lll}
\hline\noalign{\smallskip}
Notation &Meaning &Location \\
\noalign{\smallskip}\hline\noalign{\smallskip}
$F_{g,n}^{orb}(t)$ & weight of vertices (using Harer-Zagier formula)
& \eqref{eq-realization-vertices}\\
$\kappa$ & weight of an internal edge (a formal variable)
& \eqref{eq-realization-kappa}\\
$\chi_{g,n}(t,\kappa)$ & refined orbifold Euler characteristic
& \eqref{chi-g,n}\\
$\widetilde\chi_{g,n}(\kappa)$ &
$\widetilde\chi_{g,n}(\kappa):=\chi_{g,n}(1,\kappa)$
&\eqref{eq-def-tildechi}\\
$\widehat{Z}^{orb}(t,\kappa)$
& partition function for $\chi_{g,0}(t,\kappa)$
&\eqref{gaussian-chi(t,k)} \\
$d$ & realization of $\pd$ in this case
& \eqref{eq-realization-chi-pd}\\
$D$ & realization of $\cD$ in this case
&\eqref{eq-realization-D}\\
$\widetilde{D}$ & modification of $D$
& \eqref{eq-operator-tildeD} \\
$a_{g,n}^i$ & coefficients of the polynomial $\widetilde\chi_{g,n}(\kappa)$
& \eqref{eq-def-agni} \\
$G_k(z)$ & generating series for $a_{g,0}^k$
& \eqref{eq-def-generating-G}\\
$V_n(z)$ & generating series of $\chi(\cM_{g,n})$
& \eqref{eq-def-vertexV}\\
$\widetilde Z$ & partition function for $G_k(z)$
& \eqref{eq:Gauss-Gk} \\
$d_V$ & an operator to compute $G_k(V_1,V_2,\cdots)$
& \eqref{eq-def-dV}\\
$\tilde d_V$ & a modification of $d_V$
& \eqref{eq-def-tilde-d}\\
$\widehat A$ & operator to generate $G_k$ from $G_{k-1}$
& \eqref{eq-def-tilde-A} \\
$e_k$
&elementary symmetric functions
&\eqref{eq-def-ele-sym} \\
\noalign{\smallskip}\hline
\end{tabular}
\end{table}

\end{appendices}


\begin{thebibliography}{99}
% \addcontentsline{toc}{chapter}{Bibliography}

\bibitem{abk}Aganagic M, Bouchard V, Klemm A. Topological Strings and (Almost) Modular Forms. Communications in Mathematical Physics, 2008, 277(3):771-819.

\bibitem{al}Alexandrov A. Cut-and-Join operator representation for Kontsevich-Witten tau-function. Modern Physics Letters A, 2011, 26(29): 2193-2199.

\bibitem{ba}
Barnes E W,
 The theory of the G-function, Quart. J. Pure Appl. Math. 31 (1899), 264¨C314.

\bibitem{be}Berndt B C. Combinatorial analysis and series inversions. Ramanujan¡¯s Notebooks. Springer, New York, NY, 1985: 44-84.

\bibitem{bew}Berndt B C, Evans R J, Wilson B M. Chapter 3 of Ramanujan¡¯s second notebooks. Adv. in Math, 1983, 49: 123-169.

\bibitem{bcov1}Bershadsky M, Cecotti S, Ooguri H, Vafa C. Holomorphic anomalies in topological field theories. arXiv preprint hep-th/9302103, 1993.

\bibitem{bcov2}Bershadsky M, Cecotti S, Ooguri H, Vafa C. Kodaira-Spencer theory of gravity and exact results for quantum string amplitudes. Communications in Mathematical Physics, 1994, 165(2): 311-427.

\bibitem{bgp}Bini G, Gaiffi G, Polito M. A formula for the Euler characteristic of $\overline\cM_{2,n}$. Mathematische Zeitschrift, 2001, 236(3): 491-523.

\bibitem{bh}Bini G, Harer J. Euler Characteristics of Moduli Spaces of Curves. Journal of the European Mathematical Society, 2011, 13(2):p\'{a}gs. 487-512.

\bibitem{cg}Chen W Y C, Guo V J. Bijections behind the Ramanujan Polynomials. Advances in Applied Mathematics, 2001, 27(2): 336-356.

\bibitem{cy}Chen W Y C, Yang H R L. A context-free grammar for the Ramanujan-Shor polynomials. Advances in Applied Mathematics, 2019.

\bibitem{djm}Date E, Jimbo M, Miwa T. Solitons: Differential equations, symmetries and infnite dimensional algebras. Cambridge University Press, 2000.

\bibitem{dm}Deligne P, Mumford D. The irreducibility of the space of curves of given genus. Publications Math¨¦matiques de l'Institut des Hautes ¨¦tudes Scientifiques, 1969, 36(1): 75-109.

\bibitem{dl}Denef J, Loeser F. Germs of arcs on singular algebraic varieties and motivic integration. Inventiones Mathematicae, 1999, 135(1):201-232.

\bibitem{dv}Distler J, Vafa C. A critical matrix model at $c=1$. Modern Physics Letters A, 1991, 6(03): 259-270.

\bibitem{dn}Do N , Norbury P . Counting lattice points in compactified moduli spaces of curves. Geometry \& Topology, 2011, 15(4):2321-2350.

\bibitem{dgx}Drake B, Gessel I M, Xin G. Three Proofs and a Generalization of the Goulden-Litsyn-Shevelev Conjecture on a Sequence Arising in Algebraic Geometry. Journal of Integer Sequences, 2007, 10(2): 3.

\bibitem{dr}Dumont D, Ramamonjisoa A. Grammaire de Ramanujan et arbres de Cayley. Electron. J. Combin, 1996, 3(2).

\bibitem{ey}Eynard B. Large N expansion of convergent matrix integrals, holomorphic anomalies, and background independence. Journal of High Energy Physics, 2009, 2009(03): 003.

\bibitem{emo}Eynard B, Mari\~no M, Orantin N. Holomorphic anomaly and matrix models.
Journal of High Energy Physics, 2007, 2007(06): 058.

\bibitem{fl}Ferreira C, L\'opez J L. An symptotic expansion of the double Gamma function. J. Approx. Theory 2001, 111: 298-314.

\bibitem{fv}Faber C, van der Geer G. Sur la cohomologie des syst\`{e}mes locaux sur les espaces de modules des courbes de genre 2 et des surfaces ab\'{e}liennes, I. Comptes Rendus Mathematique, 2004, 338(5): 381-384.

\bibitem{fm}Fulton W, MacPherson R. A compactification of configuration spaces. Annals of Mathematics, 1994, 139(1): 183-225.

\bibitem{get1}Getzler E. Operads and moduli spaces of genus 0 Riemann surfaces. The moduli space of curves. Birkh\"auser Boston, 1995: 199-230.

\bibitem{get2}Getzler E. The semi-classical approximation for modular operads. Communications in mathematical physics, 1998, 194(2): 481-492.

\bibitem{get3}Getzler E. Euler Characteristics of Local Systems on $\overline\cM_2$. Compositio Mathematica, 2002, 132(2): 121-135.

\bibitem{gl}Getzler E, Looijenga E. The Hodge polynomial of $\overline\cM_{3,1}$. arXiv preprint math.AG/9910174, 1999.

\bibitem{gls}Goulden I P, Litsyn S, Shevelev V. On a sequence arising in algebraic geometry. Journal of Integer Sequences, 1999, 8(4):2-3.

\bibitem{gkmw}Grimm T W, Klemm A, Mari\~no M, Weiss M. Direct integration of the topological string. Journal of High Energy Physics, 2007, 2007(08): 058.

\bibitem{hz}Harer J, Zagier D. The Euler characteristic of the moduli space of curves. Inventiones Mathematicae, 1986, 85(3):457-485.

\bibitem{ke}Keel S. Intersection theory of moduli space of stable n-pointed curves of genus zero. Transactions of the American Mathematical Society, 1992, 330(2):545-574.

\bibitem{kz}Klemm A, Zaslow E. Local Mirror Symmetry at Higher Genus. arXiv preprint arXiv:hep-th/9906046.

\bibitem{kn}Knudsen F F. The projectivity of the moduli space of stable curves, II: The stacks $M_{g,n}$. Mathematica Scandinavica, 1983, 52(2): 161-199.

\bibitem{kon1}Kontsevich M. Intersection theory on the moduli space of curves and the matrix airy function. Communications in Mathematical Physics, 1992, 147(1):1-23.

\bibitem{kon2}Kontsevich M. Lecture at Orsay. 1995.

\bibitem{lz}Lando S. Zvonkin A. Graphs on Surfaces and Their Applications. Encyclopaedia of Mathematical Sciences 141, Springer-Verlag, 2004.

\bibitem{ma2}Manin Y I. Generating functions in algebraic geometry and sums over trees. The moduli space of curves. Birkh\"auser Boston, 1995: 401-417.

\bibitem{ma}Manin Y I. Frobenius manifolds, quantum cohomology, and moduli spaces. American Mathematical Society, 1999:303.

\bibitem{ny}Nishigaki S, Yoneya T. A nonperturbative theory of randomly branching chains. Nuclear Physics B, 1991, 348(3): 787-807.

\bibitem{ny2}Nishigaki S, Yoneya T. The double-scaling limit of $O(N)$ vector models and the KP hierarchy. Physics Letters B, 1991, 268(1): 35-39.

\bibitem{pe}Penner R C. Perturbative series and the moduli space of Riemann surfaces. Journal of Differential Geometry, 1988, 27(1988):35-53.

\bibitem{ro}Rota G C. On the foundations of combinatorial theory I. Theory of Mbius Functions. Zeitschrift F\"ur Wahrscheinlichkeitstheorie Und Verwandte Gebiete, 1964, 2(4):340-368.

\bibitem{sa}Sato M. Soliton Equations as Dynamical Systems on an Infinite Dimensional Grassmann Manifold. RIMS Kokyuroku, 1981, 439: 30-46.

\bibitem{sh}Shor P W. A new proof of Cayley's formula for counting labeled trees. Journal of Combinatorial Theory, Series A, 1995, 71(1): 154-158.

\bibitem{Sloane}Sloane N J A. Sloane's on-line encyclopedia of integer sequences. AT \& T Corporation, 2000.

\bibitem{tH}'t Hooft G. A planar diagram theory for strong interactions. The Large N Expansion In Quantum Field Theory And Statistical Physics: From Spin Systems to 2-Dimensional Gravity. 1993: 80-92.

\bibitem{wz}Wang Z, Zhou J. A Unified Approach to Holomorphic Anomaly Equations and Quantum Spectral Curves. Journal of High Energy Physics, 2019, 2019(04): 135.

\bibitem{wz2}Wang Z, Zhou J. Fourier-Like Transforms of Stable Graphs and Holomorphic Anomaly Equations. arXiv preprint arXiv:1905.03436, 2019.

\bibitem{wz4}Wang Z, Zhou J. In preparation.

\bibitem{wit1}Witten E. Two-dimensional gravity and intersection theory on moduli space. Surveys in differential geometry, 1990, 1(1): 243-310.

\bibitem{wit2}Witten E. Quantum Background Independence In String Theory. arXiv preprint arXiv:hep-th/9306122.

\bibitem{yy}Yamaguchi S, Yau S T. Topological string partition functions as polynomials. Journal of High Energy Physics, 2004, 2004(07): 047.

\bibitem{ya}Yasuda T. Motivic integration over Deligne-Mumford stacks. Advances in Mathematics, 2006, 207(2):707-761.

\bibitem{ze}Zeng J. A Ramanujan Sequence that Refines the Cayley Formula for Trees. Ramanujan Journal, 1999, 3(1):45-54.

\bibitem{zhou2}Zhou J. Solution of W-constraints for R-spin intersection numbers. arXiv preprint arXiv:1305.6991, 2013.

\bibitem{zhou1}Zhou J. On topological 1D gravity. I. arXiv preprint arXiv:1412.1604, 2014.

\bibitem{zhou3}Zhou J. Emergent geometry and mirror symmetry of a point. arXiv preprint arXiv:1507.01679, 2015.

\bibitem{zhou4}Zhou J. Emergent Geometry of KP Hierarchy. arXiv preprint arXiv:1511.08257, 2015.



\end{thebibliography}
\end{document}